\newtheorem{defn}{Definition}[section]
\newtheorem{lemma}[defn]{Lemma}
\newtheorem{prop}[defn]{Proposition}
\newtheorem{theo}[defn]{Theorem}
\newtheorem{coro}[defn]{Corollary}
\newtheorem{claim}{Claim}
\newtheorem{rk}[defn]{Remark}
\def\Rm{\mathop{\rm Rm}\nolimits}
\def\tr{\mathop{\rm tr}\nolimits}
\def\det{\mathop{\rm det}\nolimits}
\def\vol{\mathop{\rm vol}\nolimits}
\def\eucl{\mathop{\rm eucl}\nolimits}
\def\dim{\mathop{\rm dim}\nolimits}
\def\vol{\mathop{\rm Vol}\nolimits}
\def\div{\mathop{\rm div}\nolimits}
\def\codim{\mathop{\rm codim}\nolimits}
\def\Rm{\mathop{\rm Rm}\nolimits}
\def\tr{\mathop{\rm tr}\nolimits}
\def\det{\mathop{\rm det}\nolimits}
\def\vol{\mathop{\rm vol}\nolimits}
\def\eucl{\mathop{\rm eucl}\nolimits}
\def\dim{\mathop{\rm dim}\nolimits}
\def\vol{\mathop{\rm Vol}\nolimits}
\def\div{\mathop{\rm div}\nolimits}
\def\codim{\mathop{\rm codim}\nolimits}
\def\supp{\mathop{\rm supp}\nolimits}
\def\R{\mathop{\rm \mathbb{R}}\nolimits}
\newcommand{\Sp}{\mathbb{S}}
\newcommand{\Ent}{\mathcal{E}}
\newcommand{\Ima}{\text{Im}}
\newcommand{\Ob}{\mathcal{V}}
\newsavebox\CBox
\newcommand\hcancel[2][0.5pt]{%
  \ifmmode\sbox\CBox{$#2$}\else\sbox\CBox{#2}\fi%
  \makebox[0pt][l]{\usebox\CBox}%
  \rule[0.5\ht\CBox-#1/2]{\wd\CBox}{#1}}
\begin{document}
\title{A relative entropy for expanders of the harmonic map flow   }
\date{\today}
\begin{abstract}In this paper we focus on the uniqueness question for (expanding) solutions of the Harmonic map flow coming out of smooth $0$-homogeneous maps with values into a closed Riemannian manifold. We introduce a relative entropy for two purposes. On the one hand, we prove the existence of two expanding solutions associated to any suitable solution coming out of a $0$-homogeneous map by a blow-up and a blow-down process. On the other hand, generic uniqueness of expanding solutions coming out of the same $0$-homogeneous map of $0$ relative entropy is proved.
\end{abstract}

\author{Alix Deruelle}
\address[Alix Deruelle]{Institut de Math\'ematiques de Jussieu, Paris Rive Gauche (IMJ-PRG) UPMC - Campus Jussieu, 4, place Jussieu Bo\^ite Courrier 247 - 75252 Paris Cedex 05}
\email{alix.deruelle@imj-prg.fr}

\maketitle

\section{Introduction}
Given a connected closed Riemannian manifold $(N,g)$ isometrically embedded in some Euclidean space $\R^m$, $m\geq 2$, we consider solutions of the harmonic map flow of maps $(u(t))_{t\geq 0}$ from $\R^n$, $n\geq 3$ to $(N,g)$. More precisely, we study the parabolic system
\begin{equation}
\left\{\begin{aligned}
&\partial_tu=\Delta u+A(u)(\nabla u,\nabla u),\quad\mbox{on $\mathbb{R}^n\times\mathbb{R}_+$},&\label{eq-HMP} \\
&u|_{t=0}=u_0,&
\end{aligned}
\right.
\end{equation}
for a given map $u_0:\mathbb{R}^n\rightarrow N$, where $A(u)(\cdot,\cdot):T_uN\times T_uN\rightarrow (T_uN)^{\perp}$ denotes the second fundamental form of the embedding $N \hookrightarrow \R^m$ evaluated at $u$. Observe that equation (\ref{eq-HMP}) is equivalent to $\partial_tu-\Delta u\perp T_uN$ for a family of maps $(u(t))_{t\geq0}$ which map into $N$.
In this paper, we focus on expanding solutions of the Harmonic map flow coming out of $0$-homogeneous maps, i.e. solutions that are invariant under parabolic rescalings,
\begin{eqnarray}
u_{\lambda}(x,t)&:=&u(\lambda x,\lambda^2t)=u(x,t), \quad \lambda>0,\quad \text{$(x,t)\in \R^n\times\R_+$}.\label{resc-cond}
\end{eqnarray}
 
 The condition (\ref{resc-cond}) reflects the homogeneity of the initial condition $u_0$ in a parabolic sense.
 
  In this setting, it turns out that (\ref{eq-HMP}) is equivalent to an elliptic equation. Indeed, if $u$ is an expanding solution in the sense of (\ref{resc-cond}) then the map $U(x):=u(x,1)$ for $x\in \mathbb{R}^n$, satisfies the elliptic system
\begin{equation}
\left\{\begin{aligned}
&\Delta_f U+A(U)(\nabla U,\nabla U)=0,\quad\mbox{on $\mathbb{R}^n$},&\label{eq-HMP-Stat}\\
&\lim_{|x|\rightarrow+\infty}U(x)=u_0(x/|x|),&
\end{aligned}
\right.
\end{equation}
 
where, $f$ and $\Delta_f$ are defined by
\begin{eqnarray*}
&&f(x):=\frac{|x|^2}{4}+\frac{n}{2},\quad x\in\mathbb{R}^n,\\
&&\Delta_fU:=\Delta U+\nabla f\cdot\nabla U=\Delta U+\frac{r}{2}\partial_rU.
\end{eqnarray*}
The function $f$ is called the potential function and it is defined up to an additive constant. The choice of this constant is dictated by the requirement $$\Delta_ff=f.$$ The operator $\Delta_f$ is called a weighted laplacian and it is unitarily conjugate to a harmonic oscillator $\Delta-|x|^2/16$.

Conversely, if $U$ is a solution to (\ref{eq-HMP-Stat}) then the map $u(x,t):=U(x/\sqrt{t})$, for $(x,t)\in\R^n\times\R_+$, is a solution to (\ref{eq-HMP}). Because of this equivalence, $u_0$ can be interpreted either as an initial condition or as a boundary data at infinity. 

Existence of expanding solutions has been investigated by Germain and Rupflin \cite{Ger-Rup}, Biernat and Bizon \cite{Bie-Biz} and Germain, Ghoul and Miura \cite{Ger-Gho-Miu} in an equivariant setting. The author and T. Lamm studied the existence of weak expanding solutions of the Harmonic map flow for maps with values into an arbitrary smooth closed Riemannian manifold: \cite{Der-Lam-HMF}. There, we proved the existence of such solutions in the case the initial condition is a Lipschitz map that is homotopic to a constant. Moreover, our solutions are proved to be regular outside a compact set whose diameter depends on the $L^2_{loc}$ energy of the initial condition only.\\

We decide in this paper to focus on the uniqueness question for expanding solutions coming out of $0$-homogeneous maps $u_0$ that are sufficiently smooth outside the origin. Besides, we restrict ourselves to expanding solutions that are sufficiently regular at infinity: Definition \ref{def-reg-inf} ensures that the solution reaches its initial condition in a sufficiently smooth sense.

Now, from a variational viewpoint, expanding solutions can be interpreted as critical points of the following formal entropy or weighted energy:
\begin{eqnarray}
\mathcal{E}^+(u):=\int_{\mathbb{R}^n}|\nabla u|^2e^{\frac{|x|^2}{4}}dx.\label{inf-ent-exp}
\end{eqnarray}
 The issue is that this quantity associated to an expanding solution of the Harmonic map flow is infinite unless the solution is constant. Indeed, the pointwise energy of an expanding solution $u$ decays at most quadratically, i.e. $$|\nabla u|^2(x)\sim|x|^{-2},\quad\text{as $x\rightarrow+\infty$}.$$

This is in sharp contrast with ancient solutions called shrinking solutions: these solutions live on $(-\infty,0)$ and can potentially arise from finite-time singularities of the Harmonic map flow for maps between two closed Riemannian manifolds. Shrinking solutions are true critical points of the following well-defined entropy: 
\begin{eqnarray*}
\mathcal{E}^-(u):=\int_{\mathbb{R}^n}|\nabla u|^2e^{-\frac{|x|^2}{4}}dx<+\infty.
\end{eqnarray*}

As explained to the author by T. Ilmanen, there should be a notion of relative entropy for expanding solutions that we now explain despite the issue caused by (\ref{inf-ent-exp}).

Let $u$ be an expanding solution coming out of a $0$-homogeneous map and assume there exists a background expanding solution $u_b$ coming out of the same $0$-homogeneous map. Then, the relative entropy of $u$ and $u_b$ is formally defined by:
\begin{eqnarray}
\mathcal{E}(u,u_b):=\lim_{R\rightarrow+\infty}\int_{B(0,R)}\left(|\nabla u|^2-|\nabla u_b|^2\right)e^{\frac{|x|^2}{4}}dx.\label{rel-ent-intro-def}
\end{eqnarray}
In order to prove that this quantity (\ref{rel-ent-intro-def}) is well-defined (Theorem \ref{mono-rel-ent}), we need to establish a convergence rate for the difference $u-u_b$ of the form $f^{-n/2}e^{-f}$: see Theorem \ref{prop-dec-time-diff-sol}. This rate is sharp as shown by a unique continuation result at infinity proved in Theorem \ref{uni-cont-inf}. Observe that the naive estimate of the difference $u-u_b=u-u_0+u_0-u_b$ given by comparing the solutions to their common initial condition $u_0$ only yields a quadratic decay, i.e. $u-u_0=\textit{O}(f^{-1})=u_b-u_0$. We actually prove that this entropy (\ref{rel-ent-intro-def}) can be defined more generally for suitable solutions of the Harmonic map flow coming out of a $0$-homogeneous map $u_0$: if $u(t)$ is such a suitable solution given by Definition \ref{def-reg-inf} and if $u_b(t)$ denotes the time-dependent solution associated to $u_b$ as explained at the beginning of this intoduction then the function
\begin{eqnarray}
\mathcal{E}(u(t),u_b(t)):t\in\R_+\rightarrow\lim_{R\rightarrow+\infty}\int_{B(0,R)}t\left(|\nabla u(t)|^2-|\nabla u_b(t)|^2\right)\frac{e^{\frac{|x|^2}{4t}}}{(4\pi t)^{n/2}}dx,\label{rel-ent-def-intro-gal-sol}
\end{eqnarray}
is well-defined. Moreover, it is monotone decreasing along the Harmonic map flow and is constant precisely on expanding solutions coming out of the same initial condition $u_0$.

Again, we emphasize that (\ref{rel-ent-def-intro-gal-sol}) is produced by taking differences rather than by considering a renormalization: this makes the analysis much harder since one has to match the asymptotics of such expanding solutions (or suitable solutions to the Harmonic map flow) in a much more precise way. Observe that a renormalization on an increasing sequence of exhausting balls in Euclidean space $\R^n$ would have made the first variation of $\mathcal{E}$ vanish since Theorem \ref{mono-rel-ent} shows that the weighted $L^2$ norm of the obstruction tensor $$\partial_tu+\frac{x}{2t}\cdot\nabla u,$$ for $u$ to be an expanding solution is finite in the Lebesgue sense:
\begin{eqnarray}
\frac{d}{dt}\mathcal{E}(u,u_b)(t)=-2t\int_{\mathbb{R}^n}\left|\partial_tu+\frac{x}{2t}\cdot\nabla u\right|^2\frac{e^{\frac{|x|^2}{4t}}}{(4\pi t)^{n/2}}dx>-\infty,\quad t>0.
\end{eqnarray}

Besides its monotonicity along the Harmonic map flow, the interest of this relative entropy is twofold. On one hand, we are able to prove a version of Ilmanen's conjecture originally stated for the Mean curvature flow in [Section F., \cite{Ilm-Lec-Not}]. Ilmanen's conjecture can be stated roughly as follows:
\begin{theo}[Ilmanen's conjecture: the smooth case]\label{Ilm-conj-rough}
 Blowing up and blowing down any (suitable) solution of the Harmonic map flow coming out of a $0$-homogeneous map give rise to two expanding solutions coming out of the same $0$-homogeneous map provided there already exists another expanding solution coming out of the same $0$-homogeneous map.
 \end{theo}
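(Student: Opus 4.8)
The plan is to argue via the relative entropy of Theorem~\ref{mono-rel-ent} together with a parabolic rescaling argument, in the spirit of the tangent-flow analysis for geometric flows. Throughout, $u$ denotes a suitable solution of \eqref{eq-HMP} coming out of the $0$-homogeneous map $u_0$, and $u_b$ a background expanding solution coming out of the same $u_0$, whose existence is the standing hypothesis of the statement; $u_b$ is needed precisely because the absolute entropy \eqref{inf-ent-exp} is infinite, so all the monotonicity must be run relatively to it.

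\textbf{Rescaling and the two limiting entropies.} For $\lambda>0$ set $u^\lambda(x,t):=u(\lambda x,\lambda^2 t)$. Since $u_0$ is $0$-homogeneous one has $u^\lambda|_{t=0}=u_0(\lambda\,\cdot)=u_0$, so each $u^\lambda$ is again a suitable solution coming out of $u_0$, while $u_b$ is fixed by the rescaling by self-similarity. A change of variables $y=\lambda x$, $s=\lambda^2 t$ in \eqref{rel-ent-def-intro-gal-sol} gives the identity
\begin{equation*}
\mathcal{E}(u^\lambda(t),u_b(t))=\mathcal{E}(u(\lambda^2 t),u_b(\lambda^2 t)),\qquad \lambda,t>0.
\end{equation*}
By Theorem~\ref{mono-rel-ent} the map $s\mapsto\mathcal{E}(u(s),u_b(s))$ is finite for each $s>0$ and non-increasing; combined with the a priori bounds of Definition~\ref{def-reg-inf} and the decay of Theorem~\ref{prop-dec-time-diff-sol} — which are scale-invariant because $u_0$ is $0$-homogeneous — it then has finite limits $E_0:=\lim_{s\to 0^+}\mathcal{E}(u(s),u_b(s))$ and $E_\infty:=\lim_{s\to\infty}\mathcal{E}(u(s),u_b(s))$.

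\textbf{Compactness and self-similarity of the limits.} Using uniform-in-$\lambda$ energy bounds (again from scale-invariance of the hypotheses), an $\varepsilon$-regularity theorem for \eqref{eq-HMP} to handle a neighbourhood of the spacetime origin where $u_0$ is singular, and interior parabolic estimates, I would extract from $(u^\lambda)$ subsequential limits $u^-$ (as $\lambda\to 0$) and $u^+$ (as $\lambda\to\infty$) in $C^\infty_{loc}(\R^n\times(0,\infty))$, again solving \eqref{eq-HMP}; only existence of a limiting expander is needed here, so subsequences suffice (full uniqueness being the separate, harder issue of the rest of the paper). Applying the monotonicity formula of Theorem~\ref{mono-rel-ent} to $u^\lambda$ over a slab $\R^n\times[1,T]$,
\begin{equation*}
\mathcal{E}(u^\lambda(1),u_b(1))-\mathcal{E}(u^\lambda(T),u_b(T))=\int_1^T 2t\int_{\R^n}\Bigl|\partial_t u^\lambda+\frac{x}{2t}\cdot\nabla u^\lambda\Bigr|^2\frac{e^{\frac{|x|^2}{4t}}}{(4\pi t)^{n/2}}\,dx\,dt,
\end{equation*}
and using the rescaling identity, the left-hand side equals $\mathcal{E}(u(\lambda^2),u_b(\lambda^2))-\mathcal{E}(u(\lambda^2 T),u_b(\lambda^2 T))$, which tends to $E_0-E_0=0$ as $\lambda\to 0$ and to $E_\infty-E_\infty=0$ as $\lambda\to\infty$. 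Passing to the limit by Fatou's lemma forces $\partial_t u^\pm+\frac{x}{2t}\cdot\nabla u^\pm\equiv 0$ on $\R^n\times(0,\infty)$, i.e. $u^\pm(x,t)=U^\pm(x/\sqrt t)$ for solutions $U^\pm$ of \eqref{eq-HMP-Stat}: expanding solutions.

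\textbf{Matching the boundary data, and the main obstacle.} It then remains to see that $U^\pm$ has boundary value $u_0$ at infinity. Here I would use that each $u^\lambda-u_b$ obeys the sharp rate $O(f^{-n/2}e^{-f})$ of Theorem~\ref{prop-dec-time-diff-sol} with a constant uniform in $\lambda$ (once more by scale-invariance of the relevant norms), so the $C^\infty_{loc}$ limits satisfy $u^\pm-u_b=O(f^{-n/2}e^{-f})$; since $u_b$ reaches $u_0$ at infinity in the sense of Definition~\ref{def-reg-inf}, so do $u^\pm$, whence $\lim_{|x|\to\infty}U^\pm(x)=u_0(x/|x|)$. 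This produces two expanding solutions coming out of $u_0$ (coinciding with $u$ when $u$ is itself an expander). The main obstacle is precisely these uniform-in-$\lambda$ controls: that $E_0,E_\infty$ are finite — the lower bound $E_\infty>-\infty$ in particular — and that the $f^{-n/2}e^{-f}$-asymptotics of $u^\lambda$ and $u_b$ match uniformly along the rescaling. This is the price of defining $\mathcal{E}$ by differences rather than renormalization, and it is where Theorem~\ref{prop-dec-time-diff-sol} and the unique continuation at infinity of Theorem~\ref{uni-cont-inf} do the heavy lifting; controlling the rescaled flows near the spacetime origin, where $u_0$ is singular, is the other delicate point.
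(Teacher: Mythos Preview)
Your overall strategy---parabolic rescaling, the identity $\mathcal{E}(u^\lambda,u_b)(t)=\mathcal{E}(u,u_b)(\lambda^2 t)$, extracting subsequential limits, and using the collapse of the entropy gap to force the obstruction tensor to vanish---is exactly the route the paper takes in Theorem~\ref{ilmanen-smooth-conj}. Your use of Fatou's lemma to pass to the limit in the positive integrand is a legitimate (and slightly slicker) alternative to the paper's dominated convergence argument, which instead invokes Theorem~\ref{comp-exp-sol-gal-sol} to bound $|t\partial_t u^\lambda+\tfrac{x}{2}\cdot\nabla u^\lambda|$ uniformly in $\lambda$ by $Cf^{-n/2}e^{-f}$.

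Where you diverge from the paper is in the auxiliary ingredients, and here you over-engineer. For compactness you appeal to energy bounds and an $\varepsilon$-regularity theorem near the spacetime origin; the paper simply observes that the estimates of Definition~\ref{def-reg-inf} are scale-invariant, hence hold for every $u^\lambda$ with the same constants, and Arzel\`a--Ascoli finishes. For the boundary value you go through Theorem~\ref{prop-dec-time-diff-sol} to compare $u^\lambda$ with $u_b$; the paper uses instead the direct (and much coarser) bound $|u^\lambda(x,t)-u_0(x/|x|)|\leq C/(|x|^2+t)$, which follows from the equation and regularity at infinity and survives the limit. Finally, Theorem~\ref{uni-cont-inf} plays no role here---it is used later for generic uniqueness, not for Ilmanen's conjecture---and the ``delicate point near the spacetime origin'' you flag is not an issue in the smooth regular-at-infinity setting the paper works in.
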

 We refer the reader to Theorem \ref{ilmanen-smooth-conj} for a precise statement. 
 
We notice that a similar relative entropy has been previously considered by Ilmanen, Neves and Schulze for the Network flow for regular networks: \cite{Ilm-Nev-Sch}.

 Theorem \ref{Ilm-conj-rough} reduces so to speak the uniqueness question for suitable solutions of the Harmonic map flow coming out of a given $0$-homogeneous map $u_0$ to the uniqueness question for expanding solutions of the Harmonic map flow coming out the same map $u_0$.
 This principle is used to show a uniqueness result in case the target is non-positively curved: 
 
 \begin{theo}[Non-positively curved targets]\label{theo-non-pos-cur-tar-intro}
Let $(N,g)$ be a Riemannian manifold with non-positive sectional curvature.
Let $n\geq 3$ and let $u_0:\R^n\setminus\{0\}\rightarrow (N,g)$ be a smooth $0$-homogeneous map. Then there exists a unique suitable smooth solution coming out of $u_0$: this solution must be expanding.
\end{theo}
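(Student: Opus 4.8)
The plan is to combine the monotonicity of the relative entropy (Theorem \ref{mono-rel-ent}) with the blow-up/blow-down construction of Theorem \ref{Ilm-conj-rough} (Ilmanen's conjecture), and then to use the well-known uniqueness and rigidity of harmonic maps into non-positively curved targets to pin down a single expanding solution. First I would record that for non-positively curved $(N,g)$, there is a natural candidate expander coming out of $u_0$: one invokes the existence theory (for instance \cite{Der-Lam-HMF}, or a direct variational/continuity argument for the elliptic system (\ref{eq-HMP-Stat}) which is unobstructed when the target has $\Ric_N\le 0$ and $\sec_N\le 0$, using that $\Delta_f$ plus the good sign of the curvature term gives coercivity of the second variation) to produce at least one smooth expanding solution $u_b$ with $\lim_{|x|\to\infty}u_b(x)=u_0(x/|x|)$ in the sense of Definition \ref{def-reg-inf}. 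This $u_b$ will serve as the background solution, so that $\mathcal{E}(u,u_b)$ is defined (via Theorem \ref{mono-rel-ent} and the decay rate of Theorem \ref{prop-dec-time-diff-sol}) for every other suitable solution $u$ coming out of $u_0$.

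Next, given any suitable smooth solution $u(t)$ coming out of $u_0$, consider $t\mapsto \mathcal{E}(u(t),u_b(t))$. By Theorem \ref{mono-rel-ent} it is monotone non-increasing, with derivative
\begin{equation*}
\frac{d}{dt}\mathcal{E}(u,u_b)(t)=-2t\int_{\mathbb{R}^n}\left|\partial_tu+\frac{x}{2t}\cdot\nabla u\right|^2\frac{e^{\frac{|x|^2}{4t}}}{(4\pi t)^{n/2}}\diff x.
\end{equation*}
By Theorem \ref{Ilm-conj-rough}, the blow-down limit $t\to 0$ and the blow-up limit $t\to\infty$ both exist and are expanding solutions $u^{0}$, $u^{\infty}$ coming out of the same $0$-homogeneous map $u_0$; along these limits the entropy is constant. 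The key remaining input is a uniqueness statement for expanders into non-positively curved targets: any two smooth expanding solutions coming out of the same smooth $0$-homogeneous $u_0$ must coincide. I would obtain this by a maximum-principle/Bochner argument: if $U_1,U_2$ solve (\ref{eq-HMP-Stat}) with the same boundary data at infinity, then the (suitably weighted) distance function $d(U_1,U_2)^2$ or the energy of a geodesic homotopy satisfies a differential inequality $\Delta_f \varphi \ge 0$ (here the sign of the sectional curvature of $N$ is exactly what makes the reaction term favorable, as in Hartman's and Schoen--Yau's uniqueness theorems for harmonic maps), with $\varphi\to 0$ at infinity; the weighted maximum principle for $\Delta_f$ on $\R^n$ then forces $\varphi\equiv 0$, hence $U_1=U_2$. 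In particular $u^0=u^\infty=u_b$, so $\mathcal{E}(u(0^+),u_b)=\mathcal{E}(u(\infty),u_b)$, the monotone function is constant, its derivative vanishes identically, and therefore $\partial_t u+\frac{x}{2t}\cdot\nabla u\equiv 0$ on $\R^n\times\R_+$, i.e. $u$ is itself an expanding solution. Combined with the uniqueness of expanders, $u=u_b$, which proves both existence and uniqueness.

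I expect the main obstacle to be the uniqueness of expanders into non-positively curved targets with the weak decay $U-u_0=O(f^{-1})$ at infinity: a priori two expanders only agree with $u_0$ at the quadratic rate, which is too slow for an off-the-shelf weighted maximum principle to conclude, and one cannot directly apply Theorem \ref{prop-dec-time-diff-sol} since that theorem presupposes a background solution. The way around this is to run the Bochner argument for $\varphi$ first to upgrade the decay: the differential inequality $\Delta_f\varphi\ge c\,|\nabla(\text{homotopy})|^2\ge 0$ together with $\varphi=O(f^{-2})$ (which does follow from the $O(f^{-1})$ control on each $U_i$ and boundedness of $N$) feeds into the same ODE-type analysis at infinity used to prove Theorem \ref{prop-dec-time-diff-sol}, yielding $\varphi=O(f^{-n/2}e^{-f})$, after which the weighted maximum principle applies cleanly. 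A secondary technical point is checking that the background expander $u_b$ produced by the existence theory actually lies in the class of Definition \ref{def-reg-inf} (sufficient regularity at infinity); for non-positively curved targets this should follow from standard elliptic estimates for (\ref{eq-HMP-Stat}) bootstrapped from the a priori energy bound, since no bubbling can occur.
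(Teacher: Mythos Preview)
Your overall architecture is the same as the paper's: produce a background expander $u_b$, invoke Ilmanen's conjecture (Theorem~\ref{ilmanen-smooth-conj}) to reduce to uniqueness among expanders, then conclude that any suitable solution is expanding and equal to $u_b$. The substantive difference is in how you prove uniqueness among expanders. The paper does \emph{not} use a Hartman-type maximum principle on $d_N(U_1,U_2)^2$; instead it runs a second continuity method along the same homotopy $(u_0^\sigma)_{\sigma\in[0,1]}$ to a constant map, using Corollary~\ref{coro-unique-exp-pt} at $\sigma=1$ and the invertibility of the Jacobi operator (which is where non-positive curvature enters, via the second-variation formula) to propagate $u_1^\sigma=u_2^\sigma$ by the implicit function theorem. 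Your route is more elementary and avoids the Fredholm machinery of Section~\ref{section-mod-space-smooth-exp}; the paper's route is more uniform with its existence proof and exercises the moduli-space framework.

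Two remarks on your write-up. First, your ``main obstacle'' is not one: once you have $\Delta_f\varphi\ge 0$ and $\varphi\to 0$ at infinity (which follows already from $U_i-u_0=O(f^{-1})$), the weak maximum principle on balls $B(0,R)$ with $R\to\infty$ gives $\varphi\le 0$ directly, no upgraded decay needed; moreover Theorem~\ref{prop-dec-time-diff-sol} does apply to two expanders and does not ``presuppose a background solution''. Second, make sure you use the \emph{intrinsic} distance (or a geodesic homotopy lifted to the universal cover) to get $\Delta_f\varphi\ge 0$: the extrinsic $|U_1-U_2|^2$ picks up a term $-C|\nabla u_i|^2|U_1-U_2|^2$ with the wrong sign regardless of the curvature of $N$, so the Bochner inequality you need is the Schoen--Yau/J\"ager--Kaul one for $d_N^2$, which goes through verbatim with $\Delta_f$ in place of $\Delta$.
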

The existence part could be adapted from \cite{Der-Lam-HMF} but we prove it here directly: we use a continuity method by connecting any $0$-homogeneous map with values into a non-positively curved target to a constant map. The existence of such a path of initial conditions is granted by Hadamard's Theorem which ensures in particular that a complete Riemannian manifold with non-positive sectional curvature is aspherical. Actually, Theorem \ref{theo-non-pos-cur-tar-intro} can be interpreted as the non-compact version of Hamilton's Theorem \cite{Ham-HMF-Bdy} where the Dirichlet boundary data is now pushed at infinity. Finally, Theorem \ref{theo-non-pos-cur-tar-intro} is the analogous statement of a theorem due to the author for expanding solutions of the Ricci flow: see Remark \ref{rk-theo-rf-vs-hmf} and the references therein.

On the other hand, we prove a generic uniqueness result for expanding solutions. Again, it can be stated roughly as follows:
\begin{theo}[Generic uniqueness: unformal statement]\label{rough-sta-gen-uni-intro}
The set of $0$-homogeneous maps that are smoothed out by more than one (suitable) expanding solution with $0$ relative entropy is of first category in the Baire sense. 
\end{theo}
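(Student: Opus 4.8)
The plan is to run a Sard--Smale transversality argument at the level of the elliptic expander equation \eqref{eq-HMP-Stat}, using the relative entropy \eqref{rel-ent-intro-def} to supply the one extra codimension that distinguishes ``more than one expander'' from ``more than one expander at the same entropy level''.

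First I would fix the functional-analytic framework. Let $\mathcal{B}$ be a separable Banach manifold of $0$-homogeneous maps that are sufficiently smooth away from the origin --- for instance a little-H\"older or Sobolev completion of $C^\infty(\mathbb{S}^{n-1},N)$ --- and for $\varphi\in\mathcal{B}$ fix the reference map $u_0^\varphi(x)=\varphi(x/|x|)$. One looks for the suitable expanders of Definition~\ref{def-reg-inf} coming out of $\varphi$ in the form $U=u_0^\varphi+W$, with $W$ a section of the pulled-back tangent bundle in the Gaussian-weighted H\"older space adapted to the potential $f$, the weight being dictated by the sharp decay $f^{-n/2}e^{-f}$ of Theorem~\ref{prop-dec-time-diff-sol}. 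In these spaces the expander equation becomes a smooth map $\mathcal{G}(\varphi,W):=\Delta_f U+A(U)(\nabla U,\nabla U)$, $U:=u_0^\varphi+W$; its linearisation in $W$ at a solution is the Jacobi-type operator $L_U=\Delta_f+(\text{curvature terms of }N)$, which --- because $\Delta_f$ is unitarily conjugate to the harmonic oscillator $\Delta-|x|^2/16$ and the curvature terms decay like $|\nabla U|^2\sim|x|^{-2}$ --- is self-adjoint modulo a relatively compact perturbation on the Gaussian-weighted $L^2$ space, hence Fredholm of index $0$ on the corresponding weighted H\"older spaces.

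Next I would build the universal moduli space. Writing $U_i:=u_0^\varphi+W_i$, consider
\[\mathcal{W}:=\{(\varphi,W_1,W_2)\,:\,\mathcal{G}(\varphi,W_1)=\mathcal{G}(\varphi,W_2)=0,\ W_1\neq W_2,\ \mathcal{E}(U_1,U_2)=0\}.\]
By Theorem~\ref{mono-rel-ent} the relative entropy $(\varphi,W_1,W_2)\mapsto\mathcal{E}(U_1,U_2)$ is a well-defined smooth real-valued function on the locus where $\mathcal{G}$ vanishes twice --- it is here that the matching of asymptotics to order $f^{-n/2}e^{-f}$ is essential, a renormalisation making this function identically zero, cf. the discussion following \eqref{rel-ent-def-intro-gal-sol}. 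To see that $\mathcal{W}$ is a Banach manifold one checks that the full linearisation of $(\mathcal{G},\mathcal{G},\mathcal{E})$ is onto: surjectivity in the two $\mathcal{G}$-slots is the usual universal-transversality argument --- an $\eta\in\ker L_{U_i}$ orthogonal to the range of $D_\varphi\mathcal{G}$ would vanish near infinity after a suitable deformation of the boundary data, hence identically, by the unique continuation at infinity of Theorem~\ref{uni-cont-inf}; surjectivity in the $\mathcal{E}$-slot is the extra point discussed below. Granting it, the projection $\Phi:\mathcal{W}\to\mathcal{B}$, $(\varphi,W_1,W_2)\mapsto\varphi$, is Fredholm of index $0+0-1=-1$. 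Since the set of $0$-homogeneous maps smoothed out by more than one suitable expander of vanishing relative entropy is exactly $\Phi(\mathcal{W})$, and a negative-index Fredholm map has empty preimage over a regular value, the Sard--Smale theorem gives a comeager set of regular values in $\mathcal{B}$ from which $\Phi(\mathcal{W})$ is disjoint; hence $\Phi(\mathcal{W})$ is of first category. A standard approximation argument then transfers the conclusion from the chosen Banach completion to $C^\infty$.

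The main obstacle is surjectivity of the differential of $\mathcal{E}$ along the double-solution locus, equivalently that $0$ is a regular value of $\mathcal{E}$: the assertion that when the boundary data is deformed the relative entropy of the two bifurcating expanders genuinely moves, rather than staying locally constant. I would attack it by differentiating \eqref{rel-ent-intro-def} along a deformation of $\varphi$ and integrating by parts using $\Delta_fU_i+A(U_i)(\nabla U_i,\nabla U_i)=0$; the interior terms cancel and one is left with a boundary term at infinity which, through the sub-leading asymptotics of the $U_i$, sees the difference $U_1-U_2$ --- nonzero by hypothesis, with sharp decay exactly $f^{-n/2}e^{-f}$ by Theorem~\ref{uni-cont-inf}. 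Showing this boundary pairing is not annihilated by every deformation of $\varphi$ is where the two sharp estimates of the paper are really used; by comparison the Fredholm bookkeeping of the earlier steps is routine, though it requires care to handle the sign-indefinite curvature of $N$, which rules out a maximum-principle shortcut, and the behaviour at infinity of the boundary-trace map.
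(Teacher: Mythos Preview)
Your sketch is correct and follows essentially the same route as the paper: Fredholm analysis of the Jacobi operator $L_U$ in Gaussian-weighted spaces, the relative entropy as the extra real-valued constraint, and the identification of its first variation with the boundary pairing $-c_n\int_{\mathbb{S}^{n-1}}\langle\xi_0,\kappa_{12}\rangle\,d\sigma$, which is nontrivial precisely because $\kappa_{12}=\lim_{r\to\infty}f^{n/2}e^f(u_2-u_1)\neq 0$ by the unique continuation Theorem~\ref{uni-cont-inf}. The only difference is one of packaging: you run a single Sard--Smale argument on a universal moduli space $\mathcal{W}$ of pairs with projection of index $-1$, whereas the paper works chart by chart, first invoking Sard--Smale for the boundary-value map $\Pi$ (index $0$) and then showing that over regular values of $\Pi$ the entropy functional $\mathcal{E}_{12}$ is a submersion, hence its zero set has codimension $1$; the two presentations are equivalent, and the paper's local version has the mild advantage of separating the ``more than one expander'' locus (already generically discrete by the index-$0$ argument) from the additional entropy constraint.
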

Again, we refer the reader to Theorem \ref{theo-gen-uni} for a precise statement. Both the statement and the proof of Theorem \ref{rough-sta-gen-uni-intro} are motivated by the work of L. Mou and R. Hardt on harmonic maps defined on a domain of Euclidean space (with a boundary at finite distance): [Theorem $6.8$ and Corollary $6.9$, \cite{Har-Mou}]. In order to prove such a generic statement, one needs to understand the (Banach) manifold structure of the moduli space of suitable expanding solutions. It requires in particular to understand the Fredholm properties of the linearized operator, also called the Jacobi operator, of equation (\ref{eq-HMP-Stat}). This approach originates from the work of B. White on minimal surfaces \cite{White-var-met}. Notice that the paper \cite{Har-Mou} also adapts White's work in the setting of harmonic maps. More recently, J. Bernstein and L. Wang \cite{Ber-Wan-MCF} have been able to adapt White's approach to expanding solutions of the Mean curvature flow. Our arguments are very close in spirit to their work and are adapted from the author's work in collaboration with T. Lamm on expanding solutions of harmonic map flow \cite{Der-Lam-HMF}.  \\

We end this introduction by describing the structure of this paper:\\

Section \ref{section-L^2-max-ppe} defines solutions of the harmonic map flow that are regular at infinity.
The main result of Section \ref{section-L^2-max-ppe} is Lemma \ref{theo-max-ppe-l2}: it establishes exponential decay for subsolutions of  the heat equation with a potential decaying quadratically in space-time, vanishing at time $t=0$ in a suitable sense. Two consequences are derived from this. Theorem \ref{prop-dec-time-diff-sol} gives a sharp decay on the difference of two solutions of the harmonic map flow that are regular at infinity and that come out of the same $0$-homogeneous map. Next, Theorem \ref{comp-exp-sol-gal-sol} considers two solutions $(u_i)_{i=1,2}$ of the harmonic map flow coming out of the same initial $0$-homogeneous map such that $u_1$ is an expanding solution. The obstruction tensor $\partial_tu_2+\frac{x}{2t}\cdot\nabla u_2,$ for $u_2$ to be an expanding solution is shown to decay exponentially as well. Finally, Proposition \ref{autom-reg-infty} establishes estimates on the derivatives of the rescaled difference $U$ of two solutions $(u_i)_{i=1,2}$ of the Harmonic map flow coming out of the same $0$-homogeneous initial condition:
$f^{\frac{n}{2}}e^f(u_2-u_1)=:U.$ To do so, an intermediate step consists in showing that this rescaled difference satisfies a backward heat equation-like: this explains at least heuristically why such estimates hold. Notice that the derivatives of the un-rescaled difference $u_2-u_1$ are decaying exponentially but the degree of the polynomial in front of the exponential term is increasing: Proposition \ref{autom-reg-infty} is crucial to prove the relative entropy mentioned above is well-defined.\\

Section \ref{section-rel-ent-Ilm-conj} proves the relative entropy (\ref{rel-ent-intro-def})  is well-defined for an arbitrary suitable solution of the Harmonic map flow and an expanding solution coming out of the same initial $0$-homogeneous map. Moreover, we show this relative entropy (\ref{rel-ent-intro-def})  is monotone along the Harmonic map flow and is constant precisely on expanding solutions: this is the content of Theorem \ref{mono-rel-ent}. This section ends with the proof of Ilmanen's conjecture for solutions of the Harmonic map flow: Theorem \ref{ilmanen-smooth-conj}.\\

Then Section \ref{section-pohozaev-identity} is a short intermission establishing a rigidity statement about solutions coming out of constant maps seen as $0$-homogeneous maps: Corollary \ref{coro-unique-exp-pt} is based on a crucial (static) Pohozaev identity proved in Corollary \ref{Poho-exp}.\\

The moduli space of smooth expanding solutions is investigated in Section \ref{section-mod-space-smooth-exp}. We follow closely the arguments due to Hardt and Mou \cite{Har-Mou} on harmonic maps defined on a domain of Euclidean space which in turn are based on works of B. White on minimal surfaces, as said before: \cite{Whi-para-ell-fct} and \cite{White-var-met}. However, the analysis is substantially different since the domain is non-compact and the Jacobi operator is a drift Laplacian. More precisely, Section \ref{fct-spa-first-sec-var-rel-ent} introduces the relevant weighted function spaces and establishes the first and second variation of this new relative entropy given by (\ref{rel-ent-intro-def}). Finally, Theorem \ref{Analysis-Jacobi-field} analyses the decay of deformations lying in the kernel of the Jacobi operator that keep the boundary value at infinity fixed. When appropriately rescaled, such deformations have a well-defined trace at infinity: this statement has been proved by the author \cite{Uni-Con-Egs-Der} in the case of expanding solutions coming out of Ricci flat cones and by J. Bernstein and L. Wang for expanding solutions of the Mean curvature flow \cite{Ber-Wan-MCF}. Moreover, a unique continuation at infinity holds for such deformations in the sense that this trace at infinity determines the deformation globally: this is the last part of Theorem \ref{Analysis-Jacobi-field}.

The necessary Fredholm properties of the Jacobi operator are the content of Theorem \ref{theo-fred-prop-jac-op} that follows the approach due to the author and T. Lamm \cite{Der-Lam-HMF}: the analysis is very similar to \cite{Ber-Wan-MCF} for the Mean curvature flow. The local structure of the moduli space of smooth expanders is proved in Theorem \ref{loc-str} where Theorem \ref{Analysis-Jacobi-field} plays a crucial role. Theorem \ref{theo-glo-str} ends the analysis of the moduli space of smooth expanding solutions of the Harmonic map flow: the kernel of the Jacobi operator is integrable and the regular values of the projection map $\Pi$ associating to an expanding solution its boundary value at infinity is a residual set. \\

As its title suggests, Section \ref{uni-cont-gen-uni} starts with a non linear unique continuation at infinity for expanding solutions of the Harmonic map flow (Theorem \ref{uni-cont-inf}) whose analysis is essentially contained in \cite{Uni-Con-Egs-Der} for the Ricci flow. This leads us to prove a generic uniqueness result for expanding solutions with the same entropy as explained in Theorem \ref{rough-sta-gen-uni-intro}: see Theorem \ref{theo-gen-uni} for a rigorous statement.\\

The last section, Section \ref{sec-com-asy-est-neg-cur}, studies the boundary of the moduli space of expanding solutions of the Harmonic map flow: Theorem \ref{theo-high-der-a-priori}. In case the target is non-positively curved, the compactness of such moduli space is proved as expected: Corollary \ref{coro-non-neg-tar-comp}. This compactness is in turn crucial to prove an existence and uniqueness result for expanding solutions coming out of smooth $0$-homogeneous maps with values into a non-positively curved target: Theorem \ref{exi-uni-non-neg-cur}. 

\textbf{Acknowledgements.}
The author wishes to thank Tom Ilmanen for many useful discussions and for sharing his ideas while in residence at the Mathematical Sciences Research Institute (MSRI) in Berkeley, California, during the Spring 2016 semester. The author also thanks Felix Schulze for so many fruitful conversations. The author was supported by the grant ANR-17-CE40-0034 of the French National Research Agency ANR (project CCEM).

\section{An $L^2$ maximum principle and its consequences}\label{section-L^2-max-ppe}

Let $(N,g)$ be a closed smooth Riemannian manifold isometrically embedded in some Euclidean space $\R^m$. Let $u_1$ be an expanding solution coming out of a $0$-homogeneous map $u_0:\R^n\rightarrow N$  and let $u_2$ be any solution to the Harmonic map flow coming out of the same initial condition $u_0$. Then the relative entropy should be defined as follows:
\begin{eqnarray}\label{def-rel-entropy}
\mathcal{E}(u_2,u_1)(t)&:=&\lim_{R\rightarrow+\infty}\int_{B(0,R)}t(|\nabla u_2|^2-|\nabla u_1|^2)(x,t)\frac{e^{\frac{|x|^2}{4t}}}{(4\pi t)^{n/2}}dx,
\end{eqnarray}
for $t>0$,
at least formally speaking.

A first step to prove this limit is meaningful consists in deriving a sharp bound on the decay of the difference $u_2-u_1$ in the following space-time region $$\Omega_{\lambda}:=\left\{(x,t)\in \mathbb{R}^n\times (0,T]\quad|\quad |x|^2/{4t}>\lambda\right\},$$ for some positive $\lambda$ and some positive time $T$. For that purpose, we define the following barrier function that will be extremely useful in the sequel:
\begin{eqnarray*}
f(x,t)&:=&\frac{|x|^2}{4t}+\frac{n}{2},\quad (x,t) \in\R^n\times\mathbb{R}_+.
\end{eqnarray*}
Note that:
\begin{eqnarray*}
(\partial_t-\Delta)f=-\frac{f}{t}.
\end{eqnarray*}
We will denote by the same symbol $f$ the function $f(\cdot,1)$ when there is no ambiguity.

The solutions of the harmonic map flow we consider here have regularity at infinity: 
\begin{defn}\label{def-reg-inf}
Let $u:\R^n\times\R_+\rightarrow N$ be a smooth solution of the harmonic map flow coming out of the $0$-homogeneous map $u_0:\R^n\setminus\{0\}\rightarrow N$. The solution $u$ is said to be \textbf{regular at infinity} if there exists $\lambda\in\R_+$ such that for each positive integer $k$, there exists a positive constant $C_k$ such that:
\begin{eqnarray*}
|\nabla^k u|(x,t)\leq\frac{C_k}{(|x|^2+t)^{\frac{k}{2}}},\quad\forall (x,t)\in\Omega_{\lambda}.
\end{eqnarray*}

\end{defn}

\begin{rk}
From Definition \ref{def-reg-inf}, if a regular solution of the harmonic map flow coming out of the $0$-homogeneous map $u_0:\R^n\setminus\{0\}\rightarrow N$ is regular at infinity then $u_0$ is necessarily smooth.
\end{rk}

The main theorem of this section is the following maximum principle at infinity:
 \begin{lemma}\label{theo-max-ppe-l2}
Let $w:\R^n\times(0,T)\rightarrow[0,+\infty)$ be a smooth bounded subsolution on $\R^n\rightarrow[0,+\infty)$ of the following differential inequality:
\begin{eqnarray}
\partial_tw\leq\Delta w+\frac{c_0}{|x|^2+t}w,\label{inequ-w}
\end{eqnarray}
 for some positive constant $c_0$.
 
 Assume that $\lim_{t\rightarrow 0}w(x,t)=0$ for every $x\in\R^n\setminus\{0\}$. Then there exists a positive constant $\lambda(n,c_0)>0$ such that, $$w\leq Cf^{-\frac{n}{2}}e^{-f},\quad\text{on $\Omega_{\lambda},\quad\lambda\geq\lambda(n,c_0),$}$$
 for some positive constant $C=C\left(n,c_0,\|w\|_{L^{\infty}(\R^n\times(0,T))}
\right).$ 
\end{lemma}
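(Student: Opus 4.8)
The plan is to exhibit an explicit supersolution of $\mathcal{L}:=\partial_t-\Delta-\frac{c_0}{|x|^2+t}$ on $\Omega_\lambda$ with exactly the decay $f^{-n/2}e^{-f}$, and then to compare it with $w$. The first point is that this rate is dictated by the bare heat operator, not by $c_0$. Indeed, for an ansatz $\phi=h(f)e^{-f}$ (with $f=f(x,t)=|x|^2/(4t)+n/2$), using $\partial_tf=-|\nabla f|^2$, $|\nabla f|^2=(f-\tfrac n2)/t$, $\Delta f=\tfrac n{2t}$ and $|x|^2+t=t(4f-2n+1)$, a direct computation gives
\[
\mathcal{L}\phi=\frac{e^{-f}}{t}\Big[-(f-\tfrac n2)(h''-h')-\tfrac n2(h'-h)-\tfrac{c_0}{4f-2n+1}\,h\Big].
\]
Taking $h=f^{-n/2}$ one checks that the $f^{-n/2}$-coefficient inside the bracket vanishes identically --- this is why the exponent is $-n/2$, and where $n\geq 3$ enters --- its $f^{-n/2-1}$-coefficient being $\frac{n(n-2)-c_0}{4}+O(1/f)$. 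Thus for $c_0<n(n-2)$ the function $f^{-n/2}e^{-f}$ is already a supersolution of $\mathcal{L}$ on $\Omega_\lambda$ for $\lambda$ large, but for large $c_0$ a correction is needed.

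To treat an arbitrary $c_0>0$ I would use the two-term ansatz $h(f)=f^{-n/2}+a\,f^{-n/2-1}$ with $a:=\frac{n(n-2)-c_0-4}{4}$ (negative when $c_0$ is large). Tracking orders, the $f^{-n/2}$-coefficient of the bracket still vanishes, its $f^{-n/2-1}$-coefficient becomes $\frac{n(n-2)-c_0}{4}-a=1$, and the remainder is $O(f^{-n/2-2})$; hence there is $\lambda(n,c_0)$ such that the bracket is $\geq \tfrac12 f^{-n/2-1}$ on $\Omega_\lambda$ whenever $\lambda\geq\lambda(n,c_0)$, i.e.
\[
\mathcal{L}\phi\ \geq\ \frac{1}{2t}\,f^{-n/2-1}e^{-f}\ >\ 0\qquad\text{on }\Omega_\lambda .
\]
Enlarging $\lambda(n,c_0)$ if necessary, $h$ is positive with $\tfrac12 f^{-n/2}\leq h\leq 2f^{-n/2}$ on $\Omega_\lambda$, so $\phi$ is comparable to $f^{-n/2}e^{-f}$ there and any bound $w\leq C\phi$ yields the claim.

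It then remains to compare. Choose $C$ so that $C\phi\geq\|w\|_{L^{\infty}(\R^n\times(0,T))}$ on the lateral parabolic face $\{|x|^2=4\lambda t\}$, where $f\equiv\lambda+\tfrac n2$ and $\phi$ is a positive constant; then $v:=w-C\phi$ is a bounded subsolution of $\mathcal{L}$ which is $\leq 0$ on that face, and since $\phi$ is a \emph{strict} supersolution, writing $\eta=v/\phi$ turns the inequality into a drift-diffusion inequality $\partial_t\eta-\Delta\eta-\tfrac{2\nabla\phi}{\phi}\cdot\nabla\eta\leq 0$ with strictly negative forcing wherever $\eta>0$, so $v$ has no positive interior maximum. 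The genuine difficulty --- and, I believe, the reason for the name ``$L^2$ maximum principle'' --- is the behaviour at the space-time origin $(0,0)$ and at spatial infinity, where the potential is unbounded and the hypothesis $\lim_{t\to0}w=0$ is only pointwise, so that $v$ need not be $\leq 0$ on the part of the parabolic boundary approaching $(0,0)$. I would handle this not by the pointwise barrier but by a weighted energy estimate: with $K$ a backward heat kernel based at some $(0,t_0)$ and $\chi$ a cutoff adapted to $\Omega_\lambda$, differentiating $E(t)=\int(\chi w)^2 K\,dx$, integrating by parts, and using that on $\Omega_\lambda$ the potential is dominated by $\frac{c_0}{4\lambda t}$ (a $1/t$-weight whose constant can be made as small as we wish), one obtains $\frac{d}{dt}E\leq \frac{c_0}{2\lambda t}E+(\text{error localized in }\{\lambda<|x|^2/(4t)<2\lambda\})$; the dominated convergence theorem (using $w$ bounded and $w(\cdot,t)\to0$ a.e.) forces $E(s)\to0$ as $s\to0$, and feeding the resulting $L^2$-decay of $w$ into interior parabolic estimates for the subsolution $w$ promotes it to a pointwise decay on $\Omega_{2\lambda}$. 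Once $w$ is known to decay, the comparison with $C\phi$ becomes legitimate and gives $w\leq Cf^{-n/2}e^{-f}$ on $\Omega_\lambda$ after renaming $\lambda$. The main obstacle I anticipate is closing the interplay between this energy decay and the pointwise barrier --- choosing $\chi$ so the error terms are genuinely lower order and bootstrapping without circularity; the rest is a routine, if lengthy, computation.
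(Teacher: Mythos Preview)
Your barrier construction is correct and is a clean alternative to the paper's: where you correct $f^{-n/2}e^{-f}$ by a polynomial factor $1+af^{-1}$, the paper multiplies by $e^{-Bf^{-1}}$ with $B\gg c_0$. Both produce a strict supersolution of $\mathcal{L}$ on $\Omega_\lambda$ comparable to $f^{-n/2}e^{-f}$, so this part is fine.

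The gap is exactly where you anticipate it. Your energy inequality $\tfrac{d}{dt}E\leq \tfrac{c_0}{2\lambda t}E+\text{(error)}$ does not close: Gr\"onwall gives $E(t)\lesssim E(s)(t/s)^{c_0/(2\lambda)}$, and since dominated convergence only yields $E(s)\to 0$ with no rate, you cannot beat the diverging factor $(t/s)^{c_0/(2\lambda)}$ as $s\to 0$. Making $\lambda$ large shrinks the exponent but never removes it, and the subsequent bootstrap through interior parabolic estimates is then circular, as you feared.

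The paper avoids this with two moves. First, it runs the energy argument directly on $v_+=(w-A\phi)_+$ rather than on $w$, so no bootstrap is needed: once monotonicity is established, $\lim_{s\to 0}E(s)=0$ gives $v_+\equiv 0$ immediately. Second --- and this is the idea you are missing --- the weight is not the bare backward heat kernel but
\[
h_{\beta,\gamma}(x,t)=\frac{\beta|x|^2}{4(t-T)}-\gamma f^{-1},\qquad \beta\in(0,1),\quad \gamma\geq c_0,
\]
whose extra term $-\gamma f^{-1}$ is engineered precisely so that $\partial_t h+|\nabla h|^2\leq -\gamma/(|x|^2+t)$ on $\Omega_\lambda$. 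This Hamilton--Jacobi inequality absorbs the potential \emph{completely}, turning your Gr\"onwall bound into genuine monotonicity $\tfrac{d}{dt}\int_{\{|x|^2>4\lambda t\}}v_+^2e^{h}\,dx\leq 0$. Dominated convergence then applies cleanly at $t\to 0$ (the Gaussian part of $e^h$ provides integrability, $v_+$ is bounded and tends to $0$ pointwise), and the proof closes in a single step.
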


\begin{proof}
The idea is to adapt the $L^2$-version of the maximum principle due to Karp and Li [Theorem $7.39$, \cite{Cho-Boo}] as follows.\\

First of all, we need to construct a suitable barrier function: let us compute the evolution equation satisfied by $f^{-n/2}e^{-f}$ on $\Omega_{\lambda}$:

\begin{eqnarray*}
(\partial_t-\Delta)(e^{-f})&=&\left(-\partial_tf+\Delta f-|\nabla f|^2\right)e^{-f}=\frac{n}{2t}e^{-f},\\
(\partial_t-\Delta)(f^{-n/2})&=&-\frac{n f^{-n/2}}{2t}\left\{-1+\left(n+1\right)f^{-1}-\frac{n}{2}\left(\frac{n}{2}+1\right)f^{-2}\right\},\\
(\partial_t-\Delta)\left(f^{-n/2}e^{-f}\right)&=&f^{-n/2}(\partial_t-\Delta)(e^{-f})-n\frac{|\nabla f|^2}{f}\cdot f^{-n/2}e^{-f}+(\partial_t-\Delta)(f^{-n/2})e^{-f}\\
&=&\left\{\frac{n}{2t}-\frac{n}{2t}\left(-1+\left(n+1\right)f^{-1}-\frac{n}{2}\left(\frac{n}{2}+1\right)f^{-2}\right)-n\frac{|\nabla f|^2}{f}\right\}f^{-n/2}e^{-f}\\
&=&-\frac{n}{2t}\left\{f^{-1}-\frac{n}{2}\left(\frac{n}{2}+1\right)f^{-2}\right\}f^{-n/2}e^{-f}\\
&\geq&-\frac{C(n)}{tf}\left\{1+C(n)\lambda^{-1}\right\}f^{-n/2}e^{-f},\\
&\geq&-\frac{C(n)}{tf}f^{-n/2}e^{-f},
\end{eqnarray*}
where $C(n)$ is a positive constant that may vary from line to line and where $\lambda\geq1.$
On the other hand,
\begin{eqnarray*}
(\partial_t-\Delta)\left(e^{-Bf^{-1}}\right)&=&-\frac{Be^{Bf^{-1}}}{tf}\left(1+\textit{O}(\lambda^{-1})+B\cdot\textit{O}(\lambda^{-3})\right)\\
&=&-\frac{Be^{Bf^{-1}}}{tf}\left(1+\textit{O}(\lambda^{-1})\right),
\end{eqnarray*}
if $\lambda^2\geq B\geq 1$.

Consequently,
\begin{eqnarray*}
\left(\partial_t-\Delta\right)\left(f^{-n/2}e^{-f-Bf^{-1}}\right)&=&\left(\partial_t-\Delta\right)\left(f^{-n/2}e^{-f}\right)e^{-Bf^{-1}}-2\nabla\left(f^{-n/2}e^{-f}\right)\cdot\nabla\left(e^{-Bf^{-1}}\right)\\
&&+\left(f^{-n/2}e^{-f}\right)\left(\partial_t-\Delta\right)\left(e^{-Bf^{-1}}\right)\\
&\geq&\left(-\frac{C(n)}{tf}-\frac{B}{tf}\left(1+\textit{O}(\lambda^{-1})\right)+\frac{2B}{tf}\left(1+\textit{O}(\lambda^{-1})\right)\right)f^{-n/2}e^{-f-Bf^{-1}}\\
&\geq&\left(-\frac{C(n)}{tf}+\frac{B}{tf}\left(1+\textit{O}(\lambda^{-1})\right)\right)f^{-n/2}e^{-f-Bf^{-1}}\\
&\geq&\frac{B}{2tf}\left(1+\textit{O}(\lambda^{-1})\right)f^{-n/2}e^{-f-Bf^{-1}},
\end{eqnarray*}
if $\lambda^2\geq B>>1.$

In particular, this shows that the function $v_{A,B}:=w-Af^{-n/2}e^{-f-Bf^{-1}}$ is a subsolution of the same differential inequality satisfied by $w$ for every positive constant $A$ if $\lambda^2\geq B>>c_0.$
For such parameters $\lambda$ and $B$, fix $A$ such that 

\begin{eqnarray}
v_{A,B}\leq \|w\|_{L^{\infty}(\R^n\times(0,T))}-A\lambda^{-n/2}e^{-\lambda-B\lambda^{-1}}\leq 0,\quad\text{on $\left\{(x,t)\in \mathbb{R}^n\times (0,T]\quad|\quad\frac{|x|^2}{4t}=\lambda\right\}$,}\label{cond-bdy}
\end{eqnarray}
where $T$ is a fixed positive time. 

Consider the following time-dependent weight function $h:\mathbb{R}^n\times[0,T)\rightarrow \mathbb{R} $ defined by:
\begin{eqnarray*}
&&h_{\beta,\gamma}(x,t):=\frac{\beta|x|^2}{{4(t-T)}}-\gamma f^{-1},\quad x\in \mathbb{R}^n,\quad t\in(0,T),\quad \beta\in (0,1),\quad \gamma>0.\\
\end{eqnarray*}
We compute:
\begin{eqnarray}\label{ham-jac-equ}
\partial_th_{\beta,\gamma}&=&-\frac{\beta|x|^2}{{4(t-T)^2}}+\gamma f^{-2}\partial_tf\\
&=&-\frac{\beta|x|^2}{{4(t-T)^2}}-\gamma\frac{1-(n/2)f^{-1}}{tf},\\
|\nabla h_{\beta,\gamma}|^2&=&\left|\frac{\beta x}{2(t-T)}+\gamma\frac{\nabla f}{f^2}\right|^2\\
&=&\frac{\beta^2|x|^2}{4(t-T)^2}+\frac{\gamma\beta}{(t-T)}\frac{x\cdot \nabla f}{f^2}+\gamma^2\frac{|\nabla f|^2}{f^4},\\
\partial_th_{\beta,\gamma}+|\nabla h_{\beta,\gamma}|^2&=&\beta(\beta-1)\frac{|x|^2}{4(t-T)^2}\\
&&-\frac{\gamma}{tf}\left(1-\frac{n}{2}f^{-1}-\gamma\frac{(f-(n/2))}{f^3}-\frac{\beta|x|^2}{2(t-T)f}\right).
\end{eqnarray}
Now, observe that, if $(x,t)\in\Omega_{\lambda}$,
\begin{eqnarray*}
\beta(\beta-1)\frac{|x|^2}{4(t-T)^2}&\leq& 0,\\
1-\frac{n}{2}f^{-1}-\gamma\frac{(f-(n/2))}{f^3}-\frac{\beta|x|^2}{2(t-T)f}&\geq&1-\frac{n}{2}\lambda^{-1}-\gamma\lambda^{-2}\\
&\geq& \frac{1}{4},
\end{eqnarray*}
if we choose $\lambda$, $\beta$ and $\gamma$ as follows:
\begin{eqnarray*}
\lambda>>\lambda(n,c_0)>0,\quad \gamma<< \lambda^2.
\end{eqnarray*}
 Therefore, with such choices of parameters, one has:
\begin{eqnarray*}
\partial_th_{\beta,\gamma}+|\nabla h_{\beta,\gamma}|^2&\leq&-\frac{\gamma}{|x|^2+t},\quad \forall \gamma>0.
\end{eqnarray*}

From now on, we write $h$ for $h_{\beta,\gamma}$ and $v$ for $v_{A,B}$.

Then, define $v_+:=\max\{v,0\}$ which is Lipschitz and observe that for each positive time $t\in(0,T)$, $v_+(t)\in L^2(e^{h(t)}dx)$ by the uniform boundedness of $w$. Therefore, by the co-area formula together with (\ref{cond-bdy}) and by integration by parts:

\begin{eqnarray*}
\frac{1}{2}\partial_t\int_{|x|^2>4\lambda t}v_+^2e^{h}dx&\leq&\int_{|x|^2>4\lambda t}v_+\left(\Delta v+\frac{c_0}{|x|^2+t} v+\frac{1}{2}\partial_thv\right)e^{h}dx\\
&&-2\lambda\int_{|x|^2=4\lambda t}v_+^2e^{h}dx\\
&\leq& \int_{|x|^2>4\lambda t}\left(-|\nabla v_+|^2+\frac{1}{2}\partial_thv_+^2+v_+|\nabla v_+||\nabla h|+\frac{c_0}{|x|^2+t} v_+^2\right)e^{h}dx\\
&\leq& \int_{|x|^2>4\lambda t}\left(-\frac{1}{2}|\nabla v_+|^2+\frac{1}{2}(\partial_th+|\nabla h|^2)v_+^2+\frac{c_0}{|x|^2+t} v_+^2\right)e^{h}dx\\
&\leq&\int_{|x|^2>4\lambda t}\left(-\frac{1}{2}|\nabla v_+|^2-\frac{\gamma}{|x|^2+t}v_+^2+\frac{c_0}{|x|^2+t} v_+^2\right)e^{h}dx\\
&\leq&0,
\end{eqnarray*}
if $\gamma$ is chosen such that $\gamma\geq c_0$.

By assumption on $w$ together with the Dominated Convergence Theorem, one concludes that:

\begin{eqnarray*}
\int_{|x|^2>4\lambda t}v_+^2e^{h}dx&\leq&\lim_{s\rightarrow 0}\int_{|x|^2>4\lambda s}v_+^2e^{h}dx=0,
\end{eqnarray*}
which implies the expected result.

\end{proof}
As a first consequence of Theorem \ref{theo-max-ppe-l2}, one gets:

\begin{theo}\label{prop-dec-time-diff-sol}
Let $u_i:\mathbb{R}^n\rightarrow N\subset \R^m$, $i=1,2$ be two solutions to the harmonic map flow coming out of the $0$-homogeneous map. Assume $u_1$ and $u_2$ are regular at infinity. Let $c_0$ be a positive constant such that: 
\begin{eqnarray*}
\sup_{\R^n\times\R_+}(|x|^2+t)|\nabla u_i|^2\leq c_0,\quad i=1,2.
\end{eqnarray*}
Then,
\begin{eqnarray}
|u_1-u_2|(x,t)\leq C_{\lambda}f^{-\frac{n}{2}}(x,t)e^{-f(x,t)},\quad (x,t)\in \Omega_{\lambda},\label{fast-decay-diff-sol}
\end{eqnarray}
 for some positive $\lambda\geq \lambda(n,c_0)>0$ and some positive constant $C=C\left(n,c_0\right)$.
\end{theo}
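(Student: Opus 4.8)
The plan is to apply Lemma \ref{theo-max-ppe-l2} to the function $w := |u_1 - u_2|^2$ (or, if one prefers to keep the order of the differential inequality clean, to $w := |u_1-u_2|$, but the squared quantity is smoother and avoids issues at the zero set of the difference). First I would record that both $u_1,u_2$ take values in the fixed closed submanifold $N \subset \R^m$, so $|u_1-u_2|$ is automatically bounded on all of $\R^n \times \R_+$ by $\diam_{\R^m}(N) < \infty$; this gives the $L^\infty$ bound required as a hypothesis in Lemma \ref{theo-max-ppe-l2}, and it is where compactness of the target enters.

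The core computation is the evolution inequality for $w = |u_1-u_2|^2$. Writing $v := u_1 - u_2 \in \R^m$ (thought of as an $\R^m$-valued function, not a section of $TN$), from \eqref{eq-HMP} we have
\begin{eqnarray*}
\partial_t v - \Delta v = A(u_1)(\nabla u_1,\nabla u_1) - A(u_2)(\nabla u_2,\nabla u_2).
\end{eqnarray*}
Then
\begin{eqnarray*}
(\partial_t - \Delta) w = 2\langle v, \partial_t v - \Delta v\rangle - 2|\nabla v|^2 \leq 2\langle v,\, A(u_1)(\nabla u_1,\nabla u_1) - A(u_2)(\nabla u_2,\nabla u_2)\rangle.
\end{eqnarray*}
The point is to estimate the right-hand side by $\frac{c}{|x|^2+t}\, w$ on a region $\Omega_\lambda$. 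One splits the difference of second-fundamental-form terms as
\begin{eqnarray*}
A(u_1)(\nabla u_1,\nabla u_1) - A(u_2)(\nabla u_2,\nabla u_2) = \big(A(u_1)-A(u_2)\big)(\nabla u_1,\nabla u_1) + A(u_2)(\nabla u_1 + \nabla u_2, \nabla v),
\end{eqnarray*}
and uses: (i) $|A(u_1)-A(u_2)| \leq C(N)\,|v|$ since $A$ is smooth on the compact $N$; (ii) the regularity-at-infinity hypothesis, which gives $|\nabla u_i|^2 \leq c_0/(|x|^2+t)$ on $\Omega_\lambda$; (iii) for the second term, Cauchy–Schwarz with a weight, $2|\langle v, A(u_2)(\nabla u_1+\nabla u_2,\nabla v)\rangle| \leq \epsilon |\nabla v|^2 + C\epsilon^{-1}(|x|^2+t)^{-1} w$, absorbing the $|\nabla v|^2$ into the $-2|\nabla v|^2$ from the Bochner-type term. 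The upshot is
\begin{eqnarray*}
(\partial_t - \Delta) w \leq \frac{C(n,c_0,N)}{|x|^2+t}\, w \quad\text{on } \Omega_\lambda,
\end{eqnarray*}
which is exactly \eqref{inequ-w} with a new constant in place of $c_0$. I also need $\lim_{t\to 0} w(x,t) = 0$ for $x \neq 0$: both $u_1,u_2$ come out of the same $0$-homogeneous $u_0$, and "coming out of $u_0$" (combined with regularity at infinity, hence continuity up to $t=0$ away from the origin — see Definition \ref{def-reg-inf} and the attendant remark) forces $u_i(x,t) \to u_0(x/|x|)$ as $t\to 0$, so the difference vanishes. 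Then Lemma \ref{theo-max-ppe-l2} applied to $w$ yields $w \leq C f^{-n/2} e^{-f}$ on $\Omega_\lambda$ for $\lambda$ large; taking square roots gives $|u_1-u_2| \leq C^{1/2} f^{-n/4} e^{-f/2}$, which is \emph{not} yet \eqref{fast-decay-diff-sol} — the stated bound has $f^{-n/2} e^{-f}$, i.e. the square of what the naive argument gives.

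To reach the sharp exponent I would instead apply the lemma directly to $w := |u_1 - u_2|$. This requires checking that $|u_1-u_2|$ itself satisfies \eqref{inequ-w} in the viscosity/barrier sense, or — cleaner — running a bootstrap: once one knows $|u_1-u_2| = O(f^{-n/4}e^{-f/2})$ on $\Omega_\lambda$, in particular $|u_1-u_2| \to 0$ at spatial infinity, one can feed this back. Actually the efficient route is: set $w=|u_1-u_2|^2$ and note that on the region where $w>0$ it is smooth and satisfies, by the computation above combined with Kato's inequality $|\nabla|v||^2 \le |\nabla v|^2$, the \emph{stronger} inequality $(\partial_t-\Delta)|v| \le \frac{C}{|x|^2+t}|v|$; applying Lemma \ref{theo-max-ppe-l2} to $|v|$ (not $|v|^2$) then gives $|v| \le C f^{-n/2}e^{-f}$ on $\Omega_\lambda$ directly. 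One must be a little careful at points where $v=0$, but since $|v|$ is a bounded Lipschitz subsolution in the distributional sense this is standard. I expect this last point — establishing that $|u_1-u_2|$, and not merely $|u_1-u_2|^2$, is a subsolution of the scalar inequality \eqref{inequ-w} so that the barrier exponent comes out sharp — to be the main technical obstacle, everything else being the combination of the maximum principle Lemma \ref{theo-max-ppe-l2}, the regularity-at-infinity bounds on $|\nabla u_i|$, smoothness of $A$ on the compact target $N$, and the common initial data.
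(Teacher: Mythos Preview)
Your overall strategy is right --- reduce to Lemma~\ref{theo-max-ppe-l2} by showing that $|u_1-u_2|$ is a weak subsolution of \eqref{inequ-w} --- and you correctly identify that the sharp exponent requires applying the lemma to $|v|$ rather than $|v|^2$. But the passage from your inequality on $|v|^2$ to one on $|v|$ has a gap. Your decomposition
\[
A(u_1)(\nabla u_1,\nabla u_1) - A(u_2)(\nabla u_2,\nabla u_2) = \big(A(u_1)-A(u_2)\big)(\nabla u_1,\nabla u_1) + A(u_2)(\nabla u_1 + \nabla u_2, \nabla v)
\]
introduces a $\nabla v$ term, and absorbing it via Young costs you part of the good $-2|\nabla v|^2$: you end up with $(\partial_t-\Delta)|v|^2 \le -(2-\epsilon)|\nabla v|^2 + \tfrac{C_\epsilon}{|x|^2+t}|v|^2$. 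Now $(\partial_t-\Delta)|v|^2 = 2|v|(\partial_t-\Delta)|v| - 2|\nabla|v||^2$, so Kato $|\nabla|v||^2\le|\nabla v|^2$ only yields $2|v|(\partial_t-\Delta)|v| \le \epsilon|\nabla|v||^2 + \tfrac{C_\epsilon}{|x|^2+t}|v|^2$, with a leftover $\epsilon|\nabla|v||^2$ you cannot control. The scalar inequality \eqref{inequ-w} for $w=|v|$ does \emph{not} follow.

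The paper avoids this by never producing a $\nabla v$ term. Instead of splitting the difference of second fundamental forms, it bounds each inner product $\langle u_1-u_2,\,A(u_i)(\nabla u_i,\nabla u_i)\rangle$ separately using the geometric fact that $A(u_i)(\nabla u_i,\nabla u_i)\in (T_{u_i}N)^\perp$, together with the observation (from \cite{Har-Mou}) that since $N$ is locally the graph of its second fundamental form over $T_{u_i}N$, the normal component satisfies $|(u_1-u_2)^{\perp_i}| \le C(N)|u_1-u_2|^2$. This gives directly
\[
|\langle u_1-u_2,\,A(u_i)(\nabla u_i,\nabla u_i)\rangle| \le C(N)|\nabla u_i|^2\,|u_1-u_2|^2 \le \frac{C}{|x|^2+t}|u_1-u_2|^2,
\]
with no gradient of the difference appearing, so the full $-2|\nabla v|^2$ survives and Kato cleanly yields $(\partial_t-\Delta)|v| \le \tfrac{C}{|x|^2+t}|v|$. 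Then Lemma~\ref{theo-max-ppe-l2} (applied to a regularization $(|v|^2+\varepsilon^2)^{1/2}$ to handle the zero set, as the paper notes) finishes the proof.
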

\begin{rk}
Theorem \ref{prop-dec-time-diff-sol} is stated for solutions that are regular at infinity but its proof only requires the first derivatives to decay as in Definition \ref{def-reg-inf}.
\end{rk}
\begin{proof}
We first do it in the case $N$ is a Euclidean sphere $\mathbb{S}^{m-1}\subset\R^m$. By a straightforward computation, one gets:
\begin{eqnarray*}
\partial_t|u_1-u_2|^2&=&\Delta|u_1-u_2|^2-2|\nabla (u_1-u_2)|^2\\
&&+2|\nabla u_1|^2<u_1,u_1-u_2>-2|\nabla u_2|^2<u_2,u_1-u_2>\\
&=&\Delta|u_1-u_2|^2-2|\nabla (u_1-u_2)|^2+(|\nabla u_1|^2+|\nabla u_2|^2)|u_1-u_2|^2.\\
\end{eqnarray*}
In particular, if one denotes the norm of the difference $w:=|u_1-u_2|$, then $w$ satisfies the following differential inequality in a weak sense:
\begin{eqnarray*}
\partial_tw\leq \Delta w+\frac{C}{|x|^2+t}w,
\end{eqnarray*}
where we used the fact that $\nabla u_i$, $i=1,2$ decay like $\left(t+|x|^2\right)^{-1/2}$. The result follows by applying Theorem \ref{theo-max-ppe-l2} to $w$. One could argue that Theorem \ref{theo-max-ppe-l2} is only stated for smooth subsolutions but its proof can be adapted to a regularization of $w$ of the form $w^{\varepsilon}:=(w^2+\varepsilon^2)^{1/2}$ for $\varepsilon$ positive.

For a general target $N$ isometrically embedded in $\R^m$, we use the following remark due to [p.$ 458$, \cite{Har-Mou}]: denote the vector field $u_2-u_1$ by $\xi$ and let $\xi^{\top}$ and $\xi^{\perp}$ be the orthogonal projections onto $T_{u_1}N$ and $(T_{u_1}N)^{\perp}$. Then, since $N$ is locally the graph of the second fundamental form $A$ over $T_{u_1}N$,
\begin{eqnarray*}
u_1+\xi=u_1+\xi^{\top}-\frac{1}{2}A(u_1)(\xi^{\top},\xi^{\top})+\textit{O}(|\xi|^3).
\end{eqnarray*}
In particular, this implies:
\begin{eqnarray*}
<u_1-u_2,A(u_1)(\nabla u_1,\nabla u_1)>&=&-<\xi^{\perp},A(u_1)(\nabla u_1,\nabla u_1)>\\
&=&\left<\frac{1}{2}A(u_1)(\xi^{\top},\xi^{\top})+\textit{O}(|\xi|^3),A(u_1)(\nabla u_1,\nabla u_1)\right>.
\end{eqnarray*}
By assumption, the vector field $\xi$ is bounded which allows us to estimate the previous term as follows:
\begin{eqnarray*}
|<u_1-u_2,A(u_1)(\nabla u_1,\nabla u_1)>|&\leq&C(N)|\nabla u_1|^2|\xi|^2\\
&\leq&\frac{C(N,u_1)}{|x|^2+t}|\xi|^2\\
&=&\frac{C(N,u_1)}{|x|^2+t}|u_1-u_2|^2.
\end{eqnarray*}
Consequently, by applying the previous reasoning to $u_2$, one gets:
\begin{eqnarray*}
\partial_t|u_1-u_2|^2\leq \Delta |u_1-u_2|^2-2|\nabla(u_1-u_2)|^2+\frac{C(N,u_1,u_2)}{|x|^2+t}|u_1-u_2|^2,
\end{eqnarray*}
i.e. $w:=|u_1-u_2|$ is a weak subsolution of the form

\begin{eqnarray*}
\partial_tw\leq \Delta w+\frac{C}{|x|^2+t}w,
\end{eqnarray*}
where $C$ is a positive constant depending on the geometry of $N$ and the first derivatives of the maps $u_1$ and $u_2$.
The result follows by applying Theorem \ref{theo-max-ppe-l2} to $w$.
\end{proof}

Similarly, Theorem \ref{theo-max-ppe-l2} can be applied to the vector field $\partial_tu+\frac{x}{2t}\cdot\nabla u$ if $u$ is a solution of the harmonic map flow coming out of the same $0$-homogeneous map: this vector field measures the deviation of $u$ from being an expanding solution.

\begin{theo}\label{comp-exp-sol-gal-sol}
Let $u:\R^n\times(0,T)\rightarrow N\subset\R^{m}$ be a smooth solution of the harmonic map flow. Then the function $\left|t\partial_tu+\frac{x}{2}\cdot\nabla u\right|$ is a weak subsolution of the heat equation with potential $|\nabla u|^2$:
$$(\partial_t-\Delta) \left|t\partial_tu+\frac{x}{2}\cdot\nabla u\right|\leq C(N)|\nabla u|^2\left|t\partial_tu+\frac{x}{2}\cdot\nabla u\right|.$$

In particular, let $u_i:\mathbb{R}^n\times(0,T)\rightarrow N\subset \R^m$, $i=1,2$ be two smooth solutions to the harmonic map flow coming out of the $0$-homogeneous map that are regular at infinity and such that $u_1$ is an expanding solution. Let $c_0$ be a positive constant such that:
\begin{eqnarray*}
\left|t\partial_t(u_2-u_1)+\frac{x}{2}\cdot\nabla (u_2-u_1)\right|+(|x|^2+t)|\nabla u_i|^2\leq c_0,\quad \text{on $\Omega_{\lambda}$}.
\end{eqnarray*}
Then,
\begin{eqnarray}
\left|t\partial_t(u_2-u_1)+\frac{x}{2}\cdot\nabla (u_2-u_1)\right|\leq Cf^{-\frac{n}{2}}e^{-f},\quad \text{on $ \Omega_{\lambda}$},\label{fast-decay-diff-sol-ene}
\end{eqnarray}
 for some positive $\lambda\geq \lambda(n,c_0)>0$ and some positive constant $C=C(n,c_0)$. 

\end{theo}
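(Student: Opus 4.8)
The first assertion is the computational heart. I would begin by observing that $w:=t\partial_t u+\frac{x}{2}\cdot\nabla u$ is, up to the factor $2$, the variation field of the parabolic dilation family $u_\lambda(x,t):=u(\lambda x,\lambda^2 t)$: since the harmonic map flow (\ref{eq-HMP}) is invariant under parabolic rescaling, each $u_\lambda$ solves it, and $\frac{\partial}{\partial\lambda}\big|_{\lambda=1}u_\lambda=x\cdot\nabla u+2t\partial_t u=2w$. Differentiating (\ref{eq-HMP}) in $\lambda$ at $\lambda=1$ (equivalently, adding the equations satisfied by $t\partial_t u$ and by $\tfrac12 x_i\partial_i u$) one obtains
\[
(\partial_t-\Delta)w=DA(u)(w)(\nabla u,\nabla u)+2A(u)(\nabla u,\nabla w).
\]
The structural point is twofold: $w$ is a linear combination of $\partial_t u$ and of the $\partial_i u$, hence is tangent to $N$; and the only term carrying a derivative of $w$, namely $A(u)(\nabla u,\nabla w)$, is valued in $(T_uN)^\perp$. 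Therefore, after passing to $|w|$ — which I would do through the regularisation $(|w|^2+\varepsilon^2)^{1/2}$ and Kato's inequality, as in the proof of Theorem \ref{prop-dec-time-diff-sol} — the first-order term disappears upon contraction with $w$, and only the zeroth-order term survives, giving $(\partial_t-\Delta)|w|\le C(N)|\nabla u|^2|w|$ in the weak sense. I would carry this out explicitly for $N=\mathbb{S}^{m-1}$ (where $A(u)(\nabla u,\nabla w)=\langle\nabla u,\nabla w\rangle u$ and $\langle w,u\rangle=t\langle\partial_t u,u\rangle+\tfrac12 x_i\langle\partial_i u,u\rangle=0$, so $\langle w,(\partial_t-\Delta)w\rangle=|\nabla u|^2|w|^2$), and then reduce a general embedded target $N\hookrightarrow\R^m$ to this computation by the local graph-of-$A$ expansion used in Theorem \ref{prop-dec-time-diff-sol}.

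For the decay estimate, note first that $u_1$ expanding forces $t\partial_t u_1+\frac{x}{2}\cdot\nabla u_1\equiv0$, so the quantity under study, $w:=t\partial_t(u_2-u_1)+\frac{x}{2}\cdot\nabla(u_2-u_1)$, is exactly the obstruction tensor $t\partial_t u_2+\frac{x}{2}\cdot\nabla u_2$ of $u_2$. The first assertion applied to $u_2$, combined with the hypothesis $(|x|^2+t)|\nabla u_2|^2\le c_0$ on $\Omega_\lambda$, yields $(\partial_t-\Delta)|w|\le\frac{C(N)c_0}{|x|^2+t}|w|$ on $\Omega_\lambda$ weakly. Since the proof of Lemma \ref{theo-max-ppe-l2} only uses the subsolution inequality on $\{|x|^2>4\lambda t\}$, the $L^\infty$ bound there, and the control of the boundary term on $\{|x|^2=4\lambda t\}$ by that $L^\infty$ bound, it applies verbatim to $|w|$ (bounded on $\Omega_\lambda$ by the standing hypothesis $c_0$) once we check that $|w|(x,t)\to0$ as $t\to0$ for every $x\neq0$; the conclusion will then be $|w|\le Cf^{-n/2}e^{-f}$ on $\Omega_\lambda$ for $\lambda\ge\lambda(n,C(N)c_0)$, with $C=C(n,c_0)$ after absorbing the geometric constant, which is (\ref{fast-decay-diff-sol-ene}).

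To verify the vanishing at $t=0$, write $t\partial_t u_2=t\Delta u_2+tA(u_2)(\nabla u_2,\nabla u_2)$; regularity at infinity bounds this by $Ct(|x|^2+t)^{-1}\to0$. For $\frac{x}{2}\cdot\nabla u_2$, the key is that $\Omega_\lambda$ contains a one-sided neighbourhood of $(x_0,0)$ for each $x_0\neq0$, on which $u_2$ solves the harmonic map flow with uniformly bounded derivatives and with the smooth initial datum $u_0(\cdot/|\cdot|)$; standard parabolic regularity then upgrades the convergence $u_2(\cdot,t)\to u_0(\cdot/|\cdot|)$ (inherent in $u_2$ coming out of $u_0$, or deducible from the $t$- and $x$-gradient bounds) to $C^1_{\mathrm{loc}}(\R^n\setminus\{0\})$. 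Since $u_0(\cdot/|\cdot|)$ is $0$-homogeneous, Euler's identity gives $\frac{x}{2}\cdot\nabla\big(u_0(x/|x|)\big)=0$, hence $\frac{x}{2}\cdot\nabla u_2(x,t)\to0$; combining, $w(x,t)\to0$ as $t\to0$, and Lemma \ref{theo-max-ppe-l2} finishes the proof.

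The main obstacle is the first paragraph: identifying the algebraic structure of the linearised harmonic map flow and, for an arbitrary closed target, verifying carefully that the sole derivative-of-$w$ term is normal to $N$ and therefore invisible to $|w|$ — this is where the embedding bookkeeping of Hardt--Mou must be run, exactly as in Theorem \ref{prop-dec-time-diff-sol}. Once this differential inequality is available, the rest is a direct transcription of the scheme of Theorem \ref{prop-dec-time-diff-sol}, the only genuinely new input being the elementary vanishing of the obstruction tensor at the initial time.
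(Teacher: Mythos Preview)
Your proposal is correct and follows essentially the same route as the paper: derive the linearised equation for $w=t\partial_tu+\tfrac{x}{2}\cdot\nabla u$, use tangency of $w$ together with normality of $A(u)(\cdot,\cdot)$ to kill the first-order term, apply Kato, and feed the resulting inequality into Lemma~\ref{theo-max-ppe-l2}. One simplification you can make: the Hardt--Mou graph expansion is unnecessary here, because unlike $u_1-u_2$ in Theorem~\ref{prop-dec-time-diff-sol}, the field $w$ is genuinely tangent to $N$ (being a linear combination of $\partial_tu$ and the $\partial_iu$), so $\langle w,A(u)(\nabla u,\nabla w)\rangle=0$ holds directly for any embedded target; the paper proceeds this way without singling out the sphere case, and your verification of the vanishing of $w$ at $t=0$ is in fact more explicit than the paper's.
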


\begin{proof}
Let $u$ be as above and let us compute the evolution equation of $\partial_tu+\frac{x}{2t}\cdot\nabla u$ as follows:
\begin{eqnarray*}
(\partial_t-\Delta)(\partial_tu)&=&2A(u)(\nabla u,\nabla (\partial_tu))+D_uA(\partial_tu)(\nabla u,\nabla u),\\
(\partial_t-\Delta)\left(\frac{x}{2t}\cdot\nabla u\right)&=&-t^{-1}\left(\frac{x}{2t}\cdot\nabla u+\partial_tu\right)+\frac{A(u)(\nabla u,\nabla u)}{t}+\frac{x}{2t}\cdot\nabla (A(u)(\nabla u,\nabla u)),\\
(\partial_t-\Delta)\left(\partial_tu+\frac{x}{2t}\cdot\nabla u\right)&=&-t^{-1}\left(\partial_tu+\frac{x}{2t}\cdot\nabla u\right)+2A(u)(\nabla u,\nabla (\partial_tu))+D_uA(\partial_tu)(\nabla u,\nabla u)\\
&&+\frac{A(u)(\nabla u,\nabla u)}{t}+\frac{x}{2t}\cdot\nabla (A(u)(\nabla u,\nabla u)).
\end{eqnarray*}

Now, note that:
\begin{eqnarray*}
\frac{x}{2t}\cdot\nabla (A(u)(\nabla u,\nabla u))&=&D_uA\left(\nabla_{\frac{x}{2t}}u\right)(\nabla u,\nabla u)+2A(u)\left(\nabla_{\frac{x}{2t}}\nabla u,\nabla u\right)\\
&=&D_uA\left(\nabla_{\frac{x}{2t}}u\right)(\nabla u,\nabla u)+A(u)\left(\nabla\left(\nabla_{\frac{x}{t}}u\right),\nabla u\right)-\frac{1}{t}\cdot A(u)(\nabla u,\nabla u).
\end{eqnarray*}

Therefore,
\begin{eqnarray*}
(\partial_t-\Delta)\left(\partial_tu+\frac{x}{2t}\cdot\nabla u\right)&=&-t^{-1}\left(\partial_tu+\frac{x}{2t}\cdot\nabla u\right)+D_uA\left(\partial_tu+\nabla_{\frac{x}{2t}}u\right)(\nabla u,\nabla u)\\
&&+2A(u)\left(\nabla\left(\partial_tu+\nabla_{\frac{x}{2t}}u\right),\nabla u\right).
\end{eqnarray*}
Since,
\begin{eqnarray*}
A(u)\left(\nabla\left(\partial_tu+\nabla_{\frac{x}{2t}}u\right),\nabla u\right)\perp \partial_tu+\nabla_{\frac{x}{2t}}u,
\end{eqnarray*}
 
one gets:
\begin{eqnarray*}
(\partial_t-\Delta)\left|t\partial_tu+\frac{x}{2}\cdot\nabla u\right|^2\leq-2\left|\nabla\left(t\partial_tu+\frac{x}{2}\cdot\nabla u\right)\right|^2+C(N)|\nabla u|^2\left|t\partial_tu+\frac{x}{2}\cdot\nabla u\right|^2.
\end{eqnarray*}

 Therefore, by the Kato inequality, one gets, in the weak sense:
\begin{eqnarray*}
(\partial_t-\Delta)\left|t\partial_tu+\frac{x}{2}\cdot\nabla u\right|\leq C(N) |\nabla u|^2\left|t\partial_tu+\frac{x}{2}\cdot\nabla u\right|
.
\end{eqnarray*}
If $u_i$, $i=1,2$ are two solutions as in the statement of Proposition \ref{comp-exp-sol-gal-sol} then $w:=t\partial_t(u_1-u_2)+\frac{x}{2}\cdot\nabla (u_1-u_2)$ is equal to $t\partial_tu_1+\frac{x}{2}\cdot\nabla u_1$ since $u_2$ is an expander. Therefore, the result follows by applying Theorem \ref{theo-max-ppe-l2}.

\end{proof}
As a corollary, gradient estimates for the vector field $\partial_tu+\frac{x}{2t}\cdot\nabla u$ follows:

\begin{coro}[Shi's estimates]\label{coro-shi-est-grad-obst-tensor}
If $u_i:\mathbb{R}^n\times(0,T)\rightarrow N\subset \R^m$, $i=1,2$ are two solutions to the harmonic map flow smoothly coming out of the $0$-homogeneous map such that $u_1$ is an expanding solution, then
\begin{eqnarray*}
\sup_{\Omega_{2\lambda}}\sqrt{t}\left|\nabla\left(\partial_t(u_2-u_1)+\frac{x}{2t}\cdot\nabla (u_2-u_1)\right)\right|\leq C \sup_{\Omega_{\lambda}}\left|\partial_t(u_2-u_1)+\frac{x}{2t}\cdot\nabla (u_2-u_1)\right|,
\end{eqnarray*}
where $C$ is a uniform positive constant and where $\lambda$ is large enough. In particular, the gradient of the vector field $\partial_t(u_2-u_1)+\frac{x}{2t}\cdot\nabla (u_2-u_1)$ decays exponentially in space.

\end{coro}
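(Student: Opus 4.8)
The plan is to recognize the estimate as a parabolic Bernstein (``Shi'') interior gradient estimate for the obstruction tensor, and to make it scale‑invariant so that it survives on the noncompact region $\Omega_{\lambda}$ reaching down to $t=0$. Write $Z:=\partial_t(u_2-u_1)+\tfrac{x}{2t}\cdot\nabla(u_2-u_1)$. Since $u_1$ is an expanding solution, differentiating the scaling identity $u_1(\lambda x,\lambda^2t)=u_1(x,t)$ at $\lambda=1$ gives $\partial_tu_1+\tfrac{x}{2t}\cdot\nabla u_1\equiv 0$, so $Z=\partial_tu_2+\tfrac{x}{2t}\cdot\nabla u_2$ is simply the obstruction tensor of the single solution $u_2$, a section of $u_2^{*}TN$. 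The computation carried out in the proof of Theorem \ref{comp-exp-sol-gal-sol}, together with the orthogonality of the term $A(u_2)(\nabla Z,\nabla u_2)$ to $Z$ and after discarding the favorable term $-2t^{-1}|Z|^2$, yields the pointwise inequality
\[
(\partial_t-\Delta)|Z|^2\ \leq\ -2|\nabla Z|^2+C(N)|\nabla u_2|^2\,|Z|^2\qquad\text{on }\Omega_{\lambda}.
\]
This is exactly the structure needed for a local gradient estimate; the only genuine issue is that $\Omega_{\lambda}$ is unbounded and touches $\{t=0\}$, so one must localize at the parabolic scale $\sqrt{t_0}$.

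Concretely, I would first note that we may assume $\Theta:=\sup_{\Omega_{\lambda}}|Z|<+\infty$ (otherwise there is nothing to prove). Fix $(x_0,t_0)\in\Omega_{2\lambda}$ and set $\mu:=\tfrac12\sqrt{t_0}$. A direct check shows that for $\lambda$ large enough the parabolic cylinder $P:=B(x_0,\mu)\times(t_0-\mu^2,t_0]$ is contained in $\Omega_{\lambda}$: on $P$ one has $|x|\geq|x_0|-\mu\geq 2\sqrt{2\lambda t_0}-\tfrac12\sqrt{t_0}$ and $t\leq t_0$, so $|x|^2/4t\geq(2\sqrt{2\lambda}-\tfrac12)^2t_0/4t_0\geq\lambda$ once $(2\sqrt2-2)\sqrt\lambda\geq\tfrac12$. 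Rescaling and recentering, $\tilde u_i(y,s):=u_i(x_0+\mu y,\,t_0-\mu^2+\mu^2 s)$ again solves the harmonic map flow into $N$ on the unit cylinder $Q:=B(0,1)\times(0,1]$, and $\tilde Z(y,s):=\mu^2 Z(x_0+\mu y,\,t_0-\mu^2+\mu^2 s)$ satisfies $(\partial_s-\Delta_y)|\tilde Z|^2\leq-2|\nabla_y\tilde Z|^2+C(N)|\nabla_y\tilde u_2|^2|\tilde Z|^2$ on $Q$. By Definition \ref{def-reg-inf}, $|\nabla_x u_2|^2\leq c_0/(|x|^2+t)\lesssim (\lambda t_0)^{-1}$ on $P$, hence $|\nabla_y\tilde u_2|^2=\mu^2|\nabla_x u_2|^2\lesssim\lambda^{-1}\leq1$, and therefore $\tilde Z$ solves $(\partial_s-\Delta_y)|\tilde Z|^2\leq-2|\nabla_y\tilde Z|^2+C_1|\tilde Z|^2$ on $Q$ with $C_1=C_1(n,N,c_0)$ independent of the base point.

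At this point the problem is the classical one on a fixed unit cylinder: applying the maximum principle to $G:=\eta^2 s\,|\nabla_y\tilde Z|^2+\beta|\tilde Z|^2$, where $\eta$ is a spatial cutoff that is $1$ on $B(0,1/2)$ and supported in $B(0,3/4)$ and $\beta=\beta(n,N,C_1)$ is chosen large, gives $\sup_{B(0,1/2)\times(3/4,1]}|\nabla_y\tilde Z|\leq C\sup_{Q}|\tilde Z|$ with $C$ absolute; here one uses that $\tilde Z$ is smooth up to $s=0$ because $t$ stays bounded away from $0$ on $P$, so the $s\to0$ boundary term in $G$ vanishes. Evaluating at $(y,s)=(0,1)$ and undoing the scaling ($\nabla_y\tilde Z=\mu^3\nabla_x Z$, $|\tilde Z|=\mu^2|Z|$) yields $\mu^3|\nabla Z|(x_0,t_0)\leq C\mu^2\sup_{P}|Z|$, i.e. $\sqrt{t_0}\,|\nabla Z|(x_0,t_0)\leq C\sup_{P}|Z|\leq C\,\Theta$; taking the supremum over $(x_0,t_0)\in\Omega_{2\lambda}$ gives the stated inequality. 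For the ``in particular'' clause one keeps the local bound: estimating $\sup_P|Z|$ by $(\ref{fast-decay-diff-sol-ene})$ and noting that $f$ is comparable on $P$ to $f(x_0,t_0)$ (since $P$ has parabolic radius $\tfrac12\sqrt{t_0}$ while $f(x_0,t_0)\gtrsim\lambda$), one obtains $\sqrt{t_0}\,|\nabla Z|(x_0,t_0)\leq C\,f(x_0,t_0)^{-n/2}e^{-c\,f(x_0,t_0)}$, which decays exponentially as $|x_0|\to\infty$ at fixed $t_0$.

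The main obstacle is not analytic but geometric: one cannot use a fixed‑size cutoff because $\Omega_{\lambda}$ is noncompact and degenerates at $t=0$, so the whole point is to localize at the natural scale $\sqrt{t_0}$, check that the cutoff cylinders $P$ stay inside $\Omega_{\lambda}$ (where the evolution inequality is available), and verify that after the parabolic rescaling the potential $|\nabla u_2|^2$ becomes universally bounded — which is precisely the content of ``regular at infinity''. Everything else is the textbook Bernstein/Shi computation, and I would only sketch it rather than reproduce the computation of $(\partial_s-\Delta)|\nabla_y\tilde Z|^2$, which is routine once the target $N$ is fixed.
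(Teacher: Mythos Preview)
Your proof is correct and takes a genuinely different route from the paper. The paper works with the rescaled obstruction $\mathcal{V}:=tZ=t\partial_tu_2+\tfrac{x}{2}\cdot\nabla u_2$, derives the evolution inequalities for $|\mathcal{V}|^2$ and $|\nabla\mathcal{V}|^2$ directly on $\Omega_{\lambda}$, and then applies the maximum principle to the Shi-type product $|\nabla\mathcal{V}|^2(a+|\mathcal{V}|^2)$ multiplied by a space--time cutoff $\psi$; the details of that last step are in fact deferred to the proof of Proposition~\ref{autom-reg-infty}, which uses the self-similar rescaling $\bar U_t(x)=U(\sqrt{t}x,t)$ and a radial cutoff $\phi_R$. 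Your argument instead localizes from the outset: you pick a base point $(x_0,t_0)\in\Omega_{2\lambda}$, rescale the parabolic cylinder $P$ of radius $\tfrac12\sqrt{t_0}$ to the unit cylinder, observe that regularity at infinity makes all the coefficients (the potentials $|\nabla\tilde u_2|$, $|\nabla^2\tilde u_2|$) uniformly bounded there, and then quote the standard fixed-cylinder Bernstein estimate for $\tilde Z$.

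Both approaches are standard and yield the same constant dependence. Your version is arguably cleaner here because it cleanly separates the geometric issue (checking $P\subset\Omega_\lambda$ and the uniform coefficient bound after rescaling) from the purely analytic parabolic estimate on a fixed domain, whereas the paper's version keeps track of the weight $tf$ throughout and couples the cutoff to the global Shi computation. The paper's route, on the other hand, directly produces the pointwise improvement $|\nabla\mathcal{V}|\lesssim a/(tf)$ used later, and dovetails with the more elaborate argument of Proposition~\ref{autom-reg-infty}. One small point: when you invoke the Bernstein estimate on the unit cylinder you implicitly need the inequality for $(\partial_s-\Delta_y)|\nabla_y\tilde Z|^2$, which requires one more derivative of $\tilde u_2$ (hence $C_2$ from Definition~\ref{def-reg-inf}); this is harmless but worth stating, since the paper makes the analogous dependence explicit.
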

\begin{rk}
Corollary \ref{coro-shi-est-grad-obst-tensor} does not give a sharp decay in space for the gradient of the vector field $\partial_t(u_2-u_1)+\frac{x}{2t}\cdot\nabla (u_2-u_1)$. Nonetheless, this will be sufficient for the sequel.
\end{rk}

\begin{proof}
The proof is very similar to the proof of Proposition \ref{autom-reg-infty}. Nonetheless, as we need Corollary \ref{coro-shi-est-grad-obst-tensor} for the proof of Proposition \ref{autom-reg-infty}, we give the main steps. Recall from the proof of Proposition \ref{comp-exp-sol-gal-sol} that the obstruction vector field $\Ob:= t\partial_t(u_2-u_1)+\frac{x}{2}\cdot\nabla (u_2-u_1)$ satisfies the following equation:
\begin{eqnarray*}
(\partial_t-\Delta)\Ob&=&2A(u_2)(\nabla u_2,\nabla \Ob)+D_{u_2}A(\Ob)(\nabla u_2,\nabla u_2),
\end{eqnarray*}
which implies by using Young's inequality together with the fact that $u_2$ is regular at infinity:
\begin{eqnarray*}
(\partial_t-\Delta)|\Ob|^2&\leq&-|\nabla\Ob|^2+\textit{O}\left((tf)^{-1}\right)|\Ob|^2.
\end{eqnarray*}
Therefore, the gradient $\nabla \Ob$ satisfies schematically:
\begin{eqnarray*}
(\partial_t-\Delta)\nabla\Ob&=&\Ob\ast\nabla u_2^{*3}+\nabla\Ob\ast\nabla u_2^{*2}+\Ob\ast\nabla^2u_2\ast\nabla u_2+\nabla^2\Ob\ast\nabla u_2+\nabla\Ob\ast\nabla^2u_2,
\end{eqnarray*}
where $A\ast B$ denotes any linear combination of contractions of two tensors $A$ and $B$. In particular, by using Young's inequality together with the fact that $u_2$ is regular at infinity:
\begin{eqnarray*}
(\partial_t-\Delta)|\nabla\Ob|^2&\leq&-|\nabla^2\Ob|^2+\textit{O}\left((tf)^{-1}\right)|\nabla\Ob|^2+\textit{O}\left((tf)^{-2}\right)|\Ob|^2.
\end{eqnarray*}
Now, consider the function $|\nabla\Ob|^2(a+|\Ob|^2)$ where $a$ is a non-negative constant, universally proportional to $\sup_{\Omega_{\lambda}}|\Ob|$ where $\lambda$ is such that Proposition \ref{comp-exp-sol-gal-sol} is applicable. Then, one computes:
\begin{eqnarray*}
(\partial_t-\Delta)\left(|\nabla\Ob|^2(a+|\Ob|^2)\right)\leq -\frac{|\nabla\Ob|^4}{2}+C\frac{a^2}{(tf)^2},
\end{eqnarray*}
for some positive constant $C$ not depending on $\Ob$. By using a suitable cut-off function $\psi$ in space-time coordinates, one gets the result by applying the maximum principle to $\psi|\nabla\Ob|^2(a+|\Ob|^2)$ as in the proof of Proposition \ref{autom-reg-infty}.
\end{proof}

Now, if $u_1$ and $u_2$ are two solutions of the harmonic map flow coming out of the same $0$-homogeneous initial condition, we want to show that not only the difference $u_1-u_2$ is decaying really fast but also that $f^{n/2}e^f(u_2-u_1)$ converges radially and smoothly to a map defined on $\mathbb{S}^{n-1}\subset \mathbb{R}^n$, as $t$ goes to zero.

A first step towards this assertion is to prove that the vector field $U:=f^{n/2}e^f(u_1-u_2)$ is regular at infinity in the following sense:

\begin{prop}\label{autom-reg-infty}
\begin{enumerate}
\item If $u_1$ and $u_2$ are expanding solutions that are regular at infinity, then for all $k\geq 1$, there exists a positive constant $C_k$ such that:
\begin{eqnarray*}
\sup_{\Omega_{2\lambda}}(tf)^{\frac{k}{2}}|\nabla^kU|(x,t)\leq C_k\sup_{\Omega_{\lambda}}|U|.
\end{eqnarray*}
\item Let $u_2$ be any solution of the harmonic map flow coming out of the $0$-homogeneous map $u_0$ and let $u_1$ be an expanding solution coming out of the same map $u_0$. Assume that $u_1$ and $u_2$ are regular at infinity then for all $k\geq 1$, there exists a positive constant $C_k$ such that:
\begin{eqnarray*}
\sup_{\Omega_{2\lambda}}t^{\frac{k}{2}}|\nabla^kU|(x,t)\leq C_k\sup_{\Omega_{\lambda}}|U|.
\end{eqnarray*}
\end{enumerate}
\end{prop}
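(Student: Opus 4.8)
The plan is to reformulate everything in self-similar variables, derive the ``backward-heat'' equation satisfied by $U$, and then run a Bochner-type/Bernstein argument with cut-offs: for part (2) this mirrors the proof of Corollary \ref{coro-shi-est-grad-obst-tensor}, and for part (1) it is upgraded to a barrier comparison giving the sharp $f$-decay.

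\emph{Step 1: the conjugated equation.} Write $w:=u_1-u_2$ and $\Psi:=f^{n/2}e^{f}$; since $\Psi$ is invariant under parabolic rescaling, $\Psi(x,t)=\Psi(x/\sqrt t,1)$ and $\partial_t\Psi+\tfrac{x}{2t}\cdot\nabla\Psi=0$. Combining $(\partial_t-\Delta)w=A(u_1)(\nabla u_1,\nabla u_1)-A(u_2)(\nabla u_2,\nabla u_2)$ with the obstruction identity $\partial_tw+\tfrac{x}{2t}\cdot\nabla w=-\Ob/t$, where $\Ob:=t\partial_t(u_2-u_1)+\tfrac x2\cdot\nabla(u_2-u_1)$ (so $\Ob\equiv 0$ in case (1), while $\Ob=t\partial_tu_2+\tfrac x2\cdot\nabla u_2$ with $|\Ob|\lesssim f^{-n/2}e^{-f}$ on $\Omega_\lambda$ in case (2), by Theorem \ref{comp-exp-sol-gal-sol}), gives the fixed-time identity $\Delta_fw=-\big(A(u_1)(\nabla u_1,\nabla u_1)-A(u_2)(\nabla u_2,\nabla u_2)\big)+\Ob/t$. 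Conjugating by $\Psi$, using $\Delta_f f=f$ to compute (extending the computations in the proof of Lemma \ref{theo-max-ppe-l2})
$$\Delta_f\log\Psi-|\nabla\log\Psi|^2=\tfrac{n(n-2)}{4tf}+O\!\big((tf)^{-2}\big),$$
and noting that the only dangerous contribution, $\Psi\,\nabla\log\Psi\cdot\nabla w\simeq\nabla f\cdot\nabla w$, recombines into the operator while the left-over $(\nabla f\cdot\nabla)u_i$ is $O((tf)^{-1})$ --- because $\tfrac{x}{2t}\cdot\nabla u_i=-\Delta u_i-A(u_i)(\nabla u_i,\nabla u_i)$ for an expander, and in case (2) the $u_2$-part is absorbed into the $\Psi\,\Ob/t$ term --- one obtains
$$\Delta_{-f}U=b\cdot\nabla U+c\,U-\Psi\,\Ob/t,\qquad |b|\lesssim (tf)^{-1/2},\quad |c|\lesssim (tf)^{-1},$$
where $\Delta_{-f}=\Delta-\nabla f\cdot\nabla$ and $b,c$ are smooth, with derivatives decaying accordingly (using that $u_1,u_2$ are regular at infinity). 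In the variables $(y,\tau):=(x/\sqrt t,\log t)$ this becomes $\partial_\tau\mathcal U=\Delta_{-f}\mathcal U+\mathcal L$ with $f(y)=|y|^2/4+n/2$ and $|\mathcal L|\lesssim f^{-1/2}|\nabla_y\mathcal U|+f^{-1}|\mathcal U|$ ($\mathcal L$ carrying the bounded term $\Psi\,\Ob$ in case (2)): this is the backward-heat-like equation. Note $|\mathcal U|\le\sup_{\Omega_\lambda}|U|=:M$ by Theorem \ref{prop-dec-time-diff-sol}, and $\mathcal U$ is $\tau$-independent in case (1).

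\emph{Step 2: part (2), Bernstein with cut-offs.} As in the proof of Corollary \ref{coro-shi-est-grad-obst-tensor}: the Bochner formula for the drift Laplacian $\Delta_{-f}$, whose Hessian term $-\Hess f=-\tfrac12\Id$ has the favorable sign, together with $[\Delta_{-f},\nabla^k]=\tfrac k2\nabla^k$, yields inductively
$$(\partial_\tau-\Delta_{-f})|\nabla_y^k\mathcal U|^2\le -k|\nabla_y^k\mathcal U|^2-|\nabla_y^{k+1}\mathcal U|^2+C|\nabla_y^k\mathcal U|^2+C\!\!\sum_{0\le j<k}\!\!|\nabla_y^j\mathcal U|^2 .$$
Running the maximum principle on $\psi\,|\nabla_y^k\mathcal U|^2(a+|\mathcal U|^2)$, with $a\simeq M^2$ and $\psi$ a space-time cut-off supported in $\Omega_\lambda$ and $\equiv 1$ on $\Omega_{2\lambda}$, produces a self-improving $-\tfrac12|\nabla_y^k\mathcal U|^4$ term dominating the bad $C|\nabla_y^k\mathcal U|^2$, the cut-off errors being absorbed via the inductive hypothesis on lower-order derivatives; working on slabs $\tau_0-1\le\tau\le\tau_0$ with constants uniform in $\tau_0$ (the cut-off killing the initial slice) gives $\sup_{\Omega_{2\lambda}}|\nabla_y^k\mathcal U|\le C_kM$. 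Since $|\nabla_y^k\mathcal U|=t^{k/2}|\nabla_x^kU|$ and $|\mathcal U|=|U|$, this is (2).

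\emph{Step 3: part (1), sharp decay by barriers.} Here $\mathcal U$ is $\tau$-independent, so $\Delta_{-f}\mathcal U=-\mathcal L$ on $\{f>\lambda\}\subset\R^n$, and one wants the sharp $|\nabla_y^k\mathcal U|\lesssim f^{-k/2}M$. Iterating $\Delta_{-f}|\nabla_y^k\mathcal U|^2=k|\nabla_y^k\mathcal U|^2+2|\nabla_y^{k+1}\mathcal U|^2-2\langle\nabla_y^k\mathcal L,\nabla_y^k\mathcal U\rangle$ and feeding in the inductive decay of $|\nabla_y^j\mathcal U|^2$ for $j<k$ shows $q_k:=|\nabla_y^k\mathcal U|^2$ is a subsolution of $\Delta_{-f}-(k-o_\lambda(1))$ with source $\lesssim M^2f^{-k-1}$; since $\Delta_{-f}f^{-k}=kf^{-k}+O(f^{-k-1})$, comparison with $BM^2f^{-k}$ (with the exponent slightly relaxed to $f^{-k+\sigma}$, $\sigma\downarrow 0$, to keep a margin over the $o_\lambda(1)$, $\lambda$ being enlarged at each fixed $k$) closes the induction. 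This uses interior elliptic estimates near $\{f=\lambda\}$ to initialize the comparison and the generalized maximum principle for $\Delta_{-f}-c$, $c>0$, on the exterior domain (the blow-up barrier $e^{\mu|y|^2}$, $\mu$ small, defeating the unbounded drift $\nabla f$). Hence $(tf)^{k/2}|\nabla^kU|\le C_kM$ on $\Omega_{2\lambda}$.

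The main obstacle is Step 1: verifying that conjugation by $\Psi=f^{n/2}e^{f}$ leaves only $O((tf)^{-1/2})$ and $O((tf)^{-1})$ lower-order terms --- for which one needs both the cancellation $\Delta_f\log\Psi-|\nabla\log\Psi|^2=O((tf)^{-1})$ and the $O((tf)^{-1})$ bound on $(\nabla f\cdot\nabla)u_i$ from the expander equation (case (1)), respectively the absorption into $\Psi\,\Ob$ with $|\Ob|\lesssim f^{-n/2}e^{-f}$ from Theorem \ref{comp-exp-sol-gal-sol} (case (2)). If any of these were only $O(t^{-1})$, neither the Bernstein argument of Step 2 nor the barrier comparison of Step 3 would close. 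A secondary difficulty is running the maximum principle at infinity on the non-compact exterior domain with unbounded drift.
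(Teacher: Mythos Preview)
Your Steps 1 and 2 are essentially the paper's argument recast in self-similar variables. The paper works in the original $(x,t)$ coordinates and derives (Lemma \ref{evo-equ-resc-vec-fiel}) the evolution equation $(\partial_t+X\cdot\nabla-\Delta)|U|^2=-2|\nabla U|^2+O((tf)^{-1})|U|^2$ with $X=(1+\tfrac{n}{2}f^{-1})\tfrac{x}{t}$; your backward equation $\partial_\tau\mathcal U=\Delta_{-f}\mathcal U+\mathcal L$ is exactly this in the variables $(y,\tau)=(x/\sqrt t,\log t)$. The crucial input you identify --- that $(\nabla f\cdot\nabla)u_i=O((tf)^{-1})$ via the expander equation for $u_1$ and via Theorem \ref{comp-exp-sol-gal-sol} (and Corollary \ref{coro-shi-est-grad-obst-tensor} for its gradient) for $u_2$ --- is exactly the paper's Claim \ref{rad-der-ext-claim}. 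The Shi--Bernstein argument in your Step 2 matches the paper's use of $(|U|^2+a)(tf)|\nabla U|^2$ with a time cut-off.

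Where you diverge is Step 3. The paper does \emph{not} run a separate elliptic barrier comparison for part (1). Instead it incorporates the weight $(tf)$ directly into the Shi quantity by setting $U_2:=(tf)|\nabla U|^2$ and deriving (Lemma \ref{evo-equ-resc-vec-fiel})
\[
(\partial_t+X\cdot\nabla-\Delta)\big((tf)|\nabla U|^2\big)\le -(tf)|\nabla^2U|^2+O((tf)^{-1})\big[|U|^2+(tf)|\nabla U|^2\big].
\]
The point is that $(\partial_t+X\cdot\nabla-\Delta)(tf)=\tfrac{2}{t}(tf)+O(1)$ exactly cancels the $-\tfrac{2}{t}|\nabla U|^2$ coming from the commutator $[\Delta_{-f},\nabla]$ (your $+\tfrac12$), so the weighted quantity has \emph{no} linear growth term. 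Then the same Shi argument on $(U_1+a)U_2$ with a radial cut-off $\phi_R$ (note the sign $\partial_r\phi_R\le 0$ is compatible with $X$) yields $(tf)|\nabla U|^2\le CM^2$ directly; for expanders the rescaled quantity is $\tau$-independent, so no time cut-off is needed and one gets the sharp $(tf)^{1/2}$ decay without any loss.

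Your barrier approach in Step 3, by contrast, hits a genuine borderline: $q_k=|\nabla^k\mathcal U|^2$ satisfies $\Delta_{-f}q_k\ge (k-o_\lambda(1))q_k-CM^2f^{-k-1}$ while $\Delta_{-f}f^{-k}=kf^{-k}+O(f^{-k-1})$, so the comparison is at criticality. Your fix of relaxing to $f^{-k+\sigma}$ only yields $|\nabla^k\mathcal U|^2\le C_\sigma M^2f^{-k+\sigma}$ with $C_\sigma$ potentially blowing up as $\sigma\downarrow 0$; this does not recover the sharp $(tf)^{k/2}$ bound claimed in the proposition. The paper's device of building the weight into the Shi quantity sidesteps this entirely: the quartic nonlinearity $-\bar f^{-1}U_2^2/a^2$ from $\nabla U_1\cdot\nabla U_2$, not an eigenvalue margin, is what closes the estimate. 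If you want to salvage a barrier argument, consider instead $p_k:=f^k|\nabla^k\mathcal U|^2$ and compute $\Delta_{-f}p_k$; the $+kf^{k-1}\cdot\ldots$ and $f^k\cdot k q_k$ contributions cancel at leading order, reducing to a subsolution inequality with \emph{bounded} potential, for which a constant supersolution suffices.
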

\begin{rk}\label{rk-autom-reg}
Proposition \ref{autom-reg-infty} is stated in terms of solutions that are regular at infinity. By inspecting the proof given below, it turns out that if $u_2$ and $u_1$ are at least $C^{k+2}$, $k\geq 1$, regular at infinity in the sense that their derivatives, up to order $k+2$ decay as in Definition \ref{def-reg-inf} then the conclusion of Proposition \ref{autom-reg-infty} holds for the derivatives up to order $k$ of the rescaled difference $U=f^{n/2}e^f(u_1-u_2)$.  
\end{rk}
In order to prove Proposition \ref{autom-reg-infty}, we need the following technical lemma that establishes the evolution equations satisfied by the vector field $U$ and its derivative $\nabla U$:

\begin{lemma}\label{evo-equ-resc-vec-fiel}
One has the following evolution equations:
\begin{eqnarray}\label{evo-equ-resc-vec-fiel-1}
&&\left(\partial_t+X\cdot\nabla \right)|U|^2=\Delta |U|^2-2|\nabla U|^2 + \textit{O}\left((tf)^{-1}\right)|U|^2,\\
&&X:=\left(1+\frac{n}{2}f^{-1}\right)\frac{x}{t},\\
&&\left(\partial_t+X\cdot\nabla-\Delta \right)((tf)|\nabla U|^2)\leq-(tf)|\nabla^2U|^2+\textit{O}\left((tf)^{-1}\right)\left[|U|^2+(tf)|\nabla U|^2\right],\label{evo-equ-resc-vec-fiel-2}
\end{eqnarray}
on $\Omega_{\lambda}$.

\end{lemma}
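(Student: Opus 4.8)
The plan is to derive both evolution equations by direct computation, starting from the harmonic map flow equations satisfied by $u_1$ and $u_2$ and the known evolution equation for the difference $\xi:=u_1-u_2$, then conjugating by the weight $\rho:=f^{n/2}e^f$ so that $U=\rho\,\xi$. First I would record the evolution of $\xi$ (which was essentially computed in the proof of Theorem \ref{prop-dec-time-diff-sol}): schematically $(\partial_t-\Delta)\xi=\xi\ast\nabla u\ast\nabla u$ with coefficients controlled by $|\nabla u_i|^2=\textit{O}((tf)^{-1})$ thanks to regularity at infinity. Then I would compute the commutator: for a scalar weight $\rho$, $(\partial_t-\Delta)(\rho\xi)=\rho(\partial_t-\Delta)\xi+\big((\partial_t-\Delta)\rho\big)\xi-2\nabla\rho\cdot\nabla\xi$. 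Writing $-2\nabla\rho\cdot\nabla\xi=-2\frac{\nabla\rho}{\rho}\cdot\nabla U+2\frac{|\nabla\rho|^2}{\rho^2}U$ (so that everything is expressed in $U$ rather than $\xi$), and using the computations of $(\partial_t-\Delta)(f^{n/2})$ and $(\partial_t-\Delta)(e^f)$ already performed at the start of the proof of Lemma \ref{theo-max-ppe-l2} — with signs flipped — one sees $\frac{\nabla\rho}{\rho}=\big(1+\tfrac n2 f^{-1}\big)\nabla f=\big(1+\tfrac n2 f^{-1}\big)\tfrac{x}{2t}$, which is exactly $\tfrac12 X$, and $\frac{(\partial_t-\Delta)\rho}{\rho}=\textit{O}((tf)^{-1})$ after the leading $e^f$ and $f^{n/2}$ contributions cancel against the first-order term. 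This produces a drift-transport equation $(\partial_t+X\cdot\nabla-\Delta)U=\textit{O}((tf)^{-1})U$, whence \eqref{evo-equ-resc-vec-fiel-1} follows by taking $|\cdot|^2$, using the reaction-term orthogonality (the $\nabla u\ast\nabla u$ nonlinearity is normal to $T_{u_i}N$, as in the proof of Theorem \ref{prop-dec-time-diff-sol}) and Kato/Young, and noting the $|U|^2$ term on the right absorbs the cross terms.

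For the second equation \eqref{evo-equ-resc-vec-fiel-2}, I would differentiate the transport equation for $U$ once in space. Schematically $(\partial_t+X\cdot\nabla-\Delta)\nabla U=-(\nabla X)\ast\nabla U+\nabla\big(\textit{O}((tf)^{-1})U\big)$, where $\nabla X=\textit{O}(t^{-1})\mathrm{Id}+\textit{O}((tf)^{-1}\cdot\text{stuff})$ (the dangerous term $\nabla X\ast\nabla U\sim t^{-1}\nabla U$), and where the nonlinear terms coming from differentiating the harmonic map flow contribute the usual $\nabla U\ast\nabla u^{\ast2}$, $U\ast\nabla u\ast\nabla^2 u$, etc., all with coefficients $\textit{O}((tf)^{-1})$ or $\textit{O}((tf)^{-3/2})$ using regularity at infinity up to order two. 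Taking $|\cdot|^2$ gives $(\partial_t+X\cdot\nabla-\Delta)|\nabla U|^2\le -2|\nabla^2U|^2+\textit{O}(t^{-1})|\nabla U|^2+\textit{O}((tf)^{-2})|U|^2$. Then I multiply by the weight $tf$: using $(\partial_t+X\cdot\nabla-\Delta)(tf)=f+t\,\partial_tf+t\,X\cdot\nabla f-t\,\Delta f$ and the identities $\partial_t f=-f/t+\tfrac{n}{2t}$, $\Delta f = n/(2t)$, $X\cdot\nabla f=(1+\tfrac n2 f^{-1})\tfrac{|x|^2}{2t^2}=(1+\tfrac n2 f^{-1})\tfrac{(f-n/2)}{t}$, one checks that the first-order and leading terms cancel so that $(\partial_t+X\cdot\nabla-\Delta)(tf)=\textit{O}(f^{-1})=\textit{O}(1)$ on $\Omega_\lambda$, and crucially there is no extra $\nabla(tf)\cdot\nabla|\nabla U|^2$ bad sign because $|\nabla(tf)|^2/(tf)=\textit{O}(tf^{-1}\cdot t^{-2}|x|^2)=\textit{O}((tf)^{-1}\cdot f)$... so this cross term must be handled by absorbing into $-(tf)|\nabla^2 U|^2$ via Young after writing $\nabla(tf)\cdot\nabla|\nabla U|^2 = 2\nabla(tf)\cdot\langle\nabla^2 U,\nabla U\rangle$. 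Combining, the product $(tf)|\nabla U|^2$ satisfies the claimed inequality, where the $\textit{O}(t^{-1})\cdot(tf)|\nabla U|^2 = \textit{O}(f^{-1})\cdot (tf)|\nabla U|^2$ gives the $\textit{O}((tf)^{-1})(tf)|\nabla U|^2$ after noting $f^{-1}\le\textit{O}((tf)^{-1})$ is false — rather, one keeps it as an $\textit{O}(1)$ coefficient times $(tf)|\nabla U|^2$, which is what the stated bound actually allows if one reads $\textit{O}((tf)^{-1})$ loosely; I would be careful here and double-check the intended scaling.

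The main obstacle I anticipate is precisely this bookkeeping of scales: making sure that after conjugation the transport vector field $X$ and the weight $tf$ interact so that \emph{no} term of size comparable to or larger than the stated right-hand side survives — in particular the term $\nabla X\ast\nabla U$ of size $t^{-1}|\nabla U|$ and the gradient-of-weight cross term must genuinely be absorbable, which forces the specific choice $X=(1+\tfrac n2 f^{-1})\tfrac{x}{t}$ (note: $\tfrac{x}{t}$, not $\tfrac{x}{2t}$, because the weight contributes a full $\nabla f$ not a half) and the specific power $tf$ on $|\nabla U|^2$. I would therefore organize the computation by first isolating the \emph{linear} model — replace the harmonic map flow nonlinearity by zero and $N$ by a sphere — verify the two identities there exactly, and only then add back the nonlinear and curvature error terms, checking each is $\textit{O}((tf)^{-1})$ relative to the appropriate weighted quantity using Definition \ref{def-reg-inf} for derivatives up to order $k+2$ (as flagged in Remark \ref{rk-autom-reg}). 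Shi's estimates of Corollary \ref{coro-shi-est-grad-obst-tensor} are not needed for this lemma itself but the iteration scheme they illustrate (multiply by $a+|\text{lower order}|^2$, cut off, apply the maximum principle) is the template for deducing Proposition \ref{autom-reg-infty} from it afterward.
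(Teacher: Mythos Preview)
Your computation for \eqref{evo-equ-resc-vec-fiel-1} is essentially correct and matches the paper. The problems are all in \eqref{evo-equ-resc-vec-fiel-2}, and they are genuine gaps, not bookkeeping.

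\textbf{First gap: the weight does not evolve as $O(1)$.} Your claim that $(\partial_t+X\cdot\nabla-\Delta)(tf)=O(1)$ is wrong. Carrying out the computation you sketch (and fixing a missing factor of $2$ in your $X\cdot\nabla f$), one gets
\[
(\partial_t+X\cdot\nabla-\Delta)(tf)=2f+O(1)=\tfrac{2}{t}(tf)+O(1),
\]
which is \emph{large}, not bounded. This is not a disaster but a feature: the term $\nabla X\cdot\nabla U$ that you flagged as ``dangerous'' gives, with its precise coefficient, $-\tfrac{1}{t}\nabla U+O((tf)^{-1})\nabla U$, hence $-\tfrac{2}{t}|\nabla U|^2$ in the evolution of $|\nabla U|^2$. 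When you multiply by $tf$, the $+\tfrac{2}{t}(tf)|\nabla U|^2$ coming from the weight cancels \emph{exactly} against $(tf)\cdot(-\tfrac{2}{t}|\nabla U|^2)$. That cancellation is the whole point of the weight $tf$; treating both sides as $O(t^{-1})$ with unspecified constants, as you do, loses it and leaves an uncontrolled $O(f)|\nabla U|^2$ which cannot be rewritten as $O((tf)^{-1})(tf)|\nabla U|^2$. Your own uneasiness at the end (``$f^{-1}\le O((tf)^{-1})$ is false'') is detecting exactly this.

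\textbf{Second gap: the radial derivative of $u_i$ is better than regularity at infinity gives.} After conjugating the nonlinearity by $F^{-1}$ you get, beyond the harmless $O((tf)^{-1})U$ and $O((tf)^{-1/2})\nabla U$, a term $U\ast\nabla_{\nabla\ln F}(u_1+u_2)$. The naive bound from Definition~\ref{def-reg-inf} is $|\nabla\ln F|\,|\nabla u_i|=O((f/t)^{1/2})\cdot O((tf)^{-1/2})=O(t^{-1})$, which after differentiating and pairing with $\nabla U$ produces another uncontrolled $O(t^{-1})|\nabla U|^2$. The paper inserts a separate claim (Claim~\ref{rad-der-ext-claim}) showing the improved bounds
\[
\nabla_{\nabla\ln F}u_i=O((tf)^{-1}),\qquad \nabla\bigl(\nabla_{\nabla\ln F}u_i\bigr)=O((tf)^{-3/2}),
\]
and these do \emph{not} follow from regularity at infinity alone: for $u_1$ they use the expander identity $\nabla_{x/2}u_1=-t\partial_t u_1$, and for $u_2$ they use the exponential decay of the obstruction $t\partial_t u_2+\tfrac{x}{2}\cdot\nabla u_2$ from Theorem~\ref{comp-exp-sol-gal-sol} together with Corollary~\ref{coro-shi-est-grad-obst-tensor}. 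So contrary to your last remark, Corollary~\ref{coro-shi-est-grad-obst-tensor} \emph{is} needed for this lemma, precisely to close this claim.
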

\begin{proof}
Define $u:=u_1-u_2$.
We recall from the proofs of Theorem \ref{theo-max-ppe-l2} and Proposition \ref{prop-dec-time-diff-sol} the following computations that hold on $\Omega_{\lambda}$ :
\begin{eqnarray*}
\partial_tu&=&\Delta u+A(u_1)(\nabla u_1,\nabla u_1)-A(u_2)(\nabla u_2,\nabla u_2),\\
\partial_tF&=&\Delta F+\textit{O}\left((tf)^{-1}\right)F,\quad F:=f^{-n/2}e^{-f}.
\end{eqnarray*}
Therefore, on $\Omega_{\lambda}$:
\begin{eqnarray}
(\partial_t-\Delta)U&=&(\partial_t-\Delta)(F^{-1}u)\label{evo-equ-U-0}\\
&=&(\partial_t-\Delta)(F^{-1})u-2\nabla F^{-1}\cdot\nabla u+F^{-1}(\partial_t-\Delta)(u)\label{evo-equ-U-1}\\
&=&\textit{O}\left((tf)^{-1}\right)U-2\nabla \ln F^{-1}\cdot\nabla U\label{evo-equ-U-2}\\
&&+F^{-1}\left(A(u_1)(\nabla u_1,\nabla u_1)-A(u_2)(\nabla u_2,\nabla u_2)\right).\label{evo-equ-U-3}
\end{eqnarray}
Now,
\begin{eqnarray*}
\nabla \ln F=-\nabla f-\frac{n}{2}\nabla\ln f=-\frac{x}{2t}\left(1+\frac{n}{2}f^{-1}\right)=-\frac{X}{2}.
\end{eqnarray*}
Again, by using the proof of Proposition \ref{prop-dec-time-diff-sol}, this ends the proof of the evolution equation satisfied by $|U|^2$ on $\Omega_{\lambda}$.

Let us investigate the evolution equation satisfied by $(tf)|\nabla U|^2$. By derivating (\ref{evo-equ-U-0}), (\ref{evo-equ-U-2}) and (\ref{evo-equ-U-3}):
\begin{eqnarray}
\left(\partial_t+X\cdot\nabla \right)\nabla U&=&\Delta\nabla U-\nabla X\cdot\nabla U+\textit{O}\left((tf)^{-1}\right)\nabla U+\textit{O}\left(\nabla (tf)^{-1}\right)U\label{evo-equ-nabla-U-1}\\
&&+\nabla\left(F^{-1}\left\{A(u_1)(\nabla u_1,\nabla u_1)-A(u_2)(\nabla u_2,\nabla u_2)\right\}\right).\label{evo-equ-nabla-U-2}
\end{eqnarray}
Now,
\begin{eqnarray}
\nabla X\cdot \nabla U&=&\frac{1}{t}\nabla U+\textit{O}\left((tf)^{-1}\right)\nabla U\label{est-vec-field-X-1}\\
\nabla (tf)^{-1}&=&\textit{O}\left( (tf)^{-3/2}\right).\label{est-vec-field-X-2}
\end{eqnarray}
Then, we analyse the difference between the two second fundamental forms more precisely:
\begin{eqnarray*}
F^{-1}\left(A(u_1)(\nabla u_1,\nabla u_1)-A(u_2)(\nabla u_2,\nabla u_2)\right)&=&F^{-1}(\nabla u_1^{\ast2}+\nabla u_2^{\ast2})\ast u\\
&&+F^{-1}\nabla u\ast (\nabla u_1+\nabla u_2)\ast (u_1+u_2)\\
&=&\textit{O}((tf)^{-1})U +\textit{O}((tf)^{-1/2})\nabla U\\
&&+U\ast \nabla_{\nabla \ln F}(u_1+u_2)\ast (u_1+u_2).
\end{eqnarray*}

Therefore, by derivating the previous expression together with Young's inequality:
\begin{eqnarray*}
&&\left|\left<\nabla\left[F^{-1}\left(A(u_1)(\nabla u_1,\nabla u_1)-A(u_2)(\nabla u_2,\nabla u_2)\right)\right],\nabla U\right>\right|\leq\\
&&\textit{O}\left((tf)^{-1}+|\nabla_{\nabla \ln F}(u_1+u_2)|\right)|\nabla U|^2+\textit{O}\left((tf)^{-2}\right)|U|^2+\frac{1}{2}|\nabla^2U|^2\\
&&+\textit{O}\left(\left|\nabla(\nabla_{\nabla \ln F}(u_1+u_2))\right|+\left|\nabla_{\nabla \ln F}(u_1+u_2)\right||\nabla(u_1+u_2)|\right)|\nabla U||U|.\\
\end{eqnarray*}
The lemma is proved provided we show that the radial derivatives decay appropriately:
\begin{claim}\label{rad-der-ext-claim}
\begin{eqnarray*}
\nabla_{\nabla \ln F}(u_i)&=&\textit{O}((tf)^{-1}),\quad i=1,2,\\
\nabla(\nabla_{\nabla \ln F}(u_i))&=&\textit{O}((tf)^{-3/2}),\quad i=1,2.
\end{eqnarray*}

\end{claim}

\begin{proof}[Proof of Claim \ref{rad-der-ext-claim}]
As $u_1$ is an expanding solution: 
\begin{eqnarray*}
\nabla_{\frac{x}{2}}u_1=-t\partial_tu_1=-t(\Delta u_1+A(u_1)(\nabla u_1,\nabla u_1))=\textit{O}(f^{-1}),
\end{eqnarray*}
together with its derivatives since $u_1$ is assumed to be regular at infinity.

Now, by Proposition \ref{comp-exp-sol-gal-sol},
\begin{eqnarray*}
\nabla_{\frac{x}{2}}u_2&=&-t\partial_tu_2+\textit{O}\left(f^{-\frac{n}{2}}e^{-f}\right)\\
&=&-t\Delta u_2-tA(u_2)(\nabla u_2,\nabla u_2)+\textit{O}\left(f^{-\frac{n}{2}}e^{-f}\right)\\
&=&\textit{O}(f^{-1}).
\end{eqnarray*}
This implies the first gradient estimate for $u_1$.
By Corollary \ref{coro-shi-est-grad-obst-tensor},  the gradient of the vector field $\partial_t(u_2-u_1)+\frac{x}{2t}\cdot\nabla (u_2-u_1)$ decays faster than any polynomial. As the solution $u_2$ is assumed to be regular at infinity,
\begin{eqnarray*}
\nabla\left(\nabla_{\frac{x}{2}}u_2\right)&=&-t\nabla\left(\partial_tu_2\right)+\textit{O}\left(t^{-1/2}f^{-3/2}\right)\\
&=&-t\nabla\left(\Delta u_2+A(u_2)(\nabla u_2,\nabla u_2)\right)+\textit{O}\left(t^{-1/2}f^{-3/2}\right)\\
&=&\textit{O}(t^{-1/2}f^{-3/2}).
\end{eqnarray*}
This ends the proof of claim \ref{rad-der-ext-claim}.
\end{proof}

Finally, using (\ref{evo-equ-nabla-U-1}), (\ref{evo-equ-nabla-U-2}) together with the estimates (\ref{est-vec-field-X-1}) and (\ref{est-vec-field-X-2}) and Claim \ref{rad-der-ext-claim}, we can sum up this discussion as follows:
\begin{eqnarray*}
\left(\partial_t+X\cdot\nabla \right)|\nabla U|^2&\leq&\Delta|\nabla U|^2-\frac{3}{2}|\nabla^2U|^2-\frac{2}{t}|\nabla U|^2\\&&+\textit{O}\left((tf)^{-1}\right)|\nabla U|^2+\textit{O}\left((tf)^{-2}\right)|U|^2.
\end{eqnarray*}

Finally, recall that the space-time function $tf(x,t)=|x|^2/4+\frac{n}{2}t$ satisfies:
\begin{eqnarray*}
\left(\partial_t+X\cdot\nabla-\Delta \right)(tf)=\frac{2}{t}(tf)+\textit{O}(1),
\end{eqnarray*}
so that, by using Young's inequality regarding the norm of the second derivatives of $U$:
\begin{eqnarray*}
\left(\partial_t+X\cdot\nabla-\Delta \right)((tf)|\nabla U|^2)\leq-(tf)|\nabla^2U|^2+\textit{O}\left((tf)^{-1}\right)(tf)|\nabla U|^2+\textit{O}\left((tf)^{-1}\right)|U|^2.
\end{eqnarray*}

This finishes the proof of Lemma \ref{evo-equ-resc-vec-fiel}.
\end{proof}

\begin{proof}[Proof of Proposition \ref{autom-reg-infty}]
In order to prove the case $k=1$ (which is the most important one for the sequel), one proceeds analogously to a method initiated by Shi \cite{Shi-Def} which consists in considering the space-time function $(|U|^2+a)(tf)|\nabla U|^2$ where $a$ is a positive constant to be determined later. Schematically, we define $U_1:=|U|^2$ and $U_2:=(tf)|\nabla U|^2$ and we use (\ref{evo-equ-resc-vec-fiel-1}) and (\ref{evo-equ-resc-vec-fiel-2}) to get:
\begin{eqnarray*}
\left(\partial_t+X\cdot\nabla-\Delta \right)[(U_1+a)U_2]&\leq& -2(tf)^{-1}U_2^2+\textit{O}((tf)^{-1})U_1U_2-(tf)(U_1+a)|\nabla^2U|^2\\
&&+\textit{O}((tf)^{-1})(U_1+U_2)(U_1+a)-2\nabla U_1\cdot\nabla U_2.
\end{eqnarray*}
Now, by Young's inequality,
\begin{eqnarray*}
2|\nabla U_1\cdot\nabla U_2|&\leq& 8(tf)|U||\nabla U|^2|\nabla^2U|+2|\nabla (tf)||U||\nabla U|^3\\
&\leq&(tf)^{-1}U_2^2+\textit{O}(tf)U_1|\nabla^2U|^2+\textit{O}((tf)^{-1})U_1U_2.
\end{eqnarray*}
Therefore, by choosing $a$ proportional to $\sup_{\Omega_{\lambda}}|U|^2$ carefully,
\begin{eqnarray*}
\left(\partial_t+X\cdot\nabla-\Delta \right)[(U_1+a)U_2]&\leq&-(tf)^{-1}U_2^2+\textit{O}((tf)^{-1})(a^2+U_2(U_1+a))\\
&\leq&-(tf)^{-1}\left[\frac{(U_2(U_1+a))^2}{a^2}-a^2\right].
\end{eqnarray*}
By mimicking the case where the solutions $u_i$, $i=1,2$ are expanders, we consider the following rescaling $\bar{U}_t(x):=U(\sqrt{t}x,t)$ which implies:
\begin{eqnarray*}
\left(t\partial_t+\overline{X}\cdot\nabla-\Delta \right)[(\overline{U_1}+a)\overline{U_2}]&\leq&-C\overline{f}^{-1}\left[\frac{(\overline{U_2}(\overline{U_1}+a))^2}{a^2}-a^2\right],
\end{eqnarray*}
where $C$ is a positive constant and where the quantities $U_1$ and $U_2$ denoted with a bar have been composed with $(\sqrt{t}x,t)$. and where,
\begin{eqnarray*}
\bar{f}_t(x):=\frac{|x|^2}{4}+\frac{n}{2},\quad \overline{X}_t(x)=(1+\textit{O}(\bar{f}^{-1}))\frac{x}{2}.
\end{eqnarray*}
Now, choose a radial cut-off function $\phi_R$ defined on $M$ such that:
\begin{eqnarray*}
&& \supp(\phi_R)\subset \{4\lambda\leq|x|^2\leq R^2\},\quad\phi_R\equiv 1\quad\mbox{in $\{8\lambda\leq|x|^2\leq (R/2)^2\}$},\\
&& \phi_R\equiv 0\quad\mbox{in $\{|x|^2\leq 4\lambda\}\cup\{|x|^2\geq R^2\}$},\\
 &&-\frac{c}{R}\leq \partial_r\phi_R\leq 0,\quad\frac{(\partial_r\phi_R)^2}{\phi_R}\leq \frac{c}{R^2},\quad \nabla^2\phi_R(\partial_r,\partial_r)\geq -\frac{c}{R^2},\quad \mbox{on $\{(R/2)^2\leq|x|^2\leq R^2\}$}.
 \end{eqnarray*}
 In particular:
 \begin{eqnarray*}
&&\left(t\partial_t+\overline{X}\cdot\nabla-\Delta \right)\phi_R+\frac{|\nabla\phi_R|^2}{\phi_R}\leq \frac{C}{\bar{f}},
\end{eqnarray*}
for some positive universal constant $C$. Note the crucial sign here in front of the vector field $\overline{X}$. Therefore, one gets:
\begin{eqnarray*}
\phi_R\left(t\partial_t+\overline{X}\cdot\nabla-\Delta \right)((\overline{U_1}+a)\overline{U_2}\phi_R)&\leq& -C\overline{f}^{-1}\left[\frac{(\overline{U_2}(\overline{U_1}+a)\phi_R)^2}{a^2}-a^2\right]\\
&&-2\phi_R\nabla(\overline{U_2}(\overline{U_1}+a))\cdot\nabla\phi_R.
\end{eqnarray*}
If $(\overline{U_2}(\overline{U_1}+a)\phi_R$ attains its (positive) maximum on $M\times(0,T]$ at a point $(x_0,t_0)$ then the following relations hold at this point:

\begin{eqnarray*}
&&\overline{U_2}(\overline{U_1}+a)\nabla(\phi_R)+\phi_R\nabla(\overline{U_2}(\overline{U_1}+a))=0,\\
0&\leq& -C\left[\frac{(\overline{U_2}(\overline{U_1}+a)\phi_R)^2}{a^2}-a^2\right]-2\phi_R\nabla(\overline{U_2}(\overline{U_1}+a))\cdot\nabla\phi_R\bar{f}\\
&\leq& -C\left[\frac{(\overline{U_2}(\overline{U_1}+a)\phi_R)^2}{a^2}-a^2\right]+2\bar{f}\frac{|\nabla\phi_R|^2}{\phi_R}(\overline{U_2}(\overline{U_1}+a)\phi_R)\\
&\leq&-C\left[\frac{(\overline{U_2}(\overline{U_1}+a)\phi_R)^2}{a^2}-a^2\right]+C(\overline{U_2}(\overline{U_1}+a)\phi_R),
\end{eqnarray*}
which implies the expected result, i.e. 
$$\sup_{M\times(0,T]}\overline{U_2}(\overline{U_1}+a)\phi_R\leq a^2,$$ which is equivalent to: $$\sup_{\Omega_{2\lambda}}(t+|x|^2)|\nabla U|^2\leq C\sup_{\Omega_{\lambda}}|U|^2.$$

If $u_1$ and $u_2$ are expanding solutions, this proves Proposition \ref{autom-reg-infty}, since the function $\overline{U_2}(\overline{U_1}+a)$ is constant in time. 

If $u_1$ or $u_2$ is an arbitrary solution then one multiplies $\overline{U_2}(\overline{U_1}+a)$ by a cut-off function in time which leads to the expected estimate: in this case, the derivative of $U$ does not decay slower than $U$ but is not expected to decay faster as in the case of expanders.

\end{proof}

\section{Definition of a relative entropy}\label{section-rel-ent-Ilm-conj}
We are now in a position to state and prove the main theorem of this section:
\begin{theo}\label{mono-rel-ent}
Let $n\geq 3$. Let $u:\mathbb{R}^n\rightarrow N\subset\R^m$ be a smooth solution to the harmonic map flow coming out of a smooth $0$-homogeneous map $u_0$. Let $u_b$ be a background smooth expander smoothly coming out of $u_0$. Then the entropy $\mathcal{E}(u,u_b)$ relative to $u_b$ introduced in (\ref{def-rel-entropy}) is well-defined and is non-increasing. More precisely:
\begin{eqnarray}
\frac{d}{dt}\mathcal{E}(u,u_b)(t)=-2t\int_{\mathbb{R}^n}\left|\partial_tu+\frac{x}{2t}\cdot\nabla u\right|^2\frac{e^{\frac{|x|^2}{4t}}}{(4\pi t)^{n/2}}dx,\quad t>0.\label{evo-rel-entropy}
\end{eqnarray}
Moreover, the relative entropy $\mathcal{E}(u,u_b)(t)$ is bounded from above and from below for all time:
\begin{eqnarray*}
-\infty<\lim_{t\rightarrow+\infty}\mathcal{E}(u,u_b)(t)\leq \lim_{t\rightarrow 0}\mathcal{E}(u,u_b)(t)<+\infty.
\end{eqnarray*}

\end{theo}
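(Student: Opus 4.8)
The plan is to establish the three assertions of Theorem \ref{mono-rel-ent} in order: well-definedness of $\mathcal{E}(u,u_b)(t)$ for each $t>0$, the evolution formula \eqref{evo-rel-entropy}, and then the two-sided bound as a consequence. Throughout I write $G(x,t):=(4\pi t)^{-n/2}e^{|x|^2/(4t)}$ for the backward heat kernel weight, so that $\mathcal{E}(u,u_b)(t)=\lim_{R\to\infty}\int_{B(0,R)}t(|\nabla u|^2-|\nabla u_b|^2)G\,dx$. The first step is to show the limit in $R$ exists. Expand the integrand pointwise as $|\nabla u|^2-|\nabla u_b|^2=|\nabla(u-u_b)|^2+2\langle\nabla u_b,\nabla(u-u_b)\rangle$. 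By Theorem \ref{prop-dec-time-diff-sol} and Proposition \ref{autom-reg-infty}, on $\Omega_\lambda$ the rescaled difference $U=f^{n/2}e^f(u-u_b)$ and its first derivatives are bounded, hence $|u-u_b|+|\nabla(u-u_b)|(x,t)\leq C(tf)^{-n/2}e^{-f}$ up to the polynomial factors controlled by Proposition \ref{autom-reg-infty}. Multiplying by $G\sim t^{-n/2}e^{f}$ exactly cancels the $e^{-f}$, leaving an integrand that decays like $|x|^{-n}$ times a polynomially-controlled factor; more care shows the genuine gain is the full $f^{-n/2}$, so after multiplying by $G$ and the volume element $\sim r^{n-1}dr$ one gets an integrable tail. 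The cross term needs that $\nabla u_b$ only grows polynomially (it decays, being regular at infinity), so it is dominated by the same exponentially small factor. Hence the integrand is in $L^1(\mathbb{R}^n)$ for each fixed $t>0$ and the limit exists; this is where the sharp decay rate $f^{-n/2}e^{-f}$ from Section \ref{section-L^2-max-ppe} is indispensable — a merely quadratic decay of $u-u_b$ would not close the integral.

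The second step is the monotonicity formula. Here I would differentiate under the integral sign on $B(0,R)$, use the harmonic map flow equations $\partial_t u=\Delta u+A(u)(\nabla u,\nabla u)$ (and similarly for $u_b$), integrate by parts, and track the boundary terms on $\partial B(0,R)$. The clean way is to note the pointwise identity, valid for any solution,
\begin{eqnarray*}
\partial_t\!\left(t|\nabla u|^2 G\right)=\div\!\left(\text{flux}\right)-2t\left|\partial_t u+\tfrac{x}{2t}\cdot\nabla u\right|^2 G+\big(\text{terms that cancel against the }u_b\text{ copy}\big),
\end{eqnarray*}
where the flux involves $\langle\nabla u,\partial_t u+\tfrac{x}{2t}\cdot\nabla u\rangle$ and lower-order pieces; the $A(u)$ nonlinearity drops out of $\langle\partial_t u,\partial_t u+\tfrac{x}{2t}\cdot\nabla u\rangle$ after using orthogonality $\partial_t u\perp T_uN$ is false in general, so more precisely one uses $\langle\Delta u+A(u)(\nabla u,\nabla u),\cdot\rangle$ structure and the fact that $A(u)(\nabla u,\nabla u)\perp T_uN$ while $\partial_t u+\tfrac{x}{2t}\cdot\nabla u$ is a tangent vector modulo the normal piece $A(u)(\nabla u,\nabla u)$ itself — this is exactly the Bochner-type bookkeeping that makes the square $|\partial_t u+\tfrac{x}{2t}\cdot\nabla u|^2$ appear. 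Subtracting the identical identity for $u_b$ and integrating over $B(0,R)$, the divergence term becomes a boundary integral over $\partial B(0,R)$ which, thanks to the regularity-at-infinity bounds (Definition \ref{def-reg-inf}) and the exponential decay of the differences, tends to $0$ as $R\to\infty$; moreover the $u_b$-copy of the $\big|\partial_t u_b+\tfrac{x}{2t}\cdot\nabla u_b\big|^2$ term vanishes since $u_b$ is an expander. This yields \eqref{evo-rel-entropy}. I should also justify that the differentiation and the $R\to\infty$ limit commute, which again follows from uniform exponential control of the integrands on $\Omega_\lambda$ together with standard interior estimates on the compact region.

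The third step is immediate from the first two: \eqref{evo-rel-entropy} shows $t\mapsto\mathcal{E}(u,u_b)(t)$ is non-increasing, so $\lim_{t\to\infty}\mathcal{E}\leq\lim_{t\to 0}\mathcal{E}$. For the upper bound on $\lim_{t\to 0}\mathcal{E}(u,u_b)(t)$ I would use that $u$ and $u_b$ both come out of the same smooth $0$-homogeneous $u_0$: rescaling $(x,t)\mapsto(x/\sqrt t,1)$ turns the time-$t$ slice into a fixed picture whose integrand converges, as $t\to0$, to the corresponding relative-energy integrand for the cone map composed with the self-similar profiles, which is finite; alternatively one bounds $\mathcal{E}(u,u_b)(t)$ by its value at some fixed small $t_0$ using monotonicity after observing $\mathcal{E}$ is continuous in $t$ on $(0,\infty)$, and controls the behavior as $t\to0$ by the fact that $u(\cdot,t)$ and $u_b(\cdot,t)$ are uniformly regular at infinity with constants independent of $t$. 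For the lower bound on $\lim_{t\to\infty}\mathcal{E}(u,u_b)(t)$, integrate \eqref{evo-rel-entropy} from $1$ to $\infty$: it suffices to show $\int_1^\infty 2t\int_{\mathbb{R}^n}|\partial_t u+\tfrac{x}{2t}\cdot\nabla u|^2 G\,dx\,dt<\infty$, which, after the self-similar rescaling, reduces to a fixed weighted $L^2$ bound on the obstruction tensor; Theorem \ref{comp-exp-sol-gal-sol} gives exactly the exponential-in-$|x|$ decay of $\partial_t u+\tfrac{x}{2t}\cdot\nabla u$ on $\Omega_\lambda$, and the interior estimates handle the compact part, so the space-time integral is finite, whence $\mathcal{E}$ is bounded below. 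The main obstacle I anticipate is the rigorous interchange of limits and differentiation and, relatedly, showing the boundary flux terms at $|x|=R$ vanish in the limit — this is the place where one must feed in the full strength of Proposition \ref{autom-reg-infty} (not just the decay of $u-u_b$ but of all its derivatives, suitably rescaled), since the flux contains first derivatives of the difference and of $u_b$ weighted by the exponentially large factor $G$.
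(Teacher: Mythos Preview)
Your approach differs from the paper's in two places, and the second contains a genuine gap.

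For well-definedness you decompose $|\nabla u|^2-|\nabla u_b|^2=|\nabla(u-u_b)|^2+2\langle\nabla u_b,\nabla(u-u_b)\rangle$ and try to show each piece is in $L^1(G\,dx)$. The cross term is more delicate than you indicate: writing $u-u_b=f^{-n/2}e^{-f}U$, the dominant part of $\nabla(u-u_b)$ comes from differentiating the exponential and is of size $f^{1/2}f^{-n/2}e^{-f}$, so $|\langle\nabla u_b,\nabla(u-u_b)\rangle|G\lesssim f^{-1/2}\cdot f^{(1-n)/2}e^{-f}\cdot e^{f}=f^{-n/2}\sim|x|^{-n}$, which against $r^{n-1}\,dr$ gives $r^{-1}\,dr$ --- logarithmically divergent, contrary to your claim of an integrable tail. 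This can be rescued by noting that the dangerous piece is actually a \emph{radial} derivative, and $\nabla_{\nabla f}u_b=-\Delta u_b-A(u_b)(\nabla u_b,\nabla u_b)=O(f^{-1})$ from the expander equation; that extra $f^{-1}$ closes the integral. The paper bypasses this entirely by integrating by parts, landing on $\langle u-u_b,\,A(u)(\nabla u,\nabla u)+A(u_b)(\nabla u_b,\nabla u_b)\rangle G=O(f^{-n/2-1})$ and $\langle u-u_b,\,\partial_tu+\tfrac{x}{2}\cdot\nabla u\rangle G=O(f^{-n}e^{-f})$, plus a boundary term that is $O(R^{-2})$.

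The more serious gap is your lower bound. You assert that $\int_1^\infty 2t\int_{\mathbb{R}^n}\big|\partial_tu+\tfrac{x}{2t}\cdot\nabla u\big|^2 G_t\,dx\,dt<\infty$ follows from Theorem~\ref{comp-exp-sol-gal-sol}. But that theorem gives $|t\partial_tu+\tfrac{x}{2}\cdot\nabla u|\leq Cf(x,t)^{-n/2}e^{-f(x,t)}$ with $C$ \emph{uniform in $t$}. In the self-similar variable $y=x/\sqrt t$ (so $f(x,t)=\bar f(y):=|y|^2/4+n/2$) this reads $|t\,\partial_t\bar u(y,t)|\leq C\bar f(y)^{-n/2}e^{-\bar f(y)}$, whence
\[
2t\int_{\mathbb{R}^n}\Big|\partial_tu+\tfrac{x}{2t}\cdot\nabla u\Big|^2 G_t\,dx
=2t\int_{\mathbb{R}^n}|\partial_t\bar u(y,t)|^2\frac{e^{|y|^2/4}}{(4\pi)^{n/2}}\,dy
\ \lesssim\ \frac{1}{t}\int_{\mathbb{R}^n}\bar f^{-n}e^{-\bar f}\,dy\ =\ \frac{C'}{t},
\]
and the interior region $|y|^2\leq4\lambda$ contributes the same $O(t^{-1})$ by regularity at infinity. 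Integrating from $1$ to $T$ yields only $O(\log T)$, so this route cannot establish that $\lim_{t\to\infty}\mathcal{E}(u,u_b)(t)>-\infty$. The paper does \emph{not} integrate the monotonicity formula; it shows directly that $|\mathcal{E}(u,u_b)(t)|\leq C$ uniformly in $t$ by splitting into $B(0,\sqrt t)$ (where $t|\nabla u|^2\leq C$ and $G_t\leq C t^{-n/2}$) and $\{|x|>\sqrt t\}$, on which the same integration-by-parts identity used for well-definedness produces terms like $t\int_{|x|>\sqrt t}(tf)^{-n/2}(tf)^{-1}\,dx$ that rescale to a $t$-independent convergent integral. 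That uniform bound is what gives both $\lim_{t\to0}\mathcal{E}<+\infty$ and $\lim_{t\to\infty}\mathcal{E}>-\infty$.
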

\begin{proof}
We start by proving that the right-hand side of (\ref{def-rel-entropy}) is well-defined. By rescaling arguments, it suffices to prove it at time $t=1$.
To do so, define the backward heat kernel evaluated at time $t=1$ by:
$$G(x):=\frac{e^{\frac{|x|^2}{4}}}{(4\pi )^{n/2}},$$
where $x\in\mathbb{R}^n$, and
observe that for every positive radius $R$:
\begin{eqnarray*}
\int_{B(0,R)}\left(|\nabla u|^2-|\nabla u_b|^2 \right)Gdx&=&\int_{B(0,R)}\left<u-u_b,-\left(\Delta+\frac{x}{2}\cdot\nabla\right)(u+u_b)\right>Gdx\\
&&+\int_{S(0,R)}\left<u-u_b,\nabla_{\mathbf{n}}(u+u_b)\right>Gd\sigma_{S(0,R)}\\
&=&\int_{B(0,R)}\left<u-u_b,A(u)(\nabla u,\nabla u)+A(u_b)(\nabla u_b,\nabla u_b)\right>Gdx\\
&&-\int_{B(0,R)}\left<u-u_b,\partial_tu+\frac{x}{2}\cdot\nabla u\right>Gdx\\
&&+\int_{S(0,R)}\left<u-u_b,\nabla_{\mathbf{n}}(u+u_b)\right>Gd\sigma_{S(0,R)}.
\end{eqnarray*}
Now, by Propositions \ref{prop-dec-time-diff-sol} and \ref{comp-exp-sol-gal-sol} together with the fact that $u$ and $u_b$ are regular at infinity,  
\begin{eqnarray*}
\left<u-u_b,A(u)(\nabla u,\nabla u)+A(u_b)(\nabla u_b,\nabla u_b)\right>G&=&\textit{O}(f^{-n/2-1}),\\
\left<u-u_b,\partial_tu+\frac{x}{2}\cdot\nabla u\right>G&=&\textit{O}(f^{-n}e^{-f}),\\
<u-u_b,\nabla_{\mathbf{n}}(u+u_b)>G&=&\textit{O}(f^{-n/2-1/2}).
\end{eqnarray*}
(Here, we do not use the sharp decay of the radial derivatives of $u$ and $u_b$.)

Therefore, it implies that the limit $$\lim_{R\rightarrow+\infty}\int_{B(0,R)}\left(|\nabla u|^2-|\nabla u_b|^2 \right)Gdx,$$ is well-defined.

Now, we prove the monotonicity result.
Firstly, note that the right-hand side of $(\ref{evo-rel-entropy})$ is well-defined by Proposition \ref{comp-exp-sol-gal-sol}.

In order to prove (\ref{evo-rel-entropy}), we proceed as in \cite{Str-Har-Map} by introducing the rescaled map $u_t(\cdot):=u(\sqrt{t}\cdot,t)$ if $u$ is a solution to the harmonic map flow. 
By the co-area formula, 
\begin{eqnarray*}
\frac{d}{dt}\bigg\rvert_{t=1}\int_{B(0,R)}t|\nabla u|^2(x,t)\frac{e^{\frac{|x|^2}{4t}}}{(4\pi t)^{n/2}}dx&=&\frac{d}{dt}\bigg\rvert_{t=1}\int_{B(0,R/\sqrt{t})}|\nabla u_t|^2(x)\frac{e^{\frac{|x|^2}{4}}}{(4\pi)^{n/2}}dx\\
&=&2\int_{B(0,R)}\left<\nabla u,\nabla\left(\partial_tu+\frac{x}{2}\cdot\nabla u\right)\right>Gdx\\
&&-\frac{R}{2}\int_{S(0,R)}|\nabla u|^2Gdx.
\end{eqnarray*}
Now, let us handle the first term of the right-hand side of the previous equality. By integrating by parts once:
\begin{eqnarray*}
\int_{B(0,R)}\left<\nabla u,\nabla\left(\partial_tu+\frac{x}{2}\cdot\nabla u\right)\right>Gdx&=&-\int_{B(0,R)}\left<\Delta u+\frac{x}{2}\cdot\nabla u,\partial_tu+\frac{x}{2}\cdot\nabla u\right>Gdx\\
&&+\int_{S(0,R)}\left<\nabla_{\partial_r}u,\partial_tu+\frac{x}{2}\cdot\nabla u\right>Gd\sigma_{S(0,R)}.
\end{eqnarray*}
Now, as $u$ is a solution to the harmonic map flow, $\partial_tu-\Delta u\perp \partial_t u+\frac{x}{2}\cdot\nabla u$, so:
\begin{eqnarray*}
\int_{B(0,R)}\left<\nabla u,\nabla\left(\partial_tu+\frac{x}{2}\cdot\nabla u\right)\right>Gdx&=&-\int_{B(0,R)}\left|\partial_tu+\frac{x}{2}\cdot\nabla u\right|^2Gdx\\
&&+\int_{S(0,R)}\left<\nabla_{\partial_r}u,\partial_tu+\frac{x}{2}\cdot\nabla u\right>Gd\sigma_{S(0,R)}\\
&=&-\int_{B(0,R)}\left|\partial_tu+\frac{x}{2}\cdot\nabla u\right|^2Gdx+\textit{O}\left(R^{-2}\right),
\end{eqnarray*}
where we used Proposition \ref{comp-exp-sol-gal-sol} to estimate the boundary term and the fact that $\nabla u=\textit{O}(f^{-1/2})$. To sum it up, we get:
\begin{eqnarray*}
\frac{d}{dt}\bigg\rvert_{t=1}\int_{B(0,R)}t|\nabla u|^2(x,t)\frac{e^{\frac{|x|^2}{4t}}}{(4\pi t)^{n/2}}dx&=&
-2\int_{B(0,R)}\left|\partial_tu+\frac{x}{2}\cdot\nabla u\right|^2Gdx\\
&&-\frac{R}{2}\int_{S(0,R)}|\nabla u|^2Gd\sigma_{S(0,R)}+\textit{O}\left(R^{-2}\right).
\end{eqnarray*}
This implies when one applies the previous calculation to the background expander $u_b$:
\begin{eqnarray*}
\frac{d}{dt}\bigg\rvert_{t=1}\int_{B(0,R)}t(|\nabla u|^2-|\nabla u_b|^2)(x,t)\frac{e^{\frac{|x|^2}{4t}}}{(4\pi t)^{n/2}}dx&=&
-2\int_{B(0,R)}\left|\partial_tu+\frac{x}{2}\cdot\nabla u\right|^2Gdx\\
&&-\frac{R}{2}\int_{S(0,R)}(|\nabla u|^2-|\nabla u_b|^2)Gd\sigma_{S(0,R)}\\
&&+\textit{O}\left(R^{-2}\right).
\end{eqnarray*}
What remains to be done is the analysis of the boundary integral of the previous estimate: observe that

\begin{eqnarray*}
\int_{S(0,R)}(|\nabla u|^2-|\nabla u_b|^2)Gd\sigma_{S(0,R)}&=&\int_{S(0,R)}<\nabla (u-u_b),\nabla (u+u_b)>Gd\sigma_{S(0,R)}\\
&=&\int_{S(0,R)}\left<\nabla ((u-u_b)G),\nabla (u+u_b)\right>d\sigma_{S(0,R)}\\
&&-\int_{S(0,R)}\left<u-u_b,\nabla_{\frac{x}{2}} (u+u_b)\right>Gd\sigma_{S(0,R)}\\
&=&\int_{S(0,R)}\left<\nabla ((u-u_b)G),\nabla (u+u_b)\right>d\sigma_{S(0,R)}\\
&&+\int_{S(0,R)}\left<\textit{O}(f^{-n/2}),\nabla_{\frac{x}{2}} (u+u_b)\right>d\sigma_{S(0,R)}.
\end{eqnarray*}
Now, since $\nabla_{\frac{x}{2}} (u+u_b)=\textit{O}(f^{-1})$, 
\begin{eqnarray*}
\int_{S(0,R)}\left<\textit{O}(f^{-n/2}),\nabla_{\frac{x}{2}} (u+u_b)\right>d\sigma_{S(0,R)}=\textit{O}(R^{-3}),
\end{eqnarray*}

and we are left with the understanding of the term $\nabla ((u-u_b)G)$. Thanks to Proposition \ref{autom-reg-infty} and with the same notations, 
\begin{eqnarray*}
\nabla ((u-u_b)G)=\nabla (f^{-n/2}U)=f^{-n/2}\nabla U+\textit{O}(f^{-(n+1)/2})=\textit{O}(f^{-n/2}).
\end{eqnarray*}
Consequently,
\begin{eqnarray*}
\int_{S(0,R)}\left<\nabla ((u-u_b)G),\nabla (u+u_b)\right>d\sigma_{S(0,R)}=\textit{O}(R^{-2}).
\end{eqnarray*}
Note the crucial decay given by Proposition \ref{autom-reg-infty}, a rough decay of the form $G\nabla (u-u_b)=\textit{O}(f^{(1-n)/2})$ would not have been sufficient to conclude the monotonicity formula.

Finally, 
\begin{eqnarray*}
\frac{d}{dt}\bigg\rvert_{t=1}\int_{B(0,R)}t(|\nabla u|^2-|\nabla u_b|^2)(x,t)\frac{e^{\frac{|x|^2}{4t}}}{(4\pi t)^{n/2}}dx&=&
-2\int_{B(0,R)}\left|\partial_tu+\frac{x}{2}\cdot\nabla u\right|^2Gdx+\textit{O}\left(R^{-1}\right).
\end{eqnarray*}
By integrating first in time and by reasoning as we did previously to ensure the finiteness of the relative entropy, one obtains by letting $R$ go to $+\infty$ the desired result.\\

We still need to prove that the limits of $\mathcal{E}(u,u_b)(t)$ as $t$ goes to $0$ or to $+\infty$ are finite. To do so, we cut the integral into two parts as follows:
\begin{eqnarray*}
\mathcal{E}(u,u_b)(t)&=&\int_{B(0,\sqrt{t})}t(|\nabla u|^2-|\nabla u_b|^2)G_tdx+\int_{|x|>\sqrt{t}}t(|\nabla u|^2-|\nabla u_b|^2)G_tdx,
\end{eqnarray*}
where $$G_t(x):=\frac{e^{\frac{|x|^2}{4t}}}{(4\pi t)^{\frac{n}{2}}}.$$

Now, on $B(0,\sqrt{t})$, one has:
\begin{eqnarray*}
|\nabla u|^2+|\nabla u_b|^2\leq \frac{C}{|x|^2+t},\quad G_t(x)\leq \frac{e^{\frac{1}{4}}}{(4\pi t)^{\frac{n}{2}}}.
\end{eqnarray*}
This implies:
\begin{eqnarray*}
\left|\int_{B(0,\sqrt{t})}t(|\nabla u|^2-|\nabla u_b|^2)G_tdx\right|\leq\frac{C}{t^{n/2}}\vol B(0,\sqrt{t})\leq C,
\end{eqnarray*}
where $C$ is a positive constant independent of time that may vary from line to line. 

On the region $\R^n\setminus B(0,\sqrt{t})$, by integrating by parts,
\begin{eqnarray*}
\int_{|x|>\sqrt{t}}t(|\nabla u|^2-|\nabla u_b|^2)G_tdx&=&\int_{|x|>\sqrt{t}}t\left<u-u_b,A(u)(\nabla u,\nabla u)+A(u_b)(\nabla u_b,\nabla u_b)\right>G_tdx\\
&&-\int_{|x|>\sqrt{t}}t\left<u-u_b,\partial_tu+\frac{x}{2t}\cdot\nabla u\right>G_tdx\\
&&-\int_{S(0,\sqrt{t})}t<u-u_b,\nabla_{\mathbf{n}}(u+u_b)>G_td\sigma_{S(0,\sqrt{t})}\\
&=&I(t)+II(t)+III(t).
\end{eqnarray*}
We only prove the boundedness in time of $I$, the terms $II$ and $III$ can be handled similarly.

By Proposition \ref{prop-dec-time-diff-sol}, 
\begin{eqnarray*}
|I(t)|&\leq& Ct\int_{|x|>\sqrt{t}}(tf)^{-n/2}\cdot (tf)^{-1}dx\\
&\leq& C\int_{|y|>1}\frac{dy}{(|y|^2+1)^{\frac{n}{2}+1}}.
\end{eqnarray*}

\end{proof}

Recall that if $u:\R^n\times\R_+\rightarrow N$ is a smooth solution of the harmonic map flow coming out of a smooth $0$-homogeneous map $u_0:\R^n\setminus\{0\}\rightarrow N$ then the family of rescaled maps $(u_{\lambda})_{\lambda>0}$ is defined by (\ref{resc-cond}).
\begin{theo}(Ilmanen's conjecture on expanders: the smooth case)\label{ilmanen-smooth-conj}

Let $n\geq 3$. 
Let $u:\R^n\times\R_+\rightarrow N$ be a smooth solution of the harmonic map flow coming out of a smooth $0$-homogeneous map $u_0:\R^n\setminus\{0\}\rightarrow N$. Assume $u$ is regular at infinity. Then the family of rescaled maps $(u_{\lambda})_{\lambda>0}$ is a compact set in the smooth topology of solutions of the harmonic map flow coming out of the same initial condition $u_0$. In particular, as $ \lambda$ goes to $0$ or $+\infty$, there is a subsequence $(u_{\lambda_i})_i$ converging to a smooth expanding solution coming out of $u_0$ that is regular at infinity. 
\end{theo}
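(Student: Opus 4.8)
The plan is to run a blow-up/blow-down argument for the one-parameter family $(u_\lambda)_{\lambda>0}$, combining two ingredients that are already available: the parabolic scale-invariance built into Definition \ref{def-reg-inf}, and the monotonicity of the relative entropy from Theorem \ref{mono-rel-ent}. As in Ilmanen's conjecture (compare Theorem \ref{Ilm-conj-rough}), I fix a background expander $u_b$ that is regular at infinity and comes out of the same $0$-homogeneous map $u_0$.

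\emph{Uniform estimates and precompactness.} Since $u_0$ is $0$-homogeneous, every rescaling $u_\lambda$ is again a smooth solution of the harmonic map flow coming out of $u_0$, and the bounds in Definition \ref{def-reg-inf} are invariant under $(x,t)\mapsto(\lambda x,\lambda^2t)$: if $u$ is regular at infinity on $\Omega_{\lambda_0}$ with constants $(C_k)_k$, then every $u_\lambda$ is regular at infinity on the \emph{same} region $\Omega_{\lambda_0}$ with the \emph{same} constants $(C_k)_k$. The complement of $\Omega_{\lambda_0}$ is a parabolic neighbourhood of the origin, on which the interior (Shi-type) estimates for the harmonic map flow into the compact target $N$ apply, again in a scale-invariant way, so that $\sup_{\R^n\times\R_+}(|x|^2+t)^{k/2}|\nabla^k u_\lambda|$ is bounded uniformly in $\lambda$ for every $k$. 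By Arzel\`a--Ascoli, any sequence $\lambda_i$ has a subsequence along which $u_{\lambda_i}\to u_\infty$ in $C^\infty_{\mathrm{loc}}(\R^n\times(0,\infty))$; the limit solves the harmonic map flow, is regular at infinity with the same constants, and --- because the estimates on $\Omega_{\lambda_0}$ force $u_{\lambda_i}(x,t)\to u_0(x/|x|)$ as $t\to0$, uniformly in $i$ on compact subsets of $\R^n\setminus\{0\}$ --- it comes out of $u_0$. Hence the $C^\infty_{\mathrm{loc}}$-closure of $\{u_\lambda:\lambda>0\}$ is compact and consists of solutions of the harmonic map flow coming out of $u_0$ that are regular at infinity.

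\emph{The entropy along the family and identification of the limit.} A change of variables in (\ref{def-rel-entropy}), using that $u_b$ is an expander, gives the scaling identity $\mathcal{E}(u_\lambda,u_b)(t)=\mathcal{E}(u,u_b)(\lambda^2 t)$ for all $\lambda,t>0$; the time-weight $t$ in (\ref{def-rel-entropy}) is exactly what makes this hold. By Theorem \ref{mono-rel-ent}, $s\mapsto\mathcal{E}(u,u_b)(s)$ is non-increasing with finite limits $\ell_0:=\lim_{s\to0}\mathcal{E}(u,u_b)(s)$ and $\ell_\infty:=\lim_{s\to\infty}\mathcal{E}(u,u_b)(s)$, and the monotonicity formula (\ref{evo-rel-entropy}) applied to each $u_\lambda$ (legitimate, since $u_\lambda$ and $u_b$ satisfy the hypotheses of Theorem \ref{mono-rel-ent}) gives, for $0<a<b$,
\[
\int_a^b 2t\int_{\R^n}\left|\partial_t u_\lambda+\tfrac{x}{2t}\cdot\nabla u_\lambda\right|^2\frac{e^{|x|^2/4t}}{(4\pi t)^{n/2}}\,dx\,dt=\mathcal{E}(u,u_b)(a\lambda^2)-\mathcal{E}(u,u_b)(b\lambda^2).
\]
As $\lambda\to0$ the right-hand side tends to $\ell_0-\ell_0=0$, and as $\lambda\to+\infty$ it tends to $\ell_\infty-\ell_\infty=0$. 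Now fix $\lambda_i\to0$ (the case $\lambda_i\to+\infty$ is identical) and pass to a subsequence with $u_{\lambda_i}\to u_\infty$ as in the previous paragraph; then $\partial_t u_{\lambda_i}+\tfrac{x}{2t}\cdot\nabla u_{\lambda_i}\to\partial_t u_\infty+\tfrac{x}{2t}\cdot\nabla u_\infty$ locally uniformly, and since the weight is bounded below by a positive constant on each slab $\R^n\times[a,b]$, Fatou's lemma together with the identity above yields $\int_a^b 2t\int_{\R^n}|\partial_t u_\infty+\tfrac{x}{2t}\cdot\nabla u_\infty|^2\frac{e^{|x|^2/4t}}{(4\pi t)^{n/2}}\,dx\,dt=0$ for every $0<a<b$. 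Hence $\partial_t u_\infty+\tfrac{x}{2t}\cdot\nabla u_\infty\equiv0$ on $\R^n\times(0,\infty)$, i.e. $s\mapsto u_\infty(sx,s^2t)$ is constant, so $u_\infty$ is an expanding solution --- smooth, regular at infinity and coming out of $u_0$ by the previous paragraph.

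\emph{Main obstacle.} I expect the crux to be the uniform $C^\infty$-compactness of the rescaled family \emph{without loss of the boundary datum $u_0$}: the scale-invariance of Definition \ref{def-reg-inf} is what both rules out concentration near the spatial origin --- the most dangerous point under blow-up/blow-down --- and guarantees that the limit still attains $u_0$ at infinity. The other genuinely delicate input, namely that the entropy defect $\int(\text{obstruction})^2$ is finite in the first place, so that the second step has content, is already provided by the sharp asymptotics of Section \ref{section-L^2-max-ppe} (Theorems \ref{prop-dec-time-diff-sol}, \ref{comp-exp-sol-gal-sol} and \ref{autom-reg-infty}) through Theorem \ref{mono-rel-ent}.
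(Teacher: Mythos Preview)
Your proof is correct and follows essentially the same strategy as the paper: scale-invariant derivative bounds give $C^\infty_{\mathrm{loc}}$ compactness, the scaling identity $\mathcal{E}(u_\lambda,u_b)(t)=\mathcal{E}(u,u_b)(\lambda^2 t)$ combined with the finiteness of $\ell_0,\ell_\infty$ forces the time-integrated obstruction to vanish along the sequence, and the limit inherits the initial condition $u_0$ from the uniform rate of convergence at spatial infinity.

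The one genuine difference worth noting is how you pass to the limit in the integral of the obstruction tensor. The paper invokes dominated convergence and, to produce a dominating function, appeals to Proposition \ref{comp-exp-sol-gal-sol} applied to each $u_\lambda$ with constants \emph{uniform in $\lambda$} (which is legitimate since the hypothesis $|t\partial_t(u_\lambda-u_b)+\tfrac{x}{2}\cdot\nabla(u_\lambda-u_b)|+(|x|^2+t)|\nabla u_\lambda|^2\le c_0$ holds with $c_0$ independent of $\lambda$ by scale invariance). Your route via Fatou is more economical: since the integrands are nonnegative and converge pointwise, Fatou immediately gives $\int|\partial_t u_\infty+\tfrac{x}{2t}\cdot\nabla u_\infty|^2 G_t\,dx\,dt\le\liminf\int(\cdots)=0$, bypassing the need for a uniform integrable majorant. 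Both arguments are valid; yours avoids re-invoking the exponential decay machinery at this stage, while the paper's has the side benefit of showing that the relative entropy itself converges (not just that the limit is an expander).
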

\begin{proof}
By the definition of the rescaled maps $u_{\lambda}$ together with the fact that $u$ is regular at infinity, the following inequality holds for every integer $k\geq 0$ and $l\geq0$:
\begin{eqnarray*}
|\nabla^k\partial_t^l( u_{\lambda})|(x,t)\leq\frac{C_{k,l}}{(|x|^2+t)^{\frac{k}{2}+l}},\quad\forall (x,t)\in\R^n\times\R_+,
\end{eqnarray*}
for some uniform positive constant $C_{k,l}$.
By Arzela-Ascoli's Theorem, the family of maps $(u_{\lambda})_{\lambda>0}$ is compact in the smooth topology. Now, observe that:
\begin{eqnarray*}
\mathcal{E}(u_{\lambda},(u_b)_{\lambda})(t)=\mathcal{E}(u_{\lambda},u_b)(t)=\mathcal{E}(u,u_b)(\lambda^2t), \quad t>0,\quad \lambda >0.
\end{eqnarray*}

Therefore, by Theorem \ref{mono-rel-ent}, as $\lambda$ goes to $+\infty$ (or $0$),  $\mathcal{E}(u_{\lambda},(u_b)_{\lambda})(t)$ converges to a finite value independent of time. Finally, $(u_{\lambda})_{\lambda>0}$ subconverges to a smooth solution of the harmonic map flow $u_{\infty}$ that is regular at infinity.

 We claim that $u_{\infty}$ is necessarily an expanding solution that comes out of $u_0$. 

Indeed, if $s<t$, and for any $\lambda >0$:
\begin{eqnarray*}
\mathcal{E}(u_{\lambda},u_b)(t)-\mathcal{E}(u_{\lambda},u_b)(s)&=&-2\int_{[s,t]\times\R^n}\tau\left|\partial_{\tau}u_{\lambda}+\frac{x}{2\tau}\cdot\nabla u_{\lambda}\right|^2G_{\tau}dxd\tau.
\end{eqnarray*}
By the previous observations, we are done if one can invert the limits in the previous integral by using Lebesgue's Theorem. Since 
\begin{eqnarray*}
\left|t\partial_t(u_{\lambda}-u_b)+\frac{x}{2}\cdot\nabla (u_{\lambda}-u_b)\right|+(|x|^2+t)|\nabla u_{\lambda}|^2\leq c_0,\quad \text{on $\R^n\times\R_+$},
\end{eqnarray*}
 for a positive constant $c_0$ independent of $\lambda$, one can apply Proposition \ref{comp-exp-sol-gal-sol} to get:
\begin{eqnarray*}
\left|t \partial_{t}u_{\lambda}+\frac{x}{2}\cdot\nabla u_{\lambda}\right|\leq Cf^{-\frac{n}{2}}e^{-f},\quad \text{on $\{x\in\R^n||x|^2>4\mu t\}$},
\end{eqnarray*}
for some large $\mu\geq \mu(n,c_0)>0$ and for some positive constant $C$ independent of $\lambda$.

Finally, since the maps $u_{\lambda}$, $\lambda>0$, are uniformly regular at infinity, the harmonic map flow equation shows that there exists a uniform positive constant $C$ such that: $|u_{\lambda}(x,t)-u_0(x/|x|)|\leq C/(|x|^2+t)$ holds for $(x,t)\in\R^n\times\R_+$ and $\lambda>0$. This shows immediately that $u_{\infty}$ is coming out of $u_0$ when one lets $\lambda$ go to $+\infty$ (or $0$). This finishes the proof of Theorem \ref{ilmanen-smooth-conj}.
\end{proof}

\section{Pohozaev identity for expanders of the harmonic map flow}\label{section-pohozaev-identity}
In this section, we derive Pohozaev identities for solutions of the harmonic map flow. The main result when applied to expanding solutions is a static monotonicity formula in the spirit of the well-known monotonicity formula established by Struwe for solutions of the Harmonic map flow \cite{Str-Har-Map}.

Define formally the pointwise energy of a map $u:\R^n\times(0,T)\rightarrow N\subset\R^m$ by $$e(u):=\frac{|\nabla u|^2}{2},\text{ on $\R^n\times (0,T)$}.$$
The following proposition is a straightforward adaptation of [Proposition $3.16$, \cite{Der-Lam-HMF}]:
\begin{prop}[Pohozaev formula]\label{Poho-for-gal-sol}
Let $u:\R^n\times(0,T)\rightarrow N\subset\R^m$ be a (smooth) solution to the harmonic map flow. Then, for any $C^1$ vector field $\zeta:\R^n\times(0,T)\rightarrow \R^m$ compactly supported in space,
\begin{eqnarray*}
<\partial_tu,\nabla_{\zeta}u>_{L^2(\R^n\times[t_1,t_2])}&=&<e(u),\div\zeta>_{L^2(\R^n\times[t_1,t_2])}\\
&&-\frac{1}{2}<\mathcal{L}_{\zeta}(\eucl),\nabla u\otimes\nabla u>_{L^2(\R^n\times[t_1,t_2])},\\
\frac{1}{2}<\mathcal{L}_{\zeta}(\eucl),\nabla u\otimes\nabla u>&:=&\nabla_i\zeta_j\nabla_iu_k\nabla_ju_k,
\end{eqnarray*}
where $\mathcal{L}_{\zeta}(\eucl)$ denotes the Lie derivative of the Euclidean metric along the vector field $\zeta$.

And, for any $C^1$ function $\theta:\R^n\times(0,T)\rightarrow \R$ compactly supported in space, and $0<t_1<t_2<T$,

\begin{eqnarray*}
&&\int_{L^2(\R^n\times[t_1,t_2])}|\partial_tu|^2\theta dxdt+\left[\int_{\R^n}e(u)\theta dx\right]_{t_1}^{t_2}=\int_{\R^n\times[t_1,t_2]}e(u)\partial_t\theta-<\nabla_{\nabla \theta}u,\partial_tu> dxdt.
\end{eqnarray*}

\end{prop}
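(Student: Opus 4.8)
The plan is to test the harmonic map flow equation against suitable quantities and integrate by parts, exactly as in [Proposition $3.16$, \cite{Der-Lam-HMF}]. The only structural input is the orthogonality $\partial_tu-\Delta u\perp T_uN$ noted right after (\ref{eq-HMP}); the compact support of $\zeta$ (resp.\ $\theta$) in the space variable will kill every spatial boundary term and legitimize differentiation under the integral sign. The statement is to be read with $\zeta$ a compactly supported, time-dependent vector field on the domain $\R^n$, so that $\div\zeta$, $\mathcal{L}_\zeta(\eucl)$ and $\nabla_\zeta u=\zeta^i\partial_iu$ all make sense.

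For the first identity I would first observe that, $u$ being $N$-valued, each $\partial_iu(x,t)$ lies in $T_{u(x,t)}N$, hence so does $\nabla_\zeta u=\zeta^i\partial_iu$; since $\partial_tu-\Delta u\perp T_uN$ this gives the pointwise identity $\langle\partial_tu,\nabla_\zeta u\rangle=\langle\Delta u,\nabla_\zeta u\rangle$. Integrating over $\R^n$ and integrating by parts (twice),
\[\int_{\R^n}\langle\Delta u,\zeta^i\partial_iu\rangle\,dx=-\int_{\R^n}\partial_j\zeta^i\,\partial_ju_k\,\partial_iu_k\,dx-\int_{\R^n}\zeta^i\,\partial_i\Big(\tfrac{|\nabla u|^2}{2}\Big)\,dx=-\tfrac12\int_{\R^n}\langle\mathcal{L}_\zeta(\eucl),\nabla u\otimes\nabla u\rangle\,dx+\int_{\R^n}e(u)\,\div\zeta\,dx,\]
where in the first term I used the symmetry of $\nabla u\otimes\nabla u$ in its two indices to pass from $\partial_j\zeta^i\,\partial_ju_k\,\partial_iu_k$ to $\tfrac12(\mathcal{L}_\zeta\eucl)_{ij}\nabla_iu_k\nabla_ju_k$, and in the second term I integrated by parts once more. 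Integrating over $[t_1,t_2]$ finishes the first formula.

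For the energy identity I would argue similarly: the curve $t\mapsto u(x,t)$ lies in $N$, so $\partial_tu\in T_uN$ and orthogonality yields $|\partial_tu|^2=\langle\Delta u,\partial_tu\rangle$ pointwise. Multiplying by $\theta$, integrating over $\R^n$ and integrating by parts in space,
\[\int_{\R^n}|\partial_tu|^2\theta\,dx=-\int_{\R^n}\theta\,\partial_t e(u)\,dx-\int_{\R^n}\langle\nabla_{\nabla\theta}u,\partial_tu\rangle\,dx,\]
and then $-\int_{\R^n}\theta\,\partial_t e(u)\,dx=-\frac{d}{dt}\int_{\R^n}e(u)\theta\,dx+\int_{\R^n}e(u)\,\partial_t\theta\,dx$; integrating over $[t_1,t_2]$ produces the stated identity, the time boundary term $\big[\int_{\R^n}e(u)\theta\,dx\big]_{t_1}^{t_2}$ moving to the left-hand side.

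I do not expect a genuine obstacle here — the proposition is bookkeeping. The only points needing care are the correct identification of which vectors are tangent to $N$ (so that the second fundamental form term drops in both computations) and the index manipulations reconciling $\nabla_i\zeta_j\nabla_iu_k\nabla_ju_k$ with $\tfrac12(\mathcal{L}_\zeta\eucl)_{ij}\nabla_iu_k\nabla_ju_k$, which is precisely the symmetry of $\nabla u\otimes\nabla u$; the interchange of $\partial_t$ and $\int_{\R^n}dx$ is justified by smoothness together with compact support in space.
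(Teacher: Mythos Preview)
Your argument is correct: the orthogonality $\partial_tu-\Delta u\perp T_uN$ together with $\nabla_\zeta u,\partial_tu\in T_uN$ reduces both identities to integrations by parts, and your index bookkeeping is accurate. The paper does not actually write out a proof here---it simply refers to \cite{Der-Lam-HMF}---so there is nothing to compare; what you have is precisely the intended computation.
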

By adapting the proof of Proposition \ref{Poho-for-gal-sol}, one gets the following static Pohozaev identity for expanding solutions:
\begin{coro}[Pohozaev identity for expanders]\label{Poho-exp}
Let $u:\R^n\rightarrow N\subset\R^m$ be a smooth expanding solution to the harmonic map flow. Then, for any radius $R>0$,
\begin{eqnarray*}
R^{-1}\int_{S(0,R)}|\nabla_{r\partial r}u|^2d\sigma_f+\int_{B(0,R)}\left(\frac{r^2}{2}+n-2\right)|\nabla u|^2d\mu_f=R^{-1}\int_{S(0,R)}|\nabla^{sph}u|_{sph}^2d\sigma_f,
\end{eqnarray*}
where $f(x):=|x|^2/4$, $d\mu_f:=e^fdx$ and where $d\sigma_f:=e^fd\sigma_{\partial B(0,R)}$ denotes the induced measure on $\partial B(0,R)$. 
In other words:
\begin{eqnarray*}
\partial_R\left(R^{2-n}\int_{B(0,R)}|\nabla u|^2d\mu_f\right)=R^{1-n}\int_{B(0,R)}\frac{r^2}{2}|\nabla u|^2d\mu_f+2R^{-n}\int_{S(0,R)}|\nabla_{r\partial_r}u|^2d\sigma_f.
\end{eqnarray*}
In particular, the frequency function $r\in\R_+\rightarrow r^2\fint_{B(0,r)}|\nabla u|^2d\mu_f$ is increasing unless $u$ is a constant map. 
\end{coro}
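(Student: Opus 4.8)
The plan is to specialize the Pohozaev argument of Proposition~\ref{Poho-for-gal-sol} to the time-independent setting, carrying the Gaussian weight $e^{f}$ through the integrations by parts. Writing $U:=u(\cdot,1):\R^{n}\to N$, the equivalence recalled in the introduction says that $U$ solves $\Delta_{f}U+A(U)(\nabla U,\nabla U)=0$ on $\R^{n}$ with $f=|x|^{2}/4$, i.e.\ $\div(e^{f}\nabla U)=e^{f}\Delta_{f}U$ is pointwise normal to $T_{U}N$. I would multiply this by the radial derivative $\nabla_{r\partial_{r}}U=x_{j}\nabla_{j}U$, which is tangent to $N$, and integrate over $B(0,R)$ against Lebesgue measure; the second fundamental form term drops out by orthogonality, leaving
\[
0=\int_{B(0,R)}\partial_{i}\bigl(e^{f}\nabla_{i}U\bigr)\cdot\bigl(x_{j}\nabla_{j}U\bigr)\,dx .
\]

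\emph{First integration by parts.} Moving one derivative off $e^{f}\nabla_{i}U$ produces the boundary term $R^{-1}\int_{S(0,R)}|\nabla_{r\partial_{r}}U|^{2}\,d\sigma_{f}$ (using $\mathbf{n}=x/R$, so that $\nabla_{\mathbf{n}}U=R^{-1}\nabla_{r\partial_{r}}U$) and the interior term $-\int_{B(0,R)}e^{f}\,\nabla_{i}U\cdot\partial_{i}(x_{j}\nabla_{j}U)\,dx$. Since $\nabla_{i}U\cdot\partial_{i}(x_{j}\nabla_{j}U)=|\nabla U|^{2}+\tfrac12\,x\cdot\nabla|\nabla U|^{2}$, the interior term splits into $-\int_{B(0,R)}|\nabla U|^{2}\,d\mu_{f}$ and $-\tfrac12\int_{B(0,R)}e^{f}\,x\cdot\nabla|\nabla U|^{2}\,dx$. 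A second integration by parts on the last piece is where the weight matters: since $\nabla f=x/2$ one has $\div(e^{f}x)=e^{f}\bigl(\tfrac{r^{2}}{2}+n\bigr)$, so it equals $-\tfrac{R}{2}\int_{S(0,R)}|\nabla U|^{2}\,d\sigma_{f}+\tfrac12\int_{B(0,R)}\bigl(\tfrac{r^{2}}{2}+n\bigr)|\nabla U|^{2}\,d\mu_{f}$; the extra $r^{2}/2$ compared to the classical unweighted harmonic-map Pohozaev identity is precisely this $\nabla f$ contribution. Collecting everything gives
\[
R^{-1}\!\int_{S(0,R)}|\nabla_{r\partial_{r}}U|^{2}\,d\sigma_{f}+\int_{B(0,R)}\Bigl(\tfrac{r^{2}}{4}+\tfrac{n-2}{2}\Bigr)|\nabla U|^{2}\,d\mu_{f}=\tfrac{R}{2}\!\int_{S(0,R)}|\nabla U|^{2}\,d\sigma_{f}.
\]
On $S(0,R)$ one has the orthogonal decomposition $|\nabla U|^{2}=R^{-2}\bigl(|\nabla_{r\partial_{r}}U|^{2}+|\nabla^{sph}U|_{sph}^{2}\bigr)$; substituting it on the right-hand side and multiplying through by $2$ gives the first displayed identity of the Corollary.

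\emph{Differential form and frequency monotonicity.} Next I would set $E(R):=\int_{B(0,R)}|\nabla U|^{2}\,d\mu_{f}$, so that $E'(R)=\int_{S(0,R)}|\nabla U|^{2}\,d\sigma_{f}$ by the coarea formula. Using $|\nabla^{sph}U|_{sph}^{2}=R^{2}|\nabla U|^{2}-|\nabla_{r\partial_{r}}U|^{2}$ on $S(0,R)$, the identity just obtained rearranges to
\[
R\,E'(R)=(n-2)\,E(R)+\int_{B(0,R)}\tfrac{r^{2}}{2}|\nabla U|^{2}\,d\mu_{f}+2R^{-1}\!\int_{S(0,R)}|\nabla_{r\partial_{r}}U|^{2}\,d\sigma_{f}.
\]
Differentiating $R^{2-n}E(R)$ and inserting this, the contributions $(2-n)R^{1-n}E(R)$ and $R^{1-n}(n-2)E(R)$ cancel, leaving exactly $R^{1-n}\int_{B(0,R)}\tfrac{r^{2}}{2}|\nabla U|^{2}\,d\mu_{f}+2R^{-n}\int_{S(0,R)}|\nabla_{r\partial_{r}}U|^{2}\,d\sigma_{f}$, which is the second displayed identity. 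Its right-hand side is manifestly non-negative, hence $R\mapsto R^{2-n}E(R)$ --- equivalently the frequency function of the statement --- is non-decreasing, and it is constant on an interval only if $\int_{B(0,R)}r^{2}|\nabla U|^{2}\,d\mu_{f}\equiv0$ there, which forces $\nabla U\equiv0$, i.e.\ $u$ constant.

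\emph{Main point of care.} There is no analytic difficulty: $U$ is smooth and each $B(0,R)$ is a bounded smooth domain, so every integration by parts above is legitimate (should one only know $U$ to be smooth away from the origin, one works on an annulus $B(0,R)\setminus B(0,\varepsilon)$ and lets $\varepsilon\to0$, the inner boundary terms vanishing by the energy bounds). The only thing requiring attention is the sign and coefficient bookkeeping in the two integrations by parts, and in particular keeping track of the weighted divergence $\div(e^{f}x)=e^{f}(r^{2}/2+n)$: this is exactly what distinguishes the identity from the classical Pohozaev identity and what supplies the new bulk term $\tfrac{r^{2}}{2}|\nabla u|^{2}$ that drives the monotonicity of the frequency function.
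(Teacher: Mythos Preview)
Your proof is correct and follows essentially the same approach as the paper: multiply the static equation by the radial derivative, use orthogonality of $A(u)(\nabla u,\nabla u)$ to $T_uN$, and integrate by parts twice with the weight $e^f$. The only cosmetic difference is that the paper pairs $\Delta_f u$ with $\nabla_{\nabla f}u=\tfrac12\nabla_{r\partial_r}u$ and integrates directly against $d\mu_f$, whereas you expand $e^f\Delta_f U=\div(e^f\nabla U)$ and integrate against Lebesgue measure; these are equivalent up to the global factor of $2$ you remove at the end. Your derivation of the differential form and of the rigidity in the monotonicity statement is in fact more explicit than the paper's, which only states those consequences.
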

\begin{proof}
The proof is along the same lines as the one of Proposition \ref{Poho-for-gal-sol}. Indeed, multiply the static equation satisfied by $u$ by $\nabla_{r\partial_r/2}=:\nabla_{\nabla f}u$ and integrate over $B(0,R)$ with respect to the measure $e^fdx$ as follows:
\begin{eqnarray*}
\int_{B(0,R)}<\Delta_fu,\nabla_{\nabla f}u>d\mu_f&=&\int_{B(0,R)}<-A(u)(\nabla u,\nabla u),\nabla_{\nabla f}u>d\mu_f\\
&=&0
\end{eqnarray*}
since $A(u)(\nabla u,\nabla u)\perp T_uN$ and $\nabla_{\nabla f}u\in T_uN$. By integrating by parts, one gets:
\begin{eqnarray*}
\int_{B(0,R)}<\Delta_fu,\nabla_{\nabla f}u>d\mu_f&=&-\int_{B(0,R)}<\nabla u,\nabla (\nabla_{\nabla f}u)>d\mu_f\\
&&+\int_{S(0,R)}<\nabla_{\partial_r}u,\nabla_{\nabla f}u>d\sigma_f\\
&=&-\int_{B(0,R)}\nabla^2f(\nabla u,\nabla u)+\nabla_{\nabla f}\frac{|\nabla u|^2}{2}d\mu_f\\
&&+\frac{R}{2}\int_{S(0,R)}|\nabla_{\partial_r}u|^2d\sigma_f\\
&=&-\int_{B(0,R)}\frac{|\nabla u|^2}{2}d\mu_f+\int_{B(0,R)}\Delta_ff\frac{|\nabla u|^2}{2}d\mu_f\\
&&+\frac{R}{2}\int_{S(0,R)}|\nabla_{\partial_r}u|^2d\sigma_f-\frac{R}{4}\int_{S(0,R)}|\nabla u|^2d\sigma_f,\\
\end{eqnarray*}
The result follows by noting that $\Delta_ff=n/2+r^2/4$ and $\vol_{n-1}S(0,R)=n\cdot R^{-1}\vol B(0,R).$

\end{proof}

As a first corollary, we get the following rigidity statement about constant maps, interpreted here as expanders of the harmonic map flow:
\begin{coro}\label{coro-unique-exp-pt}
Let $u:\R^n\rightarrow N$ be a smooth expanding solution coming out of the constant map $P_0\in N$ which is regular at infinity. Then $u\equiv P_0$. 
\end{coro}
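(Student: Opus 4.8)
The plan is to combine the static frequency monotonicity of Corollary~\ref{Poho-exp} with the fast decay at infinity guaranteed by Section~\ref{section-L^2-max-ppe}, taking the constant map $P_0$ itself as background expander. First note that $u_b\equiv P_0$ is a (trivial) expanding solution, smooth and regular at infinity, coming out of the same $0$-homogeneous map $u_0\equiv P_0$; moreover the hypotheses of Theorem~\ref{prop-dec-time-diff-sol} are satisfied with $(u_1,u_2)=(u,P_0)$, since $u$ is an expander that is regular at infinity (so $(|x|^2+t)|\nabla u|^2$ is globally bounded — by Definition~\ref{def-reg-inf} on $\Omega_\lambda$ and, on the complementary cone $\{|x|^2\le 4\lambda t\}$, by smoothness of the profile $x\mapsto u(x,1)$ on a fixed ball). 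Hence $|u-P_0|(x,t)\le Cf^{-n/2}e^{-f}$ on some $\Omega_\lambda$. Consequently $U:=f^{n/2}e^{f}(u-P_0)$ is bounded on $\Omega_\lambda$, so Proposition~\ref{autom-reg-infty}(1) applies (both $u$ and $P_0$ being expanders regular at infinity) and gives $|\nabla U|\le C(tf)^{-1/2}$ on $\Omega_{2\lambda}$. Writing $u-P_0=f^{-n/2}e^{-f}U$ and using $|\nabla(f^{-n/2}e^{-f})|\le C\sqrt f\, f^{-n/2}e^{-f}$ at $t=1$, the two bounds combine to give
\[
|\nabla u|(x,1)\le C f^{\frac{1-n}{2}}e^{-f}\qquad\text{for }|x|\text{ large.}
\]

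Since $n\ge 3$ and the profile $x\mapsto u(x,1)$ is smooth, the previous estimate yields $|\nabla u|^2(x,1)\,e^{|x|^2/4}\le Cf^{1-n}e^{-f}$ at infinity, so the weighted energy $E:=\int_{\R^n}|\nabla u|^2(x,1)\,e^{|x|^2/4}\,dx=\mathcal{E}^+(u)$ is finite; in the notation of Corollary~\ref{Poho-exp}, $\int_{\R^n}|\nabla u|^2\,d\mu_f<+\infty$. Now apply Corollary~\ref{Poho-exp} to the expander $x\mapsto u(x,1)$: the function $\Phi(R):=R^{2-n}\int_{B(0,R)}|\nabla u|^2\,d\mu_f$ is non-negative and non-decreasing in $R$. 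By the finiteness just established, $\int_{B(0,R)}|\nabla u|^2\,d\mu_f\to E<+\infty$ as $R\to+\infty$, while $R^{2-n}\to 0$ because $n\ge 3$; therefore $\lim_{R\to+\infty}\Phi(R)=0$. A non-negative non-decreasing function whose limit at infinity is $0$ must vanish identically, so $\Phi\equiv 0$, i.e. $\int_{B(0,R)}|\nabla u|^2\,d\mu_f=0$ for every $R>0$; as the integrand is continuous and non-negative, $\nabla u(\cdot,1)\equiv 0$. Hence the profile is constant, and by the parabolic self-similarity $u(x,t)=u(x/\sqrt t,1)$ the solution $u$ is constant on $\R^n\times\R_+$; since it comes out of $u_0\equiv P_0$, this constant is $P_0$.

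The part I expect to be the main obstacle is the second half of the first paragraph: upgrading the merely polynomial gradient decay of Definition~\ref{def-reg-inf} to super-polynomial (here exponential) decay of $\nabla u$, which is what makes the weighted energy $\int|\nabla u|^2e^{|x|^2/4}$ finite against the growing Gaussian weight. This is precisely the role of the sharp difference estimate of Theorem~\ref{prop-dec-time-diff-sol} together with the rescaled-derivative bound of Proposition~\ref{autom-reg-infty}; once this finiteness is in hand, the monotone-frequency argument of Corollary~\ref{Poho-exp} and the elementary remark on monotone functions finish the proof at once.
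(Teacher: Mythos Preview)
Your proof is correct. Both the paper and you rest the argument on the Pohozaev monotonicity of Corollary~\ref{Poho-exp}, but you exploit it differently. The paper derives a Gronwall-type inequality $y'(R)\ge (R/2)\,y(R)$ for the spherical energy $y(R)=\int_{B(0,R)}|\nabla^{sph}u|^2\,d\mu_f$, so that $e^{-R^2/4}y(R)$ is non-decreasing, and then uses only the boundedness of the relative entropy from Theorem~\ref{mono-rel-ent} (with $u_b\equiv P_0$) to force $y(R)=\textit{O}(R^2)$ and hence $e^{-R^2/4}y(R)\to 0$. You instead use the frequency function $\Phi(R)=R^{2-n}\int_{B(0,R)}|\nabla u|^2\,d\mu_f$ itself: once the full weighted energy $\int_{\R^n}|\nabla u|^2\,d\mu_f$ is finite, $\Phi$ is non-negative, non-decreasing and tends to $0$ (since $n\ge 3$), hence vanishes. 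Your route is a little more direct at the monotonicity step; the price is that you need the genuine finiteness of $\int|\nabla u|^2\,d\mu_f$, not just a polynomial bound on its growth.

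One remark on economy: you obtain the finiteness of $\int_{\R^n}|\nabla u|^2e^{|x|^2/4}\,dx$ by going back to the pointwise machinery of Section~\ref{section-L^2-max-ppe} (Theorem~\ref{prop-dec-time-diff-sol} plus Proposition~\ref{autom-reg-infty}) to get exponential decay of $\nabla u$. This is fine, but you could have reached the same conclusion in one line by invoking Theorem~\ref{mono-rel-ent} with $u_b\equiv P_0$: since $|\nabla u_b|^2=0$, the well-definedness of $\mathcal{E}(u,P_0)$ at $t=1$ is exactly the statement that $\int_{\R^n}|\nabla u|^2e^{|x|^2/4}\,dx<+\infty$. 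That is in fact the input the paper uses, so the two proofs draw on the same underlying estimates; yours just re-derives them pointwise.
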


\begin{proof}
By Corollary \ref{Poho-exp}, 
\begin{eqnarray*}
\int_{B(0,R)}\frac{1}{2}|\nabla^{sph} u|_{sph}^2d\mu_f=\int_{B(0,R)}\frac{r^2}{2}|\nabla u|^2d\mu_f\leq R^{-1}\int_{\partial B(0,R)}|\nabla^{sph}u|_{sph}^2d\sigma_f,
\end{eqnarray*}
for every positive radii $R$. Define $$y(R):=\int_{B(0,R)}|\nabla^{sph} u|_{sph}^2d\mu_f,\quad R>0.$$ Then, the previous inequality can be interpreted as follows:
\begin{eqnarray*}
y'(R)\geq \frac{R}{2}y(R),\quad R>0.
\end{eqnarray*}

By Gronwall's inequality, $e^{-R_2^2/4}y(R_2)\geq e^{-R_1^2/4} y(R_1)$ for any $R_2>R_1>0$. If one can prove that $\lim_{R\rightarrow+\infty} e^{-R^2/4}y(R)=0$ then the previous inequality shows that $\nabla^{sph}u=0$ and by applying Pohozaev identity again, one gets that $u\equiv P_0$. 

Therefore, it suffices to prove that $\lim_{R\rightarrow+\infty} e^{-R^2/4}y(R)=0$. Now, observe that:
\begin{eqnarray*}
y(R)&=&\int_{B(0,R)}r^2|\nabla u|^2d\mu_f\leq R^2\int_{B(0,R)}\left(|\nabla u|^2-|\nabla P_0|^2\right)d\mu_f,
\end{eqnarray*}
for every positive radius $R>0$. By Theorem \ref{mono-rel-ent}, $\lim_{R\rightarrow+\infty}\int_{B(0,R)}\left(|\nabla u|^2-|\nabla P_0|^2\right)d\mu_f$ is finite so: $y(R)=\textit{O}(R^2)$. This leads to the result.

\end{proof}

\section{Moduli space of smooth expanding solutions of the harmonic map flow}\label{section-mod-space-smooth-exp}

\subsection{First and second variation of the relative entropy}\label{fct-spa-first-sec-var-rel-ent}
\subsubsection{Function spaces}
Let $n\geq 3$ and let $(N,g)$ be a closed Riemannian manifold isometrically embedded in $\R^m$. We define below the main function spaces we will use:
\begin{enumerate}
\item (Energy space) Define the spaces $H^1_f(\R^n,\R^m)$ and $H^1_f(\R^n,N)$ to be the following sets:
\begin{eqnarray*}
H^1_f(\R^n,\R^m)&:=&\left\{k\in H^1_{loc}(\R^n,\R^m)\quad|\quad k\in L^2(e^fdx),\quad \nabla k \in L^2(e^fdx)\right\},\\
H^1_f(\R^n,N)&:=&\left\{k\in H^1_{f}(\R^n,\R^m)\quad|\quad k(x)\in N,\quad \text{ for a.e. $x\in\R^n$}\right\},
\end{eqnarray*}
endowed with the same following norm: $$\|k\|_{H^1_f}:=\|k\|_{L^2(e^fdx)}+\|\nabla k\|_{L^2(e^fdx)}.$$
The space $\left(H^1_f(\R^n,\R^m), \|\cdot\|_{H^1_f}\right))$ is a Hilbert space.
\item (Schauder spaces)
For a nonnegative integer $k\in\mathbb{N}$ and some real number $\alpha\in[0,1)$, we define the following spaces:
\begin{eqnarray*}
C^{k,\alpha}(\R^n,\R^m)&:=&\{u\in C^{k,\alpha}_{loc}(\R^n,\R^m)\quad|\quad f^{i/2}\nabla^iu\in C^{0,\alpha}(\R^n,\R^m),\quad i=0,..., k\},\\
C^{k,\alpha}(\R^n,N)&:=&\{u\in C^{k,\alpha}(\R^n,\R^m)\quad|\quad \text{$u(x)\in N$ for $x\in \R^n$}\},
\end{eqnarray*}
endowed with the norm $\|u\|_{C^{k,\alpha}}:=\sum_{i=0}^k\|f^{i/2}\nabla^iu\|_{C^{0,\alpha}}.$\\

These spaces reflect the conical structure of the Euclidean space apart form the $k+\alpha$ semi-norm. They will be useful to study the Fredholm properties of the Jacobi operator.

We introduce the corresponding H\" older spaces that are conical up to the $\alpha$ semi-norm. We first define the following $\alpha$ semi-norm for a tensor map:
\begin{eqnarray*}
[u]_{k+\alpha,\R^n}:=\sup_{x\in\R^n}\sup_{y,z\in B(x,\min\{1,|x|/2\})}\min\{f(y),f(z)\}^{\frac{k+\alpha}{2}}\frac{|\nabla^ku(y)-\nabla^ku(z)|}{|y-z|^{\alpha}}.
\end{eqnarray*}
Of course, $[u]_{k+\alpha,\R^n}$ controls the usual $\alpha$ semi-norm on $\R^n$ but the converse is not true.
\begin{eqnarray*}
C^{k,\alpha}_{con}(\R^n,\R^m)&:=&\{u\in C^{k}(\R^n,\R^m)\quad|\quad [u]_{k+\alpha,\R^n}<+\infty\},\\
C^{k,\alpha}_{con}(\R^n,N)&:=&\{u\in C_{con}^{k,\alpha}(\R^n,\R^m)\quad|\quad \text{$u(x)\in N$ for $x\in \R^n$}\},
\end{eqnarray*}
endowed with the norm $\|u\|_{C^{k,\alpha}_{con}}:=\|u\|_{C^k}+[u]_{k+\alpha,\R^n}.$\\

 Actually, we will focus on the subspace of maps in $C_{con}^{k,\alpha}(\R^n,N)$ that admits a radial limit at infinity with the same regularity, i.e. such that if $u\in C_{con}^{k,\alpha}(\R^n,N)$, 
\begin{eqnarray*}
 u_0(\omega):=\lim_{r\rightarrow+\infty} u(r,\omega), \quad\omega\in\mathbb{S}^{n-1},
 \end{eqnarray*}
exists in the $C^{k,\alpha'}(\mathbb{S}^{n-1},N)$ topology, for some (hence for all) $\alpha'\in(0,\alpha)$ which implies that $u_0$ is in $C^{k,\alpha}(\mathbb{S}^{n-1},N)$. This space will be denoted by $C_{con}^{k,\alpha}(\overline{\R^n},N)$.\\

We also define sets of vector fields along a given map $u:\R^n\rightarrow N$:
\begin{eqnarray*}
C^{k,\alpha}(\R^n,T_uN)&:=&\{\kappa\in C^{k,\alpha}(\R^n,\R^m)\quad|\quad \kappa\in T_uN\}.\\
\end{eqnarray*}

We finally define the spaces of boundary data:
\begin{eqnarray*}
C^{k,\alpha}(\Sp^{n-1},N)&:=&\{\psi\in C^{k,\alpha}(\Sp^{n-1},\R^m)\quad|\quad \text{$\psi(x)\in N$ for $x\in \R^n$}\},
\end{eqnarray*}
and if $\psi\in C^{k,\alpha}(\Sp^{n-1},N)$, then we define the corresponding spaces of vector fields along $\psi$:
\begin{eqnarray*}
C^{k,\alpha}(\Sp^{n-1},T_{\psi}N)&:=&\{\kappa\in C^{k,\alpha}(\Sp^{n-1},\R^m)\quad|\quad \text{$\kappa\in T_{\psi}N$} \}.
\end{eqnarray*}

\end{enumerate}

Note that the spaces $C_{con}^{k,\alpha}(\R^n,N)$ (respectively $C^{k,\alpha}(\Sp^{n-1},N)$) are Banach manifolds modeled on their tangent spaces which are $C_{con}^{k,\alpha}(\R^n,T_uN)$ at a point $u\in C_{con}^{k,\alpha}(\R^n,N)$ (respectively $C^{k,\alpha}(\Sp^{n-1},T_{\psi}N)$ at a point $\psi\in C^{k,\alpha}(\Sp^{n-1},N)$.)

We define now the following weighted Schauder spaces:
\begin{eqnarray*}
C_f^{k,\alpha}(\R^n,N)&:=&f^{-1}\cdot C_{con}^{k,\alpha}(\R^n,N),\\
C_f^{k,\alpha}(\R^n,T_uN)&:=&f^{-1}\cdot C_{con}^{k,\alpha}(\R^n,T_uN),\quad u\in C_{con}^{k,\alpha}(\R^n,N),\\
C_{Exp}^{k,\alpha}(\R^n,T_uN)&:=&e^{-f}f^{-\frac{n}{2}}\cdot C^{k,\alpha}(\R^n,T_uN),\quad u\in C_{con}^{k,\alpha}(\R^n,N).
\end{eqnarray*}
\begin{rk}
The reason we introduce these spaces with these special weights comes from the analysis of the kernel of the Jacobi operator together with the convergence rate of an expanding solution to its asymptotic boundary data. 
\end{rk}

\subsubsection{First variational formula of entropy}
Denote by $\pi:T_{\delta}(N)\rightarrow N$ the projection on $N$ where $T_{\delta}(N)$ is a tubular neighborhood with $\delta$ sufficiently small so that $\pi$ is smooth. For a map $u:\R^n\rightarrow N\subset\R^m$, define the projection map $P_u:\R^m\rightarrow T_uN$.

Suppose $u\in C_{con}^{k,\alpha}(\R^n,N)$ with $k\geq 2$ and let $\kappa\in C_0^{\infty}(\R^n,T_uN)$. Define the following curve: $$u_t:=\pi(u+t\kappa):t\in (-\varepsilon,\varepsilon)\rightarrow C_{con}^{k,\alpha}(\R^n,N),$$ where $\varepsilon$ is a positive number sufficiently small compared to $\delta$ and $\|\kappa\|_{C^0}$. 

This curve of maps satisfies: $\frac{d}{dt}|_{t=0}u_t=\kappa\in T_uN$.
Define the relative entropy as follows:
\begin{eqnarray*}
\Ent(u_t,u):=\lim_{R\rightarrow+\infty}\int_{B(0,R)}(|\nabla u_t|^2-|\nabla u|^2)d\mu_f.
\end{eqnarray*}
One easily sees that $\Ent(u_t,u)$ is well-defined since $\kappa$ is compactly supported. We do not specify at the moment the optimal space of deformations $\kappa$ for which $\Ent(u_t,u)$ is well-defined. In case this relative entropy is well-defined, the integral $$\int_{\R^n}(|\nabla u_t|^2-|\nabla u|^2)d\mu_f,$$ is understood in the sense of improper integrals.

Then the first variation formula of the relative entropy is:
\begin{eqnarray*}
\frac{1}{2}\left.\frac{d}{dt}\right|_{t=0}\Ent(u_t,u)&=&\frac{1}{2}\left.\frac{d}{dt}\right|_{t=0}\int_{\R^n}\left(|\nabla u_t|^2-|\nabla u|^2\right)d\mu_f\\
&=&\int_{\R^n}<-E(u),\kappa>d\mu_f,
\end{eqnarray*}
where,
\begin{eqnarray*}
E(u):=P_u(\Delta_fu)=\Delta_fu+A(u)(\nabla u,\nabla u),
\end{eqnarray*}
where $A(u):T_uN\times T_uN\rightarrow (T_uN)^{\perp}$ denotes the second fundamental form of $N$ evaluated at $u$.
\begin{rk}
From an intrinsic viewpoint, $E(u)=\tr(\nabla du)+d_{\nabla f}u=0,$
where $\nabla$ denotes the connection on $T^*\R^n\otimes u^*TN$ induced from  $T^*\R^n$ and $u^*TN$. \end{rk}

\subsubsection{Second variational formula of entropy}

Assume $u$ is a smooth expanding solution of the Harmonic map flow $C_{con}^{k,\alpha}(\overline{\R^n},N)$, $k\geq 2$, $\alpha\in(0,1)$ and let $\kappa$ and $\eta$ be two deformations along $u$ in $C^{\infty}_0(\R^n,T_uN)$  and consider the following two-parameter variation:
\begin{eqnarray*}
u_{s,t}:=\pi(u+s\kappa+t\eta),\quad s,t\in(-\varepsilon,\varepsilon).
\end{eqnarray*}
Then the second variation of the relative entropy is equal to:
\begin{eqnarray*}
\frac{1}{2}\left.\frac{\partial^2}{\partial s\partial t}\right|_{s,t=0}\Ent(u_{s,t},u)=-\int_{\R^n}<L_u(\kappa),\eta>d\mu_f,
\end{eqnarray*}
where $L_u:C^{\infty}_0(\R^n,T_uN)\rightarrow C^{\infty}_0(\R^n,T_uN)$ is the Jacobi operator with respect to $u$ defined by:
\begin{eqnarray*}
L_u(\kappa):=P_u(D_uE(\kappa))=\Delta_f\kappa+D_u(A)(\kappa)(\nabla u,\nabla u)+2A(u)(\nabla u,\nabla \kappa).
\end{eqnarray*}
Since $A(u)(\nabla u,\nabla \kappa)\in (T_uN)^{\perp}$, one gets:
\begin{eqnarray*}
\frac{1}{2}\left.\frac{\partial^2}{\partial s\partial t}\right|_{s,t=0}\Ent(u_{s,t},u)&=&-\int_{\R^n}<\Delta_f\kappa+D_u(A)(\kappa)(\nabla u,\nabla u),\eta>d\mu_f,
\end{eqnarray*}
which shows that $L_u$ is a symmetric operator. One can also get a more intrinsic formula for the second variation:
\begin{eqnarray*}
\frac{1}{2}\left.\frac{\partial^2}{\partial s\partial t}\right|_{s,t=0}\Ent(u_{s,t},u)&=&-\int_{\R^n}<\Delta_f\kappa+\tr\left<\Rm(h)(du,\kappa)\eta,du\right>d\mu_f,\quad \kappa,\eta\in C^{\infty}_0(\R^n,T_uN),
\end{eqnarray*}
where $(N,h)$ is isometrically embedded in $\R^m$. The curvature term is defined as follows:
\begin{eqnarray*}
\tr\left<\Rm(h)(du,\kappa)\eta,du\right>:=\tr\left((v,w)\rightarrow\left<\Rm(h)(\nabla_vu,\kappa)\eta,\nabla_wu\right>_{u^*h}\right).
\end{eqnarray*}

In order to understand its spectral properties, we introduce the following space:
\begin{eqnarray*}
D(L_u):=\{\kappa\in H^1_f(\R^n,T_uN)\quad|\quad L_u(\kappa)\in L^2_f(\R^n,T_uN)\}.
\end{eqnarray*}

Since $u\in C_{con}^{k,\alpha}(\overline{\R^n},N)$, one can show that
\begin{eqnarray*}
D(L_u)=\{\kappa\in H^1_f(\R^n,T_uN)\quad|\quad \Delta_f\kappa\in L^2_f(\R^n,\R^m)\}.
\end{eqnarray*}

The spectral properties of the Jacobi operator $L_u$ are summarized below:

\begin{prop}\label{emp-spec-egs}
Let $u \in C_{con}^{k,\alpha}(\overline{\R^n},N)$ be an expanding solution of the harmonic map flow. 
Then the operator $L_u|_{C_0^{\infty}(\R^n,T_uN)}$ admits a unique self-adjoint extension to $D(L_u)$ whose domain is contained in $H^{1}_f(\R^n,T_uN)$. Moreover, the essential spectrum of $L_u$ is empty:
$\sigma_{ess}(L_u)=\emptyset$.
\end{prop}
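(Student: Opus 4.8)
The plan is to recognize $L_u$, after an exponential change of gauge, as a magnetic Schr\"odinger operator on sections of the bundle $u^*TN$ with a confining potential, and then invoke the classical theory of the harmonic oscillator. First I would rewrite $L_u$ in intrinsic terms: decomposing $\Delta_f\kappa$ into its $T_uN$ and $(T_uN)^{\perp}$ parts and using the Gauss--Weingarten relations for the isometric embedding $N\hookrightarrow\R^m$, one gets $P_u(\Delta_f\kappa)=\Delta_f^{\nabla}\kappa+\textit{O}(|\nabla u|^2)\kappa$, where $\Delta_f^{\nabla}:=\tr\nabla^2+\nabla_{\nabla f}$ denotes the drift rough Laplacian on $u^*TN$ equipped with the pulled-back connection; the normal term $2A(u)(\nabla u,\nabla\kappa)$ is annihilated by $P_u$, and the curvature term $\tr\langle\Rm(h)(du,\kappa)\cdot,du\rangle$ from the second variation is a symmetric bundle endomorphism with coefficient $\textit{O}(|\nabla u|^2)$. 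Since $u\in C_{con}^{k,\alpha}(\overline{\R^n},N)$ with $k\ge2$, one has $|\nabla u|(x)=\textit{O}(f^{-1/2})$ together with bounds on the connection coefficients of $u^*TN$ in suitable gauges; hence $L_u=\Delta_f^{\nabla}+\mathcal{V}_u$ with $\mathcal{V}_u$ a symmetric bundle endomorphism satisfying $|\mathcal{V}_u|=\textit{O}(f^{-1})$, in particular bounded and vanishing at infinity. Note that $\Delta_f^{\nabla}$ is symmetric and nonpositive on $C_0^{\infty}(\R^n,T_uN)$ for $L^2_f$, since $\int\langle\Delta_f^{\nabla}\kappa,\eta\rangle\,d\mu_f=-\int\langle\nabla\kappa,\nabla\eta\rangle\,d\mu_f$ after integration by parts.

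Next I would conjugate by the unitary $\Phi:\kappa\mapsto e^{f/2}\kappa$ from $L^2_f(\R^n,u^*TN)$ onto $L^2(\R^n,u^*TN)$. A direct computation (the same one showing in the introduction that $\Delta_f$ is unitarily conjugate to $\Delta-|x|^2/16$) gives, at the level of quadratic forms, $\int|\nabla\kappa|^2\,d\mu_f=\int\big(|\nabla\tilde\kappa|^2+(\tfrac{|x|^2}{16}+\tfrac n4)|\tilde\kappa|^2\big)\,dx$ with $\tilde\kappa:=\Phi\kappa$, so that $\Phi L_u\Phi^{-1}=\Delta^{\nabla}-\big(\tfrac{|x|^2}{16}+\tfrac n4\big)+\mathcal{V}_u=:\mathcal{H}$, and $-\mathcal{H}$ is a magnetic Schr\"odinger operator with smooth bounded vector potential and \emph{confining} scalar potential $\tfrac{|x|^2}{16}\to+\infty$, perturbed by the bounded, decaying term $\tfrac n4-\mathcal{V}_u$. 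The magnetic harmonic oscillator $-\Delta^{\nabla}+\tfrac{|x|^2}{16}$ is essentially self-adjoint on $C_0^{\infty}(\R^n,u^*TN)$ — the classical fact for the scalar oscillator (a nonnegative smooth potential), carried over to sections via a local trivialization, the vector potential being lower order and dominated by $|x|^2/16$. Adding the bounded symmetric operator $\tfrac n4-\mathcal{V}_u$ and applying the Kato--Rellich theorem, $\mathcal{H}$, hence $L_u|_{C_0^{\infty}(\R^n,T_uN)}$, is essentially self-adjoint with a unique self-adjoint extension. Its domain $\mathrm{Dom}(\overline{\mathcal{H}})=\{w:\mathcal{H}w\in L^2\}$ lies in the form domain $\{w:\nabla w\in L^2,\ |x|\,w\in L^2\}$; transporting back by $\Phi$, $\mathrm{Dom}(\overline{L_u})=\{\kappa\in L^2_f:\Delta_f\kappa\in L^2_f\}=D(L_u)$ (the last equality because $\mathcal{V}_u$ is bounded), and this space sits inside $H^1_f(\R^n,T_uN)$ because $\int|\nabla\kappa|^2\,d\mu_f=-\int\langle\Delta_f^{\nabla}\kappa,\kappa\rangle\,d\mu_f\le\|\Delta_f^{\nabla}\kappa\|_{L^2_f}\|\kappa\|_{L^2_f}<+\infty$, the integration by parts being justified by graph-norm approximation by $C_0^{\infty}$ sections.

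For the emptiness of the essential spectrum I would argue that, since $\tfrac{|x|^2}{16}\to+\infty$, the weighted form domain embeds compactly into $L^2(\R^n,u^*TN)$ (a weighted Rellich theorem), so $-\Delta^{\nabla}+\tfrac{|x|^2}{16}$ has compact resolvent; adding the bounded perturbation $\tfrac n4-\mathcal{V}_u$ keeps the resolvent compact, hence $\mathcal{H}$ — and therefore $L_u$ — has purely discrete spectrum, i.e. $\sigma_{ess}(L_u)=\emptyset$. Equivalently, one can invoke Persson's principle directly on $L_u$: for $\kappa\in C_0^{\infty}(\R^n\setminus B(0,R),T_uN)$ the computation above yields $\langle-L_u\kappa,\kappa\rangle_{L^2_f}=\int\big(|\nabla(e^{f/2}\kappa)|^2+(\tfrac{|x|^2}{16}+\tfrac n4)|e^{f/2}\kappa|^2-\langle\mathcal{V}_u e^{f/2}\kappa,e^{f/2}\kappa\rangle\big)\,dx\ge\big(\tfrac{R^2}{16}+\tfrac n4-C\big)\|\kappa\|_{L^2_f}^2$, so $\inf\sigma_{ess}(-L_u)\ge\tfrac{R^2}{16}+\tfrac n4-C$ for every $R$, i.e. $\inf\sigma_{ess}(-L_u)=+\infty$.

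The only genuinely non-routine point is the passage from the scalar harmonic oscillator to sections of $u^*TN$: one must verify that the induced connection (``magnetic'') terms and the zeroth-order curvature / second-fundamental-form terms are all subordinate to the confining potential $|x|^2/16$, which is exactly where the decay $\nabla u=\textit{O}(f^{-1/2})$ (a consequence of $u$ being regular at infinity together with elliptic regularity for expanders) and the bounded geometry of $N$ enter, so that semiboundedness, essential self-adjointness, and the compact embedding all persist.
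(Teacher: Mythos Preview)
Your proof is correct and follows essentially the same route as the paper: conjugate $L_u$ by the unitary $\kappa\mapsto e^{f/2}\kappa$ to obtain a Schr\"odinger-type operator with the confining potential $\tfrac{|x|^2}{16}+\tfrac n4$, then invoke standard spectral theory (essential self-adjointness plus compact resolvent). The only differences are cosmetic: you work intrinsically with the rough Laplacian $\Delta^{\nabla}$ on $u^*TN$ and appeal to Kato--Rellich and Persson's principle, whereas the paper works extrinsically with $P_u(\Delta)-V$, cites Taylor's textbook for self-adjointness, and writes out the compact embedding $\bar H_f^1\hookrightarrow L^2$ by hand via a truncation argument using the properness of $|\nabla f|$.
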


\begin{proof}
Let $\kappa\in C_0^{\infty}(\R^n,T_uN)$. Then, 
\begin{eqnarray*}
e^{f/2}L_u(e^{-f/2}\kappa)&=&e^{f/2}\left((\Delta_f(e^{-f/2}\kappa)+2A(u)(\nabla u,\nabla(e^{-f/2}\kappa))+D_uA(e^{-f/2}\kappa)(\nabla u,\nabla u)\right)\\
&=&\Delta\kappa+e^{f/2}\Delta_f(e^{-f/2})\kappa+2e^{f/2}A(u)(\nabla u,\nabla(e^{-f/2}\kappa))+D_uA(\kappa)(\nabla u,\nabla u)\\
&=&P_u(\Delta\kappa)-\frac{1}{2}\left(\Delta f+\frac{\arrowvert\nabla f\arrowvert^2}{2}\right)\kappa-A(u)(\nabla_{\nabla f} u,\kappa)\\
&=:&P_u(\Delta\kappa)-V\kappa,
\end{eqnarray*}
where $$V\kappa:=\frac{1}{2}\left(\Delta f+\frac{\arrowvert\nabla f\arrowvert^2}{2}\right)\kappa+A(u)(\nabla_{\nabla f} u,\kappa).$$
Observe that $A(u)(\nabla_{\nabla f} u,\kappa)\perp T_uN$.

Now, define 
\begin{eqnarray*}
&&\bar{H}_{f}^1(\R^n,T_uN):=\{\kappa\in H^1(\R^n,T_uN),\quad f^{1/2}\cdot \kappa\in L^2(\R^n,T_uN)\},
\end{eqnarray*}
endowed with the norm $$\|\kappa\|_{\bar{H}_{f}^1}:=\|\kappa\|_{H^1}+\|f^{1/2}\cdot\kappa\|_{L^2}.$$
\begin{eqnarray*}
\bar{H}_f^2(\R^n,T_uN):=\{\kappa\in H^2(\R^n,T_uN),\quad
f\cdot\kappa\in L^2(\R^n,T_uN) \}
\end{eqnarray*}
endowed with the norm $$\|\kappa\|_{\bar{H}_f^2}:=\|\kappa\|_{H^2}+\|f\cdot\kappa\|_{L^2}.$$
Invoking [Chap.$8$, Sec. $2$, \cite{Tay-Boo-II}], the operator $-P_u(\Delta)+V\cdot|_{C_0^{\infty}(\R^n,T_uN)}$ admits a unique self-adjoint extension to $\bar{H}_f^2(\R^n,T_uN)$ whose domain is contained in $\bar{H}_f^1(\R^n,T_uN)$. Moreover, since $V$ is proper in the sense of quadratic forms, by a straightforward adaptation of Proposition $2.8$ of [Chap.$8$, Sec. $2$, \cite{Tay-Boo-II}], the operator $-P_u(\Delta)+V$ has compact resolvent. \\

Indeed, it suffices to prove that the injection $\bar{H}_f^1(\R^n,T_uN)\hookrightarrow L^2(\R^n,T_uN)$ is compact. If $(\kappa_i)_i$ is a bounded sequence in $\bar{H}_f^1(\R^n,T_uN)$, then by Rellich's theorem and a diagonal argument, there exists a subsequence still denoted by $(\kappa_i)_i$ that converges in $L^2(B(0,l),T_uN)$-norm where $(B(0,l))_l$ is an exhaustion of $\R^n$. Now, let $\eta$ be a positive number and fix an index $l$ sufficiently large such that, for any $i,i'\geq i(l,\eta)$,
\begin{eqnarray*}
&&\|\kappa_i-\kappa_{i'}\|_{L^2(B(0,l),T_uN)}\leq \eta,\quad\inf_{\R^n\setminus B(0,l)}\arrowvert\nabla f\arrowvert\geq 1/\eta,
\end{eqnarray*}
since $|\nabla f|$ is proper.
\begin{eqnarray*}
\|\kappa_i-\kappa_{i'}\|_{L^2(\R^n,T_uN)}&\leq& \|\kappa_i-\kappa_{i'}\|_{L^2(B(0,l),T_uN)}\\
&&+\frac{1}{\inf_{\R^n\setminus B(0,l)}\arrowvert\nabla f\arrowvert}\cdot\left(\int_{\R^n\setminus B(0,l)}\arrowvert\nabla f\arrowvert^2\arrowvert \kappa_i-\kappa_{i'}\arrowvert^2dx\right)^{1/2}\\
&\leq&\eta+\eta\cdot(\|\kappa_i\|_{\bar{H}_f^1(\R^n,T_uN)}+\|\kappa_{i'}\|_{\bar{H}_f^1(\R^n,T_uN)})\\
&\leq& C\eta,
\end{eqnarray*}
where $C$ does not depend on $l$ since $(\kappa_i)_i$ is bounded in $\bar{H}_f^1(\R^n,T_uN)$.

Finally, as $L_u$ is unitarily conjugate to $P_u(\Delta)-V$, it follows that the essential spectrum of $L_u$ is empty.

\end{proof}

As an illustration, we consider the case where $N=\Sp^{m-1}$ is a Euclidean sphere of radius $1$. Then, if $u\in C_{con}^{k,\alpha}(\overline{\R^n},N)$ is a smooth expanding solution to the Harmonic map flow, the Jacobi operator with respect to $u$ is:
\begin{eqnarray*}
L_u(\kappa)=\Delta_f\kappa+2<\nabla u,\nabla \kappa>u+|\nabla u|^2\kappa,\quad \kappa\in C_{con}^{k,\alpha}(\R^n,T_uN).
\end{eqnarray*}

A vector field $\kappa\in H^1_{loc}(\R^n,T_uN)$ that satisfies $L_u\kappa=0$ is called a \textbf{Jacobi field} along $u$. Denote this space by $\ker L_u$. Since $u$ is regular, elliptic regularity shows that $\kappa$ is smooth on $\R^n$. It turns out that there are roughly two kinds of behavior at infinity for such Jacobi fields: bounded ones and those who converge to $0$ at infinity.

 Now, define the subspace of Jacobi fields along $u$ that vanishes at infinity:
\begin{eqnarray*}
\ker_0L_u:=\ker L_u\cap \left\{\kappa\in C^{k,\alpha}(\R^n,T_uN)\quad|\quad \lim_{+\infty}\kappa=0\right\}.
\end{eqnarray*}

The purpose of the next theorem is to analyse the convergence rate of such Jacobi fields.

\begin{theo}\label{Analysis-Jacobi-field}
Let $\kappa$ be a Jacobi field along an expanding map $u\in C_{con}^{k,\alpha}(\overline{\R^n},N)$ that vanishes at infinity, i.e. $\kappa\in \ker_0L_u$. Then, 
$\kappa \in \cap_{k\geq 0}C^{k,\alpha}(\R^n,T_uN)$ and the following radial limit
\begin{eqnarray*}
\kappa_{\infty}(\omega):=\lim_{r\rightarrow+\infty}\left(e^f f^{\frac{n}{2}}\kappa\right)(r,\omega),\quad\omega\in \Sp^{n-1},
\end{eqnarray*}
exists and defines a vector field $\kappa_{\infty}\in C^{k,\alpha}(\Sp^{n-1},T_uN)$ along $u$. In particular, 
\begin{eqnarray}
\ker_0L_u\subset\ker L_u\cap L^2_f(\R^n,T_uN),\label{rk-incl}
\end{eqnarray}
 and has finite dimension.

Finally, one has the following unique continuation result at infinity: if $\kappa_{\infty}\equiv 0$ then $\kappa\equiv0$. 
\end{theo}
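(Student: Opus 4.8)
The plan is to transport the problem into the parabolic picture of Section \ref{section-L^2-max-ppe} and run the machinery already developed there. Since $u$ is an expander, $\tilde u(x,t):=u(x/\sqrt t)$ solves the harmonic map flow, and a direct computation shows that $\tilde\kappa(x,t):=\kappa(x/\sqrt t)$ solves the linearised equation $\partial_t\tilde\kappa=\Delta\tilde\kappa+D_{\tilde u}A(\tilde\kappa)(\nabla\tilde u,\nabla\tilde u)+2A(\tilde u)(\nabla\tilde u,\nabla\tilde\kappa)$, precisely because $L_u\kappa=0$. As $\tilde\kappa\in T_{\tilde u}N$ while $A(\tilde u)(\nabla\tilde u,\nabla\tilde\kappa)\perp T_{\tilde u}N$, the bounded function $w:=|\tilde\kappa|$ satisfies, weakly, $\partial_t w\leq\Delta w+\tfrac{c_0}{|x|^2+t}w$ — here one uses that $u$ is regular at infinity, which is exactly what makes $(|x|^2+t)|\nabla\tilde u|^2$ bounded on $\R^n\times\R_+$ — while $w(x,t)\to0$ as $t\to0$ for every $x\neq0$ because $\kappa$ vanishes at infinity. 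Lemma \ref{theo-max-ppe-l2} then gives $|\kappa|\leq Cf^{-n/2}e^{-f}$ on $\{|x|^2>4\lambda\}$ for $\lambda$ large. Combined with elliptic regularity on a fixed ball, this shows $\kappa\in L^2_f(\R^n,T_uN)$, and the interior gradient estimate for $L_u\kappa=0$ then gives $\nabla\kappa\in L^2_f$, so $\kappa\in D(L_u)$. Hence $\ker_0L_u\subset\ker L_u\cap L^2_f$, which by Proposition \ref{emp-spec-egs} — the $0$-eigenspace of the self-adjoint extension is finite dimensional since the essential spectrum is empty — is finite dimensional; this establishes (\ref{rk-incl}) and the dimension claim.

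Next I would upgrade the decay to the full regularity statement by running, on the rescaled vector field $U:=f^{n/2}e^f\kappa$ (equivalently on $\tilde U(x,t):=f(x,t)^{n/2}e^{f(x,t)}\tilde\kappa(x,t)$), the Shi-type argument of Proposition \ref{autom-reg-infty} together with Corollary \ref{coro-shi-est-grad-obst-tensor}. Conjugating $L_u\kappa=0$ by $f^{n/2}e^f$ turns it into a drift equation for $U$ with the favourable sign of the drift and lower-order coefficients that are $\textit{O}(f^{-1/2})$ — the radial-derivative decay of $u$ needed for this is exactly Claim \ref{rad-der-ext-claim} — so the cut-off/maximum-principle scheme of Lemma \ref{evo-equ-resc-vec-fiel} and Proposition \ref{autom-reg-infty} yields $\sup_{\{|x|^2>8\lambda\}}f^{k/2}|\nabla^kU|\leq C_k\sup|U|$ for every $k$. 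Thus $U\in\bigcap_k C^{k,\alpha}_{con}(\R^n,T_uN)$ and $\kappa\in\bigcap_k C^{k,\alpha}(\R^n,T_uN)$ (exponential decay beats the conical weights). To produce the trace, write the $U$-equation in polar coordinates: $\partial_r^2U+\big(\tfrac{n-1}{r}-\tfrac r2\big)\partial_rU+\tfrac1{r^2}\Delta_{\Sp^{n-1}}U=\textit{O}(r^{-1})\partial_rU+\textit{O}(r^{-2})\big(U+\nabla_{\Sp^{n-1}}U+\nabla^2_{\Sp^{n-1}}U\big)$; multiplying by the integrating factor $r^{n-1}e^{-r^2/4}$ and using the $C^2_{con}$-bounds (so $\partial_rU=\textit{O}(r^{-1})$ and the right-hand side is $\textit{O}(r^{-2})$), integration from $r$ to $+\infty$ gives $\partial_rU=\textit{O}(r^{-3})$, whence $\lim_{r\to+\infty}U(r,\omega)=:\kappa_\infty(\omega)$ exists with $U(r,\cdot)=\kappa_\infty+\textit{O}(r^{-2})$. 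Differentiating tangentially and repeating the argument shows the spherical derivatives converge uniformly, so $\kappa_\infty\in C^{k,\alpha}(\Sp^{n-1},T_uN)$.

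Finally, for the unique continuation at infinity, the point is that $\kappa_\infty\equiv0$ says exactly that the smooth solution $\tilde U$ of the rescaled drift parabolic equation of Proposition \ref{autom-reg-infty} extends continuously up to $t=0$ with vanishing trace $\tilde U(\cdot,0)=\kappa_\infty=0$ — indeed $\tilde U(x,t)\to\kappa_\infty(x/|x|)$ as $t\to0$ by the asymptotics just established. Equivalently, at the level of the elliptic equation, the asymptotic (Levinson-type) analysis of the $U$-equation near $r=+\infty$ shows that its bounded solutions form, mode by mode in the spherical variable, the span of a single solution tending to a constant, the growing solution $\sim e^{r^2/4}$ being excluded; since that constant is precisely the corresponding Fourier coefficient of $\kappa_\infty$, the vanishing $\kappa_\infty\equiv0$ forces $U\equiv0$ on a neighbourhood of infinity, and then $\kappa\equiv0$ on all of $\R^n$ by Aronszajn's unique continuation (recall $L_u\kappa=0$ on $\R^n$). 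The heart of the matter — making this dichotomy uniform in the spherical mode, i.e. establishing the requisite Carleman inequality for the drift operator $L_u$ near infinity — is the main obstacle, and I would carry it out exactly as in the analogous settings of expanders of the Ricci flow \cite{Uni-Con-Egs-Der} and of the mean curvature flow \cite{Ber-Wan-MCF}.
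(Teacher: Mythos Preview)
Your proposal is correct in outline and ultimately lands on the same Carleman-based unique continuation as the paper, but the route you take for the decay and the regularity of the rescaled field differs from the paper's, and one heuristic you offer is misleading.

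For the decay $|\kappa|\le Cf^{-n/2}e^{-f}$, the paper does \emph{not} pass to the time-dependent picture and invoke Lemma~\ref{theo-max-ppe-l2}. Instead it stays elliptic: from $\Delta_f|\kappa|^2\ge 2|\nabla\kappa|^2-C|\nabla u|^2|\kappa|^2$ one gets $\Delta_f\kappa_\varepsilon\ge -Cf^{-1}\kappa_\varepsilon$ for $\kappa_\varepsilon=(|\kappa|^2+\varepsilon^2)^{1/2}$, and then a direct barrier argument (as in \cite{Der-Asy-Com-Egs}) with $e^{-Cf^{-1}}\kappa_\varepsilon-Af^{-n/2}e^{-f}$ gives the bound. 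Your parabolic route via $\tilde\kappa(x,t)=\kappa(x/\sqrt t)$ and Lemma~\ref{theo-max-ppe-l2} is a legitimate alternative, but it buys nothing here and forces you to repackage a purely elliptic fact. Likewise, for the regularity of $\kappa_f:=f^{n/2}e^f\kappa$ the paper does not adapt Proposition~\ref{autom-reg-infty} and Corollary~\ref{coro-shi-est-grad-obst-tensor} (which are written for differences of solutions and would need reworking): it simply computes $\Delta_{-f}\kappa_f=V_1(u)\ast\kappa_f+V_2(u)\ast\nabla\kappa_f$ with $V_1=\textit{O}(f^{-1})$, $V_2=\textit{O}(f^{-1/2})$, reads this as a backward heat equation, and applies local parabolic Schauder estimates to get $\|\kappa_f\|_{C^{k,\alpha}_{con}}\lesssim\|\kappa_f\|_{C^0}$. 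From there $\partial_r\kappa_f=\textit{O}(r^{-3})$ and the radial limit follow; your ODE/integrating-factor argument for this last step is fine and equivalent to what the paper does.

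One caution on the unique continuation paragraph: the ``mode by mode'' Levinson picture you describe is not a proof, because the coefficients of the $\kappa_f$-equation depend on $u$ and are \emph{not} radially symmetric, so one cannot separate variables on $\Sp^{n-1}$. The paper's actual argument (carried out later together with Theorem~\ref{uni-cont-inf}) is purely a weighted Carleman estimate for $H=-\Delta_f$ with weights $e^{2F_\alpha+f}$, $F_\alpha=\tfrac f2+\tfrac{2\alpha+n}{4}\ln f$, exploiting the commutator identity (\ref{Mourre-est}); the tangential/normal splitting of $H\kappa$ from $L_u\kappa=0$ is what makes the estimate close. You correctly identify this as the heart of the matter and the relevant references, so your plan is sound---just drop the separable-ODE heuristic.
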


\begin{rk}\label{rk-bdy-map-jac-field}
Theorem \ref{Analysis-Jacobi-field} shows in particular that the map 
\begin{eqnarray}
\kappa\in \ker_0 L_u\rightarrow \{\kappa_{\infty}\in C^{k,\alpha}(\Sp^{n-1},T_uN)\}=:B_u^{k,\alpha},
\end{eqnarray}
where $B_u^{k,\alpha}$ stands for boundary maps at infinity,
 is an isomorphism of finite dimensional vector spaces. 

One can actually show that the inclusion (\ref{rk-incl}) is an equality: $\ker_0L_u=\ker L_u\cap L^2_f(\R^n,T_uN).$
\end{rk}

\begin{proof}
Let $\kappa\in\ker_0L_u$, that is:
\begin{eqnarray*}
L_u(\kappa)&=&\Delta_f\kappa+D_uA(\kappa)(\nabla u,\nabla u)+2A(u)(\nabla u,\nabla \kappa)\\
&=&0.
\end{eqnarray*}
Since $A(u)(\nabla u,\nabla \kappa)\in (T_uN)^{\perp}$, $|\kappa|^2$ satisfies the following differential inequality:
\begin{eqnarray*}
\Delta_f|\kappa|^2\geq2|\nabla\kappa|^2-C(N)|\nabla u|^2|\kappa|^2.
\end{eqnarray*}
In particular, a regularization of $|\kappa|$ of the form $\kappa_{\varepsilon}:=\sqrt{|\kappa|^2+\varepsilon^2}$ with $\varepsilon\in(0,1)$ satisfies:
\begin{eqnarray*}
\Delta_f\kappa_{\varepsilon}\geq-\frac{C(N)}{f}\kappa_{\varepsilon}.
\end{eqnarray*}
According to [Section $2.3$, \cite{Der-Asy-Com-Egs}], there exist positive constants $C$ and $A_0$ such that $e^{-Cf^{-1}}\kappa_{\varepsilon}-Af^{-n/2}e^{-f}$ satisfies the maximum principle for any $A\geq A_0$  outside a sufficiently large ball $B(0,R_0)$ independent of $\varepsilon$:
\begin{eqnarray*}
\max_{B(0,R)\setminus B(0,R_0)}\left(e^{-Cf^{-1}}\kappa_{\varepsilon}-Af^{-n/2}e^{-f}\right)=\max_{\partial B(0,R)\cup \partial B(0,R_0)}\left(e^{-Cf^{-1}}\kappa_{\varepsilon}-Af^{-n/2}e^{-f}\right).
\end{eqnarray*}
By letting $\varepsilon$ go to $0$ and by using the fact that $\lim_{+\infty}\kappa=0$, one gets:
\begin{eqnarray*}
\max_{\R^n\setminus B(0,R_0)}\left(e^{-Cf^{-1}}|\kappa|-Af^{-n/2}e^{-f}\right)=\max_{\partial B(0,R_0)}\left(e^{-Cf^{-1}}|\kappa|-Af^{-n/2}e^{-f}\right)\leq 0,
\end{eqnarray*}
if $A$ is chosen large enough compared to $\max_{\partial B(0,R_0)}|\kappa|$. This implies already a very fast decay for $\kappa$. By applying Schauder estimates to $\kappa$, its derivatives decay exponentially at infinity so in particular, they decay faster than any polynomial: this ensures that $\kappa \in \cap_{k\geq 0}C^{k,\alpha}(\R^n,N)$.

Now, we need to show that the rescaled vector field $e^ff^{n/2}\kappa$ along $u$ converges to a vector field $\kappa_{\infty}$ defined on the sphere (at infinity) $\Sp^{n-1}$ in the $C^{k,\alpha}(\Sp^{n-1},T_uN)$ topology. For this purpose, we first compute the evolution equation satisfied by the vector field $e^ff^{n/2}\kappa=:\kappa_f$ along $u$:
\begin{eqnarray*}
\Delta_{-f}\kappa_f&=&V_1(u)\ast\kappa_f+V_2(u)\ast\nabla \kappa_f,\\
V_1(u)&=&\nabla u\ast\nabla u+\textit{O}(f^{-1})\in C^{k-1,\alpha}_{con}(\R^n,\R^m),\\
V_2(u)\ast\nabla \kappa_f&=&\textit{O}(f^{-1/2})\ast\nabla \kappa_f+e^ff^{n/2}A(u)(\nabla u,\nabla \kappa)\\
&=&\textit{O}(f^{-1/2})\ast\nabla \kappa_f-(1+\textit{O}(f^{-1}))A(u)(\nabla_{\nabla f} u,\kappa_f)\\
&=&\textit{O}(f^{-1/2})\ast\nabla \kappa_f+\textit{O}(f^{-1})\ast\kappa_f,
\end{eqnarray*}
where we used the fact that $u$ is an expanding solution in the last line.

Notice that the terms $\textit{O}(f^{-1})$ (resp. $\textit{O}(f^{-1/2})$) are in $C_{f}^{k-1,\alpha}(\R^n,(\R^m)^*\otimes\R^m)$, (resp. $C_{f^{1/2}}^{k-1,\alpha}(\R^n,(\nabla \R^m)^*\otimes\R^m)$). 



To sum it up, we have:
\begin{eqnarray}
&&\Delta_{-f}\kappa_f=V_1(u)\ast\kappa_f+V_2(u)\ast\nabla \kappa_f,\label{back-heat-eq-resc-jac-fiel}\\
&&V_1(u)\in C_{f}^{k-2,\alpha}(\R^n,(\R^m)^*\otimes\R^m),\quad V_2(u)\in C_{f^{1/2}}^{k-1,\alpha}(\R^n,(\nabla \R^m)^*\otimes\R^m)).\label{pot-bac-heat-eq}
\end{eqnarray}
The evolution equation (\ref{back-heat-eq-resc-jac-fiel}) can be reinterpreted as a linear backward heat equation with data given by the right-hand side of (\ref{back-heat-eq-resc-jac-fiel}) with some amount of regularity at infinity described by (\ref{pot-bac-heat-eq}). Therefore, standard parabolic Schauder estimates in their local version in the case of an ancient solution to the heat equation give:
 \begin{eqnarray}
\|\kappa_f\|_{C^{k,\alpha}_{con}(\R^n,\R^m)}\leq C(k,\alpha,n,m)\|\kappa_f\|_{C^0(\R^n,T_uN)}.\label{sch-est-bac-heat-eq}
\end{eqnarray}
Finally, to prove that $\kappa_f\in C^{k,\alpha}_{con}(\overline{\R^n},T_uN)$, it suffices to prove that $\kappa_f$ has a radial limit in the $C^{k,\alpha'}(\Sp^{n-1},N)$ topology, $\alpha'\in(0,\alpha)$, that lies in $C^{k,\alpha}(\Sp^{n-1},N).$ Since we assume $k\geq 2$, the estimates (\ref{sch-est-bac-heat-eq}) together with (\ref{back-heat-eq-resc-jac-fiel}) show that the radial derivative decay much faster than expected: $\partial_r\kappa_f=\textit{O}(r^{-3})$ which implies that $\kappa_f(r,\cdot)$ converges radially in the $C^0$ topology to a vector field $\kappa_{\infty}\in C^0(\Sp^{n-1},T_{u_0N})$. Now, recall that the inclusion $C^{k,\alpha}(\Sp^{n-1},T_uN)\hookrightarrow C^{k,\alpha'}(\Sp^{n-1},T_uN)$ is compact for all $\alpha'\in(0,\alpha)$. This implies that $\kappa_f$ subconverges radially to a vector field $\kappa'_{\infty}\in C^{k,\alpha}(\Sp^{n-1},T_uN)$ in the $C^{k,\alpha'}(\Sp^{n-1},T_uN)$ topology. By uniqueness of the limit, $\kappa_{\infty}=\kappa_{\infty}'\in C^{k,\alpha}(\Sp^{n-1},T_uN)$ and the convergence holds in the expected topology.

We postpone the proof of the unique continuation property at infinity for such Jacobi fields to the proof of Theorem \ref{uni-cont-inf}.

\end{proof}

\subsection{Fredholm properties of the Jacobi operator}\label{Fred-prop-Jac-op-subsec}
Let $u\in C_{con}^{k_0,\alpha_0}(\overline{\R^n},N)$ be an expanding map for some integer $k_0\geq 2$ and some real number $\alpha_0\in(0,1)$.

In this section, we establish the required properties on the Jacobi operator associated to $u$ between Schauder spaces. Because the natural deformation space consisting of quadratically decaying vector fields along $u$ are not contained in any energy space endowed with the weighted measure $e^fdx$, we cannot rely on Proposition \ref{emp-spec-egs}. For this purpose, we need to introduce the following function spaces:
\begin{eqnarray*}
\mathcal{D}_f^{k+2,\alpha}(\R^n,\R^m)&:=&\{\kappa\in C^{k+2,\alpha}_{loc}(\R^n,\R^m)\quad|\quad\kappa\in C_f^{k,\alpha}(\R^n,\R^m)\quad|\quad \Delta_f \kappa\in C_f^{k,\alpha}(\R^n,\R^m)\},\\
\mathcal{D}_f^{k+2,\alpha}(\R^n,T_uN)&:=&\{\kappa\in \mathcal{D}_f^{k+2,\alpha}(\R^n,\R^m)\quad|\quad \text{$\kappa\in T_uN$}\},\\
\mathcal{C}_f^{k+2,\alpha}(\R^n,T_uN)&:=&\big\{\kappa\in C^{k+2,\alpha}_{loc}(\R^n,T_uN)\cap C^{k,\alpha}_f(\R^n,T_uN),\\
&& f^{k/2}\nabla^k(f\cdot\kappa)\in C^{2}(\R^n,\R^m),\quad \left[\nabla^2\left(f^{k/2}\nabla^k(f\cdot\kappa)\right)\right]_{\alpha,\R^n}<+\infty\big\}.
\end{eqnarray*}
The space $\mathcal{D}_f^{k+2,\alpha}(\R^n,T_uN)$ is a Banach space but it is not straightforward: this is a by-product of the proof of the next theorem that establishes the Fredholm properties of the Jacobi operator.
\begin{theo}\label{theo-fred-prop-jac-op}
Let $k\geq 2$ and $\alpha\in(0,1)$. Let $u\in C_{con}^{k,\alpha}(\overline{\R^n},N)$ be an expanding solution of the Harmonic map flow. Then the projection of the weighted laplacian, 
\begin{align*}
\tilde{L}_u:\mathcal{D}_f^{k+2,\alpha}(\R^n,T_uN) &\rightarrow C_f^{k,\alpha}(\R^n,T_uN)\\
\kappa&\mapsto \Delta_f\kappa+2A(u)(\nabla u,\nabla\kappa)+(D_uA(\kappa)(\nabla u,\nabla u))^{\perp}
\end{align*}
 is an isomorphism of Banach spaces. As a consequence, the Jacobi operator associated to $u$, $$L_u: \mathcal{D}_f^{k+2,\alpha}(\R^n,T_uN)\rightarrow C_f^{k,\alpha}(\R^n,T_uN),$$ is a Fredholm operator of index $0$. Moreover, the space $\mathcal{C}^{k+2,\alpha}_f(\R^n,T_uN)$ continuously injects in $\mathcal{D}_f^{k+2,\alpha}(\R^n,T_uN)$.
\end{theo}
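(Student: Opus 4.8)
The plan is to treat $\tilde{L}_u$ as a compact perturbation of the model drift Laplacian $\Delta_f$ acting componentwise, deduce that it is Fredholm of index $0$, then upgrade it to an isomorphism via a maximum principle, and finally recover $L_u$ and the claimed inclusion as further (routine) perturbation statements. Throughout I use the profile estimate from a direct computation $\Delta_f(f^{-1})=-f^{-1}+\textit{O}(f^{-2})$, the normality $A(u)(\cdot,\cdot)\perp T_uN$, and the decay $|\nabla u|=\textit{O}(f^{-1/2})$ coming from Definition \ref{def-reg-inf}.

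Step 1: the model operator. I first claim $\Delta_f\colon\mathcal{D}_f^{k+2,\alpha}(\R^n,\R^m)\to C_f^{k,\alpha}(\R^n,\R^m)$ is an isomorphism. Injectivity follows from the strong maximum principle: an element of the kernel lies in $C_f^{k,\alpha}$, hence tends to $0$ at infinity, and $\Delta_f$ has no zeroth order term, so it vanishes identically. For surjectivity, and for the a priori estimate $\|\kappa\|_{C_f^{k,\alpha}}\le C\left(\|\Delta_f\kappa\|_{C_f^{k,\alpha}}+\|\kappa\|_{C^0(B(0,R_0))}\right)$, one combines: (a) $f^{-1}$ is a barrier, since $\Delta_f(f^{-1})=-f^{-1}+\textit{O}(f^{-2})\le-\frac{1}{2}f^{-1}$ outside a fixed ball, so solutions of $\Delta_f v=g$ with $g\in C_f^{k,\alpha}$ obey $|v|\le C\|g\|_\infty f^{-1}$ away from the origin by comparison, and are bounded near the origin classically; (b) interior (conical) Schauder estimates on the balls $B(x,|x|/2)$, on which $\frac{r}{2}$ is comparable to $|x|$ and becomes a bounded coefficient after the natural rescaling, converting the $C^0$-decay of $v$ and the weighted $C^{k,\alpha}$-bound on $g$ into membership $v\in C_f^{k,\alpha}$. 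Solving the Dirichlet problem on $B(0,R)$ and letting $R\to+\infty$ yields a preimage; completeness of $\mathcal{D}_f^{k+2,\alpha}$, needed for the open mapping theorem, follows from interior elliptic regularity applied to a Cauchy sequence for the graph norm. This step, where the conical weights $f^{i/2}\nabla^i$ must be matched against the polynomial weight $f^{-1}$ in the presence of the drift $\frac{r}{2}\partial_r$, is the main technical obstacle; it parallels the analysis in \cite{Der-Lam-HMF} and \cite{Ber-Wan-MCF}.

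Step 2: perturbation and Fredholmness. The operators $\tilde{L}_u-\Delta_f=2A(u)(\nabla u,\nabla\cdot)+(D_uA(\cdot)(\nabla u,\nabla u))^{\perp}$ and $L_u-\tilde{L}_u=(D_uA(\cdot)(\nabla u,\nabla u))^{\top}$ are of order $\le 1$ with coefficients of size $\textit{O}(|\nabla u|)=\textit{O}(f^{-1/2})$ and $\textit{O}(|\nabla u|^2)=\textit{O}(f^{-1})$; applied to a section of $C_f^{k,\alpha}$ (whose sections and first derivatives are $\textit{O}(f^{-1})$) they land in sections with strictly faster decay, $\textit{O}(f^{-3/2})$ or better. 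Together with the interior $C^{k+2,\alpha}_{loc}$-bounds furnished by the a priori estimate of Step 1, this makes both operators compact from $\mathcal{D}_f^{k+2,\alpha}(\R^n,T_uN)$ to $C_f^{k,\alpha}(\R^n,T_uN)$, by the same diagonal Arzela--Ascoli argument used in the proof of Proposition \ref{emp-spec-egs} (local compactness plus uniform tail decay). Hence $\tilde{L}_u$, being the sum of the isomorphism $\Delta_f$ and a compact operator, is Fredholm of index $0$; and then so is $L_u=\tilde{L}_u+(L_u-\tilde{L}_u)$.

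Step 3: invertibility of $\tilde{L}_u$ and the final inclusion. Both $2A(u)(\nabla u,\nabla\kappa)$ and $(D_uA(\kappa)(\nabla u,\nabla u))^{\perp}$ are normal to $N$, so $<\tilde{L}_u\kappa,\kappa>=<\Delta_f\kappa,\kappa>$ pointwise; thus $\tilde{L}_u\kappa=0$ forces $\Delta_f|\kappa|^2=2|\nabla\kappa|^2\ge0$, and since $|\kappa|^2\to0$ at infinity the maximum principle gives $\kappa\equiv0$. Being injective and Fredholm of index $0$, $\tilde{L}_u$ is an isomorphism. Finally, for the inclusion $\mathcal{C}_f^{k+2,\alpha}(\R^n,T_uN)\hookrightarrow\mathcal{D}_f^{k+2,\alpha}(\R^n,T_uN)$ one must check $\Delta_f\kappa\in C_f^{k,\alpha}$ with a controlled norm when $\kappa\in\mathcal{C}_f^{k+2,\alpha}$: the defining condition $f^{k/2}\nabla^k(f\kappa)\in C^2$ together with $f\kappa\in C^{k,\alpha}_{con}$ and a Landau--Kolmogorov interpolation on the conical balls forces $\nabla^i(f\kappa)=\textit{O}(f^{-i/2})$ for all $0\le i\le k+2$; writing $\kappa=f^{-1}(f\kappa)$ and expanding $\Delta_f\kappa$ term by term, the only contribution whose naive size exceeds the target, the drift term $\frac{r}{2}f^{-1}\partial_r(f\kappa)$, becomes $\textit{O}(f^{-1})$ thanks to $\partial_r(f\kappa)=\textit{O}(f^{-1/2})$, while the commutator $\Delta_f(f^{-1})=-f^{-1}+\textit{O}(f^{-2})$ supplies the remaining cancellation; differentiating this identity up to order $k$ and inserting the interpolated decay yields $\|\Delta_f\kappa\|_{C_f^{k,\alpha}}\le C\|\kappa\|_{\mathcal{C}_f^{k+2,\alpha}}$. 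I expect Step 1 (and, of the same nature, this last computation) to be the crux: everything else is a maximum-principle argument, the compactness argument already carried out in Proposition \ref{emp-spec-egs}, or the abstract bookkeeping of compact perturbations of isomorphisms.
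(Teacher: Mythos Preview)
Your strategy shares the essential tools with the paper's proof (barrier $f^{-1}$, maximum principle on Dirichlet exhaustions, parabolic/conical Schauder estimates), but the reduction in Step~2 has a structural gap. The operator $\Delta_f$ does \emph{not} preserve the subbundle $T_uN\subset\R^m$: for tangential $\kappa$, the normal part of $\Delta_f\kappa$ is precisely $-2A(u)(\nabla u,\nabla\kappa)-(D_uA(\kappa)(\nabla u,\nabla u))^{\perp}$. So the expression ``$\tilde{L}_u-\Delta_f$ is compact as a map $\mathcal{D}_f^{k+2,\alpha}(\R^n,T_uN)\to C_f^{k,\alpha}(\R^n,T_uN)$'' is not well-posed: $\Delta_f$ is simply not such a map. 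If instead you work on $\R^m$-valued sections, then $\Delta_f$ is an isomorphism and the difference is compact, but Fredholmness of index $0$ on $\R^m$ does not automatically descend to the restriction $\tilde{L}_u=P_u\circ\Delta_f|_{T_uN}$; you would still need to analyse the block decomposition along $T_uN\oplus(T_uN)^{\perp}$ and show the complementary diagonal block also has index $0$. A second, related issue: compactness of the \emph{first}-order term $2A(u)(\nabla u,\nabla\cdot)$ from $\mathcal{D}_f^{k+2,\alpha}$ to $C_f^{k,\alpha}$ requires weighted control on $\nabla^{k+1}\kappa$, which is not obviously contained in the graph norm $\|\kappa\|_{C_f^{k,\alpha}}+\|\Delta_f\kappa\|_{C_f^{k,\alpha}}$ without first running the scaled Schauder argument you defer to Step~1.

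The paper avoids both difficulties by using your Step~3 observation \emph{at the outset} rather than after the fact. Since $\langle\tilde{L}_u\kappa,\kappa\rangle=\langle\Delta_f\kappa,\kappa\rangle$ pointwise, one has $\Delta_f|\kappa|^2\ge 2\langle\tilde{L}_u\kappa,\kappa\rangle$, so the barrier argument with $f^{-1}$ applies \emph{directly} to solutions of $\tilde{L}_u\kappa_R=Q$ on balls $B(0,R)$, giving a uniform bound $\|f\kappa_R\|_{C^0}\le C\|Q\|_{C_f^{0,\alpha}}$ and hence surjectivity (and injectivity) of $\tilde{L}_u$ on $T_uN$-valued sections without ever invoking $\Delta_f$ as a comparison operator. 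The compact-perturbation step is used only afterwards, and only for the \emph{zeroth}-order difference $L_u-\tilde{L}_u=(D_uA(\cdot)(\nabla u,\nabla u))^{\top}$, where the $O(f^{-1})$ coefficient makes compactness immediate and no bundle mismatch arises. In short: your key pointwise identity is exactly what drives the argument, but it should be fed into the a~priori estimate for $\tilde{L}_u$ itself, not postponed until after a Fredholm reduction that, as written, does not close.
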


\begin{proof}
Once the statement about $\tilde{L}_u$ is established, the assertion about the Jacobi operator is proved as follows: observe that $L_u=\tilde{L}_u+\left(D_uA(\cdot)(\nabla u,\nabla u)\right)^{\top}$.
And the operator $$\kappa\in\mathcal{D}_f^{k+2,\alpha}(\R^n,T_uN)\rightarrow \left(D_uA(\kappa)(\nabla u,\nabla u)\right)^{\top}\in C_f^{k,\alpha}(\R^n,T_uN),$$ is a well-defined continuous map since $u\in C_{con}^{k,\alpha}(\overline{\R^n},N)$ with $k\geq 2$. Moreover, it can be shown that it is a compact operator since 
\begin{eqnarray*}
|D_uA(\kappa)(\nabla u,\nabla u)|\leq C(N)|\nabla u|^2|\kappa|\leq \frac{C(N)}{|x|^2}|\kappa|,\quad |x|\geq 1.
\end{eqnarray*}
Therefore, by Fredholm theory, $L_u$ is a Fredholm operator of degree equal to the degree of $\tilde{L}_u$, i.e. $0$.

We first prove the surjectivity of $\tilde{L}_u$. Let $Q\in C_f^{k,\alpha}(\R^n,T_uN)$ for some $k\in \mathbb{N}$ and $\alpha\in(0,1)$. Since $T_uN$ is a bundle over $\R^n$, standard elliptic theory ensures the existence of a solution $\kappa_R\in C^{2,\alpha}(B(0,R),T_uN)$ for each positive radius $R$ to the following Dirichlet problem:
\begin{eqnarray*}
&&\tilde{L}_u\kappa_R=Q,\quad \text{on $B(0,R)$},\\
&&\kappa|_{\partial B(0,R)}=0.
\end{eqnarray*}
By the very definition of $\tilde{L}_u$, 
\begin{eqnarray*}
<\tilde{L}_u(\kappa_R),\kappa_R>&=&<\Delta_f\kappa_R,\kappa_R>\\
&=&\frac{1}{2}\left(\Delta_f|\kappa_R|^2-2|\nabla \kappa_R|^2\right).
\end{eqnarray*}
The issue here to let $R$ go to $+\infty$ is that classical elliptic Schauder estimates might depend on the radii $R$. Our goal is to prove some a priori estimates independent of $R$. By the previous observation, one gets:
\begin{eqnarray*}
\Delta_f|\kappa_R|_{\varepsilon}\geq -|Q|,\quad \text{on $B(0,R)$},
\end{eqnarray*}
where $|\kappa_R|_{\varepsilon}:=\sqrt{|\kappa_R|^2+\varepsilon^2}$ for $\varepsilon\in(0,1)$. Now, we observe that $f$ is a nice barrier function since:
\begin{eqnarray*}
\Delta_ff^{-1}&=&-\frac{\Delta_ff}{f^2}+2\frac{|\nabla f|^2}{f^3}\leq-f^{-1}\left(1-2\frac{|\nabla f|^2}{f^2}\right),
\end{eqnarray*}
and one can check that $a:=\inf_{\R^n}\left(1-2\frac{|\nabla f|^2}{f^2}\right)>0.$ In particular, if $C$ is a positive constant, this implies that:
\begin{eqnarray*}
\Delta_f\left(|\kappa_R|_{\varepsilon}-Cf^{-1}\right)\geq aCf^{-1} -|Q|\geq 0,
\end{eqnarray*}
if $C= a^{-1}\|Q\|_{C^{0,\alpha}_f}.$  The maximum principle then shows that 
\begin{eqnarray*}
\sup_{B(0,R)}\left(|\kappa_R|_{\varepsilon}-Cf^{-1}\right)\leq \sup_{\partial B(0,R)}\left(|\kappa_R|_{\varepsilon}-Cf^{-1}\right)\leq 0.
\end{eqnarray*}
Therefore, since $C$ can be chosen independently of $\varepsilon\in(0,1)$, we can let $\varepsilon$ go to $0$ and we finally get the first a priori $C^0$ weighted estimate:
\begin{eqnarray*}
\|f\kappa_R\|_{C^0(B(0,R))}\leq a^{-1}\|Q\|_{C^{0,\alpha}_f},\quad R>0.
\end{eqnarray*}
By standard elliptic Schauder estimates together with Arzela-Ascoli's theorem, there is a subsequence still denoted by $(\kappa_R)_R$ that converges to $\kappa\in C^{2,\alpha}_{loc}(\R^n,T_uN)$ in the $C^2_{loc}(\R^n,T_uN)$ topology. This vector field $\kappa$ satisfies:
\begin{eqnarray}
&&\tilde{L}_u(\kappa)=Q,\quad \text{on $\R^n$},\quad \|f\kappa\|_{C^0}\leq a^{-1}\|Q\|_{C^{0}_f}.\label{a-priori-est-sol}
\end{eqnarray}
We now re-interprete the elliptic equation $\tilde{L}_u(\kappa)=Q$ as a parabolic one by noticing that if we define $\bar{\kappa}(x,t):=\kappa(x/\sqrt{t})$ for $(x,t)\in \R^n\times \R_+$, 
\begin{eqnarray*}
&&(\partial_t-\Delta )\bar{\kappa}=-2A(\bar{u})(\nabla \bar{u},\nabla\bar{\kappa})-\left(D_{\bar{u}}A(\bar{\kappa})(\nabla \bar{u},\nabla \bar{u})\right)^{\perp}-t^{-1}\bar{Q},\\
&&\bar{u}(x,t):=u(x/\sqrt{t}).
\end{eqnarray*}
Therefore, standard local parabolic Schauder estimates imply that:
\begin{eqnarray}
\sup_{x\in\R^n}f(x)\|\kappa\|_{C^{2,\alpha}(B(x,1))}\leq C\left( \|f\kappa\|_{C^0}+\|Q\|_{C^{0,\alpha}_f}\right)\leq C(1+a^{-1}) \|Q\|_{C^{0,\alpha}_f},\label{final-est}
\end{eqnarray}
for some positive constant $C$ depending on $\alpha$ and where we used (\ref{a-priori-est-sol}) in the last inequality. Notice that the semi-norm $[f\nabla^2\kappa]_{\alpha,\R^n}$ has not yet been estimated.

The last estimates that remain to be shown concern the weighted conical H\" older semi-norms $[f\kappa]_{\alpha,\R^n}$ and $[f\Delta_f\kappa]_{\alpha,\R^n}$. The bound on $[f\Delta_f\kappa]_{\alpha,\R^n}$ follows from the one on $[f\kappa]_{\alpha,\R^n}$. The bound on $[f\nabla^2\kappa]_{\alpha,\R^n}$ follows from the bounds on $[f\kappa]_{\alpha,\R^n}$ and $[f\Delta_f\kappa]_{\alpha,\R^n}$ by classical parabolic Schauder estimates.

  To prove such a bound, it is sufficient by interpolation theory to prove that if $Q\in C^1_{con}(\R^n,T_uN)$ then $\kappa$ satisfies:
\begin{eqnarray*}
\|f^{1/2}\nabla \kappa\|_{C^0}\leq C\|Q\|_{C^1_{con}},
\end{eqnarray*}
for some uniform positive constant $C$. Such an estimate can be derived in the same way we proceeded for the $C^0$ bound (\ref{a-priori-est-sol}) by deriving the equation $\tilde{L}_u(\kappa)=Q$.

Therefore both the surjectivity and the injectivity of $\tilde{L}_u$ have been established. It also shows that $\mathcal{D}_f^{2,\alpha}(\R^n,T_uN)$ inherits a Banach structure and as such, $\tilde{L}_u$ becomes an isomorphism of Banach spaces for $k=0$. The cases $k\geq 1$ follow analogously. 
They are proved along the same lines with the help of the maximum principle.
\end{proof}
\subsection{Manifold structure}
From now on, we consider the moduli space of \textit{smooth} expanding solutions of the harmonic map flow, that is the set of weak solutions $u\in H^1_{loc}(\R^n,N)$ to $H_f(u)=0$ that are smooth and have some regularity at infinity in the sense that $u\in C_{con}^{k,\alpha}(\overline{\R^n},N)$ for some $k\geq 2$, and some $\alpha\in(0,1)$. We denote such a set by:
\begin{eqnarray*}
\Ent_{xp}^{k,\alpha}(N):=\{u\in  C_{con}^{k,\alpha}(\overline{\R^n},N)\quad|\quad \Delta_fu+A(u)(\nabla u,\nabla u)=0\}. 
\end{eqnarray*}
Notice that the definition of $\Ent_{xp}^{k,\alpha}(N)$ asks for too much regularity: indeed, by the proof of Theorem \ref{Analysis-Jacobi-field}, if an expanding map $u\in  C_{con}^{k,\alpha}(\R^n,N)$ for $k\geq 2$ then it admits a radial limit $u_0\in C^{k,\alpha}(\Sp^{n-1},N)$ at infinity.


In this section, we prove that the moduli space of harmonic maps $\mathcal{E}_{xp}^{k,\alpha}(N)$ with some regularity at infinity is a Banach manifold locally modeled on $C^{k,\alpha}(\mathbb{S}^{n-1},N)\times \ker_0L_u$ if $u\in \mathcal{E}_{xp}^{k,\alpha}(N)$. We follow the presentation of \cite{Har-Mou} in the case of harmonic maps on a domain of $\R^n$ very closely. This approach is in turn due to \cite{White-var-met} in the case of minimal surfaces.

The main tools are a delicate integration by parts together with the unique continuation result from Theorem \ref{Analysis-Jacobi-field}. We start by analyzing  the local structure of $\Ent_{xp}^{k,\alpha}(N)$. Before going further, we need to introduce a bit of notation. If $u\in\Ent_{xp}^{k,\alpha}(N)$, $k\geq 2$ and $\alpha\in(0,1)$, we denote by $K_u$ a complementary space of $\ker_0L_u$ in $\mathcal{D}_f^{k,\alpha}(\R^n,T_uN)$: $K_u$ exists since $\ker_0L_u$ is finite dimensional. Similarly, let $I_u$ be a complementary space of $\Ima L_u$ in $C_f^{k-2,\alpha}(\R^n,T_uN)$. Finally, we define a first approximation map $T_u$ as follows: the map $$\Phi_u:h\in T_uN\rightarrow \pi_N(u+h)\in N,$$ is a local diffeomorphism around $0\in T_uN$. Now, consider the map
$$\begin{array}{ccccc}
T_u & : & B(u_0,\varepsilon_0)\subset C^{k,\alpha}(\mathbb{S}^{n-1},N) & \mapsto & C_{con}^{k,\alpha}(\overline{\mathbb{R}^n},T_uN) \\
 & & v_0 & \mapsto & \Phi^{-1}_{u}(\pi_N(\eta(v_0-u_0)+u)), \\
\end{array}$$
where $\eta :\R^n\rightarrow [0,1]$ is a smooth function such that $\eta\equiv 0$ on $B(0,1)\subset \R^n$ and $\eta\equiv 1$ outside $B(0,2)$ for some sufficiently small $\varepsilon_0>0$. The map $T_u$ is well-defined and smooth. 


\begin{theo}[Local structure]\label{loc-str}
Let $k\geq 2$ and $\alpha\in(0,1)$. Let $u\in\Ent_{xp}^{k,\alpha}(N)$. Then there exist a neighborhood $(u_0,0)\in U\subset C^{k,\alpha}(\mathbb{S}^{n-1},N)\times \ker_0L_u$ together with smooth maps 
\begin{eqnarray*}
&&\iota_u:U\rightarrow C_{con}^{k,\alpha}(\overline{\R^n},N),\\
&&s_u:U\rightarrow I_u\subset C_f^{k-2,\alpha}(\R^n,T_uN),
\end{eqnarray*}
 satisfying:
 \begin{enumerate}
 \item \label{item-str-inf-iota}$\iota_u(u_0,0)=u$ and $(\iota_u(v_0,\kappa))_0=v_0$, for every $(v_0,\kappa)\in U$.\\
 \item $\iota_u(v_0,\kappa)$ is in $\Ent_{xp}^{k,\alpha}(N)$ if and only if $s_u(v_0,\kappa)=0$.\\
 \item \label{item-inj} $D\iota_u(u_0,0):C^{k,\alpha}(\mathbb{S}^{n-1},T_{u_0}N)\times\ker_0L_u\rightarrow C^{k,\alpha}_{con}(\overline{\mathbb{R}^n},T_uN)$ is injective. \\ 
 \item \label{item-surj}
 $Ds_u(u_0,0):C^{k,\alpha}(\mathbb{S}^{n-1},T_{u_0}N)\times\ker_0L_u\rightarrow I_u$ is surjective.\\
 \item \label{item-unique-exp} For every sufficiently small neighborhood $B(0,\varepsilon)$ of $0$ in $\ker_0L_u$, there is a neighborhood $W$ of $u\in C^{k,\alpha}_{con}(\overline{\mathbb{R}^n},N)$ such that any expanding map $v$ in $W$ equals $\iota_u(v_0,\kappa)$ for some $\kappa$ in $B(0,\varepsilon)$.\\
 
 \item \label{item-chart}The triplet $(\iota_u,U\cap s_u^{-1}(0),W\cap \Ent_{xp}^{k,\alpha}(N))$ is a chart for $\Ent_{xp}^{k,\alpha}(N)$, i.e.\\
 $\iota_u:U\cap s_u^{-1}(0)\rightarrow W\cap \Ent_{xp}^{k,\alpha}(N)$ is a diffeomorphism and $\Pi\circ \iota_u(v_0,\kappa)=v_0$ for $(v_0,\kappa)\in U\cap s_u^{-1}(0)$. Moreover, $U\cap s_u^{-1}(0)$ has codimension the nullity of the Jacobi operator, $\dim \ker_0L_u$ and its tangent space at $(u_0,0)$ is:
 \begin{eqnarray}
 T_{(u_0,0)}\left(U\cap s_u^{-1}(0)\right)=\ker D_1s_u(u_0,0)\oplus\ker_0L_u.\label{split-tan-space}
 \end{eqnarray}
 \\
 
 \item \label{item-second-count}Finally, one can choose $U$ (respectively $W$) to be open for the $C^{k,\alpha'}(\mathbb{S}^{n-1},N)$  topology (respectively for the $C_{con}^{k,\alpha'}(\overline{\mathbb{R}^{n}},N)$  topology) if $\alpha'\in(0,\alpha)$.

 \end{enumerate}

\end{theo}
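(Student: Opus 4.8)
The plan is a Lyapunov--Schmidt reduction for the expander operator $H_f(v):=\Delta_f v+A(v)(\nabla v,\nabla v)$ in the weighted Schauder framework of Theorem \ref{theo-fred-prop-jac-op}, following White \cite{White-var-met} and Hardt--Mou \cite{Har-Mou}; recall $H_f(v)\in T_vN$ for every $v\colon\R^n\to N$. For $(v_0,\kappa,w)$ near $(u_0,0,0)$ in $C^{k,\alpha}(\Sp^{n-1},N)\times\ker_0L_u\times K_u$ I set
\[
v(v_0,\kappa,w):=\Phi_u\!\left(T_u(v_0)+\kappa+w\right),\qquad
\Psi(v_0,\kappa,w):=P_u\big(H_f(v(v_0,\kappa,w))\big).
\]
Then $\Psi$ takes values in $C_f^{k-2,\alpha}(\R^n,T_uN)$: indeed $v(u_0,0,0)=u$ is an expander, $\kappa$ and $w$ decay, and $H_f$ applied to the model map $\Phi_u(T_u(v_0))$ decays like $f^{-1}$ because that map is $0$-homogeneous outside $B(0,2)$; moreover $\Psi=0$ if and only if $H_f(v)=0$, since $P_u|_{T_vN}$ is invertible for $v$ that is $C^0$-close to $u$. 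We have $\Psi(u_0,0,0)=0$, and by the definition of $K_u$ together with Theorem \ref{theo-fred-prop-jac-op} (with $k$ replaced by $k-2$) the partial differential $D_w\Psi(u_0,0,0)=L_u|_{K_u}\colon K_u\to\Ima L_u$ is a Banach space isomorphism onto the closed subspace $\Ima L_u$, which is complemented by $I_u$.

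Applying the implicit function theorem to $p_{\mathrm{Im}}\circ\Psi=0$, where $p_{\mathrm{Im}},p_{I_u}$ denote the continuous projections of $C_f^{k-2,\alpha}(\R^n,T_uN)$ onto $\Ima L_u$ and $I_u$, yields a neighbourhood $U$ of $(u_0,0)$ and a smooth map $w_\ast\colon U\to K_u$ with $w_\ast(u_0,0)=0$ and $p_{\mathrm{Im}}\Psi(v_0,\kappa,w_\ast(v_0,\kappa))\equiv0$; one then sets $\iota_u(v_0,\kappa):=v(v_0,\kappa,w_\ast(v_0,\kappa))$ and $s_u(v_0,\kappa):=p_{I_u}\Psi(v_0,\kappa,w_\ast(v_0,\kappa))$. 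Item (1) holds because $\kappa$ and $w_\ast$ decay, so the boundary trace of $\iota_u(v_0,\kappa)$ equals that of $\Phi_u(T_u(v_0))$, namely $v_0$; item (2) holds because $\Psi(v_0,\kappa,w_\ast)=s_u(v_0,\kappa)$ and $\Psi=0\iff H_f=0$; and item (3) follows from $D\iota_u(u_0,0)(\dot v_0,\dot\kappa)=DT_u(u_0)\dot v_0+\dot\kappa+Dw_\ast(u_0,0)(\dot v_0,\dot\kappa)$, since taking boundary traces forces $\dot v_0=0$ and then $\dot\kappa\in\ker_0L_u$ and $Dw_\ast(u_0,0)(0,\dot\kappa)\in K_u$ both vanish, using $\ker_0L_u\cap K_u=\{0\}$. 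Here one first observes that $Dw_\ast(u_0,0)(0,\dot\kappa)=0$: differentiating $p_{\mathrm{Im}}\Psi(\cdot,\cdot,w_\ast)=0$ in the $\ker_0L_u$-direction and using $L_u\dot\kappa=0$ shows $L_u(Dw_\ast(u_0,0)(0,\dot\kappa))\in I_u\cap\Ima L_u=\{0\}$, whence $Dw_\ast(u_0,0)(0,\dot\kappa)\in\ker_0L_u\cap K_u=\{0\}$.

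The crux is item (4). Differentiating $p_{\mathrm{Im}}\Psi(v_0,\kappa,w_\ast)=0$ and using the last observation gives
\[
Ds_u(u_0,0)(\dot v_0,\dot\kappa)=L_u\psi_{\dot v_0},\qquad
\psi_{\dot v_0}:=DT_u(u_0)\dot v_0+Dw_\ast(u_0,0)(\dot v_0,0)\in C_{con}^{k,\alpha}(\overline{\R^n},T_uN),
\]
independent of $\dot\kappa$. Since $L_u$ is formally self-adjoint for $d\mu_f=e^f\,dx$, and since $\Ima L_u$ is $L^2_f$-orthogonal to $\ker_0L_u$ (the boundary term at infinity vanishes because elements of $\mathcal{D}_f^{k,\alpha}$ decay like $f^{-1}$ while Jacobi fields in $\ker_0L_u$ decay exponentially by Theorem \ref{Analysis-Jacobi-field}), the pairing $I_u\times\ker_0L_u\to\R$ is non-degenerate; so surjectivity of $Ds_u(u_0,0)$ onto $I_u$ amounts to showing that $\varphi\in\ker_0L_u$ with $\langle L_u\psi_{\dot v_0},\varphi\rangle_{L^2_f}=0$ for all $\dot v_0$ must vanish. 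Integrating by parts on $B(0,R)$, letting $R\to+\infty$, and using $L_u\varphi=0$,
\[
\langle L_u\psi_{\dot v_0},\varphi\rangle_{L^2_f}=\lim_{R\to+\infty}\int_{S(0,R)}\big(\langle\nabla_{\partial_r}\psi_{\dot v_0},\varphi\rangle-\langle\psi_{\dot v_0},\nabla_{\partial_r}\varphi\rangle\big)e^f\,d\sigma .
\]
The first term tends to $0$ because $\psi_{\dot v_0}$ is radially constant to leading order, so $\nabla_{\partial_r}\psi_{\dot v_0}$ decays while $\varphi\,e^f=\textit{O}(f^{-n/2})$; for the second, Theorem \ref{Analysis-Jacobi-field} gives $\varphi=f^{-n/2}e^{-f}(\varphi_\infty+o(1))$, hence $\nabla_{\partial_r}\varphi=-\tfrac{r}{2}f^{-n/2}e^{-f}(\varphi_\infty+o(1))$, and the surface integral converges to $c\int_{\Sp^{n-1}}\langle\dot v_0,\varphi_\infty\rangle\,d\omega$ for a fixed constant $c\neq0$. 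Thus $\langle Ds_u(u_0,0)(\dot v_0,\dot\kappa),\varphi\rangle_{L^2_f}=c\int_{\Sp^{n-1}}\langle\dot v_0,\varphi_\infty\rangle\,d\omega$; if this vanishes for all $\dot v_0\in C^{k,\alpha}(\Sp^{n-1},T_{u_0}N)$ then $\varphi_\infty\equiv0$, and the unique continuation at infinity in Theorem \ref{Analysis-Jacobi-field} forces $\varphi\equiv0$. I expect this step to be the main obstacle, since it uses the full strength of Theorem \ref{Analysis-Jacobi-field} --- the sharp asymptotics $\kappa\sim f^{-n/2}e^{-f}\kappa_\infty$ and the unique continuation at infinity --- to make the boundary integral simultaneously convergent and faithful.

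For item (5), given an expander $v$ that is $C_{con}^{k,\alpha}$-close to $u$, set $v_0:=\lim_{r\to+\infty}v(r,\cdot)$ and $\zeta:=\Phi_u^{-1}(v)-T_u(v_0)$; then $\zeta$ solves $L_u\zeta=(\text{quadratic in }\zeta)-P_uH_f(\Phi_u(T_u(v_0)))$ with inhomogeneous term $\textit{O}(f^{-1})$, so the barrier $f^{-1}$ and the maximum principle used in the proof of Theorem \ref{theo-fred-prop-jac-op} (equivalently Theorem \ref{prop-dec-time-diff-sol}) give $\zeta\in\mathcal{D}_f^{k,\alpha}(\R^n,T_uN)=\ker_0L_u\oplus K_u$, small; writing $\zeta=\kappa+w$ and noting $p_{\mathrm{Im}}\Psi(v_0,\kappa,w)=0$, the uniqueness clause of the implicit function theorem yields $w=w_\ast(v_0,\kappa)$, i.e. $v=\iota_u(v_0,\kappa)$. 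Item (6) then assembles: by item (4) $s_u$ is a submersion at $(u_0,0)$, so $U\cap s_u^{-1}(0)$ is a submanifold of codimension $\dim I_u=\dim\ker_0L_u$ with $T_{(u_0,0)}(U\cap s_u^{-1}(0))=\ker Ds_u(u_0,0)=\ker D_1s_u(u_0,0)\oplus\ker_0L_u$, which is \eqref{split-tan-space}, using that $Ds_u(u_0,0)$ is independent of the $\ker_0L_u$-variable; and by items (2), (3), (5), $\iota_u$ restricts to a smooth bijection $U\cap s_u^{-1}(0)\to W\cap\Ent_{xp}^{k,\alpha}(N)$ with smooth inverse $v\mapsto(v_0,\kappa)$ and $\Pi\circ\iota_u(v_0,\kappa)=v_0$, hence a chart. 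Finally, for item (7), a solution of the elliptic expander equation that is merely $C^{k,\alpha'}$-close to $u$ is automatically $C^{k,\alpha}$-close by interior Schauder estimates, so one may shrink $U$ and $W$ to $C^{k,\alpha'}$-open sets without changing $U\cap s_u^{-1}(0)$ or $W\cap\Ent_{xp}^{k,\alpha}(N)$.
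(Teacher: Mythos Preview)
Your proof is correct and follows essentially the same Lyapunov--Schmidt reduction as the paper: the map $\Psi$ is the paper's $P_u\circ H_f(\pi_N(u+T_u(v_0)+\kappa+\eta))$, the implicit function theorem is applied to its projection onto $\Ima L_u$, and your boundary-integral computation in item~(4) is the same integration by parts as the paper's, only phrased dually (you test against $\varphi\in\ker_0L_u$ to show surjectivity onto $I_u$, whereas the paper restricts $D_1s_u(u_0,0)$ to the finite-dimensional space $B_u^{k,\alpha}$ of boundary traces and shows injectivity there, using $\dim B_u^{k,\alpha}=\dim I_u$).

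One caveat: in item~(7) the phrase ``interior Schauder estimates'' is too quick. Interior estimates give local control, but you need the weighted conical $C^{k,\alpha}_{con}$ norm uniformly at infinity. The paper handles this by redoing the implicit function theorem in $C^{k,\alpha'}$ to produce $\iota_u'$, and then bootstrapping: if $v_0\in C^{k,\alpha}$, the expander equation for $\iota_u'(v_0,\kappa)$ gives $\Delta_f(\iota_u'(v_0,\kappa)-\eta v_0)\in C_f^{k-2,\alpha}$, and the isomorphism $\Delta_f\colon\mathcal{D}_f^{k,\alpha}\to C_f^{k-2,\alpha}$ together with the maximum principle forces $\iota_u'(v_0,\kappa)\in C^{k,\alpha}_{con}$, whence $\iota_u'=\iota_u$ on the overlap. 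Your sketch points in the right direction but should invoke this weighted regularity rather than interior estimates alone.
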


\begin{proof}

Define a (non-linear) map $N:C^{k,\alpha}(\mathbb{S}^{n-1},N)\times\ker_0L_u\times K_u\rightarrow \Ima L_u\subset C_f^{k-2,\alpha}(\R^n,T_uN)$ on a neighborhood of $(u_0,0,0)\in C^{k,\alpha}(\mathbb{S}^{n-1},N)\times\ker_0L_u\times K_u$ by

\begin{eqnarray*}
N(v_0,\kappa,\eta):=P_{\Ima L_u}\circ P_u\circ H_f(\pi_N(u+T_u(v_0)+\kappa+\eta)).
\end{eqnarray*}
$N$ is well-defined and is a smooth map.

Then one computes $D_3N(u_0,0,0)(\eta)=L_u(\eta)$ for $\eta\in K_u$. By the definition of $K_u$, $D_3N(u_0,0,0):K_u\rightarrow \Ima L_u$ is an isomorphism of Banach spaces. The implicit function theorem ensures the existence of a neighborhood $U$ of $(u_0,0)\in C^{k,\alpha}(\mathbb{S}^{n-1},N)\times\ker_0L_u$ and a neighborhood $V$ of $0\in K_u$ together with a smooth map $j: U\subset C^{k,\alpha}(\mathbb{S}^{n-1},N)\times\ker_0L_u\rightarrow V\subset K_u$ such that:
\begin{eqnarray}
\text{$N(v_0,\kappa,\eta)=0$, for $(v_0,\kappa,\eta)\in U\times V$ if and only if $\eta=j(v_0,\kappa)$.}\label{exp-criterion}
\end{eqnarray}
We define two maps $\iota_u:U\rightarrow C^{k,\alpha}_{con}(\overline{\R^n},N)$ and $s_u: U\rightarrow I_u$ by
\begin{eqnarray*}
\iota_u(v_0,\kappa)&:=&\pi_N(u+T_u(v_0)+\kappa+j(v_0,\kappa)),\\
s_u(v_0,\kappa)&:=&P_{I_u}\circ P_u\circ H_f(\iota_u(v_0,\kappa)).
\end{eqnarray*}
The maps $\iota_u$ and $s_u$ are well-defined and smooth. We are now in a position to prove the assertions of Theorem \ref{loc-str}.

\begin{enumerate}
\item By its very definition, $\iota_u(u_0,0)=u$ and since $\kappa$ and $j(v_0,\kappa)$ are going to $0$ at infinity, $(\iota_u(v_0,\kappa))_0=v_0$, for $(v_0,\kappa)\in U$.\\

\item According to (\ref{exp-criterion}), if $(v_0,\kappa)\in U$ is such that $\iota_u(v_0,\kappa)$ is an expanding solution then $H_f(\iota_u(v_0,\kappa))=0$ and $s_u(v_0,\kappa)=0$ follows by definition of the map $s_u$. \\

Conversely, assume $(v_0,\kappa)\in U\cap s_u^{-1}(0)$. Then since $\Ima L_u$ and $I_u$ are complementary, one gets that $P_u\circ H_f(\iota_u(v_0,\kappa))=0$. Now, pointwis, 
\begin{eqnarray*}
|H_f(\iota_u(v_0,\kappa))|^2&=&\left<H_f(\iota_u(v_0,\kappa)),H_f(\iota_u(v_0,\kappa))-P_{u}\circ(H_f(\iota_u(v_0,\kappa))\right>\\
&=&\left<H_f(\iota_u(v_0,\kappa)),P_{\iota_u(v_0,\kappa)}\circ H_f(\iota_u(v_0,\kappa))-P_{u}\circ(H_f(\iota_u(v_0,\kappa))\right>\\
&\leq&\|P_{\iota_u(v_0,\kappa)}-P_u\| |H_f(\iota_u(v_0,\kappa))|^2,
\end{eqnarray*}
where $\|P_{\iota_u(v_0,\kappa)}-P_u\|$ denotes the operator norm of $P_{\iota_u(v_0,\kappa)}-P_u$.

Therefore, if $(v_0,\kappa)\in s_u^{-1}(0)$ is in a sufficiently small neighborhood of $(u_0,0)$ then the previous inequalities show that $H_f(\iota_u(v_0,\kappa))=0$, i.e. that $\iota_u(v_0,\kappa)$ defines a smooth expanding map coming out of $v_0$ in $C_{con}^{k,\alpha}(\overline{\R^n},N)$.\\

\item To show that $D\iota_u(u_0,0):C^{k,\alpha}(\mathbb{S}^{n-1},T_{u_0}N)\rightarrow C_{con}^{k,\alpha}(\overline{\R^n},T_uN)$ is injective, note that:
\begin{eqnarray*}
&&D\iota_u(u_0,0)(\xi,\kappa)=D_1\iota_u(u_0,0)(\xi)+D_2\iota_u(u_0,0)(\kappa),\\
&&D_2\iota_u(u_0,0)(\xi,\kappa)=\kappa+D_2j(u_0,0)(\kappa),\\
&&(\xi,\kappa)\in C^{k,\alpha}(\mathbb{S}^{n-1},T_uN)\times C_{con}^{k,\alpha}(\overline{\R^n},T_uN).
\end{eqnarray*}
Now, since $N(v_0,\kappa,j(v_0,\kappa))=0$ for every $(v_0,\kappa)\in U$, one gets by differentiating 
\begin{eqnarray*}
L_u(\kappa+D_2j(u_0,0)(\kappa))=0.
\end{eqnarray*}
This implies that $\kappa+D_2j(u_0,0)(\kappa)\in \ker_0L_u$ which in turn gives $D_2j(u_0,0)(\kappa)\in \ker_0L_u$. By definition, $D_2j(u_0,0)(\kappa)\in K_u$, therefore $D_2j(u_0,0)(\kappa)=0$. In particular, we get:
\begin{eqnarray}
D_2\iota_u(u_0,0)(\kappa)=\kappa,\quad \forall \kappa\in \ker_0L_u.\label{remark-diff-map-F}
\end{eqnarray}

We are in a position to prove that $D\iota_u(u_0,0)$ is injective: if $(\xi_0,\kappa)\in \ker D\iota_u(u_0,0)$, then by (\ref{remark-diff-map-F}), 
\begin{eqnarray*}
0=D\iota_u(u_0,0)(\xi_0,\kappa)=D_1\iota_u(u_0,0)(\xi_0)+\kappa.
\end{eqnarray*}
At infinity, this shows that $\xi_0=(D_1\iota_u(u_0,0)(\xi_0))_0=-\kappa_0=0.$ Consequently, $\xi_0=0$ and $\kappa=0$. \\

\item
To show that $Ds_u(u_0,0)$ is surjective, it is sufficient to prove that $D_1s_u(u_0,0):B_u^{k,\alpha}\subset C^{k,\alpha}(\mathbb{S}^{n-1},T_{u_0},N) \rightarrow I_u$ is an isomorphism. Now, by Remark \ref{rk-bdy-map-jac-field}, $$\dim B_u^{k,\alpha}=\dim \ker_0L_u=\codim \Ima L_u=\dim I_u,$$ which implies that it is sufficient to prove that $D_1s_u(u_0,0)$ is injective.

Let $\kappa_{\infty}\in B_u^{k,\alpha}$ such that $D_1s_u(u_0,0)(\kappa_{\infty})=0$ where $\kappa_{\infty}$ is defined by Theorem \ref{Analysis-Jacobi-field} for $\kappa\in \ker_0L_u$. Now, $D_1\iota_u(u_0,0)(\kappa_{\infty})=:\xi\in\ker L_u$. By integrating by parts:
 \begin{eqnarray*}
0&=&\int_{B(0,R)}\left<L_u\kappa,\xi\right>-\left<\kappa,L_u\xi\right>d\mu_f\\
&=&\int_{\partial B(0,R)}\left<\nabla_{\partial_r}\kappa,\xi\right>-\left<\kappa,\nabla_{\partial_r}\xi\right>d\sigma_f.
\end{eqnarray*} 
Since $\kappa=\textit{O}\left(f^{-n/2}e^{-f}\right)$ and $\nabla_{\partial_r}\xi=\textit{O}(f^{-1/2})$ by Theorem \ref{Analysis-Jacobi-field}, one has:$$\lim_{R\rightarrow +\infty} \int_{\partial B(0,R)}\left<\kappa,\nabla_{\partial_r}\xi\right>d\sigma_f=0.$$
Finally:
\begin{eqnarray*}
\left<\nabla_{\partial_r}\kappa,\xi\right>e^f&=&\left<\nabla_{\partial_r}\left(f^{-n/2}e^{-f}\left(f^{n/2}e^f\kappa\right)\right),\xi\right>e^f\\
&=&-f^{(n-1)/2}<\kappa_{\infty},\xi>+\textit{O}(f^{-(n+1)/2}),
\end{eqnarray*}
which shows:
\begin{eqnarray*}
\int_{\mathbb{S}^{n-1}}|\kappa_{\infty}|^2d\sigma&=&\int_{\mathbb{S}^{n-1}}<\kappa_{\infty},\xi_0>d\sigma\\
&=&-c_n\lim_{R\rightarrow+\infty}\int_{\partial B(0,R)}\left<\nabla_{\partial_r}\kappa,\xi\right>d\sigma_f\\
&=&0,
\end{eqnarray*}
i.e. $\kappa_{\infty}=0$, which means that $D_1s_u(u_0,0)$ restricted to $B_u^{k,\alpha}$ is injective.\\
\item 
Let $v$ be a smooth expanding map in $\Ent_{xp}^{k,\alpha}(N)$ close to $u$. Recall that by the definition of the map $\Phi_u$:  $$v=\pi_N(u+\Phi_u^{-1}(v)).$$ 
Note that $\Phi_u^{-1}(v)$ takes its values in $T_uN$ and by construction, $-T_u(v_0)+\Phi_u^{-1}(v)=:h\in \mathcal{D}_f^{k,\alpha}(\R^n,T_uN).$ Define $\kappa:=P_{\ker_0L_u}h$ and $\eta:= P_{K_u}h$ so that $h=\kappa+\eta$. Since $v$ is an expanding map close to $u$, $v=\pi_N(u+T_u(v_0)+\kappa+\eta)=\iota_u(v_0,\kappa)$.\\
\item According to (\ref{item-surj}) and the implicit function theorem for Banach manifolds, $s_u^{-1}(0)$ is a manifold around $(u_0,0)$ of codimension $\dim \ker_0L_u$. The splitting of the tangent space of $s_u^{-1}(0)$ at $(u_0,0)$ comes from the following observation:
\begin{eqnarray}
&&Ds_u(u_0,0)(\xi,\kappa)=D_1s_u(u_0,0)(\xi)=L_u(D_1\iota_u(u_0,0)(\xi)),\label{split-diff-sub}\\
&&(\xi,\kappa)\in C^{k,\alpha}(\mathbb{S}^{n-1},T_uN)\times\ker_0L_u.
\end{eqnarray}\\

\item According to the beginning of this proof, if $u\in\Ent_{xp}^{k,\alpha}(N)\subset C_{con}^{k,\alpha'}(\overline{\R^n},N)$, $\alpha'\in(0,\alpha)$, then there exist a neighborhood $(u_0,0)\in U'\subset C^{k,\alpha'}(\mathbb{S}^{n-1},N)\times \ker_0L_u$ and a smooth map with respect to $C^{k,\alpha'}$ norms 
\begin{eqnarray*}
&&\iota_u':U'\rightarrow C_{con}^{k,\alpha'}(\overline{\R^n},N),
\end{eqnarray*}
satisfying (\ref{item-unique-exp}). We claim that $\iota_u'|_{U'\cap C^{k,\alpha}(\mathbb{S}^{n-1},N)\times \ker_0L_u}$ is smooth and $\iota_u'|_{U'\cap U}=\iota_u|_{U'\cap U}$.

Indeed, if $(v_0,\kappa)\in C^{k,\alpha}(\mathbb{S}^{n-1},N)\times \ker_0L_u$ is sufficiently close to $(u_0,0)$ then $\iota_u'(v_0,\kappa)$ is an expanding map. In particular, $$A( \iota_u'(v_0,\kappa))(\nabla \iota_u'(v_0,\kappa),\nabla \iota_u'(v_0,\kappa))\in C^{k-1,\alpha'}_f(\R^n,\R^m)\subset C_f^{k-2,\alpha}(\R^n,\R^m).$$ 
Now, let $\eta :\R^n\rightarrow [0,1]$ be a smooth function such that $\eta\equiv 0$ on $B(0,1)\subset \R^n$ and $\eta\equiv 1$ outside $B(0,2)$. Then $\Delta_f(\iota_u'(v_0,\kappa)-\eta v_0)$ is in $C_f^{k-2,\alpha}(\R^n,\R^m)$. Since $\Delta_f:\mathcal{D}_f^{k,\alpha}(\R^n,\R^m)\rightarrow C_f^{k-2,\alpha}(\R^n,\R^m)$ is an isomorphism, there exists a vector field $X$ in $\mathcal{D}_f^{k,\alpha}(\R^n,\R^m)$ such that $\Delta_fX=\Delta_f(\iota_u'(v_0,\kappa)-\eta v_0).$ Since $\iota_u'(v_0,\kappa)-\eta v_0$ converges to $0$ at infinity, the maximum principle tells us that $\iota_u'(v_0,\kappa)-\eta v_0=X$ is in $\mathcal{D}_f^{k,\alpha}(\R^n,\R^m)$. Therefore, as $f^{(k-2)/2}\nabla^k (f\cdot X)\in C_{con}^{0,\alpha}(\R^n,\R^m)$, $\iota_u'(v_0,\kappa)$ is in $C_{con}^{k,\alpha}(\overline{\R^n},N)$. Using [(\ref{item-surj}), Theorem \ref{loc-str}], this ends the claim, i.e. $\iota_u'(v_0,\kappa)=\iota_u(v_0,\kappa)$ for $(v_0,\kappa)\in U'\cap U$.
 The new corresponding neighborhood is defined by $U'\cap C^{k,\alpha}(\mathbb{S}^{n-1},N)$ which is open in $C^{k,\alpha}(\mathbb{S}^{n-1},N)$ for the $C^{k,\alpha'}(\mathbb{S}^{n-1},N)$ topology. Similarly for $W$ defined by [(\ref{item-surj}), Theorem \ref{loc-str}], one can take $W'\cap C_{con}^{k,\alpha}(\overline{\R^n},N)$ to be an open neighborhood of $u$ in $C_{con}^{k,\alpha}(\overline{\R^n},N)$ in the $C_{con}^{k,\alpha'}(\overline{\R^n},N)$ topology.

\end{enumerate}
\end{proof}

We next show that the moduli space of expanders $\Ent_{xp}^{k,\alpha}(N)$ is globally a smooth Banach manifold, more precisely:
\begin{theo}\label{theo-glo-str}
\begin{enumerate}
\item (Integrability)\label{item-integ}
The tangent space to $\Ent_{xp}^{k,\alpha}(N)$ at a point $u$ is $\ker L_u\cap C_{con}^{k,\alpha}(\overline{\R^n},T_uN)$: any bounded element in the kernel of the Jacobi operator is the initial velocity vector field of a one-parameter family of expanding maps.\\

\item (Global structure)
\begin{enumerate}
\item \label{item-Mod-spa} The moduli space of expanders $\Ent_{xp}^{k,\alpha}(N)$ is a second countable Banach manifold modeled on the boundary maps $C^{k,\alpha}(\mathbb{S}^{n-1},N)$.\\

\item \label{item-Sard-Smale}The projection map at infinity $\Pi:\Ent_{xp}^{k,\alpha}(N)\rightarrow C^{k,\alpha}(\mathbb{S}^{n-1},N)$ defined by $\Pi(u):=u_0$ is a smooth Fredholm map of degree $0$ with $\ker D\Pi(u)=\ker_0 L_u$ and the set 
\begin{eqnarray}\label{set-inf-def}
B_u^{k,\alpha}=\left\{\kappa_{\infty}\in C^{k,\alpha}(\mathbb{S}^{n-1},T_uN)\quad|\quad\kappa\in\ker_0L_u\right\},
\end{eqnarray}
is perpendicular to $D\Pi(T_u\Ent_{xp}^{k,\alpha}(N))$.\\

In particular, the set of regular values of $\Pi$ is a residual set.

\end{enumerate}

\end{enumerate}

\end{theo}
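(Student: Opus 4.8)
The plan is to obtain all the assertions from the local description of Theorem~\ref{loc-str}, the trace analysis of Theorem~\ref{Analysis-Jacobi-field}, the Fredholm properties of Theorem~\ref{theo-fred-prop-jac-op}, and the Sard--Smale theorem. \emph{Atlas and manifold structure.} For each $u\in\Ent_{xp}^{k,\alpha}(N)$, Theorem~\ref{loc-str} gives a chart $\iota_u:U\cap s_u^{-1}(0)\to W\cap\Ent_{xp}^{k,\alpha}(N)$ with $\Pi\circ\iota_u(v_0,\kappa)=v_0$. Since $Ds_u(u_0,0)(\xi,\kappa)=D_1s_u(u_0,0)(\xi)$ is onto $I_u$ with kernel $\ker D_1s_u(u_0,0)\times\ker_0L_u$, the set $U\cap s_u^{-1}(0)$ is a Banach submanifold of $C^{k,\alpha}(\Sp^{n-1},N)\times\ker_0L_u$ with tangent space $(\ref{split-tan-space})$. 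As $D_1s_u(u_0,0)$ restricts to an isomorphism $B_u^{k,\alpha}\to I_u$ (proof of $(\ref{item-surj})$ in Theorem~\ref{loc-str}), one has the topological splitting $C^{k,\alpha}(\Sp^{n-1},T_{u_0}N)=\ker D_1s_u(u_0,0)\oplus B_u^{k,\alpha}$, and since $B_u^{k,\alpha}\cong\ker_0L_u$ (Remark~\ref{rk-bdy-map-jac-field}) the chart is modeled on $\ker D_1s_u(u_0,0)\oplus\ker_0L_u\cong C^{k,\alpha}(\Sp^{n-1},T_{u_0}N)$. The transition maps are smooth since $\iota_u$ is smooth and its inverse, reading off the boundary trace together with the $\ker_0L_u$-component of the difference to $u$, is smooth; using $(\ref{item-second-count})$ of Theorem~\ref{loc-str} one covers $\Ent_{xp}^{k,\alpha}(N)$ by countably many such charts (the usual device of passing to a slightly weaker H\"older exponent $\alpha'<\alpha$ to gain separability), which yields assertion $(\ref{item-Mod-spa})$.

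\emph{Integrability.} Fix $u\in\Ent_{xp}^{k,\alpha}(N)$. Linearizing the expander equation $H_f(\iota_u(v_0,\kappa))=0$ along $U\cap s_u^{-1}(0)$ gives $L_u\big(D\iota_u(u_0,0)(\xi,\kappa)\big)=0$, hence $T_u\Ent_{xp}^{k,\alpha}(N)\subset\ker L_u\cap C_{con}^{k,\alpha}(\overline{\R^n},T_uN)$. For the converse, take $\zeta\in\ker L_u\cap C_{con}^{k,\alpha}(\overline{\R^n},T_uN)$ with boundary trace $\zeta_0$. For any $\kappa'\in\ker_0L_u$ with rescaled limit $\kappa'_{\infty}$ (Theorem~\ref{Analysis-Jacobi-field}), integrating by parts on $B(0,R)$ exactly as in the proof of $(\ref{item-surj})$ in Theorem~\ref{loc-str} and using the decay $\nabla\zeta=\textit{O}(f^{-1/2})$ together with the sharp asymptotics of $\kappa'$,
\begin{eqnarray*}
0=\int_{\partial B(0,R)}\Big(\langle\nabla_{\partial_r}\zeta,\kappa'\rangle-\langle\zeta,\nabla_{\partial_r}\kappa'\rangle\Big)\,d\sigma_f\xrightarrow[R\to+\infty]{}c_n\int_{\Sp^{n-1}}\langle\zeta_0,\kappa'_{\infty}\rangle\,d\sigma,
\end{eqnarray*}
for a dimensional constant $c_n>0$. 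Thus $\zeta_0\perp B_u^{k,\alpha}$ in $L^2(\Sp^{n-1})$; combined with $B_u^{k,\alpha}\perp\ker D_1s_u(u_0,0)$ and $\dim B_u^{k,\alpha}=\codim\ker D_1s_u(u_0,0)$, this forces $\zeta_0\in\ker D_1s_u(u_0,0)$. Writing $\hat\zeta:=D_1\iota_u(u_0,0)(\zeta_0)$, one has $L_u\hat\zeta=D_1s_u(u_0,0)(\zeta_0)=0$, so $\kappa:=\zeta-\hat\zeta\in\ker L_u$ vanishes at infinity, and the barrier argument of Theorem~\ref{Analysis-Jacobi-field} places $\kappa$ in $\ker_0L_u$. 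Hence $\zeta=D\iota_u(u_0,0)(\zeta_0,\kappa)\in T_u\Ent_{xp}^{k,\alpha}(N)$, proving $(\ref{item-integ})$; in particular every bounded Jacobi field integrates to a one-parameter family of expanders since $\Ent_{xp}^{k,\alpha}(N)$ is a manifold.

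\emph{Fredholmness and Sard--Smale.} In the chart $\Pi$ is the linear projection $(v_0,\kappa)\mapsto v_0$, so $D\Pi(u)$ has kernel $\{0\}\times\ker_0L_u\cong\ker_0L_u$ and image $\ker D_1s_u(u_0,0)$, of codimension $\dim I_u=\dim\ker_0L_u$; hence $\Pi$ is a smooth Fredholm map of index $0$, and the orthogonality of $B_u^{k,\alpha}$ to $D\Pi(T_u\Ent_{xp}^{k,\alpha}(N))=\ker D_1s_u(u_0,0)$ is the computation of the previous paragraph. Since $\Ent_{xp}^{k,\alpha}(N)$ is covered by countably many charts, the Sard--Smale theorem applies and the set of regular values of $\Pi$ is residual in $C^{k,\alpha}(\Sp^{n-1},N)$, which establishes $(\ref{item-Sard-Smale})$. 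The main obstacle is the integrability step: ruling out obstructions to integrating bounded Jacobi fields rests entirely on the $f^{-n/2}e^{-f}$ decay and the existence of the rescaled trace $\kappa'_{\infty}$ supplied by Theorem~\ref{Analysis-Jacobi-field}, without which the boundary integral above could not be evaluated and the crucial orthogonality $\zeta_0\perp B_u^{k,\alpha}$ would fail; a lesser nuisance is second-countability inside the non-separable space $C^{k,\alpha}$, handled by the $\alpha'$-reduction of $(\ref{item-second-count})$ in Theorem~\ref{loc-str}.
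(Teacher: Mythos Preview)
Your proof is correct and follows the same overall architecture as the paper's (local charts from Theorem~\ref{loc-str}, trace analysis from Theorem~\ref{Analysis-Jacobi-field}, Sard--Smale), but the integrability step (\ref{item-integ}) is argued along a genuinely different route.

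The paper, given $\xi\in\ker L_u\cap C_{con}^{k,\alpha}(\overline{\R^n},T_uN)$, immediately sets $\kappa:=\xi-D_1\iota_u(u_0,0)(\xi_0)$ and observes that $\kappa\in\mathcal{D}_f^{k,\alpha}(\R^n,T_uN)$ since both $\xi$ and $D_1\iota_u(u_0,0)(\xi_0)$ have the same boundary trace $\xi_0$. Then $L_u\kappa=-L_u(D_1\iota_u(u_0,0)(\xi_0))$ lies both in $\Ima L_u$ (because $\kappa$ is in the domain) and in $I_u$ (because $P_{\Ima L_u}\circ L_u\circ D_1\iota_u(u_0,0)\equiv0$ by construction of $\iota_u$), so $L_u\kappa=0$ and $\kappa\in\ker_0L_u$. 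No boundary integration by parts is needed at this stage; the orthogonality computation is deferred to the proof of (\ref{item-Sard-Smale}).

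You instead front-load the boundary integration by parts: you first prove $\zeta_0\perp B_u^{k,\alpha}$ for any bounded Jacobi field $\zeta$, deduce (using the algebraic splitting $C^{k,\alpha}(\Sp^{n-1},T_{u_0}N)=\ker D_1s_u(u_0,0)\oplus B_u^{k,\alpha}$ and the orthogonality $\ker D_1s_u(u_0,0)\perp B_u^{k,\alpha}$, which is another instance of the same computation) that $\zeta_0\in\ker D_1s_u(u_0,0)$, and only then conclude $\zeta-\hat\zeta\in\ker_0L_u$. This has the virtue of performing the orthogonality computation once and reusing it verbatim for (\ref{item-Sard-Smale}); the paper's route is more economical for (\ref{item-integ}) alone since it bypasses the boundary limit entirely via the purely algebraic splitting $\Ima L_u\oplus I_u=C_f^{k-2,\alpha}(\R^n,T_uN)$. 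Both arguments are valid and rest on the same ingredients.
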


\begin{proof}
(Proof of (\ref{item-integ})) Let $u\in \Ent_{xp}^{k,\alpha}(N)$. We show that $\xi\in\ker L_u\cap C_{con}^{k,\alpha}(\overline{\R^n},T_uN)$ if and only if there exists a one-parameter family of expanding maps $(u(t))_{t\in(-\varepsilon,\varepsilon})\in \Ent_{xp}^{k,\alpha}(N)$ such that 
\begin{eqnarray*}
u(0)=u,\quad \frac{d}{dt}\bigg\rvert_{t=0}u(t)=\xi.
\end{eqnarray*}
The \guillemotleft if\guillemotright -part is left to the reader. Let $\xi\in\ker L_u\cap C^{k,\alpha}(\overline{\R^n},T_uN)$. Then consider the map $D_1\iota_u(u_0,0)(\xi_0)$ in $C^{k,\alpha}(\overline{\R^n},T_uN)$ as in the proof of [(\ref{item-inj}), Theorem \ref{loc-str}]. Then by the definition of the map $\iota_u$, the difference $\xi-D_1\iota_u(u_0,0)(\xi_0)$ is in $\mathcal{D}_f^{k,\alpha}(\R^n,T_uN)$. Now, observe that $P_{\Ima L_u}\circ L_u(D_1\iota_u(u_0,0)(\xi_0))=0$. Therefore, $L_u(\xi-D_1\iota_u(u_0,0)(\xi_0))=0$ and $\xi-D_1\iota_u(u_0,0)(\xi_0)$ is in $\mathcal{D}_f^{k,\alpha}(\R^n,T_uN)$, i.e. 

\begin{eqnarray*}
\kappa:=\xi-D_1\iota_u(u_0,0)(\xi_0)\in\ker_0L_u,\quad D_1\iota_u(u_0,0)(\xi_0)\in \ker L_u\cap C_{con}^{k,\alpha}(\overline{\R^n},T_uN).
\end{eqnarray*}

According to (\ref{split-diff-sub}) and (\ref{split-tan-space}), one deduces that $(\xi_0,\kappa)$ is in the tangent space of $s_u^{-1}(0)$ at $(u_0,0)$. By [(\ref{item-chart}), Theorem \ref{loc-str}], there exists a one-parameter family of maps $(v_0(t),\kappa(t))_{t\in(-\varepsilon,\varepsilon)}$ in $s_u^{-1}(0)$ such that
\begin{eqnarray*}
(v_0(t),\kappa(t))|_{t=0}=(u_0,0),\quad \frac{d}{dt}\bigg\rvert_{t=0}(v_0(t),\kappa(t))=(\xi_0,\kappa).
\end{eqnarray*}
Consequently, the curve $t\in(-\varepsilon,\varepsilon)\rightarrow\iota_u(v_0(t),\kappa(t))\in \Ent_{xp}^{k,\alpha}(N)$ is well-defined and smooth and its initial velocity vector field is exactly $D_1\iota(u_0,0)(\xi_0)+\kappa=\xi$. This ends the proof of (\ref{item-integ}).

(Proof of (\ref{item-Mod-spa})) Let $u_1$ and $u_2$ be two expanding maps in $\Ent_{xp}^{k,\alpha}(N)$. By using the notations and the results from [(\ref{item-chart}),Theorem \ref{loc-str}], let $(\iota_{u_i},U_i\cap s_{u_i}^{-1}(0),W_i\cap \Ent_{xp}^{k,\alpha}(N)=:W'_i)_{i=1,2}$ be the corresponding charts. Then the composition 
$$\iota_{u_1}^{-1}\circ \iota_{u_2}:\iota_{u_2}^{-1}(W'_1\cap W_2')\rightarrow \iota_{u_1}^{-1}(W'_1\cap W_2'),$$ is a smooth map.

According to [(\ref{item-second-count}), Theorem \ref{loc-str}], one can choose the neighborhhods $(U_i)_{i=1,2}$ and $(W_i)_{i=1,2}$ to be open with respect to the $C^{k,\alpha'}$ topology for $\alpha'\in(0,\alpha)$ which implies the second countability property of $\Ent_{xp}^{k,\alpha}(N)$.\\

(Proof of (\ref{item-Sard-Smale}))
Let $u$ be an expanding map in $\Ent_{xp}^{k,\alpha}(N)$ and let $(\iota_{u},U\cap s_{u}^{-1}(0),W\cap \Ent_{xp}^{k,\alpha}(N))$ be a corresponding chart. Then, $\Pi\circ \iota_u(v_0,\kappa)=v_0$ for $(v_0,\kappa)\in U\cap s_{u}^{-1}(0)$ by [(\ref{item-str-inf-iota}), Theorem \ref{loc-str}] which shows that $\Pi$ is smooth. Now, $D\Pi(u)(\xi)=\xi_0$ for $\xi\in T_u\Ent_{xp}^{k,\alpha}(N)=\ker L_u\cap C_{con}^{k,\alpha}(\overline{\R^n},T_uN)$. Therefore, $\xi\in \ker D\Pi(u)$ if and only if $\xi\in \ker L_u\cap C_{con}^{k,\alpha}(\overline{\R^n},T_uN)$ and $\kappa$ goes to $0$ at infinity, i.e. if and only if $\kappa\in \ker_0L_u$. In particular, this and the results of Theorem \ref{loc-str} show that $\Ima D\Pi(u)$ is of finite codimension and is isomorphic to $\ker L_u\cap C_{con}^{k,\alpha}(\overline{\R^n},T_uN)/\ker_0L_u$, i.e. $\Pi$ is Fredholm of degree $0$.

Let us show that the set defined by (\ref{set-inf-def}) is perpendicular to $\Ima D\Pi(u)$. Similarly to the proof of [(\ref{item-surj}), Theorem \ref{loc-str}], if $\xi\in T_u\Ent_{xp}^{k,\alpha}(N)$ and $\kappa\in\ker_0L_u$, one has by integrating by parts:
\begin{eqnarray*}
0=\int_{B(0,R)}\left<L_u\kappa,\xi\right>-\left<\kappa,L_u\xi\right>d\mu_f=\int_{\partial B(0,R)}\left<\nabla_{\partial_r}\kappa,\xi\right>-\left<\kappa,\nabla_{\partial_r}\xi\right>d\sigma_f,\quad R>0.
\end{eqnarray*}
By letting $R$ go to $+\infty$ together with Theorem \ref{Analysis-Jacobi-field}:
\begin{eqnarray*}
\int_{\mathbb{S}^{n-1}}\left<\kappa_{\infty},\xi_0\right>d\sigma=0,
\end{eqnarray*}
which is exactly the desired result. 
Finally, we are in a good position to use Sard-Smale's Theorem \cite{Sard-Smale} to ensure that the set of critical values of $\Pi$ is of first category in $C^{k,\alpha}(\mathbb{S}^{n-1},N)$. A proof using only Sard's Theorem can be given in the spirit of \cite{White-var-met} or [Theorem $6.6$, \cite{Har-Mou}].
\end{proof}
 
\section{Unique continuation and generic uniqueness}\label{uni-cont-gen-uni}

In this section, we prove a unique continuation at infinity for expanding maps coming out of the same initial condition. This result will be used in a crucial way to prove a generic property about the uniqueness of such expanders with vanishing relative entropy.

\begin{theo}(Unique continuation at infinity)\label{uni-cont-inf}
Let $u_1$ and $u_2$ be two expanding solutions in $\Ent_{xp}^{k,\alpha}(N)$ with $k\geq 4$, $\alpha\in(0,1)$ coming out of the same map $u_0\in C^{k,\alpha}(\mathbb{S}^{n-1},N)$. Then there exists a map $\kappa_{12}\in C^{k-4}(\mathbb{S}^{n-1},T_{u_0}N)$ such that the limit,
\begin{eqnarray}
\lim_{r\rightarrow+\infty}f^{\frac{n}{2}}e^f(u_2-u_1)(r,\omega)=:\kappa_{12}(\omega),\quad \omega\in\mathbb{S}^{n-1},
\end{eqnarray}
exists and holds in the $C^{k}(\Sp^{n-1},\R^m)$ topology. Moreover, $u_1=u_2$ if and only if $\kappa_{12}=0$.
\end{theo}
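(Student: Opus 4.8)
The plan is to work with the difference $w:=u_2-u_1$ and its rescaling $U:=f^{n/2}e^{f}w$, to obtain $\kappa_{12}$ exactly as in the proof of Theorem \ref{Analysis-Jacobi-field}, and to reserve the real work for the rigidity statement $\kappa_{12}\equiv0\Rightarrow u_1\equiv u_2$.

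\textbf{The equation and the radial trace.} Subtracting the two copies of (\ref{eq-HMP-Stat}) gives $\Delta_f w=A(u_1)(\nabla u_1,\nabla u_1)-A(u_2)(\nabla u_2,\nabla u_2)$, whose right-hand side is affine in $w$ with coefficients built from $u_i$ and $\nabla u_i$; expanding it as in the proof of Lemma \ref{evo-equ-resc-vec-fiel} and passing to $U=F^{-1}w$ with $F=f^{-n/2}e^{-f}$ — using Claim \ref{rad-der-ext-claim} to absorb the radial derivatives $\nabla_{\nabla f}u_i=\textit{O}(f^{-1})$ — one sees that $U$ solves a linear equation of exactly the form (\ref{back-heat-eq-resc-jac-fiel})--(\ref{pot-bac-heat-eq}), namely $\Delta_{-f}U=V_1\ast U+V_2\ast\nabla U$ with $V_1\in C_f^{k-2,\alpha}$, $V_2\in C_{f^{1/2}}^{k-1,\alpha}$, the point being that, $u_i$ being expanders, the only potentially dangerous first-order term $f^{-1/2}\ast\nabla w$ is reabsorbed into a zeroth order one. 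By Theorem \ref{prop-dec-time-diff-sol} $U$ is bounded on some $\Omega_\lambda$, and by Proposition \ref{autom-reg-infty} (static case) $\nabla^{j}U=\textit{O}(f^{-j/2})$ for all $j$. One then repeats verbatim the argument of Theorem \ref{Analysis-Jacobi-field}: reading the above as an ancient (backward heat) equation with coefficients regular at infinity and applying local parabolic Schauder estimates yields $\|U\|_{C^{k,\alpha}_{con}}\le C\|U\|_{C^0}$ together with the improved radial decay $\partial_rU=\textit{O}(r^{-3})$, so $U(r,\cdot)$ converges radially, in the $C^{k}$-topology of the statement, to a vector field $\kappa_{12}\in C^{k-4}(\mathbb{S}^{n-1},T_{u_0}N)$ along $u_0$ (the loss of derivatives reflecting the $C_f^{k-2,\alpha}$/$C_{f^{1/2}}^{k-1,\alpha}$ regularity of $V_1,V_2$ and the passage to the radial trace, and explaining the hypothesis $k\ge4$). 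The implication $u_1=u_2\Rightarrow\kappa_{12}=0$ is trivial.

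\textbf{Unique continuation at infinity.} Assume $\kappa_{12}\equiv0$. Integrating $\partial_rU=\textit{O}(r^{-3})$ from infinity already gives $U=\textit{O}(f^{-1})$, i.e. $w=\textit{O}(f^{-n/2-1}e^{-f})$, so $w$ vanishes strictly faster than the slowest indicial rate $f^{-n/2}e^{-f}$ of $\Delta_f$. Following the analysis of the author for the Ricci flow in \cite{Uni-Con-Egs-Der}, I would then: (i) prove a Carleman-type inequality for the drift Laplacian $\Delta_{-f}$ on dyadic annuli $\{\rho<f<2\rho\}$, with a weight $e^{2\tau\varphi(f)}$ depending on a large parameter $\tau$ and chosen to match the Gaussian geometry at infinity, valid for sections which together with their first derivatives decay at infinity at the rate afforded by $\kappa_{12}\equiv0$; (ii) feed $U$ into it — the right-hand side of its equation being the perturbation $V_1\ast U+V_2\ast\nabla U$, whose coefficients decay like $f^{-1}$, $f^{-1/2}$ and whose gradient is controlled by Proposition \ref{autom-reg-infty}, hence absorbed into the left-hand side once $\tau$ is large — to conclude first that $U$ vanishes to infinite order at infinity, then, letting $\tau\to+\infty$, that $U\equiv0$ on some region $\{f>\lambda^{\ast}\}$, i.e. $u_1\equiv u_2$ there; (iii) since $u_1,u_2$ are smooth solutions of the same elliptic system (\ref{eq-HMP-Stat}) on $\mathbb{R}^n$ agreeing on the open set $\{|x|^2>4\lambda^{\ast}\}$, invoke the strong unique continuation property (Aronszajn) to get $u_1\equiv u_2$ on all of $\mathbb{R}^n$. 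The very same argument applied to $\kappa_f=e^{f}f^{n/2}\kappa$ solving (\ref{back-heat-eq-resc-jac-fiel}) also settles the unique continuation statement for Jacobi fields postponed from Theorem \ref{Analysis-Jacobi-field}.

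\textbf{Where the difficulty lies.} Everything outside (i)--(ii) is routine given the earlier results. The crux is the Carleman inequality for $\Delta_{-f}$: because the drift $\nabla f$ grows linearly the relevant geometry at infinity is Gaussian rather than conical, so the weight must be a carefully tuned function of $f$; and since the indicial rates of $\Delta_{-f}$ on decaying sections form the \emph{entire} scale of polynomial decays $f^{-M}$, the parameter $\tau$ together with sharp control of the commutator and boundary terms in the underlying integration by parts is needed precisely to jump past all of them simultaneously and reach the regime where vanishing is forced. This is exactly the mechanism of \cite{Uni-Con-Egs-Der} for the Ricci flow, which transfers with only notational changes; the only genuinely new bookkeeping is checking that the target nonlinearity contributes nothing beyond lower-order perturbations of the linearized operator and that the regularity available under $k\ge4$ suffices throughout.
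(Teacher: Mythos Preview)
Your proposal is correct and follows the same overall strategy as the paper: derive the existence of $\kappa_{12}$ from the backward-heat structure of $U=f^{n/2}e^fw$ (Proposition~\ref{autom-reg-infty} and the argument of Theorem~\ref{Analysis-Jacobi-field}), then establish unique continuation via Carleman estimates adapted from \cite{Uni-Con-Egs-Der}, and finish with Aronszajn. The differences are in the packaging of the Carleman step.

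The paper works with $\xi=u_2-u_1$ and $H=-\Delta_f$ directly, rather than with the rescaled $U$ and $\Delta_{-f}$. Its key technical device is the \emph{tangential/normal splitting} in Lemma~\ref{lemma-carleman}: the estimate carries $(H\phi)^\top$ with weight $e^{F_\alpha}$ but $(H\phi)^\perp$ with the strictly weaker weight $e^{F_{\alpha-1}}$. This asymmetry is what makes the absorption work, because $(H\xi)^\top=\textit{O}(f^{-1})\ast\xi+\text{quadratic}$ while $(H\xi)^\perp=\textit{O}(f^{-1/2})\ast\nabla\xi+\text{quadratic}$; the first-order normal term is exactly compensated by the loss of half a power in the weight. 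The nonlinear complication you call bookkeeping is that $\xi\notin T_{u_1}N$, so the paper needs Claims~\ref{claim-lemma-carleman-non-linear} and~\ref{est-various-norm-H-nonlin}, which rest on the quadratic estimates $|\xi^\perp|\le C|\xi|^2$ and $|\nabla\xi^\perp|\le C(|\nabla\xi||\xi|+|\nabla u||\xi|^2)$. Your formulation with $U$ and $\Delta_{-f}$ hides this splitting; the term $V_2\ast\nabla U$ with $V_2=\textit{O}(f^{-1/2})$ must still be absorbed, and the pointwise bound from Proposition~\ref{autom-reg-infty} alone is not what does it---one needs the weighted $L^2$ gradient control of Proposition~\ref{est-various-norm-H} (or its analogue for $\Delta_{-f}$), which bootstraps with the Carleman inequality. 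Finally, the paper uses the global weight family $F_\alpha=f/2+\tfrac{2\alpha+n}{4}\ln f$ and the connectedness argument on $S=\{\alpha:\xi\in L^2(e^{2F_\alpha+f}dx)\}$, rather than dyadic annuli; both styles work and are standard variants of the same mechanism.
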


\begin{rk}
Theorem \ref{uni-cont-inf} should be interpreted as a unique continuation at infinity, i.e. it involves a Carleman estimate adapted to the harmonic oscillator: it is a non-linear version of Theorem \ref{Analysis-Jacobi-field}. The heuristics behind this rate are the following: the difference $u_1-u_2$ lies approximately in the kernel of the weighted operator $\Delta_f$ and goes to zero at infinity. The solutions that lies in the kernel of $\Delta_f$ are of two kinds: either they behave like $f^{-1}$ or they decay like $f^{-\frac{n}{2}}e^{-f}$. 
\end{rk}

\begin{rk}
The restriction $k\geq 4$ on the regularity at infinity is due to Proposition \ref{autom-reg-infty} whose proof asks for two more derivatives: see Remark \ref{rk-autom-reg}. This explains the regularity of the limit $\kappa_{12}$: this is far from being optimal but it is sufficient to prove Theorem \ref{theo-gen-uni} on generic uniqueness.
\end{rk}
We use and adapt the results from \cite{Uni-Con-Egs-Der} on unique continuation at infinity for approximate eigentensors of the drift Laplacian $\Delta_f$: as the methods are very similar, we only give the main steps. We start with a key estimate based on a commutator estimate: see \cite{Uni-Con-Egs-Der} and the references therein for the motivations of such estimates.

By using the same notations introduced in [Section $3$, \cite{Uni-Con-Egs-Der}], one defines two operators acting on $C^{\infty}_0(\R^n,\R^m)$ as follows:
\begin{eqnarray*}
H\phi:=-\Delta_f\phi,\quad A\phi:=\nabla_{\nabla f}\phi+\frac{f}{2}\phi,\quad \phi\in C^{\infty}_0(\R^n,\R^m).
\end{eqnarray*}
Then one checks that $A$ is anti-symmetric with respect to the weighted measure $e^fdx$. [Corollary $3.3$, \cite{Uni-Con-Egs-Der}] applied to the Ricci expanding soliton $(\R^n,\eucl,\nabla f)$ gives:
\begin{eqnarray}
\left<[H,A]\phi,\phi\right>_{L^2_f}=\int_{\R^n}\left<H\phi,\phi\right>-\frac{f}{2}|\phi|^2e^fdx,\quad\phi\in C^{\infty}_0(\R^n,\R^m),\label{Mourre-est}
\end{eqnarray}
where the notation $L^2_f$ stands for the space $L^2(e^fdx)$.

The next lemma corresponds to [Lemma $3.6$, \cite{Uni-Con-Egs-Der}] that needs some modification in the setting of expanding solutions to the harmonic map flow. For this purpose, define the following weight:
\begin{eqnarray*}
F_{\alpha}:=\frac{f}{2}+\frac{2\alpha+n}{4}\ln f,\quad \alpha \geq 0.
\end{eqnarray*}
In the sequel, we intend to estimate an element in the kernel of the Jacobi operator or the difference of two expanding solutions of the Harmonic map flow in the $L^2$ spaces defined with respect to the weighted measures $e^{f+2F_{\alpha}}dx$ for all $\alpha\geq 0$.
\begin{lemma}\label{lemma-carleman}
Let $u$ be an expanding solution of the Harmonic map flow in $\Ent_{xp}^{k,\alpha}(N)$ with $k\geq 2$, $\alpha\in(0,1)$.
Then there exist some positive constant $c$ and a positive radius $R$ such that for any positive $\alpha$ with $\alpha R^2\geq c$ and for any tensor $\phi\in C^{\infty}_0(\R^n,T_uN)$ supported outside $B(0,R)$,
\begin{eqnarray}
\int_{\R^n}\left(\alpha+\frac{\alpha^2}{f^2}\right)|\phi|^2e^{2F_{\alpha}+f}dx\leq c\left(1+\frac{1}{\alpha}\right)\|(H\phi)^{\top}e^{F_{\alpha}}\|^2_{L^2_f}+c\left\|(H\phi)^{\perp}e^{F_{\alpha-1}}\right\|_{L^2_f}^2.\label{crucial-est-unique-est}
\end{eqnarray}
In particular, (\ref{crucial-est-unique-est}) holds for any smooth tensor supported outside $B(0,R)$ such that $\phi e^{F_{\alpha}}$, $(H\phi)^{\top}e^{F_{\alpha}}$, $(H\phi)^{\perp}e^{F_{\alpha-1}}$ and $(\nabla \phi)e^{F_{\alpha-1}}$ lie in $L^2(e^fdx)$.
\end{lemma}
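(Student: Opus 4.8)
The plan is to import the commutator (Mourre\nobreakdash-type) Carleman machinery of [Lemma $3.6$, \cite{Uni-Con-Egs-Der}] essentially verbatim, adding the bookkeeping needed because $\phi$ is valued in the bundle $T_uN$. Write $F := F_\alpha$ and set $\psi := \phi\,e^{F}$, so that controlling $\phi$ against the weight $e^{2F+f}\,dx$ is the same as controlling $\psi$ in $L^2_f := L^2(e^f\,dx)$. A direct conjugation of $H = -\Delta_f$ gives $e^{F}H(e^{-F}\psi) = S_\alpha\psi + \mathcal A_\alpha\psi$, where $S_\alpha := H - |\nabla F|^2$ is symmetric for $\langle\cdot,\cdot\rangle_{L^2_f}$ and $\mathcal A_\alpha := 2\,\nabla F\cdot\nabla + \Delta_f F$ is skew\nobreakdash-symmetric. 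Since $\nabla F_\alpha = \left(\tfrac12 + \tfrac{2\alpha+n}{4}f^{-1}\right)\nabla f$ and $|\nabla f|^2 = f - \tfrac n2$, one has $|\nabla F_\alpha|^2 = \tfrac f4 + \tfrac\alpha2 + \textit{O}(1) + \textit{O}(\alpha^2 f^{-1})$, while $\mathcal A_\alpha = A + \tfrac{2\alpha+n}{4}\,\mathrm{Id} + (\text{terms of lower order in } f^{-1})$, with $A = \nabla_{\nabla f} + \tfrac f2$ the operator appearing in (\ref{Mourre-est}).

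The basic identity $\|e^{F}H(e^{-F}\psi)\|_{L^2_f}^2 = \|S_\alpha\psi\|_{L^2_f}^2 + \|\mathcal A_\alpha\psi\|_{L^2_f}^2 + \langle[S_\alpha,\mathcal A_\alpha]\psi,\psi\rangle_{L^2_f} \geq \langle[S_\alpha,\mathcal A_\alpha]\psi,\psi\rangle_{L^2_f}$ reduces the estimate to a lower bound for the commutator term. Plugging (\ref{Mourre-est}) in for the leading piece $[H,A]$ and expanding the remaining commutators in powers of $f^{-1}$ — exactly as in [Lemma $3.6$, \cite{Uni-Con-Egs-Der}] — one obtains, after discarding the favorable signs and using that on $\supp\phi \subset \R^n\setminus B(0,R)$ with $R$ large and $\alpha R^2\geq c$ one has $f \gtrsim \alpha$, so that all $\textit{O}(f^{-1})$ error terms are absorbed, a lower bound of the form $c^{-1}\int_{\R^n}\bigl(\alpha + \alpha^2 f^{-2}\bigr)|\psi|^2 e^f\,dx$. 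It is essential here that the Bakry--Émery Ricci tensor of $(\R^n,\eucl,\nabla f)$ is a constant multiple of the metric, so that no curvature term enters (\ref{Mourre-est}). Undoing the substitution $\psi = \phi\,e^{F}$ produces the left-hand side of (\ref{crucial-est-unique-est}), while the right-hand side is $\|e^{F}H(e^{-F}\psi)\|_{L^2_f}^2 = \|(H\phi)\,e^{F}\|_{L^2_f}^2$, up to the bundle correction discussed next.

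The genuinely new point — and the step I expect to be the main obstacle — is the vector-bundle structure. Since $\phi$ is a section of $T_uN$ but $\Delta_f$ does not preserve such sections, the identity above applies to the full $\R^m$-valued quantity $H\phi = -\Delta_f\phi$, and one must relate $\|(H\phi)\,e^{F}\|_{L^2_f}$ to the tangential and normal parts occurring in (\ref{crucial-est-unique-est}). Decomposing $H\phi = (H\phi)^{\top} + (H\phi)^{\perp}$, the normal part is schematically $(H\phi)^{\perp} = \textit{O}(f^{-1/2})\,\nabla\phi + \textit{O}(f^{-1})\,\phi$, because the bundle $T_uN$ along $u$ varies with rate $|\nabla u| = \textit{O}(f^{-1/2})$ — with the radial derivative of $u$ even smaller, $\textit{O}(f^{-1})$, since $u$ is an expander regular at infinity. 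Hence $(H\phi)^{\perp}$ carries an extra half power of $f$ relative to $(H\phi)^{\top}$, which is exactly why it appears in (\ref{crucial-est-unique-est}) with weight $e^{F_{\alpha-1}}$ rather than $e^{F_\alpha}$; the price is that $\|(\nabla\phi)\,e^{F_{\alpha-1}}\|_{L^2_f}$ must be added to the list of controlled quantities, and a Caccioppoli-type integration by parts (testing against $\phi\,e^{2F_{\alpha-1}+f}$) bounds $\int_{\R^n}|\nabla\phi|^2 e^{2F_{\alpha-1}+f}\,dx$ by the right-hand side of (\ref{crucial-est-unique-est}), so that the normal contribution is genuinely absorbed into the estimate.

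Finally, the extension from $\phi\in C^\infty_0(\R^n,T_uN)$ to the weaker class in the last sentence of the statement — smooth $\phi$ supported outside $B(0,R)$ with only $\phi\,e^{F_\alpha}$, $(H\phi)^{\top}e^{F_\alpha}$, $(H\phi)^{\perp}e^{F_{\alpha-1}}$ and $(\nabla\phi)\,e^{F_{\alpha-1}}$ assumed to lie in $L^2_f$ — is a routine cutoff argument: one multiplies by radial cutoffs $\chi_j$ equal to $1$ on $B(0,j)\setminus B(0,2R)$, notes that the error terms $[\Delta_f,\chi_j]\phi$ are supported where $\nabla\chi_j\neq 0$ and are dominated by the assumed integrability, applies (\ref{crucial-est-unique-est}) to $\chi_j\phi$, and passes to the limit $j\to\infty$ by monotone and dominated convergence.
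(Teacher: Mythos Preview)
Your scalar part (the first two paragraphs) is fine and matches the paper's approach, though the paper works directly with the Mourre identity (\ref{Mourre-est}) applied to $\phi_\alpha:=e^{F_\alpha}\phi$ and the relation $\langle[H,A]\phi_\alpha,\phi_\alpha\rangle_{L^2_f}=2\langle H\phi_\alpha,A\phi_\alpha\rangle_{L^2_f}$ rather than the symmetric/antisymmetric conjugation decomposition; the two presentations are equivalent.

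The gap is in the bundle correction. Your conjugation identity produces $\|(H\phi)e^{F_\alpha}\|_{L^2_f}^2=\|(H\phi)^{\top}e^{F_\alpha}\|_{L^2_f}^2+\|(H\phi)^{\perp}e^{F_\alpha}\|_{L^2_f}^2$ on the right-hand side. Since $e^{F_{\alpha-1}}=f^{-1/2}e^{F_\alpha}$, the lemma's right-hand side --- with the normal part at the \emph{smaller} weight $e^{F_{\alpha-1}}$ --- is strictly smaller than yours, so you cannot pass from your inequality to the stated one. Your proposed fix, bounding $(H\phi)^{\perp}$ pointwise by $\textit{O}(f^{-1/2})|\nabla\phi|+\textit{O}(f^{-1})|\phi|$ and invoking Caccioppoli, does not produce the term $\|(H\phi)^{\perp}e^{F_{\alpha-1}}\|_{L^2_f}^2$ either: it would eliminate $(H\phi)^{\perp}$ from the right-hand side altogether and replace it by $\int|\nabla\phi|^2e^{2F_{\alpha-1}+f}dx$, which the available Caccioppoli estimate (Proposition \ref{est-various-norm-H}) controls only at the cost of $I^0_{\alpha}(\phi)$-type terms that do not absorb cleanly into the left-hand side for all $\alpha>0$.

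The mechanism the paper uses is different and sharper. In its approach the normal part enters only through the \emph{cross pairing} $\langle A\phi_\alpha,(H\phi)^{\perp}e^{F_\alpha}\rangle_{L^2_f}$, not through the full norm. Since $\phi_\alpha\in T_uN$, the $\tfrac{f}{2}\phi_\alpha$ piece of $A\phi_\alpha=\nabla_{\nabla f}\phi_\alpha+\tfrac{f}{2}\phi_\alpha$ is tangent and drops out of this pairing; what remains is $\langle(\nabla_{\nabla f}\phi_\alpha)^{\perp},(H\phi)^{\perp}e^{F_\alpha}\rangle_{L^2_f}$. Now $(\nabla_{\nabla f}\phi_\alpha)^{\perp}$ is a zero-order operator applied to $\phi_\alpha$ with coefficient of size $|\nabla_{\nabla f}u|=\textit{O}(f^{-1})$ --- this is exactly where the expander equation is used, and it is the \emph{radial} derivative of $u$, not the full $\nabla u=\textit{O}(f^{-1/2})$, that matters. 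The resulting factor of $f^{-1}$ is then split as $f^{-1/2}\cdot f^{-1/2}$ by Cauchy--Schwarz, which is what lets $(H\phi)^{\perp}$ appear with the lower weight $e^{F_{\alpha-1}}$. The gain is at the level of the pairing with $A\phi_\alpha$, not at the level of a pointwise bound on $(H\phi)^{\perp}$; by discarding $\|\mathcal A_\alpha\psi\|^2$ in your identity you lose access to precisely this structure.
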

\begin{proof}[Proof of Lemma \ref{lemma-carleman}]
Consider $\phi_{\alpha}:=e^{F_{\alpha}}\phi$ for $\alpha\geq 0$. Let us apply (\ref{Mourre-est}) to $\phi_{\alpha}$:
\begin{eqnarray}
\left<[H,A]\phi_{\alpha},\phi_{\alpha}\right>_{L^2_f}=\int_{\R^n}\left<H\phi_{\alpha},\phi_{\alpha}\right>-\frac{f}{2}|\phi_{\alpha}|^2e^fdx.\label{equ-mourre}
\end{eqnarray}
Define a function $w_{\alpha}$ by: $$\nabla F_{\alpha}=\left(\frac{1}{2}+\frac{2\alpha+n}{4f}\right)\nabla f=:w_{\alpha}\nabla f.$$ Note that $w_{\alpha}\geq 1/2$. Now we compute $H\phi_{\alpha}$ as in the proof of [Lemma $3.6$, \cite{Uni-Con-Egs-Der}]:
\begin{eqnarray}
H\phi_{\alpha}=(H\phi)e^{F_{\alpha}}+|\nabla F_{\alpha}|^2\phi_{\alpha}-B\phi_{\alpha},\label{H-phi-B}
\end{eqnarray}
where the operator $B$ is defined by:$$B\phi_{\alpha}:=2w_{\alpha}A\phi_{\alpha}+<\nabla f,\nabla w_{\alpha}>\phi_{\alpha}. $$One can check that $B$ is anti-symmetric with respect to the weighted measure $e^fdx$. The next step consists in estimating the scalar product $\left<[H,A]\phi_{\alpha},\phi_{\alpha}\right>_{L^2_f}$ from above:
\begin{eqnarray*}
2\left<H\phi_{\alpha},A\phi_{\alpha}\right>_{L^2_f}&\leq& 2\left<A\phi_{\alpha},(H\phi)e^{F_{\alpha}}+|\nabla F_{\alpha}|^2\phi_{\alpha}-\left<\nabla f,\nabla w_{\alpha}\right>\phi_{\alpha}\right>_{L^2_f}-|A\phi_{\alpha}|^2_{L^2_f}\\
\end{eqnarray*}
Now, since $\phi$ takes its values into $T_uN$:
\begin{eqnarray*}
\left<A\phi_{\alpha},(H\phi)e^{F_{\alpha}}\right>_{L^2_f}&=&\left<A\phi_{\alpha},(H\phi)^{\top}e^{F_{\alpha}}\right>_{L^2_f}+\left<A\phi_{\alpha},(H\phi)^{\perp}e^{F_{\alpha}}\right>_{L^2_f}\\
&=&\left<A\phi_{\alpha},(H\phi)^{\top}e^{F_{\alpha}}\right>_{L^2_f}+\left<\nabla_{\nabla f}\phi_{\alpha},(H\phi)^{\perp}e^{F_{\alpha}}\right>_{L^2_f}\\
&\leq&\left\|A\phi_{\alpha}\right\|_{L^2_f}\left\|(H\phi)^{\top}e^{F_{\alpha}}\right\|_{L^2_f}+C\left\|\phi_{\alpha}f^{-1/2}\right\|_{L^2_f}\left\|(H\phi)^{\perp}f^{-1/2}e^{F_{\alpha}}\right\|_{L^2_f},
\end{eqnarray*}
since the operator that  sends $\phi\in C^{\infty}_0(\R^n,T_uN)$ to $(A\phi)^{\perp}$ is actually a zero-order operator decaying like $\nabla_{\nabla f}u$, i.e. like $f^{-1}$. Therefore, by using Young's inequality,
\begin{eqnarray*}
2\left<H\phi_{\alpha},A\phi_{\alpha}\right>_{L^2_f}&\leq&\left\|(H\phi)^{\top}e^{F_{\alpha}}\right\|_{L^2_f}^2+2\left<A\phi_{\alpha},|\nabla F_{\alpha}|^2\phi_{\alpha}-\left<\nabla f,\nabla w_{\alpha}\right>\phi_{\alpha}\right>_{L^2_f}\\
&&+C\left\|\phi_{\alpha}f^{-1/2}\right\|_{L^2_f}\left\|(H\phi)^{\perp}f^{-1/2}e^{F_{\alpha}}\right\|_{L^2_f}.
\end{eqnarray*}
By using the definition of $F_{\alpha}$ and an integration by parts to get rid of the presence of the operator $A$ in the previous estimate, one gets:
\begin{eqnarray}
2\left<H\phi_{\alpha},A\phi_{\alpha}\right>_{L^2_f}&\leq&\left\|(H\phi)^{\top}e^{F_{\alpha}}\right\|_{L^2_f}^2+\left\|(H\phi)^{\perp}e^{F_{\alpha-1}}\right\|_{L^2_f}^2\label{est-comm-1}\\
&&+\left<\phi_{\alpha},\nabla f\cdot(\left<\nabla f,\nabla w_{\alpha}\right>-|\nabla F_{\alpha}|^2)\phi_{\alpha}\right>_{L^2_f}
+C\left\|\phi_{\alpha-1}\right\|_{L^2_f}^2.\label{est-comm-2}
\end{eqnarray}
 We use finally the anti-symmetry of the operator $B$ showing up in (\ref{H-phi-B}) together with inequalities (\ref{est-comm-1}), (\ref{est-comm-2}) and equality (\ref{equ-mourre}) to establish the following estimate:
 \begin{eqnarray*}
&&\int_{\R^n}\left< e^{F_{\alpha}}(H\phi)^{\top},\phi_{\alpha}\right>+\left(|\nabla F_{\alpha}|^2+\nabla f\cdot\left(|\nabla F_{\alpha}|^2-\nabla f\cdot\nabla w_{\alpha}\right)-\frac{f}{2}\right)|\phi_{\alpha}|^2e^fdx\leq\\
&&\left\|(H\phi)^{\top}e^{F_{\alpha}}\right\|_{L^2_f}^2+\left\|(H\phi)^{\perp}e^{F_{\alpha-1}}\right\|_{L^2_f}^2+C\left\|\phi_{\alpha-1}\right\|_{L^2_f}^2.
\end{eqnarray*}
A lengthy computation similar to [p. $3123$, \cite{Uni-Con-Egs-Der}] shows that:
\begin{eqnarray*}
|\nabla F_{\alpha}|^2+\nabla f\cdot\left(|\nabla F_{\alpha}|^2-\nabla f\cdot\nabla w_{\alpha}\right)-\frac{f}{2}=\left(\frac{\alpha}{2}+\frac{n}{4}\frac{\alpha^2}{f^2}\right)(1+\textit{O}(f^{-1})).
\end{eqnarray*}
Therefore, there exists a radius $R$ sufficiently large such that for any positive $\alpha$ and any tensor $\phi$ compactly supported outside $B(0,R)$:
\begin{eqnarray*}
&&\int_{\R^n}\left< e^{F_{\alpha}}(H\phi)^{\top},\phi_{\alpha}\right>+\left(\frac{\alpha}{2}+\frac{n}{4}\frac{\alpha^2}{f^2}\right)(1+\textit{O}(f^{-1}))|\phi_{\alpha}|^2e^fdx\leq\\
&& \left\|(H\phi)^{\top}e^{F_{\alpha}}\right\|_{L^2_f}^2+\left\|(H\phi)^{\perp}e^{F_{\alpha-1}}\right\|_{L^2_f}^2+C\left\|\phi_{\alpha-1}\right\|_{L^2_f}^2.
\end{eqnarray*}
By using Young's inequality on the first term of the left-hand side of the previous inequality:
 \begin{eqnarray*}
&&\int_{\R^n}\left(\alpha+\frac{\alpha^2}{f^2}\right)|\phi_{\alpha}|^2e^fdx\leq c(1+\alpha^{-1}) \left\|(H\phi)^{\top}e^{F_{\alpha}}\right\|_{L^2_f}^2+c\left\|(H\phi)^{\perp}e^{F_{\alpha-1}}\right\|_{L^2_f}^2+c\left\|\phi_{\alpha-1}\right\|_{L^2_f}^2,
\end{eqnarray*}
where $c$ is a uniform positive constant.

This implies the expected Lemma by absorbing the last term $\left\|\phi_{\alpha-1}\right\|_{L^2_f}^2$ by the left-hand side if $\alpha R^2$ is universally large enough. The last statement follows from a density argument.

\end{proof}

Define for any nonnegative integer $k$ and any smooth tensor $\phi\in C^{\infty}_0(\R^n,\R^m)$ the following weighted $L^2$ norms:
\begin{eqnarray*}
I^k_{\alpha}(\phi)&:=&\int_{\R^n}f^k|\nabla^k\phi|^2e^{2F_{\alpha}}e^fdx,\\
J^{\top}_{\alpha}(\phi)&:=&\int_{\R^n}|(H\phi)^{\top}|^2e^{2F_{\alpha}}e^fdx,\quad J^{\perp}_{\alpha}(\phi):=\int_{\R^n}|(H\phi)^{\perp}|^2e^{2F_{\alpha}}e^fdx.
\end{eqnarray*}

We state without proof a proposition analogous to [Proposition $3.5$, \cite{Uni-Con-Egs-Der}]: 
\begin{prop}\label{est-various-norm-H}
Let $u\in\Ent^{k,\alpha}(N)$ be an expanding solution to the Harmonic map flow with $k\geq 2$. Let $\phi\in C^{\infty}_0(\R^n,T_uN)$. Then, for any nonnegative $\alpha$,
\begin{eqnarray}
\int_{\R^n}|\nabla \phi|^2e^{2F_{\alpha}}e^fdx=I^1_{\alpha-1}(\phi)\lesssim J^{\top}_{\alpha}(\phi)+I^0_{\alpha+1}(\phi)+\alpha I^0_{\alpha}(\phi)+\alpha^2 I^0_{\alpha-1}(\phi),\label{est-various-norm-H-inequ}
\end{eqnarray}
where the sign $\lesssim$ means up to a positive multiplicative constant uniform in the parameters.

\end{prop}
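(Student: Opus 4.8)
The plan is to deduce Proposition~\ref{est-various-norm-H} from a single weighted integration by parts, following verbatim the scheme of [Proposition~$3.5$, \cite{Uni-Con-Egs-Der}]; the only new ingredient is that $H=-\Delta_f$ acts on $\R^m$-valued maps while $\phi$ is everywhere tangent to $N$ along $u$.

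First I record the elementary weight identities. Since $F_\alpha-F_{\alpha\pm1}=\mp\tfrac12\ln f$, one has $e^{2F_{\alpha+1}}=f\,e^{2F_\alpha}$ and $e^{2F_{\alpha-1}}=f^{-1}e^{2F_\alpha}$; hence $I^1_{\alpha-1}(\phi)=\int_{\R^n}|\nabla\phi|^2e^{2F_\alpha}e^f\,dx$, $I^0_{\alpha+1}(\phi)=\int_{\R^n}|\phi|^2 f\,e^{2F_\alpha}e^f\,dx$ and $\alpha^2I^0_{\alpha-1}(\phi)=\alpha^2\int_{\R^n}|\phi|^2 f^{-1}e^{2F_\alpha}e^f\,dx$. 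Next, testing $H\phi=-\Delta_f\phi$ against $\psi:=e^{2F_\alpha}\phi\in C^\infty_0(\R^n,\R^m)$ and using that $\Delta_f$ is symmetric for the measure $e^f\,dx$, I obtain
\[
\int_{\R^n}|\nabla\phi|^2 e^{2F_\alpha}e^f\,dx+\int_{\R^n}\langle\nabla F_\alpha,\nabla|\phi|^2\rangle\,e^{2F_\alpha}e^f\,dx=\int_{\R^n}\langle H\phi,\phi\rangle\,e^{2F_\alpha}e^f\,dx .
\]
Because $\phi(x)\in T_uN$ pointwise, $\langle H\phi,\phi\rangle=\langle(H\phi)^\top,\phi\rangle$; a second integration by parts rewrites the middle term as $-\int_{\R^n}|\phi|^2\bigl(\Delta_fF_\alpha+2|\nabla F_\alpha|^2\bigr)e^{2F_\alpha}e^f\,dx$, so that
\[
I^1_{\alpha-1}(\phi)=\int_{\R^n}\langle(H\phi)^\top,\phi\rangle\,e^{2F_\alpha}e^f\,dx+\int_{\R^n}|\phi|^2\bigl(\Delta_fF_\alpha+2|\nabla F_\alpha|^2\bigr)e^{2F_\alpha}e^f\,dx .
\]

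It then remains to estimate the two terms on the right. For the first, Cauchy--Schwarz and Young's inequality give the bound $\tfrac12 J^\top_\alpha(\phi)+\tfrac12 I^0_\alpha(\phi)$, and $\tfrac12 I^0_\alpha(\phi)\le\tfrac12 I^0_{\alpha+1}(\phi)$ since $f\ge n/2\ge1$ for $n\ge3$. For the second, I use the explicit identities $|\nabla f|^2=f-\tfrac n2$, $\Delta_f f=f$, $\Delta_f\ln f=1-f^{-1}+\tfrac n2 f^{-2}$: writing $\nabla F_\alpha=w_\alpha\nabla f$ with $w_\alpha=\tfrac12+\tfrac{2\alpha+n}{4f}$, one computes
\[
\Delta_fF_\alpha+2|\nabla F_\alpha|^2=\tfrac f2+\tfrac{2\alpha+n}{4}\bigl(1-f^{-1}+\tfrac n2 f^{-2}\bigr)+2w_\alpha^2\bigl(f-\tfrac n2\bigr)\ \lesssim\ f+\alpha+\tfrac{\alpha^2}{f},
\]
the implied constant depending only on $n$. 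Multiplying by $|\phi|^2e^{2F_\alpha}e^f$, integrating, and invoking the weight identities above, the second term is $\lesssim I^0_{\alpha+1}(\phi)+\alpha I^0_\alpha(\phi)+\alpha^2 I^0_{\alpha-1}(\phi)$. Combining the two bounds yields precisely (\ref{est-various-norm-H-inequ}).

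I do not expect a genuine obstacle: the content is a one-line integration by parts plus bookkeeping of the $\alpha$-dependent weight $F_\alpha$. The only point needing care is the identity $\langle H\phi,\phi\rangle=\langle(H\phi)^\top,\phi\rangle$, which discards the normal component of $H\phi$ for free thanks to $\phi\in T_uN$, and explains why only $J^\top_\alpha$ (and not $J^\perp_\alpha$) enters the right-hand side --- a simplification relative to the approximate-eigentensor setting of \cite{Uni-Con-Egs-Der}. Finally, the compact-support hypothesis on $\phi$ can be removed by the usual cut-off and density argument whenever $\phi e^{F_\alpha}$, $(H\phi)^\top e^{F_\alpha}$, $(H\phi)^\perp e^{F_{\alpha-1}}$ and $(\nabla\phi)e^{F_{\alpha-1}}$ all lie in $L^2(e^f\,dx)$.
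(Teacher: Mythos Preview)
Your proof is correct. The paper does not actually prove this proposition: it is stated without proof as the analogue of [Proposition~$3.5$, \cite{Uni-Con-Egs-Der}], with the single remark that ``the crucial difference here is that the right-hand side of (\ref{est-various-norm-H-inequ}) only contains the tangential part of the weighted Laplacian $\Delta_f$''. Your argument is exactly the adaptation the paper has in mind, and you have correctly identified the one new point --- namely that $\phi\in T_uN$ forces $\langle H\phi,\phi\rangle=\langle(H\phi)^{\top},\phi\rangle$, which is why $J^{\perp}_{\alpha}$ drops out compared to the reference.
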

The crucial difference here is that the right-hand side of (\ref{est-various-norm-H-inequ}) only contains the tangential part of the weighted Laplacian $\Delta_f$ with respect to the weight $e^{2F_{\alpha}+f}$.

We are in a good position to prove Theorem \ref{uni-cont-inf} together with the unique continuation result stated in Theorem \ref{Analysis-Jacobi-field}.

\begin{proof}[Proof of Theorem \ref{Analysis-Jacobi-field} (unique continuation part)]
 Let $\kappa \in \ker_0L_u$ be a Jacobi field along an expanding solution $u\in\Ent_{xp}^{k,\alpha}(N)$. Then:
\begin{eqnarray}
H\kappa=D_uA(\kappa)(\nabla u,\nabla u)+2A(u)(\nabla u,\nabla \kappa).\label{jac-field-H-formalism}
\end{eqnarray}
 In particular, (\ref{jac-field-H-formalism}) can be rewritten schematically as:
 \begin{eqnarray}
(H\kappa)^{\top}=\textit{O}(f^{-1})\ast \kappa,\quad (H\kappa)^{\perp}=\textit{O}(f^{-1/2})\ast\nabla\kappa.\label{system-project-jac-equ}
\end{eqnarray}
We start by showing that both the gradient and the tangential part of the weighted Laplacian of a Jacobi field lie in $L^2\left(e^{2F_{\alpha}+f}dx\right)$ as soon as $\kappa$ does for a different $\alpha$ eventually:
\begin{lemma}\label{weighted-grad-lap-L^2-lemma}
If $\kappa \in \ker_0L_u$ lies in $L^2\left(e^{2F_{\alpha}+f}dx\right)$ for some nonnegative $\alpha$ then, $\nabla \kappa\in L^2\left(e^{2F_{\alpha-1}+f}dx\right)$, $(H\kappa)^{\top}\in L^2\left(e^{2F_{\alpha}+f}dx\right)$ and $(H\kappa)^{\perp}\in L^2\left(e^{2F_{\alpha-1}+f}dx\right)$
\end{lemma}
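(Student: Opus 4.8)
The plan is to obtain the first and third conclusions immediately from the schematic Jacobi equations (\ref{system-project-jac-equ}) and the hypothesis, and to extract the $\nabla\kappa$ estimate from a weighted Caccioppoli inequality obtained by testing the equation $H\kappa=-\Delta_f\kappa$ against $\kappa$ with a suitable exponential weight and a spatial cut-off. Recall that, because $L_u\kappa=0$, $A(u)(\nabla u,\nabla\kappa)\in(T_uN)^{\perp}$ and $u$ is regular at infinity (so $|\nabla u|\leq Cf^{-1/2}$), one has pointwise $|(H\kappa)^{\top}|\leq Cf^{-1}|\kappa|$ and $|(H\kappa)^{\perp}|\leq Cf^{-1/2}|\nabla\kappa|$. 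Since $e^{2F_{\alpha-1}}=f^{-1}e^{2F_{\alpha}}$ and $f\geq n/2$, the hypothesis $\kappa\in L^2(e^{2F_{\alpha}+f}dx)$ already gives $\int_{\R^n}f^{-j}|\kappa|^2e^{2F_{\alpha}+f}dx<+\infty$ for every $j\geq 0$; in particular $\int_{\R^n}|(H\kappa)^{\top}|^2e^{2F_{\alpha}+f}dx\leq C\int_{\R^n}f^{-2}|\kappa|^2e^{2F_{\alpha}+f}dx<+\infty$, i.e. $(H\kappa)^{\top}\in L^2(e^{2F_{\alpha}+f}dx)$. Likewise, once $\nabla\kappa\in L^2(e^{2F_{\alpha-1}+f}dx)$ is established, the bound $|(H\kappa)^{\perp}|^2e^{2F_{\alpha-1}+f}\leq Cf^{-1}|\nabla\kappa|^2e^{2F_{\alpha-1}+f}\leq C'|\nabla\kappa|^2e^{2F_{\alpha-1}+f}$ yields $(H\kappa)^{\perp}\in L^2(e^{2F_{\alpha-1}+f}dx)$. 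So everything reduces to the gradient estimate.

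For the gradient estimate I would fix a cut-off $\chi_R\in C^{\infty}_0(\R^n)$ with $\chi_R\equiv 1$ on $B(0,R)$, $\supp\chi_R\subset B(0,2R)$, $|\nabla\chi_R|\leq C/R$, $|\nabla^2\chi_R|\leq C/R^2$, multiply $H\kappa=-\Delta_f\kappa$ by $\chi_R^2e^{2F_{\alpha-1}}\kappa$ and integrate against $e^fdx$. Using $\int(-\Delta_fv)\cdot w\,e^fdx=\int\langle\nabla v,\nabla w\rangle e^fdx$ for $w$ compactly supported, the identity $\langle H\kappa,\kappa\rangle=\langle(H\kappa)^{\top},\kappa\rangle$ (valid since $\kappa\in T_uN$), and one further integration by parts of the resulting $\nabla|\kappa|^2$–term, one arrives at
\begin{equation*}
\int_{\R^n}\chi_R^2\,|\nabla\kappa|^2\,e^{2F_{\alpha-1}+f}dx=\int_{\R^n}\langle(H\kappa)^{\top},\kappa\rangle\chi_R^2\,e^{2F_{\alpha-1}+f}dx+\int_{\R^n}|\kappa|^2\,Q_{\alpha-1}\,\chi_R^2\,e^{2F_{\alpha-1}+f}dx+\mathrm{Err}_R,
\end{equation*}
where $Q_{\alpha-1}:=\Delta F_{\alpha-1}+2|\nabla F_{\alpha-1}|^2+\nabla f\cdot\nabla F_{\alpha-1}$ satisfies $Q_{\alpha-1}\leq Cf$ on $\R^n$ — this follows from $\nabla F_{\alpha-1}=w_{\alpha-1}\nabla f$ with $w_{\alpha-1}=\tfrac12+\textit{O}(f^{-1})$, $|\nabla f|^2=f-n/2$ and $\Delta f=n/2$ — and where $\mathrm{Err}_R$ gathers the terms carrying a factor $\nabla\chi_R$ or $\nabla^2\chi_R$, hence supported in the annulus $B(0,2R)\setminus B(0,R)$.

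The three terms on the right are then controlled uniformly in $R$ by the observations of the first paragraph: $|\langle(H\kappa)^{\top},\kappa\rangle|\leq Cf^{-1}|\kappa|^2$ makes the first term $\leq C\int_{\R^n}f^{-2}|\kappa|^2e^{2F_{\alpha}+f}dx<+\infty$; the second is $\leq C\int_{\R^n}f\,|\kappa|^2e^{2F_{\alpha-1}+f}dx=C\int_{\R^n}|\kappa|^2e^{2F_{\alpha}+f}dx=C\|\kappa\|^2_{L^2(e^{2F_{\alpha}+f})}$; and since the only potentially large factor inside $\mathrm{Err}_R$ is $|\nabla f|\leq Cf^{1/2}$ while every error term carries at least one $|\nabla\chi_R|\leq C/R$ (or $|\nabla^2\chi_R|\leq C/R^2$), one gets $|\mathrm{Err}_R|\leq \tfrac{C}{R}\int_{\R^n}f^{1/2}|\kappa|^2e^{2F_{\alpha-1}+f}dx+\tfrac{C}{R^2}\int_{\R^n}|\kappa|^2e^{2F_{\alpha-1}+f}dx\leq \tfrac{C}{R}\|\kappa\|^2_{L^2(e^{2F_{\alpha}+f})}\to 0$, using $f^{1/2}e^{2F_{\alpha-1}}=f^{-1/2}e^{2F_{\alpha}}\leq Ce^{2F_{\alpha}}$. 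Hence $\int_{B(0,R)}|\nabla\kappa|^2e^{2F_{\alpha-1}+f}dx\leq\int_{\R^n}\chi_R^2|\nabla\kappa|^2e^{2F_{\alpha-1}+f}dx\leq C\|\kappa\|^2_{L^2(e^{2F_{\alpha}+f})}$ for every $R$, and letting $R\to+\infty$ (monotone convergence) gives $\nabla\kappa\in L^2(e^{2F_{\alpha-1}+f}dx)$ together with the quantitative bound $\|\nabla\kappa\|^2_{L^2(e^{2F_{\alpha-1}+f})}\leq C\|\kappa\|^2_{L^2(e^{2F_{\alpha}+f})}$. Equivalently, one may invoke Proposition \ref{est-various-norm-H} with parameter $\alpha-1$ applied to $\kappa$, whose right-hand side $J^{\top}_{\alpha-1}(\kappa)+I^0_{\alpha}(\kappa)+(\alpha-1)I^0_{\alpha-1}(\kappa)+(\alpha-1)^2I^0_{\alpha-2}(\kappa)$ is finite by the first paragraph; the cut-off argument just described is precisely what licenses that application to the non-compactly supported $\kappa$.

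The step I expect to be the main obstacle is exactly this: justifying the integration by parts for a Jacobi field that is not compactly supported, i.e. showing $\mathrm{Err}_R$ stays bounded and in fact vanishes as $R\to+\infty$. This works because the only dangerous factor in the error terms, namely $\nabla f\sim f^{1/2}$, is dominated by the gain $f^{j}e^{2F_{\alpha-1}}=f^{j-1}e^{2F_{\alpha}}$ coming from the relation between the two weights, combined with the $1/R$ decay of $|\nabla\chi_R|$; crucially no term of the form $|\nabla\chi_R|^2|\nabla\kappa|^2$ survives, since in the chosen arrangement $\nabla\kappa$ appears only through the single term $\chi_R^2|\nabla\kappa|^2e^{2F_{\alpha-1}+f}$, so no absorption step is needed and the whole error is simply $O(R^{-1})\|\kappa\|^2_{L^2(e^{2F_{\alpha}+f})}$.
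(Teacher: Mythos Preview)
Your argument is correct and is essentially the standard weighted Caccioppoli computation that underlies [Lemma~4.1, \cite{Uni-Con-Egs-Der}], to which the paper simply defers for the proof; in particular your use of the schematic bounds (\ref{system-project-jac-equ}) to reduce to the $\nabla\kappa$ estimate, and the handling of the cut-off errors via $f^{1/2}e^{2F_{\alpha-1}}=f^{-1/2}e^{2F_{\alpha}}$, are exactly the expected steps. Your closing remark that this is equivalent to applying Proposition~\ref{est-various-norm-H} with parameter $\alpha-1$ (after justifying the passage from compactly supported tensors to $\kappa$ by the same cut-off) is also accurate and matches the paper's framework.
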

The proof of Lemma \ref{weighted-grad-lap-L^2-lemma} is exactly the same as the proof of [Lemma $4.1$, \cite{Uni-Con-Egs-Der}]. Note that Lemma \ref{weighted-grad-lap-L^2-lemma} enables to apply Lemma \ref{lemma-carleman}.
The next claim establishes the expected unique continuation in case $\kappa\in L^2\left(e^{2F_{\alpha}+f}dx\right)$ for all nonnegative $\alpha$:
\begin{claim}\label{Unique-cont-jac-field-claim}
Let $\kappa\in \ker_0L_u$ such that $\kappa\in L^2\left(e^{2F_{\alpha}+f}dx\right)$ for all nonnegative $\alpha$ then $\kappa\equiv 0$.
\end{claim}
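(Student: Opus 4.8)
The plan is to run a Carleman-type argument at infinity, closely following [Section $4$, \cite{Uni-Con-Egs-Der}], and to close the argument with classical interior unique continuation. First I would fix a radius $R$, large enough that the conclusion of Lemma \ref{lemma-carleman} holds for all $\alpha\geq1$ (with the implicit constants there depending only on $u$), choose a smooth cut-off $\chi:\R^n\rightarrow[0,1]$ with $\chi\equiv0$ on $B(0,R)$ and $\chi\equiv1$ on $\R^n\setminus B(0,2R)$, and apply the Carleman estimate (\ref{crucial-est-unique-est}) --- in the form valid for non-compactly supported tensors, which is the last part of Lemma \ref{lemma-carleman} --- to $\phi:=\chi\kappa$. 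The standing hypothesis that $\kappa\in L^2(e^{2F_\alpha+f}dx)$ for every $\alpha\geq0$, together with Lemma \ref{weighted-grad-lap-L^2-lemma}, guarantees that $\phi e^{F_\alpha}$, $(\nabla\phi)e^{F_{\alpha-1}}$, $(H\phi)^\top e^{F_\alpha}$ and $(H\phi)^\perp e^{F_{\alpha-1}}$ all lie in $L^2(e^fdx)$, so the estimate is legitimate for $\phi=\chi\kappa$; after a routine density argument the same is true of Proposition \ref{est-various-norm-H}.

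Next I would insert the structure of the Jacobi equation into the right-hand side of (\ref{crucial-est-unique-est}). Writing $H(\chi\kappa)=\chi H\kappa-2\nabla\chi\cdot\nabla\kappa+(H\chi)\kappa$, the last two terms are supported in the fixed annulus $A_R:=B(0,2R)\setminus B(0,R)$; using the schematic identities (\ref{system-project-jac-equ}), i.e. $(H\kappa)^\top=\textit{O}(f^{-1})\ast\kappa$ and $(H\kappa)^\perp=\textit{O}(f^{-1/2})\ast\nabla\kappa$, and the relation $f^{-1}e^{2F_{\alpha-1}+f}=f^{-2}e^{2F_\alpha+f}$, the two terms on the right of (\ref{crucial-est-unique-est}) are controlled by $\textit{O}(f^{-2})|\chi\kappa|^2$ and $\textit{O}(f^{-2})|\chi\nabla\kappa|^2$ weighted by $e^{2F_\alpha+f}$, plus an error integral over $A_R$ bounded by $C(R,\kappa)(1+\alpha^2)e^{2F_\alpha(2R)+f(2R)}$. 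To remove the gradient term I would apply Proposition \ref{est-various-norm-H} to $\chi\kappa$ with the parameter shifted by $2$, so that the combination $I^1_{\alpha-3}(\chi\kappa)=\int_{\R^n}f^{-2}|\nabla(\chi\kappa)|^2e^{2F_\alpha+f}dx$ appears on the left; bounding the resulting $J^\top_{\alpha-2}(\chi\kappa)$ again through (\ref{system-project-jac-equ}) and translating the $I^0$-terms back, this gives
\[
\int_{\R^n}\frac{1}{f^2}|\chi\nabla\kappa|^2e^{2F_\alpha+f}dx\lesssim\int_{\R^n}\Big(\frac{1}{f}+\frac{\alpha}{f^2}+\frac{\alpha^2}{f^3}\Big)|\chi\kappa|^2e^{2F_\alpha+f}dx+C(R,\kappa)(1+\alpha^2)e^{2F_\alpha(2R)+f(2R)}.
\]
Since $f\geq f(R)$ on $\supp\chi$, each of the coefficients $f^{-1}$, $\alpha f^{-2}$, $\alpha^2f^{-3}$, $f^{-2}$ occurring on the right is, for $R$ large enough (depending only on $u$) and then $\alpha$ large enough, a small fraction of the coefficient $\alpha+\alpha^2/f^2$ sitting on the left of the Carleman estimate, so every term except the $A_R$-errors can be absorbed into the left-hand side.

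This leaves the inequality $\alpha\int_{\R^n}|\chi\kappa|^2e^{2F_\alpha+f}dx\leq C(R,\kappa)(1+\alpha^2)e^{2F_\alpha(2R)+f(2R)}$ for all $\alpha$ large. Restricting the integral to $\{|x|\geq3R\}$, where $\chi\equiv1$ and, by monotonicity of $r\mapsto2F_\alpha(r)+f(r)$, $e^{2F_\alpha+f}\geq e^{2F_\alpha(3R)+f(3R)}$, I would divide through to obtain
\[
\int_{|x|\geq3R}|\kappa|^2dx\leq\frac{C(R,\kappa)(1+\alpha^2)}{\alpha}\,e^{2F_\alpha(2R)+f(2R)-2F_\alpha(3R)-f(3R)}.
\]
Because $2F_\alpha(2R)-2F_\alpha(3R)=2\bigl(f(2R)-f(3R)\bigr)+\tfrac{2\alpha+n}{2}\ln\tfrac{f(2R)}{f(3R)}$ with $0<f(2R)<f(3R)$, the exponent tends to $-\infty$ linearly in $\alpha$, so the right-hand side tends to $0$ as $\alpha\rightarrow+\infty$ and hence $\kappa\equiv0$ on $\{|x|\geq3R\}$. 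Finally, since $\kappa$ solves the second-order linear elliptic system $L_u\kappa=0$ with smooth coefficients and vanishes on a non-empty open set, classical strong unique continuation forces $\kappa\equiv0$ on $\R^n$.

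I expect the main obstacle to be the bookkeeping in the absorption step: one has to check that coupling the Carleman estimate of Lemma \ref{lemma-carleman} with the elliptic gradient estimate of Proposition \ref{est-various-norm-H} produces a genuine gain --- a large power of $\alpha$ together with a factor $f(R)^{-1}$ that can be made as small as desired --- rather than merely reshuffling the weighted norms, and that the $A_R$-errors, which carry only polynomial factors of $\alpha$ and the smaller exponential weight $e^{2F_\alpha(2R)}$, are indeed beaten by the larger weight $e^{2F_\alpha(3R)}$ available on $\{|x|\geq3R\}$. Tracking precisely which negative powers of $f$ accompany which powers of $\alpha$ at each step is exactly where the argument is delicate, and is the part that follows most closely the computation around [p. $3123$, \cite{Uni-Con-Egs-Der}].
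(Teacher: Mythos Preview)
Your proposal is correct and follows essentially the same route as the paper's own proof: localize by a cut-off, apply the Carleman estimate of Lemma \ref{lemma-carleman} to $\chi\kappa$, feed in the structure (\ref{system-project-jac-equ}) of the Jacobi equation, control the gradient term via Proposition \ref{est-various-norm-H} with a shifted parameter, absorb the resulting $I^0$-terms into the left-hand side, and then send $\alpha\to+\infty$ to force $\kappa$ to vanish outside a ball before invoking classical interior unique continuation. The only cosmetic difference is that you work with three radii $R<2R<3R$ and keep track of the polynomial factor $(1+\alpha^2)/\alpha$, killing it by the genuine exponential gap $e^{2F_\alpha(2R)-2F_\alpha(3R)}$, whereas the paper arranges the cut-off so that the annulus error and the lower bound on the weight are evaluated at essentially the same radius, making the exponential ratio bounded and letting the bare factor $\alpha$ on the left do the work; both variants close without difficulty.
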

\begin{proof}[Proof of Claim \ref{Unique-cont-jac-field-claim}]
Again, the statement of Claim \ref{Unique-cont-jac-field-claim} echoes [Theorem $4.2$, \cite{Uni-Con-Egs-Der}] and so does its proof. It is sufficient to prove that $\kappa\equiv 0$ outside a ball $B(0,R)$ by classical unique continuation results for second order elliptic equations: see \cite{Aron-Uni-Con} for instance.

Let us localize $\kappa$ at infinity by multiplying it by a cut-off function $\eta$ supported outside a ball $B(0,R)$ of radius $R$ sufficiently large such that Lemma \ref{lemma-carleman} is applicable: if $\alpha\geq 1$, the condition $\alpha R^2\geq c$ from Lemma \ref{lemma-carleman}  will be automatically satisfied. Then define the vector field $\tilde{\kappa}:=\eta\kappa$ and apply Lemma \ref{lemma-carleman} in terms of the quantities $I^{k}_{\alpha}(\tilde{\kappa})$ introduced previously:
\begin{eqnarray}
\alpha I^{0}_{\alpha}(\tilde{\kappa})+\alpha^2I^0_{\alpha-2}(\tilde{\kappa})\lesssim \left(1+\frac{1}{\alpha}\right)J^{\top}_{\alpha}(\tilde{\kappa})+J^{\perp}_{\alpha-1}(\tilde{\kappa}).\label{inequ-carleman-2}
\end{eqnarray}
Now, one can write schematically:
\begin{eqnarray*}
H\tilde{\kappa}=\eta(\kappa)+\eta H\kappa,
\end{eqnarray*}
where $\eta(\kappa)$ denotes a (generic) vector field compactly supported in $B(0,R)$ depending on $\kappa$ and $\nabla \kappa$. By projecting on $T_uN$, one gets:
\begin{eqnarray*}
(H\tilde{\kappa})^{\top}=\eta(\kappa)+\textit{O}(f^{-1})\ast \tilde{\kappa},\quad (H\tilde{\kappa})^{\perp}=\eta(\kappa)+\textit{O}(f^{-1/2})\ast\nabla\tilde{\kappa}.
\end{eqnarray*}
Therefore, for $\alpha\geq 1$, 
\begin{eqnarray}
J^{\top}_{\alpha}(\tilde{\kappa})+J^{\perp}_{\alpha-1}(\tilde{\kappa})&\lesssim& C(R,\tilde{\kappa})e^{\sup_{B(0,R)}2F_{\alpha}+f}+I^0_{\alpha-2}(\tilde{\kappa})+\int_{\R^n}|\nabla \tilde{\kappa}|^2e^{2F_{\alpha-2}+f}dx\\
&\lesssim&C(R,\tilde{\kappa})e^{\sup_{B(0,R)}2F_{\alpha}+f}+I^0_{\alpha-2}(\tilde{\kappa})+I^1_{\alpha-3}(\tilde{\kappa}).\label{inequ-amorce-1}
\end{eqnarray}
According to Proposition \ref{est-various-norm-H}:
\begin{eqnarray}
I^1_{\alpha-3}(\tilde{\kappa})\lesssim J^{\top}_{\alpha-2}(\tilde{\kappa})+I^0_{\alpha-1}(\tilde{\kappa})+\alpha I^0_{\alpha-2}(\tilde{\kappa})+\alpha^2 I^0_{\alpha-3}(\tilde{\kappa}).\label{inequ-amorce-2}
\end{eqnarray}
 By concatenating inequalities (\ref{inequ-amorce-1}) and (\ref{inequ-amorce-2}), one checks that for $R$ sufficiently large but independent of $\alpha\geq 1$:
 \begin{eqnarray*}
J^{\top}_{\alpha}(\tilde{\kappa})+J^{\perp}_{\alpha-1}(\tilde{\kappa})\lesssim C(R,\tilde{\kappa})e^{\sup_{B(0,R)}2F_{\alpha}+f}+I^0_{\alpha-1}(\tilde{\kappa})+\alpha I^0_{\alpha-2}(\tilde{\kappa})+\alpha^2 I^0_{\alpha-3}(\tilde{\kappa}).
\end{eqnarray*}
Using now inequality (\ref{inequ-carleman-2}) gives:
\begin{eqnarray*}
\alpha I^{0}_{\alpha}(\tilde{\kappa})+\alpha^2I^0_{\alpha-2}(\tilde{\kappa})\lesssim C(R,\kappa)e^{\sup_{B(0,R)}2F_{\alpha}+f}+I^0_{\alpha-1}(\tilde{\kappa})+\alpha I^0_{\alpha-2}(\tilde{\kappa})+\alpha^2 I^0_{\alpha-3}(\tilde{\kappa}),
\end{eqnarray*}
which implies in turn that:
\begin{eqnarray*}
\alpha I^{0}_{\alpha}(\tilde{\kappa})+\alpha^2I^0_{\alpha-2}(\tilde{\kappa})\lesssim C(R,\kappa)e^{\sup_{B(0,R)}2F_{\alpha}+f}.
\end{eqnarray*}
Therefore, we have proved in particular that for all $\alpha\geq 1$:
\begin{eqnarray*}
\alpha\int_{\R^n\setminus B(0,R)}|\kappa|^2dx\lesssim C(R,\kappa)e^{\sup_{B(0,R)}(2F_{\alpha}+f)-\inf_{\R^n\setminus B(0,R)}(2F_{\alpha}+f)}\lesssim  C(R,\kappa).
\end{eqnarray*}
Hence $\kappa\equiv 0$ on $\R^n\setminus B(0,R)$ by letting $\alpha$ go to $+\infty$.

This ends the proof of Claim \ref{Unique-cont-jac-field-claim}.

\end{proof}
To finish the proof of Theorem \ref{Analysis-Jacobi-field} (unique continuation part), it suffices to check that the assumption of Claim \ref{Unique-cont-jac-field-claim} holds. As the arguments are very close to [Section $4.2$, \cite{Uni-Con-Egs-Der}], we will be very sketchy. Consider the set $$S:=\left\{\alpha\in [0,+\infty)\quad|\quad \kappa\in L^2\left(e^{2F_{\alpha}+f}dx\right)\right\}.$$ We proceed by showing that $S$ is a non-empty, closed and open set of $[0,+\infty)$. 

The fact that $S$ is open can be proved as in [Claim $3$, \cite{Uni-Con-Egs-Der}]: the estimates that are being used are taken from the proof of Claim \ref{Unique-cont-jac-field-claim} in our setting. The same holds for proving that $S$ is closed: this fact corresponds to [Claim $2$, \cite{Uni-Con-Egs-Der}]. 

We give some details to show that $S$ is non-empty: recall from the proof of Theorem \ref{Analysis-Jacobi-field} that the rescaled vector field $\kappa_f:=f^{n/2}e^f\kappa$ satisfies 
\begin{eqnarray}
&&\Delta_{-f}\kappa_f=V(u)\ast\kappa_f+W(u)\ast\nabla \kappa_f,\label{back-heat-eq-resc-jac-fiel-bis}\\
&&V(u)\in C_{f}^{k-2,\alpha}(\R^n,(\R^m)^*\otimes\R^m),\quad W(u)\in C_{f^{1/2}}^{k-1,\alpha}(\R^n,(\nabla \R^m)^*\otimes\R^m)).\label{pot-bac-heat-eq-bis}
\end{eqnarray}
Then, by parabolic Schauder estimates for backward heat equations, one gets that $\nabla^i\kappa_f=\textit{O}(f^{-i/2})$ for $i=1,2$. Therefore, the radial derivative $\nabla_{\nabla f}\kappa_f$ decays at least like $f^{-1}$. In particular, it shows that $\kappa_f=\textit{O}(f^{-1})$ since by assumption, $\kappa_f$ converges to $0$ radially at infinity. Finally, it implies that $\kappa$ lies in $L^2(e^{F_1+f}dx)$, i.e. $1\in S$.
\end{proof}

\begin{proof}[Proof of Theorem \ref{uni-cont-inf}]
We now prove Theorem \ref{uni-cont-inf}. 

Let $u_1$ and $u_2$ be two expanding solutions in $\Ent_{xp}^{k,\alpha}(N)$ coming out of the same $0$-homogeneous map $u_0\in C^{k,\alpha}(\mathbb{S}^{n-1},N)$. Then, by linearizing around the map $u_1$:
\begin{eqnarray*}
\Delta_f(u_2-u_1)=-2A(u_1)(\nabla(u_2-u_1),\nabla u_1)-D_{u_1}A(u_2-u_1)(\nabla u_1,\nabla u_1)+ Q(u_2-u_1),
\end{eqnarray*}
where the term $Q(u_2-u_1)$ satisfies pointwise:
\begin{eqnarray*}
|Q(u_2-u_1)|\leq C(N)\left(|u_2-u_1||\nabla(u_2-u_1)|+|\nabla(u_2-u_1)|^2\right).
\end{eqnarray*}
Therefore, the difference $u_2-u_1$ satisfies a system of equations very similar to (\ref{system-project-jac-equ}):
\begin{eqnarray}
(H(u_2-u_1))^{\top}&=&\textit{O}(f^{-1})\ast (u_2-u_1)+(Q(u_2-u_1))^{\top},\label{system-project-nonlin-equ-1}\\
 (H(u_2-u_1))^{\perp}&=&\textit{O}(f^{-1/2})\ast\nabla(u_2-u_1)+(Q(u_2-u_1))^{\perp}.\label{system-project-nonlin-equ-2}
\end{eqnarray}
The proof is now along the same lines of the proof of the unique continuation part of Theorem \ref{Analysis-Jacobi-field}. 
However, Lemma \ref{lemma-carleman} and Proposition \ref{est-various-norm-H} need to be slightly adjusted since the difference of the two solutions $\xi:=u_2-u_1$ does not take its values into $T_{u_1}N$  a priori. To circumvent this issue, we use the following observation already noticed in the proof of Theorem \ref{prop-dec-time-diff-sol}: the orthogonal projection of $\xi$ on $(T_{u_1}N)^{\perp}$ is depending quadratically on the norm of $\xi$, i.e. there is a positive constant $C(N)$ such that 
\begin{eqnarray}
|\xi^{\perp}|\leq C(N)|\xi|^2,\quad \quad |\xi|\leq \delta(N), \label{pt-better-est-perp-part}
\end{eqnarray}
pointwise, where $\delta(N)$ is a sufficiently small positive constant depending on the geometry of $N$. Moreover, according to [$(3.4.6)$, \cite{Har-Mou}], one can also estimate the first-derivative of $\xi^{\perp}$ in a better way:
\begin{eqnarray}
|\nabla \xi^{\perp}|\leq C(N)\left(|\nabla \xi||\xi|+|\nabla u||\xi|^2\right),\quad |\xi|\leq \delta(N).\label{pt-better-est-grad-perp-part}
\end{eqnarray}

 In particular, the proof of Lemma \ref{lemma-carleman} applied to $\xi$ leads to:
\begin{claim}\label{claim-lemma-carleman-non-linear}
Define the map $\xi_{\alpha}:=e^{F_{\alpha}}\xi$ for $\alpha\geq 0$. Then there exist some positive constant $c$ and a positive radius $R$ such that for any positive $\alpha$ with $\alpha R^2\geq c$,
\begin{eqnarray}
\alpha\|\eta\xi_{\alpha}\|_{L^2_f}+\alpha^2\|\eta\xi_{\alpha-2}\|^2_{L^2_f}\lesssim \left(1+\frac{1}{\alpha}\right)\|(H\eta\xi)^{\top}e^{F_{\alpha}}\|^2_{L^2_f}+\left\|(H\eta\xi)^{\perp}e^{F_{\alpha-1}}\right\|_{L^2_f}^2,\label{crucial-est-unique-est-nonlin}
\end{eqnarray}
for any smooth function $\eta\in C^{\infty}_0(\R^n,\R)$ compactly supported outside $B(0,R)$.
\end{claim}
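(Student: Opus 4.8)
The plan is to rerun the proof of Lemma~\ref{lemma-carleman} with the test vector field $\phi:=\eta\xi$, where $\xi:=u_2-u_1$, and to isolate the single step in which the hypothesis that $\phi$ be valued in $T_{u_1}N$ entered. The Mourre-type identity~(\ref{Mourre-est})--(\ref{equ-mourre}), the formula~(\ref{H-phi-B}) for $H\phi_\alpha$ with $\phi_\alpha:=e^{F_\alpha}\phi$, and the antisymmetry in $L^2_f$ of the operators $A$ and $B$ appearing there depend only on the gradient soliton structure of $(\R^n,\eucl,\nabla f)$, hence are unaffected. Tangency of $\phi$ was used only to bound $\langle A\phi_\alpha,(H\phi)^\perp e^{F_\alpha}\rangle_{L^2_f}$, the perpendicular component being taken along $u_1$: there one replaced $A\phi_\alpha=\nabla_{\nabla f}\phi_\alpha+\tfrac{f}{2}\phi_\alpha$ by $\nabla_{\nabla f}\phi_\alpha$ using $(\tfrac f2\phi_\alpha)^\perp=0$, and then used $(\nabla_{\nabla f}\phi_\alpha)^\perp=\textit{O}(f^{-1})\ast\phi_\alpha$ (valid because $u_1$ is an expander, so $\nabla_{\nabla f}u_1=\textit{O}(f^{-1})$). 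For $\phi=\eta\xi$ the field $\phi_\alpha$ is no longer tangent to $N$ along $u_1$, and one must keep $(A\phi_\alpha)^\perp=(\nabla_{\nabla f}\phi_\alpha)^\perp+\tfrac f2\phi_\alpha^\perp$ with $\phi_\alpha^\perp=e^{F_\alpha}\eta\,\xi^\perp$; moreover $(\nabla_{\nabla f}\phi_\alpha)^\perp$ now also acquires, besides the old $\textit{O}(f^{-1})\ast\phi_\alpha$ term, contributions involving $\xi^\perp$, $\nabla\xi^\perp$ and $\nabla\eta$.

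I would then observe that every one of these new contributions carries a pointwise scalar factor of the form $f^{j}|\xi|$ or $f^{j}|\nabla\xi|$ with $j$ bounded (possibly accompanied by the $\alpha$-dependent factors $w_\alpha$ already present in Lemma~\ref{lemma-carleman}): the power $f^{j}$ comes from the explicit $\tfrac f2$ in front of $\phi_\alpha^\perp$ and from $\nabla_{\nabla f}F_\alpha=w_\alpha|\nabla f|^2=\textit{O}(f)$, while the factor $|\xi|$, resp.\ $|\nabla\xi|$, comes from the quadratic pointwise bounds~(\ref{pt-better-est-perp-part}) and~(\ref{pt-better-est-grad-perp-part}), i.e.\ $|\xi^\perp|\le C(N)|\xi|^2$ and $|\nabla\xi^\perp|\le C(N)(|\nabla\xi|\,|\xi|+|\nabla u_1|\,|\xi|^2)$. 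Since $u_1$ and $u_2$ are expanders coming out of the same $u_0$ and regular at infinity, Theorem~\ref{prop-dec-time-diff-sol} gives $|\xi|\le C f^{-n/2}e^{-f}$ on $\R^n\setminus B(0,R)$, and Proposition~\ref{autom-reg-infty} — together with Remark~\ref{rk-autom-reg}, which costs two extra derivatives and explains the hypothesis $k\geq 4$ — yields the same decay for $|\nabla\xi|$. In particular, for every fixed $j$ the quantity $\varepsilon(R):=\sup_{\R^n\setminus B(0,R)}(f^{j}|\xi|+f^{j}|\nabla\xi|)$ tends to $0$ as $R\to+\infty$. Writing $(H\phi)^\perp e^{F_\alpha}=f^{1/2}(H\phi)^\perp e^{F_{\alpha-1}}$ and applying Cauchy--Schwarz and Young's inequality, each new contribution is then bounded by $\varepsilon(R)$ times a sum of the standard quantities $\|\phi_\alpha\|_{L^2_f}^2$, $I^1_{\alpha-1}(\phi)$, $J^\top_\alpha(\phi)$ and $\|(H\phi)^\perp e^{F_{\alpha-1}}\|_{L^2_f}^2$, plus terms supported on $\supp\nabla\eta$; using Proposition~\ref{est-various-norm-H} to reduce $I^1_{\alpha-1}(\phi)$, and noting that the super-exponentially small prefactor $\varepsilon(R)$ dominates any fixed power of $f$ that the weights $e^{F_\alpha}$ may force out, all of this is absorbed, for $R$ large, into the left-hand side of the estimate (which carries $\alpha\|\phi_\alpha\|_{L^2_f}^2+\alpha^2\|\phi_{\alpha-2}\|_{L^2_f}^2$) and into the $\|(H\phi)^\perp e^{F_{\alpha-1}}\|^2$ term already present on the right.

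With the new terms absorbed, the remainder is identical to the proof of Lemma~\ref{lemma-carleman}: the algebraic identity $|\nabla F_\alpha|^2+\nabla f\cdot(|\nabla F_\alpha|^2-\nabla f\cdot\nabla w_\alpha)-\tfrac f2=(\tfrac\alpha2+\tfrac n4 f^{-2}\alpha^2)(1+\textit{O}(f^{-1}))$ supplies the positive bulk term, Young's inequality disposes of the cross term $\langle e^{F_\alpha}(H\phi)^\top,\phi_\alpha\rangle_{L^2_f}$, and the residual $\|\phi_{\alpha-1}\|_{L^2_f}^2$ is absorbed once $\alpha R^2$ is universally large; this gives~(\ref{crucial-est-unique-est-nonlin}) for every $\eta\in C_0^\infty(\R^n,\R)$ supported outside $B(0,R)$. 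Note that the claim is a pure a priori estimate: it does not use the equations~(\ref{system-project-nonlin-equ-1})--(\ref{system-project-nonlin-equ-2}), only the geometric fact that $\xi^\perp$ is quadratically controlled. The main obstacle, and the only genuinely new point compared with Lemma~\ref{lemma-carleman}, is the term $\tfrac f2\phi_\alpha^\perp$: it is a full power of $f$ worse than anything occurring in the tangent case, and the Carleman estimate closes only because $\xi^\perp$ is \emph{quadratically} small in $\xi$ and $\xi$ itself decays super-exponentially at infinity by Theorem~\ref{prop-dec-time-diff-sol}, so that $f^{j}|\xi^\perp|\lesssim f^{j}|\xi|^{2}$ stays negligible however many powers of $f$ the absorption procedure generates.
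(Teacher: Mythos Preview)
Your proposal is correct and follows the same approach as the paper: rerun Lemma~\ref{lemma-carleman}, locate the single use of tangency in the pairing $\langle A\phi_\alpha,(H\phi)^\perp e^{F_\alpha}\rangle_{L^2_f}$, and control the new normal pieces $(A\phi_\alpha)^\perp=(\nabla_{\nabla f}\phi_\alpha)^\perp+\tfrac f2\phi_\alpha^\perp$ via the quadratic bounds~(\ref{pt-better-est-perp-part})--(\ref{pt-better-est-grad-perp-part}) and the exponential decay of $\xi,\nabla\xi$ from Theorem~\ref{prop-dec-time-diff-sol} and Proposition~\ref{autom-reg-infty}.

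One simplification: your detour through $I^1_{\alpha-1}(\phi)$ and Proposition~\ref{est-various-norm-H} is unnecessary (and the latter is stated only for tangential fields). The paper bounds $(A\xi_\alpha)^\perp$ \emph{pointwise} purely by scalar multiples of $|\xi_\alpha|$, namely $f|\xi_\alpha^\perp|\le C(N)f|\xi|\,|\xi_\alpha|$ and $|(\nabla_{\nabla f}\xi_\alpha)^\perp|\lesssim\big((f+\alpha)|\xi|+f^{1/2}|\nabla\xi|+f^{-1}\big)|\xi_\alpha|$; no gradient of $\phi_\alpha$ appears, and the resulting error terms $\|\xi_{\alpha-1}\|_{L^2_f}^2+\alpha^2\|\xi_{\alpha-3}\|_{L^2_f}^2$ are absorbed directly by the left-hand side for $\alpha R^2$ large.
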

 \begin{proof}[Proof of Claim \ref{claim-lemma-carleman-non-linear}]
 Since the proof does not use the equations (\ref{system-project-nonlin-equ-1}) and (\ref{system-project-nonlin-equ-2}), one can assume that $\xi$ is already supported outside a ball $B(0,R)$ where $R$ is a radius to be determined later. This remark will lighten the notations.
 
Let us notice that identities (\ref{equ-mourre}) and (\ref{H-phi-B}) remain unchanged when applied to the map $\xi_{\alpha}:=e^{F_{\alpha}}\xi.$ 
 To estimate the scalar product $\left<A\xi_{\alpha},(H\xi)e^{F_{\alpha}}\right>_{L^2_f}$: 
 \begin{eqnarray*}
\left<A\xi_{\alpha},(H\xi)e^{F_{\alpha}}\right>_{L^2_f}&=&\left<A\xi_{\alpha},(H\xi)^{\top}e^{F_{\alpha}}\right>_{L^2_f}+\left<A\xi_{\alpha},(H\xi)^{\perp}e^{F_{\alpha}}\right>_{L^2_f}\\
&=&\left<A\xi_{\alpha},(H\xi)^{\top}e^{F_{\alpha}}\right>_{L^2_f}+\left<\nabla_{\nabla f}\xi_{\alpha}+\frac{f}{2}\xi_{\alpha}^{\perp},(H\xi)^{\perp}e^{F_{\alpha}}\right>_{L^2_f},
\end{eqnarray*}
we observe by using (\ref{pt-better-est-perp-part}) and (\ref{pt-better-est-grad-perp-part}) that, pointwise,
\begin{eqnarray*}
f\left|\xi_{\alpha}^{\perp}\right|&\leq& fe^{F_{\alpha}}|\xi^{\perp}|\leq C(N)f|\xi||\xi_{\alpha}|,\\
\left|(\nabla_{\nabla f}\xi_{\alpha})^{\perp}\right|&\lesssim& \left|\nabla_{\nabla f}\left(\xi_{\alpha}^{\perp}\right)\right|+\frac{|\xi_{\alpha}|}{f}\\
&\lesssim& (f+\alpha)\left|\xi_{\alpha}^{\perp}\right|+e^{F_{\alpha}}\left|\nabla_{\nabla f}(\xi^{\perp})\right|+\frac{|\xi_{\alpha}|}{f}\\
&\lesssim&(f+\alpha)|\xi||\xi_{\alpha}|+\left(f^{\frac{1}{2}}|\nabla \xi|+|\xi|+f^{-1}\right)|\xi_{\alpha}|,
\end{eqnarray*}
since $f^{1/2}|\nabla u|$ is bounded uniformly. Since $\xi$ and $\nabla \xi$ are decaying exponentially by Theorem \ref{prop-dec-time-diff-sol} and Proposition \ref{autom-reg-infty}, the following estimate holds:
\begin{eqnarray*}
\left<A\xi_{\alpha},(H\xi)e^{F_{\alpha}}\right>_{L^2_f}&\lesssim&\left\|A\xi_{\alpha}\right\|_{L^2_f}\left\|(H\xi)^{\top}e^{F_{\alpha}}\right\|_{L^2_f}+\left\|(H\xi)^{\perp}e^{F_{\alpha-1}}\right\|_{L^2_f}^2\\
&&+\left\|\xi_{\alpha-1}\right\|_{L^2_f}^2+\alpha^2\|\xi_{\alpha-3}\|^2_{L^2_f}.
\end{eqnarray*}
By inspecting the proof of Lemma \ref{lemma-carleman}, one arrives at the following conclusion: there exists a radius $R$ sufficiently large such that for any positive $\alpha$:
\begin{eqnarray*}
&&\int_{\R^n}\left< e^{F_{\alpha}}(H\xi),\xi_{\alpha}\right>+\left(\frac{\alpha}{2}+\frac{n}{4}\frac{\alpha^2}{f^2}\right)(1+\textit{O}(f^{-1}))|\xi_{\alpha}|^2e^fdx\leq\\
&& \left\|(H\xi)^{\top}e^{F_{\alpha}}\right\|_{L^2_f}^2+\left\|(H\xi)^{\perp}e^{F_{\alpha-1}}\right\|_{L^2_f}^2+\left\|\xi_{\alpha-1}\right\|_{L^2_f}^2+\alpha^2\|\xi_{\alpha-3}\|^2_{L^2_f}.
\end{eqnarray*}
Using (\ref{pt-better-est-perp-part}) again leads to the proof of Claim \ref{claim-lemma-carleman-non-linear}.
 \end{proof}
 
 Similarly, Proposition \ref{est-various-norm-H} becomes in this setting:
 \begin{claim}\label{est-various-norm-H-nonlin}
For any nonnegative $\alpha$, and any $\eta\in C^{\infty}_0(\R^n,\R)$,
\begin{eqnarray}
\int_{\R^n}|\nabla (\eta\xi)|^2e^{2F_{\alpha}}e^fdx&=&I^1_{\alpha-1}(\eta\xi)\lesssim J^{\top}_{\alpha}(\eta\xi)+J^{\perp}_{\alpha-1}(\eta\xi)\\
&&+I^0_{\alpha+1}(\eta\xi)+\alpha I^0_{\alpha}(\eta\xi)+\alpha^2 I^0_{\alpha-1}(\eta\xi).\label{est-various-norm-H-inequ-nonlin}
\end{eqnarray}

 \end{claim}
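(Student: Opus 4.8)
The plan is to prove Claim \ref{est-various-norm-H-nonlin} along exactly the same lines as its linear model, inequality (\ref{est-various-norm-H-inequ}) of Proposition \ref{est-various-norm-H}: a double integration by parts against the weighted measure $e^{2F_\alpha+f}\diff x$, the only genuinely new point being the control of the terms carrying the normal component of $\xi$ along $u_1$. As in the proof of Claim \ref{claim-lemma-carleman-non-linear} we may and do assume that $\eta$ is supported outside a ball $B(0,R)$ with $R$ so large that $|\xi|\le\delta(N)$ and $f^{1/2}|\xi|$ is as small as we wish there, which is legitimate by the exponential decay of $\xi$ and $\nabla\xi$ (Theorem \ref{prop-dec-time-diff-sol} and Proposition \ref{autom-reg-infty}).

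First I would record the basic identity. Writing $\phi:=\eta\xi$ and recalling that $\nabla F_\alpha=w_\alpha\nabla f$ with $w_\alpha=\tfrac12+\tfrac{2\alpha+n}{4f}$, so that $-\Delta_{2F_\alpha+f}\phi=H\phi-2w_\alpha\nabla_{\nabla f}\phi$, integration of $|\nabla\phi|^2$ against $e^{2F_\alpha+f}\diff x$ produces no boundary term and yields
\begin{eqnarray*}
I^1_{\alpha-1}(\phi)=\int_{\R^n}|\nabla\phi|^2e^{2F_\alpha+f}\diff x&=&\int_{\R^n}\langle\phi,H\phi\rangle e^{2F_\alpha+f}\diff x-\int_{\R^n}w_\alpha\nabla_{\nabla f}|\phi|^2\,e^{2F_\alpha+f}\diff x,
\end{eqnarray*}
the first equality following from $e^{2F_{\alpha-1}}=f^{-1}e^{2F_\alpha}$.

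Then I would treat the two terms on the right. The last one is purely geometric: integrating by parts once more and using $\Delta f=n/2$, $|\nabla f|^2=f-n/2$, one finds $\div\!\big(w_\alpha e^{2F_\alpha+f}\nabla f\big)=\big(f+\textit{O}(\alpha)+\textit{O}(\alpha^2f^{-1})+\textit{O}(1)\big)e^{2F_\alpha+f}$, whence this contribution is $\lesssim I^0_{\alpha+1}(\phi)+\alpha I^0_\alpha(\phi)+\alpha^2I^0_{\alpha-1}(\phi)$, exactly as in Proposition \ref{est-various-norm-H} and without any reference to $N$. For the first one, I would split $H\phi=(H\phi)^{\top}+(H\phi)^{\perp}$ along $u_1$: the tangential part gives $\langle\phi,(H\phi)^{\top}\rangle=\langle\phi^{\top},(H\phi)^{\top}\rangle\le|\phi|\,|(H\phi)^{\top}|$, hence by Cauchy--Schwarz a contribution $\lesssim J^{\top}_\alpha(\phi)+I^0_\alpha(\phi)\le J^{\top}_\alpha(\phi)+I^0_{\alpha+1}(\phi)$, while the normal part gives $\langle\phi,(H\phi)^{\perp}\rangle=\langle\phi^{\perp},(H\phi)^{\perp}\rangle$ with, by (\ref{pt-better-est-perp-part}), $f^{1/2}|\phi^{\perp}|\le C(N)f^{1/2}|\xi|\,|\phi|\le|\phi|$ on $\supp\eta$; Young's inequality then yields
\begin{eqnarray*}
\int_{\R^n}\langle\phi^{\perp},(H\phi)^{\perp}\rangle e^{2F_\alpha+f}\diff x&\le&\frac12\int_{\R^n}f^{-1}|(H\phi)^{\perp}|^2e^{2F_\alpha+f}\diff x+\frac12\int_{\R^n}f\,|\phi^{\perp}|^2e^{2F_\alpha+f}\diff x\\
&\lesssim& J^{\perp}_{\alpha-1}(\phi)+I^0_{\alpha+1}(\phi).
\end{eqnarray*}
Adding the three pieces gives (\ref{est-various-norm-H-inequ-nonlin}).

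The main obstacle --- really the only departure from Proposition \ref{est-various-norm-H} --- is this last, normal, contribution, and it is resolved precisely because (\ref{pt-better-est-perp-part}) gains a full extra power of $|\xi|$: combined with the smallness of $f^{1/2}|\xi|$ this buys the half-power of $f$ that allows $(H\phi)^{\perp}$ to be absorbed with the weight $e^{2F_{\alpha-1}+f}$ rather than $e^{2F_\alpha+f}$, which is exactly the weight carried by $(H\eta\xi)^{\perp}$ on the right-hand side of the Carleman inequality (\ref{crucial-est-unique-est-nonlin}) of Claim \ref{claim-lemma-carleman-non-linear}. As in the linear case, notice that (\ref{est-various-norm-H-inequ-nonlin}) is a pure integration-by-parts statement: neither the nonlinear remainder $Q$ nor the structure equations (\ref{system-project-nonlin-equ-1})--(\ref{system-project-nonlin-equ-2}) enter its proof.
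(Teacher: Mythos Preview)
Your proof is correct and follows exactly the approach the paper indicates: the paper itself gives no proof of this claim, merely stating that it is the analogue of Proposition \ref{est-various-norm-H} with the remark ``Notice the extra-term $J^{\perp}_{\alpha-1}(\eta\xi)$ involving the normal part of the operator $H$ acting on $\eta\xi$,'' and you have correctly identified and handled precisely this extra term via the splitting $\langle\phi,H\phi\rangle=\langle\phi^{\top},(H\phi)^{\top}\rangle+\langle\phi^{\perp},(H\phi)^{\perp}\rangle$.

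One minor simplification: your reduction to $\eta$ supported outside a large ball (in order to invoke (\ref{pt-better-est-perp-part}) and the smallness of $f^{1/2}|\xi|$) is not actually needed. The trivial bound $|\phi^{\perp}|\le|\phi|$ already suffices: Young's inequality with weight $f^{-1}$ gives
\[
\int_{\R^n}\langle\phi^{\perp},(H\phi)^{\perp}\rangle e^{2F_\alpha+f}\diff x\le \tfrac12 J^{\perp}_{\alpha-1}(\phi)+\tfrac12 I^0_{\alpha+1}(\phi),
\]
valid for any $\eta\in C^\infty_0(\R^n,\R)$, so the claim holds exactly as stated. This does not affect the application, since in the unique continuation argument $\eta$ is always a cut-off supported far out anyway.
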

Notice the extra-term $J^{\perp}_{\alpha-1}(\eta\xi)$ involving the normal part of the operator $H$ acting on $\eta\xi$.
Then the proof of the unique continuation is exactly the same as the proof of Theorem \ref{Analysis-Jacobi-field} (unique continuation part): Lemma \ref{weighted-grad-lap-L^2-lemma} continues to hold in this setting, Lemma \ref{lemma-carleman} is replaced by Claim \ref{claim-lemma-carleman-non-linear} and Claim \ref{est-various-norm-H-nonlin} plays the role of Proposition \ref{est-various-norm-H}. Finally, to prove that the set $S$ is not empty, one uses Proposition \ref{autom-reg-infty} to show that $f^{n/2}e^f\xi=\textit{O}(f^{-1})$, i.e. $1\in S$.

\end{proof}

Recall that a set $E$ is of codimension $m$ in a Banach space $X$ if $E\subset\cup_{i\geq 1}\Pi_i(X_i)$ where, for each index $i\geq 1$, $X_i$ is a submanifold of $X\times\R^{k_i}$ of codimension $m+k_i$ and where $\Pi_i: X\times\R^{k_i}\rightarrow X$ denotes the projection onto the first factor. 

Note as explained in [Section $1.6$, \cite{Whi-para-ell-fct}] that a set of positive codimension is of first (Baire) category.

\begin{theo}[Generic uniqueness]\label{theo-gen-uni}
The set of regular values of $\Pi$ in $C^{k,\alpha}(\mathbb{S}^{n-1},N)$, $k\geq 4$, that are smoothed out by more than one expanding solution in $\Ent_{xp}^{k,\alpha}(N))$ with $0$ relative entropy is of codimension $1$. In particular, the set of boundary maps in $C^{k,\alpha}(\mathbb{S}^{n-1},N)$ that are smoothed out by more than one expanding solution in $\Ent_{xp}^{k,\alpha}(N)$ with $0$ relative entropy is of first category.
\end{theo}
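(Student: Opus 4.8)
The plan is to adapt the strategy of B.~White and of Hardt--Mou [Theorem $6.8$, \cite{Har-Mou}] to the present setting, the relative entropy $\Ent$ of Theorem \ref{mono-rel-ent} playing the role of the energy, the sphere at infinity that of the (finite) boundary of the domain, and the unique continuation at infinity of Theorem \ref{uni-cont-inf} that of the classical Aronszajn unique continuation from the boundary. First I would restrict to the open subset $\mathcal{U}:=\{u\in\Ent_{xp}^{k,\alpha}(N)\ |\ \ker_0L_u=\{0\}\}$, which contains $\Pi^{-1}(v_0)$ for every regular value $v_0$ since $\Pi$ is Fredholm of index $0$ (Theorem \ref{theo-glo-str}); on $\mathcal{U}$ the map $\Pi$ is a local diffeomorphism by [(\ref{item-chart}), Theorem \ref{loc-str}], and, $\Ent_{xp}^{k,\alpha}(N)$ being second countable (Theorem \ref{theo-glo-str}), I can cover $\mathcal{U}$ by countably many open sets $V_\ell$ on each of which $\Pi$ restricts to a diffeomorphism onto an open set $\mathcal{O}_\ell\subset C^{k,\alpha}(\Sp^{n-1},N)$, with inverse $\sigma_\ell$. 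If a regular value $v_0$ is smoothed out by two distinct expanders $u_1\neq u_2$, then $u_i\in V_{\ell_i}$ with $\ell_1\neq\ell_2$ (otherwise injectivity of $\Pi|_{V_{\ell_1}}$ would force $u_1=u_2$), so $v_0$ belongs to
\[
B_{\ell_1\ell_2}:=\{\,w_0\in\mathcal{O}_{\ell_1}\cap\mathcal{O}_{\ell_2}\ :\ \sigma_{\ell_1}(w_0)\neq\sigma_{\ell_2}(w_0),\ \Ent(\sigma_{\ell_2}(w_0),\sigma_{\ell_1}(w_0))=0\,\}.
\]
Since there are countably many such pairs and, by the definition recalled above, a countable union of codimension-$1$ submanifolds is of codimension $1$ (hence of first category), it suffices to prove that each $B_{\ell\ell'}$ is a codimension-$1$ submanifold of $C^{k,\alpha}(\Sp^{n-1},N)$.

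Fix such a pair, abbreviate $g:=\Ent(\sigma_{\ell'}(\cdot),\sigma_{\ell}(\cdot))$ on $\mathcal{O}_\ell\cap\mathcal{O}_{\ell'}$ and $\Omega:=\{\sigma_\ell\neq\sigma_{\ell'}\}$, an open subset, so that $B_{\ell\ell'}=g^{-1}(0)\cap\Omega$. The heart of the matter is to show that $g$ is $C^1$ with $dg(w_0)\neq 0$ for \emph{every} $w_0\in\Omega$, which makes $0$ a regular value of $g|_\Omega$ and $B_{\ell\ell'}$ a codimension-$1$ submanifold. To compute $dg$, fix $w_0\in\Omega$, a path $w_0(s)$ with $\dot w_0(0)=\psi\in C^{k,\alpha}(\Sp^{n-1},T_{w_0}N)$, set $u_i:=\sigma_{(\cdot)}(w_0)$ and $\xi_i:=\frac{d}{ds}\big|_{0}\sigma_{(\cdot)}(w_0(s))\in\ker L_{u_i}\cap C_{con}^{k,\alpha}(\overline{\R^n},T_{u_i}N)$; since $\Pi\circ\sigma_{(\cdot)}=\mathrm{id}$ and $D\Pi(u)(\xi)=\xi_0$ by Theorem \ref{theo-glo-str}, both $\xi_i$ have boundary value $\psi$ at infinity. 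Differentiating $\int_{B(0,R)}(|\nabla u_2|^2-|\nabla u_1|^2)\,d\mu_f$ in $s$ and integrating by parts, the interior term drops out because $\Delta_fu_i=-A(u_i)(\nabla u_i,\nabla u_i)\perp T_{u_i}N\ni\xi_i$, leaving
\[
dg(w_0)(\psi)=\lim_{R\to+\infty}2\int_{\partial B(0,R)}\big(\langle\nabla_{\partial_r}u_2,\xi_2\rangle-\langle\nabla_{\partial_r}u_1,\xi_1\rangle\big)\,d\sigma_f .
\]

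The main obstacle is making sense of this boundary limit: with the weight $d\sigma_f=e^f\,d\sigma$ each summand grows like $e^{|x|^2/4}$, so only a delicate cancellation leaves a finite answer, and the exchange of the limits in $R$ and $s$ must be justified. It is here that the hypothesis that $w_0$ is a \emph{regular} value enters. Writing $\eta:=\xi_1-\xi_2$, one has $L_{u_1}\eta=(L_{u_1}-L_{u_2})\xi_2$, whose coefficients involve $u_1-u_2$ and $\nabla(u_1-u_2)$, which decay like $f^{-n/2}e^{-f}$ by Theorems \ref{prop-dec-time-diff-sol} and \ref{comp-exp-sol-gal-sol} and Proposition \ref{autom-reg-infty}; thus $L_{u_1}\eta$ is exponentially small and, since $\eta\to 0$ at infinity, $\eta$ differs from an exponentially small particular solution by an element of $\ker_0L_{u_1}=\{0\}$, hence decays exponentially as well. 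This annihilates the cross-contributions $\langle\nabla_{\partial_r}u_1,\xi_2-\xi_1\rangle\,d\sigma_f$ and reduces the boundary limit to $\lim_{R\to+\infty}2\int_{\partial B(0,R)}\langle\nabla_{\partial_r}(u_2-u_1),\xi_2\rangle\,d\sigma_f$. Using $u_2-u_1=f^{-n/2}e^{-f}(\kappa_{\ell\ell'}+o(1))$ with $\kappa_{\ell\ell'}:=\lim_{r\to+\infty}f^{n/2}e^f(u_2-u_1)$ provided by Theorem \ref{uni-cont-inf}, the fact that $\xi_2\to\psi$ radially, and the sharp decay $\nabla_{\partial_r}(f^{n/2}e^f(u_2-u_1))=\textit{O}(r^{-3})$ from Proposition \ref{autom-reg-infty} — precisely the computation carried out in the integration by parts of [(\ref{item-surj}), Theorem \ref{loc-str}] — one obtains
\[
dg(w_0)(\psi)=-\,c_n\int_{\Sp^{n-1}}\langle\kappa_{\ell\ell'},\psi\rangle\,d\sigma ,\qquad c_n>0 ,
\]
and the continuous dependence of $\kappa_{\ell\ell'}$ on $w_0$ gives $g\in C^1$.

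Consequently $dg(w_0)=0$ forces $\int_{\Sp^{n-1}}\langle\kappa_{\ell\ell'},\psi\rangle\,d\sigma=0$ for all $\psi\in C^{k,\alpha}(\Sp^{n-1},T_{w_0}N)$, hence $\kappa_{\ell\ell'}\equiv 0$ (it is continuous and such $\psi$ are dense in $C^0$), hence $\sigma_\ell(w_0)=\sigma_{\ell'}(w_0)$ by the unique continuation at infinity of Theorem \ref{uni-cont-inf}, contradicting $w_0\in\Omega$. Therefore $dg$ is nowhere vanishing on $\Omega$, $B_{\ell\ell'}=g^{-1}(0)\cap\Omega$ is a codimension-$1$ submanifold, and the set of regular values of $\Pi$ smoothed out by more than one expander of $0$ relative entropy, being contained in $\bigcup_{\ell\neq\ell'}B_{\ell\ell'}$, is of codimension $1$ in $C^{k,\alpha}(\Sp^{n-1},N)$, in particular of first category by [Section $1.6$, \cite{Whi-para-ell-fct}]. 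The restriction $k\geq 4$ enters only through the use of Theorem \ref{uni-cont-inf} and Proposition \ref{autom-reg-infty}.
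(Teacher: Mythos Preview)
Your overall architecture matches the paper's: parametrize nearby expanders by local inverses of $\Pi$, view the relative entropy as a function $g$ of the boundary datum, compute $dg$ by integration by parts and identify it with the $L^2(\Sp^{n-1})$-pairing against the radial limit $\kappa_{12}$ of Theorem \ref{uni-cont-inf}, conclude $g$ is a submersion, and cover by countably many such charts using second countability.

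There is, however, a real gap in your justification that $\eta=\xi_1-\xi_2$ decays like $f^{-n/2}e^{-f}$. You argue that $\eta$ ``differs from an exponentially small particular solution by an element of $\ker_0L_{u_1}=\{0\}$'', but the only isomorphism theorem available is Theorem \ref{theo-fred-prop-jac-op}, which is between the \emph{polynomially} weighted spaces $\mathcal{D}_f^{k+2,\alpha}\to C_f^{k,\alpha}$; the particular solution it produces is only $\textit{O}(f^{-1})$, and that is far too weak to make the boundary integrals against $d\sigma_f=e^f d\sigma$ converge. The paper does not use $\ker_0L_{u_1}=\{0\}$ at this point at all. Instead it splits $\xi=\xi^{\top_1}+\xi^{\perp_1}$ along $T_{u_1}N$, observes that $\xi^{\perp_1}=\xi_2^{\perp_1}-\xi_2^{\perp_2}=\textit{O}(u_2-u_1)\ast\xi_2=\textit{O}(f^{-n/2}e^{-f})$ directly, and then derives for $|\xi^{\top_1}|$ a differential inequality of the form $\Delta_f|\xi^{\top_1}|\geq -\textit{O}(f^{-1})|\xi^{\top_1}|-\textit{O}(f^{-n/2-1}e^{-f})$, to which the barrier/maximum-principle argument of Theorem \ref{Analysis-Jacobi-field} applies verbatim. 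In particular, your sentence ``It is here that the hypothesis that $w_0$ is a regular value enters'' is misplaced: regularity is needed so that $\Pi$ is a local diffeomorphism and the sections $\sigma_\ell$ exist, not for the decay of $\eta$.
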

\begin{proof}
Consider the following set of expanding maps coming out of the same map with $0$ relative entropy:
\begin{eqnarray*}
\mathcal{E}^{k,\alpha}_0(N):=\left\{(u_i)_{i=1,2}\in \Ent_{xp}^{k,\alpha}(N),\quad \Pi(u_1)=\Pi(u_2),\quad u_1\neq u_2,\quad \mathcal{E}(u_2,u_1)=0 \right\}.
\end{eqnarray*}
Let $(u_i)_{i=1,2}\in \mathcal{E}^{k,\alpha}_0(N)$ with $\Pi(u_1)=\Pi(u_2)=:u_0$ and let $(\iota_{u_i},U_i\cap s_{u_i}^{-1}(0),W_i\cap \Ent_{xp}^{k,\alpha}(N))_{i=1,2}$ be the corresponding charts given by [(\ref{item-chart}), Theorem \ref{loc-str}] where the neighborhoods $(U_i)_{i=1,2}$ and $(W_i)_{i=1,2}$ are open for the $C^{k,\alpha'}$ topology. From now on, rename the neighborhoods $U_i\cap s_{u_i}^{-1}(0)$, $i=1,2$ by $U_i$. Then define the following functional that will serve as a Morse functional on a neighborhood of $(u_0,0)$:
$$\begin{array}{ccccc}
\mathcal{E}_{12} & : & U:=U_1\cap U_2\subset C^{k,\alpha}(\mathbb{S}^{n-1},N)\times\{0\} & \mapsto & \R \\
 & & (v_0,0) & \mapsto & \mathcal{E}(\iota_{u_2}(v_0,0),\iota_{u_1}(v_0,0)). \\
\end{array}$$
The maps $\iota_{u_1}(v_0,0)$ and $\iota_{u_2}(v_0,0)$ are two expanding maps in $\mathcal{E}_{xp}^{k,\alpha}(N)$ coming out of the same map $v_0$ by [(\ref{item-str-inf-iota}), Theorem \ref{loc-str}], therefore Theorem \ref{mono-rel-ent} ensures the functional $\mathcal{E}_{12}$ is well-defined. Let $(v_0(\tau))_{-\varepsilon\leq \tau\leq \varepsilon}$ be a smooth curve of maps in $U\subset C^{k,\alpha}(\mathbb{S}^{n-1},N)$ such that:
\begin{eqnarray*}
v_0(0)=u_0,\quad \frac{d}{d\tau}\bigg\rvert_{\tau=0}v_0(\tau)=:\xi_0\in C^{k,\alpha}(\mathbb{S}^{n-1},T_{u_0}N),
\end{eqnarray*}
and let $\kappa_{12}:=\lim_{+\infty}f^{n/2}e^f(u_2-u_1)$ whose existence is ensured by Theorem \ref{uni-cont-inf}. We adapt the proof of the monotonicity property established in Theorem \ref{mono-rel-ent} as follows:
\begin{eqnarray*}
D_1\mathcal{E}_{12}(u_0,0)(\xi_0)&=& 
\lim_{R\rightarrow+\infty}\int_{B(0,R)}\left<\nabla \xi_2,\nabla u_2\right>-\left<\nabla \xi_1,\nabla u_1\right>d\mu_f,\\
\xi_i&:=&D_1\iota_{u_i}(u_0,0)(\xi_0)\in T_{u_i}\Ent_{xp}^{k,\alpha}(N),\quad i=1,2.
\end{eqnarray*}
Now, by integrating by parts once and by using the facts that $\Delta_fu_i\perp T_{u_i}N$ for $i=1,2$:
\begin{eqnarray*}
\int_{B(0,R)}\left<\nabla \xi_2,\nabla u_2\right>-\left<\nabla \xi_1,\nabla u_1\right>d\mu_f&=&\int_{B(0,R)}\left< \xi_1,\Delta_f u_1\right>-\left< \xi_2,\Delta_fu_2\right>d\mu_f\\
&&+\int_{S(0,R)}\left<\xi_2,\nabla_{\partial_r} u_2\right>-\left< \xi_1,\nabla_{\partial_r} u_1\right>d\sigma_f\\
&=&\int_{S(0,R)}\left<\xi_2,\nabla_{\partial_r} u_2\right>-\left< \xi_1,\nabla_{\partial_r} u_1\right>d\sigma_f.
\end{eqnarray*}

We claim that $\xi:=\xi_1-\xi_2$ decays as fast as Jacobi fields along an expanding solution vanishing at infinity, i.e.
$$\xi=\textit{O}\left(f^{-\frac{n}{2}}e^{-f}\right).$$

First of all, the projection $\xi^{\perp_1}$ onto $(T_{u_1}N)^{\perp}$ satisfies:
\begin{eqnarray}
\xi^{\perp_1}=\xi_2^{\perp_1}=\xi_2^{\perp_1}-\xi_2^{\perp_2}=\textit{O}(u_2-u_1)\ast\xi_2=\textit{O}\left(f^{-\frac{n}{2}}e^{-f}\right),\label{dec-xi-perp}
\end{eqnarray}
since $\xi_2\in T_{u_2}N$.

Now, let us derive the evolution equation of the projection $\xi^{\top_1}$ onto $T_{u_1}N$:
\begin{eqnarray*}
\left<\Delta_f\xi^{\top_1},\xi^{\top_1}\right>&=&-2\left<A(u_2)(\nabla u_2,\nabla\xi_2),\xi^{\top_1}\right>-\left<D_{u_2}A(\xi_2)(\nabla u_2,\nabla u_2),\xi^{\top_1}\right>\\
&&-\left<\Delta_f\xi^{\perp_1},\xi^{\top_1}\right>+\left<D_{u_1}A(\xi_1)(\nabla u_1,\nabla u_1),\xi^{\top_1}\right>.
\end{eqnarray*}
Let us estimate each term on the righthand side of the previous equation:
\begin{eqnarray*}
\left<A(u_2)(\nabla u_2,\nabla\xi_2),\xi^{\top_1}\right>&=&\left<\left(A(u_2)(\nabla u_2,\nabla\xi_2)\right)^{\perp_2}-\left(A(u_2)(\nabla u_2,\nabla\xi_2)\right)^{\perp_1},\xi^{\top_1}\right>\\
&=&\left<\textit{O}\left(f^{-1-\frac{n}{2}}e^{-f}\right),\xi^{\top_1}\right>,
\end{eqnarray*}
where we used Theorem \ref{prop-dec-time-diff-sol} to estimate the difference $u_2-u_1$.
\begin{eqnarray*}
&&\left<D_{u_2}A(\xi_2)(\nabla u_2,\nabla u_2)-D_{u_1}A(\xi_1)(\nabla u_1,\nabla u_1),\xi^{\top_1}\right>=\\
&&\left<D_{u_1}A(\xi^{\top_1})(\nabla u_1,\nabla u_1),\xi^{\top_1}\right>+\left<D_{u_1}A(\xi^{\perp_1})(\nabla u_1,\nabla u_1),\xi^{\top_1}\right>\\
&&+\left<D_{u_2}A(\xi_2)(\nabla u_2,\nabla u_2)-D_{u_1}A(\xi_2)(\nabla u_1,\nabla u_1),\xi^{\top_1}\right>.
\end{eqnarray*}
By using (\ref{dec-xi-perp}) together with Theorem \ref{prop-dec-time-diff-sol} and Proposition \ref{autom-reg-infty},
\begin{eqnarray*}
&&\left<D_{u_2}A(\xi_2)(\nabla u_2,\nabla u_2)-D_{u_1}A(\xi_1)(\nabla u_1,\nabla u_1),\xi^{\top_1}\right>=\\
&&\textit{O}(f^{-1})\ast \xi^{\top_1}\ast \xi^{\top_1}+\textit{O}(f^{-1-\frac{n}{2}}e^{-f})\ast\xi^{\top_1}.
\end{eqnarray*}
Similarly, one gets: $\left<\Delta_f\xi^{\perp_1},\xi^{\top_1}\right>=\textit{O}(f^{-1-\frac{n}{2}}e^{-f})$.

 Consequently, the norm of the difference $|\xi^{\top_1}|$ is a weak subsolution of the following differential inequality:
\begin{eqnarray*}
\Delta_f|\xi^{\top_1}|\geq -\textit{O}(f^{-1})|\xi^{\top_1}|-\textit{O}(f^{-n/2-1}e^{-f}).
\end{eqnarray*}

Adapting the proof of Theorem \ref{Analysis-Jacobi-field} on the decay of Jacobi fields that vanish at infinity, one gets, $\xi^{\top_1}=\textit{O}(f^{-n/2}e^{-f}),$ as expected.\\
 
 By using Theorem \ref{uni-cont-inf},
\begin{eqnarray*}
&&\lim_{R\rightarrow+\infty}\int_{B(0,R)}\left<\nabla \xi_2,\nabla u_2\right>-\left<\nabla \xi_1,\nabla u_1\right>d\mu_f=\\
&&\lim_{R\rightarrow+\infty} \int_{S(0,R)}\left<\xi_1,\nabla_{\partial_r} (u_2-u_1)\right>d\sigma_f+\lim_{R\rightarrow+\infty} \int_{S(0,R)}\left<\xi_2-\xi_1,\nabla_{\partial_r} u_2\right>d\sigma_f\\
&=&\lim_{R\rightarrow+\infty} \left(\int_{S(0,R)}\left<\xi_1,\nabla_{\partial_r} (f^{-\frac{n}{2}}e^{-f})f^{\frac{n}{2}}e^f(u_2-u_1)\right>d\sigma_f+\textit{O}(R^{-2})\right)\\
&=&-c_n\int_{\mathbb{S}^{n-1}}\left<\xi_0, \kappa_{12}\right>d\sigma,
\end{eqnarray*}
for some positive constant $c_n$. Consequently:
\begin{eqnarray*}
D_1\mathcal{E}_{12}(u_0,0)(\xi_0)=-c_n\int_{\mathbb{S}^{n-1}}\left<\xi_0, \kappa_{12}\right>d\sigma.
\end{eqnarray*}
Since $u_1\neq u_2$, Theorem \ref{uni-cont-inf} ensures that $\kappa_{12}\neq 0$ which implies that $\mathcal{E}_{12}$ is a local submersion at $(u_0,0)$. Therefore, $\mathcal{E}_{12}^{-1}\{0\}\cap U $ is of codimension $1$ by the implicit function theorem. 

We conclude by invoking the separability of $\Ent_{xp}^{k,\alpha}(N)$ with respect to the $C^{k,\alpha'}$ topology, $\alpha'\in(0,\alpha)$, established in [(\ref{item-Mod-spa}), Theorem \ref{theo-glo-str}]: indeed, there exists a countable subcover $(U^i:=\mathcal{E}_{12}^{-1}\{0\}\cap U^i_1\cap U^i_2)_{i\geq 1}$ of $\Pi(\mathcal{E}^{k,\alpha}_0(N))$ such that each $U^i$ has codimension $1$.  
\end{proof}

\section{Compactness and asymptotic estimates}\label{sec-com-asy-est-neg-cur}

The purpose of this section is to prove that the set of smooth expanding solutions coming out of smooth $0$-homogeneous maps $u_0:\R^n\rightarrow N$ that are regular at infinity is compact provided the set of initial conditions $u_0$ is and an a priori uniform bound on the $C^0$ norm of the gradient of such expanders holds. These results are reminiscent of a previous work due to the author on expanding Ricci solitons \cite{Der-Asy-Com-Egs}.

\begin{theo}\label{Asy-Est-Grad-Theo}
Let $u:\R^n\rightarrow N\subset \R^m$ be a smooth expanding solution coming out of the $0$-homogeneous map $u_0:\R^n\rightarrow N$. Assume $u$ is regular at infinity. Then, 
\begin{eqnarray*}
\sup_{x\in\R^n}f^{\frac{1}{2}}(x)|\nabla u|(x)\leq C\left(n,N,\|\nabla u\|_{C^0(\R^n)},\|\nabla u_0\|_{C^0(\mathbb{S}^{n-1})}\right).
\end{eqnarray*}

\end{theo}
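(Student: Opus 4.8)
## Proof strategy for Theorem \ref{Asy-Est-Grad-Theo}

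The plan is to run a Bernstein-type gradient estimate adapted to the weighted setting, exploiting the good sign provided by the drift term $\nabla f \cdot \nabla$ in $\Delta_f$. First I would record the Bochner-type identity for the pointwise energy density $e(u) = |\nabla u|^2/2$ of an expanding solution: since $u$ satisfies $\Delta_f u + A(u)(\nabla u, \nabla u) = 0$, differentiating and tracing gives, schematically,
\begin{eqnarray*}
\Delta_f |\nabla u|^2 = 2|\nabla^2 u|^2 + 2\langle \nabla u, \nabla (\Delta_f u)\rangle + 2\,\mathrm{Ric}_f(\nabla u, \nabla u) - (\text{curvature of }N\text{ terms}),
\end{eqnarray*}
where $\mathrm{Ric}_f = \mathrm{Ric} + \Hess f = \frac{1}{2}\eucl$ on $(\R^n, \eucl, \nabla f)$ because this is the Gaussian expanding soliton. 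Using the equation to replace $\nabla(\Delta_f u)$ and the sectional curvature bound of $N$ (only an upper bound on $|\Rm^N|$ restricted to the image, which is compact hence bounded), one obtains a differential inequality of the form $\Delta_f |\nabla u|^2 \geq |\nabla u|^2 - C(N)|\nabla u|^4 - (\text{lower order})$, where the linear term $+|\nabla u|^2$ comes precisely from the soliton identity $\mathrm{Ric}_f = \frac{1}{2}\eucl$.

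Next I would introduce the test function $G := f \cdot |\nabla u|^2$ (this is the natural quantity since Theorem \ref{Asy-Est-Grad-Theo} asserts $f^{1/2}|\nabla u|$ is bounded, i.e. $G$ is bounded), and compute $\Delta_f G$ using $\Delta_f f = f$ and $|\nabla f|^2 = f - n/2$. The key terms are $\Delta_f G = f \Delta_f |\nabla u|^2 + |\nabla u|^2 \Delta_f f + 2\nabla f \cdot \nabla |\nabla u|^2 = f\Delta_f|\nabla u|^2 + G + 2\nabla f\cdot\nabla|\nabla u|^2$. The cross term is handled by writing $2\nabla f \cdot \nabla |\nabla u|^2 = \frac{2}{f}\nabla f \cdot \nabla G - \frac{2|\nabla f|^2}{f}|\nabla u|^2$; absorbing the good $|\nabla^2 u|^2$ term from the Bochner inequality against the cross terms via Cauchy–Schwarz (as in Shi's estimate), one arrives at an inequality of the shape
\begin{eqnarray*}
\Delta_f G \geq -\frac{C}{f} G - C(N) G^2 + (\text{bounded error}) - \langle X, \nabla G\rangle
\end{eqnarray*}
for a suitable vector field $X$ with controlled coefficients. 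Because $u$ is regular at infinity, $G(x) = f(x)|\nabla u|^2(x) = O(1)$ already \emph{qualitatively} at infinity, so $G$ attains its supremum at an interior point $x_0$ (or the supremum is approached and one uses the Omori–Yau maximum principle, which is available since the drift $\nabla f$ makes $\Delta_f$ well-behaved). At such a point $\nabla G(x_0) = 0$ and $\Delta_f G(x_0) \leq 0$, which forces $C(N) G(x_0)^2 \leq \frac{C}{f(x_0)}G(x_0) + (\text{bounded error})$, hence $G(x_0) \leq C(n, N)(1 + \text{error})$. The error terms involve $\|\nabla u\|_{C^0(\R^n)}$ (through $|\nabla u|^4 \leq \|\nabla u\|_{C^0}^2 |\nabla u|^2$) and, near the origin where $f$ is bounded below by $n/2$, the behaviour of $u$ which is controlled in terms of $\|\nabla u_0\|_{C^0(\Sp^{n-1})}$ via the boundary data; this is where those two constants enter the final bound.

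The main obstacle, I expect, is the interplay between the non-trivial target geometry and the weight near the origin: the term $C(N)|\nabla u|^4$ from the second fundamental form of $N$ is \emph{not} small a priori, and is only tamed by the $a\ priori$ hypothesis $\|\nabla u\|_{C^0(\R^n)} < \infty$ combined with the linear $+G$ term gained from the soliton identity — one must check that the coefficient bookkeeping genuinely closes, i.e. that after Cauchy–Schwarz the coefficient in front of $G^2$ stays positive and of size $\sim C(N)$ rather than being swamped. A secondary subtlety is justifying the maximum principle argument rigorously: either invoking Omori–Yau for $\Delta_f$ on $(\R^n,\eucl)$ (whose Bakry–Émery Ricci curvature is bounded below by $1/2 > 0$), or, more elementarily, multiplying $G$ by a cutoff $\phi_R$ supported in $\{|x| \le R\}$ exactly as in the proof of Proposition \ref{autom-reg-infty} — noting the crucial favourable sign of the drift vector field against $\nabla \phi_R$ — and letting $R \to \infty$ using that $G$ is bounded by regularity at infinity. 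Everything else is routine Bochner/Shi-type computation, and the dependence of the final constant on $n$, $N$, $\|\nabla u\|_{C^0(\R^n)}$ and $\|\nabla u_0\|_{C^0(\Sp^{n-1})}$ falls out of tracking the constants through these steps.
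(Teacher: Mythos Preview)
Your overall strategy (Bochner inequality for $|\nabla u|^2$, then a maximum-principle argument to bound $f|\nabla u|^2$) matches the paper's. However the implementation has two concrete problems, the second of which is a genuine gap.

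First, a sign issue in the Bochner step: with the paper's convention $\Delta_f=\Delta+\nabla f\cdot\nabla$, the commutator is $\Delta_f\nabla u=\nabla(\Delta_f u)-\tfrac12\nabla u$, so the linear term is $-|\nabla u|^2$, not $+|\nabla u|^2$. The exact identity the paper obtains is $\Delta_f|\nabla u|^2=2|\nabla^2u|^2-|\nabla u|^2-2|A(u)(\nabla u,\nabla u)|^2$, i.e. for $U:=|\nabla u|^2$ one has $\Delta_fU\geq -U-C(N)U^2$.

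Second, and more seriously, the direct maximum principle on $G=fU$ does not close. From $\Delta_fU\geq -U-C(N)U^2$ together with $\Delta_ff=f$ and $|\nabla f|^2=f-n/2$ one computes
\[
\Delta_fG-\tfrac{2}{f}\nabla f\cdot\nabla G\;\geq\; 2f|\nabla^2u|^2-\tfrac{C(N)}{f}G^2-\tfrac{2}{f}G+\tfrac{n}{f^2}G,
\]
so at an interior maximum of $G$ one only gets $\tfrac{C(N)}{f}G^2+\tfrac{2}{f}G\geq 0$, which is vacuous: the $G^2$ term has the \emph{wrong sign} to force an upper bound. Your claimed implication ``$C(N)G(x_0)^2\leq \tfrac{C}{f(x_0)}G(x_0)+\text{error}$'' would require a \emph{positive} $G^2$ coefficient in $\Delta_fG$, which never appears. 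The paper instead invokes an external barrier/comparison result (Proposition~1.9 of \cite{Der-Asy-Com-Egs}) that, from $\Delta_fU\geq -U-C(N)U^2$, yields $\sup_{\R^n} fU\leq C(n,N,\sup U,\limsup_{+\infty}fU)$. This is where $\|\nabla u_0\|_{C^0(\mathbb{S}^{n-1})}$ enters --- as the boundary value at infinity, since $\limsup_{+\infty} f|\nabla u|^2=\tfrac14\|\nabla u_0\|_{C^0}^2$ --- and \emph{not} ``near the origin'' as you suggest. To repair your argument you would need to replace the interior-maximum step by a genuine barrier comparison (e.g.\ compare $U$ with $Af^{-1}$, with corrections, on $\R^n\setminus B(0,R)$, using the known decay at infinity), which is essentially what the cited proposition does.
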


\begin{proof}
Let us derive the evolution equation satisfied by $|\nabla u|^2$ with the help of Bochner's formula:
\begin{eqnarray*}
\Delta_f|\nabla u|^2&=&2|\nabla^2u|^2+2<\Delta_f\nabla u,\nabla u>\\
&=&2|\nabla^2u|^2-|\nabla u|^2+2<\nabla(\Delta_f u),\nabla u>\\
&=&2|\nabla^2u|^2-|\nabla u|^2-2<\nabla(A(u)(\nabla u,\nabla u)),\nabla u>.
\end{eqnarray*}
Since $A(u)(\nabla u,\nabla u)\perp\nabla u$,
\begin{eqnarray*}
\Delta_f|\nabla u|^2&=&2|\nabla^2u|^2-|\nabla u|^2+2<A(u)(\nabla u,\nabla u),\Delta u>\\
&=&2|\nabla^2u|^2-|\nabla u|^2+2<A(u)(\nabla u,\nabla u),\Delta_f u>\\
&=&2|\nabla^2u|^2-|\nabla u|^2-2\left|A(u)(\nabla u,\nabla u)\right|^2.\\
\end{eqnarray*}

Therefore,
\begin{eqnarray*}
\Delta_f|\nabla u|^2&\geq&2|\nabla^2u|^2-|\nabla u|^2-C(N)|\nabla u|^4.
\end{eqnarray*}

Then the function $U:=|\nabla u|^2$ satisfies: $$\Delta_fU\geq -U-C(N)U^2.$$
By [Propositon $1.9$, \cite{Der-Asy-Com-Egs}] applied to $U$ and to the expanding solution $(M^n,g,\nabla^gf)=(\R^n,\eucl,\nabla(r^2/4))$, one gets:
\begin{eqnarray}
\sup_{\R^n}fU\leq C\left(n,N,\sup_{\R^n}U,\limsup_{+\infty}fU\right).
\end{eqnarray}
This in turn means in our setting:
\begin{eqnarray*}
\sup_{\R^n}f|\nabla u|^2\leq C\left(n,N,\sup_{\R^n}|\nabla u|,\|\nabla u_0\|_{C^0(\mathbb{S}^{n-1})}\right),
\end{eqnarray*}
which leads to the expected estimate.

\end{proof}
The next theorem ensures that the higher derivatives of expanding solutions satisfying the assumptions of Theorem \ref{Asy-Est-Grad-Theo} are a priori controlled as well:
\begin{theo}\label{theo-high-der-a-priori}
Let $u:\R^n\rightarrow N\subset \R^m$ be a smooth expanding solution coming out of the $0$-homogeneous map $u_0:\R^n\rightarrow N$. Assume $u$ is regular at infinity. Then, for every integer $k\geq 2$,
\begin{eqnarray}
\sup_{x\in\R^n}f^{\frac{k}{2}}(x)|\nabla^k u|(x)\leq C\left(k,n,\|\nabla u\|_{C^0(\R^n)},\sup_{1\leq i\leq k}\|\nabla ^iu_0\|_{C^0(\mathbb{S}^{n-1})}\right).\label{a-priori-bdy-inf-der}
\end{eqnarray}

\end{theo}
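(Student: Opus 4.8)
The plan is to prove (\ref{a-priori-bdy-inf-der}) by induction on $k$: the case $k=0$ is trivial since $N$ is compact, and the case $k=1$ is Theorem \ref{Asy-Est-Grad-Theo}. Assume then that $\sup_{\R^n}f^{j}|\nabla^j u|^2\le C_j$ for all $j\le k-1$, with $C_j$ depending only on the quantities appearing on the right-hand side of (\ref{a-priori-bdy-inf-der}). The first step is to differentiate the elliptic equation $\Delta_f u=-A(u)(\nabla u,\nabla u)$ $k$ times and to apply Bochner's formula, exactly as in the proof of Theorem \ref{Asy-Est-Grad-Theo}. Commuting $\nabla^k$ past $\Delta_f$ produces, besides curvature terms of the target, the drift commutator $-\tfrac{k}{2}\nabla^k u$, so that
\[
\Delta_f|\nabla^k u|^2=2|\nabla^{k+1}u|^2-k|\nabla^k u|^2-2\big\langle\nabla^k\!\big(A(u)(\nabla u,\nabla u)\big),\nabla^k u\big\rangle+2\langle(\text{curvature}),\nabla^k u\rangle,
\]
where the genuine constant potential $-k$ matches the identity $\Delta_f f^{-k}=-kf^{-k}+O(f^{-k-1})$. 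The key algebraic point — a higher-order avatar of the relation $A(u)(\nabla u,\nabla u)\perp\nabla u$ used in the gradient case — is that $A(u)(\cdot,\cdot)$ is normal-valued whereas $\nabla^k u$ is tangent-valued, so in $\langle\nabla^k(A(u)(\nabla u,\nabla u)),\nabla^k u\rangle$ neither the top-order factor $\nabla^{k+1}u$ nor a second factor $\nabla^k u$ can survive the pairing; every remaining term has the form $\langle(\text{a product of }\nabla^{j}u,\ j\le k-1,\text{ of total order }k+2),\nabla^k u\rangle$ and is bounded, by the induction hypothesis, by $Cf^{-1}|\nabla^k u|^2$ or $Cf^{-(k+2)/2}|\nabla^k u|$; the curvature terms obey the same bounds. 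Absorbing these by Cauchy-Schwarz and setting $v_k:=f^{k-1}|\nabla^k u|^2$, the two genuine constants cancel ($-k+(k-1)=-1$) and one obtains, on $\{f\ge R_0\}$,
\[
\Delta_f v_k\ \ge\ -\Big(1+\tfrac{C}{f}\Big)v_k-\tfrac{C}{f^{2}}.
\]

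Since $u$ is regular at infinity, $v_k$ is bounded and $\limsup_{f\to\infty}f\,v_k=\limsup_{f\to\infty}f^k|\nabla^k u|^2<+\infty$. Moreover a short analysis of the asymptotics — using that the radial derivative $\nabla_{\nabla f}u=-\Delta u-A(u)(\nabla u,\nabla u)=O(f^{-1})$ (hence decays faster than the generic rate, and similarly for its higher derivatives) together with $u(r,\cdot)\to u_0$ in $C^k_{\mathrm{loc}}(\Sp^{n-1})$ — yields
\[
\limsup_{f\to\infty}f^k|\nabla^k u|^2\ \le\ C(k)\,\sup_{1\le i\le k}\|\nabla^i u_0\|_{C^0(\Sp^{n-1})}^2.
\]
I would then invoke [Proposition $1.9$, \cite{Der-Asy-Com-Egs}] — in the mild variant allowing the lower-order perturbation $Cf^{-1}v_k$ and the fast source $Cf^{-2}$ — applied to $v_k$ on the expanding soliton $(\R^n,\eucl,\nabla(r^2/4))$: it propagates the bound inward from $\{f=R_1\}$ to all of $\{f\ge R_1\}$, where $R_1=R_1(n,N,\|\nabla u\|_{C^0},\|\nabla u_0\|_{C^0})$ is the fixed radius beyond which the $o(1)$ terms are controlled. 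On the compact complement $\{f\le R_1\}$ the bound $\sup f^{j}|\nabla^j u|^2\le C$ for $j\le k$ follows from interior elliptic Schauder estimates for $\Delta_f u=-A(u)(\nabla u,\nabla u)$, bootstrapped from $\|\nabla u\|_{C^0}$ and the compactness of $N$. Combining the two regions gives (\ref{a-priori-bdy-inf-der}).

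The main obstacle, and the only non-routine point, is the bookkeeping in the Bochner computation together with the insistence that the final constant depend only on $\|\nabla u\|_{C^0(\R^n)}$ and $\sup_{1\le i\le k}\|\nabla^i u_0\|_{C^0(\Sp^{n-1})}$, and \emph{not} on $\sup_{\R^n}|\nabla^k u|$. This is exactly why one must exploit the normality of $A(u)(\cdot,\cdot)$ to guarantee that no term quadratic in $\nabla^k u$ — beyond the harmless drift contribution $-k|\nabla^k u|^2$ — appears; the circularity one might fear, namely a bound on $v_k$ over a compact set feeding into the constant, is broken because over a \emph{fixed} compact ball the higher derivatives are controlled by the gradient via elliptic regularity. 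A technically lighter but essentially equivalent route, in the spirit of the proofs of Proposition \ref{autom-reg-infty} and Corollary \ref{coro-shi-est-grad-obst-tensor}, is to pass to the rescaled solution $\bar u(x,t):=u(x/\sqrt t)$ of the harmonic map flow and to run an $\varepsilon$-regularity argument on parabolic cylinders centred far out in space; there the dependence on $\sup_{1\le i\le k}\|\nabla^i u_0\|$ enters through the control of the cylinders reaching down to the initial time $t=0$, on which $\bar u$ is the $0$-homogeneous map $u_0(x/|x|)$.
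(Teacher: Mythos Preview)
Your overall strategy --- induction on $k$, a Bochner-type inequality for $|\nabla^k u|^2$, then a barrier/maximum-principle argument \`a la \cite{Der-Asy-Com-Egs} --- is exactly the one the paper follows. The gap is in your derivation of the differential inequality, specifically in the claimed orthogonality.

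The assertion that ``$\nabla^k u$ is tangent-valued'' is simply false as soon as $k\ge 2$: already $\nabla^2 u$ has normal component $(\nabla_i\nabla_j u)^\perp=A(u)(\nabla_i u,\nabla_j u)$. The relation $A(u)(\nabla u,\nabla u)\perp\nabla u$ used in the gradient case does \emph{not} have a higher-order avatar of the type you invoke; consequently the top-order term $A(u)(\nabla^{k+1}u,\nabla u)$ is not automatically orthogonal to $\nabla^k u$, nor is the contribution $\nabla^k u\ast\nabla^2 u$ absent from the expansion of $\nabla^k\big(A(u)(\nabla u,\nabla u)\big)$. Both terms are present and must be dealt with.

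The paper handles them as follows. The cross term $\nabla^{k+1}u\ast\nabla u$, paired with $\nabla^k u$, is absorbed by Young's inequality into the good Bochner term $2|\nabla^{k+1}u|^2$, leaving a harmless $O(f^{-1})|\nabla^k u|^2$ since $|\nabla u|^2=O(f^{-1})$ by the case $k=1$. The genuinely delicate term is $\nabla^k u\ast\nabla^2 u$: for $k\ge 3$ it is again $O(f^{-1})|\nabla^k u|$ by induction, but for $k=2$ it produces a \emph{cubic} term $c|\nabla^2 u|^3$, precisely the ``quadratic in $\nabla^k u$'' obstruction you were trying to rule out. The paper breaks this circularity by a separate step: writing $\Delta_f\nabla u=-\tfrac12\nabla u+\nabla u^{*3}+\nabla^2u\ast\nabla u=O(f^{-1/2})$ and applying ordinary local parabolic Schauder estimates to $\nabla u$ yields $\nabla^2 u=O(f^{-1/2})$ \emph{without} the sharp weight; feeding this back in gives the uniform inequality
\[
\Delta_f|\nabla^k u|^2\ \ge\ -\big(k+O(f^{-1/2})\big)|\nabla^k u|^2-O\big(f^{-k-1}\big),\qquad k\ge 2.
\]
From there the argument proceeds essentially as you outline, with a barrier of the form $f^{k}e^{-Cf^{-1/2}}|\nabla^k u|^2-Af^{-1}$ (the paper invokes [Lemma~2.9, \cite{Der-Asy-Com-Egs}] rather than Proposition~1.9, which matches the situation of a subsolution with constant leading potential $-k$ and controlled $\limsup$ at infinity).
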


\begin{rk}\label{rk-theo-high-der-a-priori}
Theorem \ref{theo-high-der-a-priori} is stated in terms of expanding solutions that are regular at infinity. If a finite number of rescaled derivatives are bounded and converge to the corresponding covariant derivative of the map at infinity then the same result holds.
\end{rk}

\begin{proof}
The inequalities (\ref{a-priori-bdy-inf-der}) are true without the polynomial weights by classical parabolic estimates applied to $u$: 
\begin{eqnarray}
\sup_{x\in\R^n}|\nabla^k u|(x)\leq C\left(k,n,\|\nabla u\|_{C^0(\R^n)}\right),\quad k\geq 2.\label{class-par-a-priori-loc-bds}
\end{eqnarray}

Therefore, it suffices to prove these inequalities outside a ball $B(0,R)$ whose radius $R$ depends on the constants involved in (\ref{a-priori-bdy-inf-der}) only.

We proceed by induction on $k\geq 1$: the case $k=1$ is exactly the content of Theorem \ref{Asy-Est-Grad-Theo}. Assume $k\geq 2$ and let us compute the evolution equation satisfied by $\nabla^k u$:
\begin{eqnarray*}
\Delta_f\nabla^ku&=&\nabla^k(\Delta_fu)-\frac{k}{2}\nabla^ku\\
&=&-\frac{k}{2}\nabla^ku-\nabla^k(A(u)(\nabla u,\nabla u))\\
&=&-\frac{k}{2}\nabla^ku+\sum_{l=0}^k\nabla^{k-l}(A(u))\ast\nabla^l(\nabla u\ast\nabla u).
\end{eqnarray*}
Now, by Fa\`a di Bruno's formula and by the induction assumption:
\begin{eqnarray*}
\nabla^{m}(A(u))=\textit{O}(\nabla^mu)+\textit{O}(f^{-m/2}),\quad 0\leq m\leq k,
\end{eqnarray*}
where $\textit{O}(\cdot)$ is only depending on the quantities described in (\ref{a-priori-bdy-inf-der}).

By isolating the terms involving the $(k+1)$-th and $k$-th derivatives of $u$ and using the induction assumption again, one gets:
\begin{eqnarray*}
\Delta_f\nabla^ku&=&-\frac{k}{2}\nabla^ku+\nabla^{k+1}u\ast\nabla u+\nabla^{k}u\ast(\nabla u^{*2}+\nabla^2u)+\textit{O}\left(f^{-\frac{k+2}{2}}\right).
\end{eqnarray*}
In particular, if $k\geq 3$, by Young's inequality applied to the norm of the $(k+1)$-th derivatives of $u$,
\begin{eqnarray*}
\Delta_f|\nabla^ku|^2&\geq&-\left(k+\textit{O}(f^{-1})\right)|\nabla^ku|^2-\textit{O}\left(f^{-k-1}\right).
\end{eqnarray*}
Similarly, if $k=2$:
\begin{eqnarray*}
\Delta_f|\nabla^2u|^2&\geq&-\left(2+c|\nabla^2u|+\textit{O}(f^{-1})\right)|\nabla^2u|^2-\textit{O}\left(f^{-3}\right).
\end{eqnarray*}
Now recall the (schematic) equation satisfied by $\nabla u$:
\begin{eqnarray*}
\Delta_f\nabla u&=&-\frac{\nabla u}{2}+\nabla u^{*3}+\nabla^2u\ast\nabla u\\
&=&\textit{O}(f^{-1/2}),
\end{eqnarray*}
where we used Theorem \ref{Asy-Est-Grad-Theo} together with Claim \ref{class-par-a-priori-loc-bds} for $\nabla^2u$. By classical parabolic estimates applied to $\nabla u$: $\nabla^2u=\textit{O}(f^{-1/2})$ uniformly.

 Therefore, in any case, one is reduced to the following differential inequality:
\begin{eqnarray*}
\Delta_f|\nabla^ku|^2&\geq&-\left(k+\textit{O}\left(f^{-1/2}\right)\right)|\nabla^ku|^2-\textit{O}\left(f^{-k-1}\right).
\end{eqnarray*}
Now, one can use [Lemma $2.9$, \cite{Der-Asy-Com-Egs}] or adapt its proof to show the expected result.
Indeed, by elementary but tedious computations in the spirit of the previous ones, the function $f^{k}e^{-Cf^{-1/2}}|\nabla^k u|^2-Af^{-1}$ where $A$ and $C$ are positive constants sufficiently large depending only on the constants involved in (\ref{a-priori-bdy-inf-der}), satisfies the maximum principle outside a large ball $B(0,R)$ whose radius $R$ depends only on the constants involved in (\ref{a-priori-bdy-inf-der}). 

\end{proof}

By applying a Nash-Moser iteration with the help of a suitable Bochner formula, compactness holds when the target manifold is non-positively curved:
\begin{coro}[Non-positively curved target]\label{coro-non-neg-tar-comp}
Let $(N,g)$ be a Riemannian manifold with non-positive sectional curvature. Let $k\geq 2$ and $\alpha\in(0,1)$. Then the following set
\begin{eqnarray*}
\mathcal{E}_{xp}^{k,\alpha}(N,\Lambda):=\left\{u\in \Ent_{xp}^{k,\alpha}( N):   \|u_0\|_{C^{k,\alpha}(\mathbb{S}^{n-1},N)}\leq \Lambda \right\},
\end{eqnarray*}
is compact in the $C_{con}^{k,\alpha'}(\R^n,N)$-topology for $\alpha'\in(0,\alpha).$

Similarly, the set
\begin{eqnarray*}
\mathcal{E}_{xp}^{\infty}(N,(\Lambda_k)_{k\geq 1}):=&\{&u\in C^{\infty}_{loc}(\R^n, N): \text{ $u$ expander regular at infinity}\quad|\\
&& \quad  \|\nabla^k u_0\|_{C^0(\mathbb{S}^{n-1},N)}\leq \Lambda_k,\quad k\geq 1 \},
\end{eqnarray*}
is compact in the smooth conical topology, i.e. in the topology defined by $\cap_{k\geq 0}C^{k,\alpha}_{con}(\R^n,N)$, for any $\alpha\in(0,1)$.

\end{coro}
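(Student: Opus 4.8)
The plan is to deduce compactness from uniform a priori estimates together with Arzel\`a--Ascoli, the whole difficulty being concentrated in a single uniform bound over the family: a bound on $\|\nabla u\|_{C^0(\R^n)}$. Indeed, once we know that $\|\nabla u\|_{C^0(\R^n)}\le C(n,N,\Lambda)$ for every $u\in\mathcal E_{xp}^{k,\alpha}(N,\Lambda)$ (resp.\ $\le C(n,N,\Lambda_1)$ for every member of the $C^\infty$ family), Theorems \ref{Asy-Est-Grad-Theo} and \ref{theo-high-der-a-priori} immediately upgrade this to $\sup_{\R^n}f^{k/2}|\nabla^k u|\le C(k,n,N,\Lambda,\dots)$ for every $k\ge1$, with constants uniform over the family; in particular all members are uniformly regular at infinity in the sense of Definition \ref{def-reg-inf}. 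So I would spend the bulk of the proof on the gradient bound and treat the rest as bookkeeping.

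For the gradient bound the crucial point --- and the only place where $\sec_N\le 0$ is used --- is that the Bochner identity underlying Theorem \ref{Asy-Est-Grad-Theo} becomes \emph{linear} in the energy density. Working intrinsically on the Gaussian expanding soliton $(\R^n,\eucl,f)$, where $\Hess f=\tfrac12\,\eucl$ and $\tau_f(u)=0$, one has
\[
\Delta_f\Big(\tfrac12|\nabla u|^2\Big)=|\nabla du|^2-\tfrac12|\nabla u|^2-\sum_{i,j}\big\langle R^N(du(e_i),du(e_j))\,du(e_j),du(e_i)\big\rangle ,
\]
the last term being the usual Eells--Sampson curvature term, which is $\le 0$ when $\sec_N\le0$. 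Hence, on all of $\R^n$,
\[
\Delta_f|\nabla u|^2\ \ge\ -\,|\nabla u|^2 .
\]
This is exactly the improvement of the identity appearing in the proof of Theorem \ref{Asy-Est-Grad-Theo}, whose bad quadratic term $-2|A(u)(\nabla u,\nabla u)|^2$ is now absorbed, via the Gauss equation, by the full Hessian contribution $2|\nabla du|^2$. Independently, Corollary \ref{Poho-exp} says that $r\mapsto r^2\fint_{B(0,r)}|\nabla u|^2\,d\mu_f$ is non-decreasing; evaluating its limit as $r\to+\infty$ using that $u$ is regular at infinity and $u(r,\cdot)\to u_0$ radially (the radial part of $\nabla u$ being $\textit{O}(f^{-3/2})$, so that only the spherical part contributes) one gets $\lim_{r\to+\infty}r^2\fint_{B(0,r)}|\nabla u|^2\,d\mu_f=\tfrac1{\omega_{n-1}}\int_{\Sp^{n-1}}|\nabla^{sph}u_0|_{sph}^2\,d\sigma\le\|\nabla u_0\|_{C^0(\Sp^{n-1})}^2$, hence a scale-invariant weighted energy bound $r^2\fint_{B(0,r)}|\nabla u|^2\,d\mu_f\le\|\nabla u_0\|_{C^0}^2$ at \emph{every} scale, with constant depending only on $\Lambda$.

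I would then run a De Giorgi--Nash--Moser iteration for the drift Laplacian $\Delta_f$ to turn this integral bound into a pointwise one. Because the inequality $\Delta_f|\nabla u|^2\ge-|\nabla u|^2$ is linear, the iteration produces local sup bounds with constants \emph{independent of} $\sup|\nabla u|$ --- this is precisely why $\sec_N\le0$ is needed: in the general case the inequality of Theorem \ref{Asy-Est-Grad-Theo} carries the term $-C(N)|\nabla u|^4$, and treating it as a potential $\big(C(N)|\nabla u|^2\big)|\nabla u|^2$ would reintroduce $\sup|\nabla u|$ into the constants. The iteration can moreover be made uniform at infinity because on a ball $B(x,\rho)$ with $\rho\sim|x|^{-1}$ the drift $\nabla f=x/2$ and the oscillation of the weight $e^f$ are both $\textit{O}(1)$, the weighted background $(\R^n,\eucl,e^f\,dx)$ having $\Ric_f=\tfrac12\,\eucl\ge0$. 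Combined with the scale-invariant energy bound this gives $\sup_{\R^n}f|\nabla u|^2\le C(n)\,\|\nabla u_0\|_{C^0(\Sp^{n-1})}^2$, hence the desired $\Lambda$-uniform bound on $\|\nabla u\|_{C^0(\R^n)}$.

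With uniform estimates on all the $f^{k/2}\nabla^k u$ in hand, the proof closes as in the earlier sections. Arzel\`a--Ascoli together with the compactness of the H\"older inclusions $C^{k,\alpha}\hookrightarrow C^{k,\alpha'}$ --- and the analogous statement for the radial traces on $\Sp^{n-1}$, which exist and are uniformly bounded in $C^{k,\alpha}(\Sp^{n-1},N)$ by the argument in the proof of Theorem \ref{Analysis-Jacobi-field} --- extracts from any sequence in the family a subsequence converging in $C_{con}^{k,\alpha'}(\R^n,N)$ (resp.\ in $\cap_{k\ge0}C_{con}^{k,\alpha}(\R^n,N)$). Passing to the limit in $\Delta_fu+A(u)(\nabla u,\nabla u)=0$ shows the limit is again a smooth expander regular at infinity, and since the radial traces converge, the limiting boundary map still satisfies $\|u_0\|_{C^{k,\alpha}(\Sp^{n-1},N)}\le\Lambda$ (resp.\ $\|\nabla^k u_0\|_{C^0(\Sp^{n-1},N)}\le\Lambda_k$ for all $k$), so the limit lies in the set, which is therefore compact in the stated topology. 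The one delicate step is the uniform gradient bound, and within it the technical heart is making the Moser iteration uniform against the linearly growing drift at infinity; everything else is a combination of Corollary \ref{Poho-exp}, Theorems \ref{Asy-Est-Grad-Theo}--\ref{theo-high-der-a-priori}, and standard compactness arguments.
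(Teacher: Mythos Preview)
Your overall strategy coincides with the paper's: reduce everything to a uniform bound on $\|\nabla u\|_{C^0(\R^n)}$, then feed this into Theorems~\ref{Asy-Est-Grad-Theo}--\ref{theo-high-der-a-priori} and close by Arzel\`a--Ascoli. The Bochner computation you carry out is also the right one, and you correctly identify that non-positive curvature kills the quadratic term, making the differential inequality linear in $|\nabla u|^2$.

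Where you diverge from the paper is in how you extract the pointwise gradient bound from this linear inequality, and here there is a gap. You work \emph{statically} with $\Delta_f$: the inequality $\Delta_f|\nabla u|^2\ge -|\nabla u|^2$, Corollary~\ref{Poho-exp} for the integral input, and a Moser iteration for the drift Laplacian on balls $B(x,|x|^{-1})$. The problem is that Corollary~\ref{Poho-exp} only controls weighted energies on balls \emph{centred at the origin}, whereas your Moser iteration at a point $x$ far from the origin needs energy control on $B(x,\rho)$. You never explain how to pass from one to the other, and with the weight $e^f$ blowing up this is not a formality. The paper sidesteps this entirely by passing to the \emph{parabolic} picture: set $u(x,t):=u(x/\sqrt t)$, so that the Bochner formula becomes $(\partial_t-\Delta)|\nabla u|^2\le 0$ for the \emph{standard} heat operator, and run ordinary parabolic Nash--Moser (no drift, no weight). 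The required local energy input then comes from the \emph{general} Pohozaev identity of Proposition~\ref{Poho-for-gal-sol} (with arbitrary test vector fields and functions), which, as in \cite{Der-Lam-HMF}, yields $\|\nabla u(\cdot,s)\|_{L^2(B(x,1))}\le C\|\nabla u_0\|_{L^2(B(x,2))}$ at \emph{every} centre $x$; homogeneity of $u_0$ makes the right-hand side uniformly bounded. This is precisely the off-centre control your static argument is missing.

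A second, smaller point: to land in $C^{k,\alpha}_{con}$ (and not merely $C^k_{con}$) the paper does one extra step you gloss over. After the derivative bounds, one writes $\Delta_f(u-\phi u_0)=-A(u)(\nabla u,\nabla u)-\Delta_f(\phi u_0)\in C^{0,\alpha}_f$ with uniform norm, invokes the isomorphism $\Delta_f:\mathcal D_f^{2,\alpha}\to C_f^{0,\alpha}$ from Theorem~\ref{theo-fred-prop-jac-op} to produce a solution $v$ with controlled $\mathcal D_f^{2,\alpha}$-norm, and identifies $v=u-\phi u_0$ via the maximum principle. This is how the uniform H\"older control on the top derivatives is obtained; the reference to Theorem~\ref{Analysis-Jacobi-field} alone does not quite give it.
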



\begin{proof}
According to Theorems \ref{Asy-Est-Grad-Theo} and \ref{theo-high-der-a-priori}, it is sufficient to prove the following a priori bound: if an expanding solution $u\in C^{\infty}_{loc}(\R^n,N)$ is regular at infinity or in $C^{k,\alpha}_{con}(\R^n,N)$ then there exists a uniform positive constant $C$ such that:
\begin{eqnarray}
\|\nabla u\|_{C^0(\R^n)}\leq C\|\nabla u_0\|_{L^2(\Sp^{n-1})}.\label{exp-inequ}
\end{eqnarray}
The proof is standard: let $u(\cdot,t):=u(\cdot/\sqrt{t})$ be the solution to the harmonic map flow associated to $u$. The Bochner formula in its parabolic version (see [Lemma $5.3.3$, \cite{Lin-Wang-Boo}] for instance) gives:
\begin{eqnarray*}
(\partial_t-\Delta)\left(\frac{|\nabla u|^2}{2}\right)=-|\nabla du|^2+\sum_{i,j=1}^n\det A(u)(\nabla_iu,\nabla_ju)\leq0,
\end{eqnarray*}
by the Gauss-Codazzi equations together with the fact that the sectional curvature of $(N,g)$ is non-positive.  Now, a Nash-Moser iteration applied to the subsolution $|\nabla u|^2$ of the heat equation (see \cite{Str-Har-Map} in a harmonic map flow context) gives:
\begin{eqnarray*}
|\nabla u|^2(x,t)\leq C\fint_{P_r(x,t)}|\nabla u|^2(y,s)dyds,\quad (x,t)\in\R^n\times(0,+\infty), \quad r^2\in(0,t),
\end{eqnarray*}
for some uniform positive constant $C$, where $P_r(x,t):=B(x,r)\times(t-r^2,t]\subset\R^n\times\R_+$ is a parabolic neighborhood of $(x,t)$ in $\R^n\times\R_+$. Take $t=2$ and $r^2=1$ and use the Pohozaev identity from Proposition \ref{Poho-for-gal-sol} to prove as in [Proposition $3.15$, \cite{Der-Lam-HMF}] that:
\begin{eqnarray*}
\|\nabla u\|_{L^2(B(x,1))}(s)\leq C\|\nabla u_0\|_{L^2(B(x,2))},\quad \forall x\in\R^n,\quad s\in[1,2],
\end{eqnarray*}
for some uniform positive constant $C$. The expected inequality (\ref{exp-inequ}) follows by combining the previous inequalities together with the fact that $u_0$ is $0$-homogeneous. This ends the proof if $u$ is assumed to be regular at infinity.

Let us assume now that $u\in\Ent_{xp}^{2,\alpha}(N)$ (the cases $k\geq 3$ can be handled similarly) and let us prove that $\|u\|_{C^{2,\alpha}_{con}(\R^n,N)}$ is uniformly bounded as expected. According to Theorem \ref{theo-high-der-a-priori} and Remark \ref{rk-theo-high-der-a-priori}, there exists a uniform positive constant $C(n,\Lambda)$ such that 
\begin{eqnarray}
\sup_{\R^n}\left\{|f^{1/2}\cdot \nabla u|+|f\cdot \nabla^2 u|\right\}\leq C(n,\Lambda). \label{a-priori-decay-first-sec-der-exp-non-pos-cur}
\end{eqnarray}
In particular, 
\begin{eqnarray*}
|\nabla_{\nabla f}u|\leq\frac{C(n,\Lambda)}{f},
\end{eqnarray*}
which implies by integrating radially that: $|u-u_0|\leq C(n,\Lambda)/f$. 

Now, if $\phi$ is a smooth cut-off function such that $\phi=0$ on $B(0,1)$ and $\phi=1$ outside $B(0,2)$ then:
\begin{eqnarray}
\Delta_f(u-\phi u_0)=-A(u)(\nabla u,\nabla u)-\Delta_f(\phi u_0).\label{equ-sol-substract-init-cond}
\end{eqnarray}
Since $u_0$ is $0$-homogeneous, $\|\Delta_f(\phi u_0)\|_{C^{0,\alpha}_{f}(\R^n,\R^m)}\leq C(n,\Lambda)$ and the same is true for the term $A(u)(\nabla u,\nabla u)$ by (\ref{a-priori-decay-first-sec-der-exp-non-pos-cur}). By (the proof) of Theorem \ref{theo-fred-prop-jac-op}, there exists a solution $v\in \mathcal{D}_f^{k+2,\alpha}(\R^n,\R^m)$ to $$\Delta_fv=-A(u)(\nabla u,\nabla u)-\Delta_f(\phi u_0),\quad \|v\|_{\mathcal{D}_f^{2,\alpha}(\R^n,\R^m)}\leq C(n,\Lambda).$$ Consequently, by using the maximum principle, $u-\phi u_0=v$ and in particular, this implies that $u-\phi u_0\in \mathcal{D}_f^{2,\alpha}(\R^n,\R^m)$ with the corresponding estimate: $\|u-\phi u_0\|_{\mathcal{D}_f^{2,\alpha}(\R^n,\R^m)}\leq C(n,\Lambda)$. This ends the proof of Corollary \ref{coro-non-neg-tar-comp} by invoking Arzela-Ascoli's Theorem.

\end{proof}

Corollary \ref{coro-non-neg-tar-comp} allows to prove the following existence and uniqueness theorem:
\begin{theo}\label{exi-uni-non-neg-cur}
Let $(N,g)$ be a Riemannian manifold with non-positive sectional curvature.
Let $n\geq 3$ and let $u_0\in C^{\infty}(\mathbb{S}^{n-1},N)$. Then there exists a unique smooth solution coming out of $u_0$ that is regular at infinity: this solution must be expanding.
\end{theo}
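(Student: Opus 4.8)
The plan is to establish existence and uniqueness separately, with uniqueness coming essentially for free from the results already assembled in this paper. For \textbf{uniqueness}, suppose $u_1$ and $u_2$ are two smooth solutions of the harmonic map flow coming out of $u_0$ that are regular at infinity. By Theorem \ref{mono-rel-ent} the relative entropy $\mathcal{E}(u_2,u_1)(t)$ is well-defined and non-increasing, and by the same theorem it is constant precisely when $u_2$ is an expander. Thus the strategy is: first show both solutions are expanders (so that $\mathcal{E}(u_2,u_1)$ is constant in $t$, and in fact we may as well work with the static picture), then invoke the non-linear unique continuation at infinity. Concretely, if $u_1$ and $u_2$ are both expanders coming out of $u_0$, then $f^{n/2}e^f(u_2-u_1)$ has a radial limit $\kappa_{12}$ at infinity by Theorem \ref{uni-cont-inf}, and $u_1=u_2$ iff $\kappa_{12}=0$. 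So it remains to show $\kappa_{12}=0$; here is where non-positive curvature enters. Using the curvature assumption one shows that the relative entropy $\mathcal{E}(u_2,u_1)$, which for two expanders coincides with $\mathcal{E}(u_2,u_1)(1)=\lim_{R\to\infty}\int_{B(0,R)}(|\nabla u_2|^2-|\nabla u_1|^2)d\mu_f$, must vanish: indeed by the Bochner/Gauss–Codazzi computation used in the proof of Corollary \ref{coro-non-neg-tar-comp}, $|\nabla u_i|^2$ is a subsolution of $\Delta_f(\cdot)\geq -|\nabla u_i|^2-C|\nabla u_i|^4$ with the curvature term having a favorable sign, from which one expects $\mathcal{E}(u_2,u_1)=0$ for any pair of expanders into a non-positively curved target. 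Then the first-variation computation of $\mathcal{E}_{12}$ in the proof of Theorem \ref{theo-gen-uni} shows $\int_{\mathbb{S}^{n-1}}|\kappa_{12}|^2d\sigma$ is controlled by the entropy difference, forcing $\kappa_{12}=0$ and hence $u_1=u_2$.

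For \textbf{existence}, I would run a continuity method, exactly as flagged in the introduction: since $(N,g)$ is complete with non-positive sectional curvature, Hadamard's theorem gives that $N$ is aspherical, so any smooth map $u_0\in C^\infty(\mathbb{S}^{n-1},N)$ is homotopic to a constant; pick a smooth path $(u_0^\tau)_{\tau\in[0,1]}$ in $C^{k,\alpha}(\mathbb{S}^{n-1},N)$ with $u_0^0$ constant and $u_0^1=u_0$. Let $S\subseteq[0,1]$ be the set of $\tau$ for which there exists a smooth expander regular at infinity coming out of $u_0^\tau$. By Corollary \ref{coro-unique-exp-pt}, $0\in S$ (the constant expander). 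The set $S$ is open by the local structure theorem: at an expander $u$ over $u_0^\tau$, the projection map $\Pi$ from $\Ent_{xp}^{k,\alpha}(N)$ has $D\Pi(u)$ surjective onto a complement of $B_u^{k,\alpha}$, and combined with Theorem \ref{loc-str} the moduli space surjects locally onto nearby boundary data — but more simply, since $L_u$ is Fredholm of index $0$ and $D_1s_u(u_0,0)$ is an isomorphism (by the injectivity argument in Theorem \ref{loc-str} item (\ref{item-surj})), the implicit function theorem produces expanders over all nearby $u_0^{\tau'}$. The set $S$ is closed by the compactness Corollary \ref{coro-non-neg-tar-comp}: if $\tau_j\to\tau_\infty$ with expanders $u^{(j)}$ over $u_0^{\tau_j}$, then since $\|u_0^{\tau_j}\|_{C^{k,\alpha}(\mathbb{S}^{n-1},N)}$ is uniformly bounded along the path, the $u^{(j)}$ lie in a compact subset $\mathcal{E}_{xp}^{k,\alpha}(N,\Lambda)$ and subconverge in $C_{con}^{k,\alpha'}$ to an expander over $u_0^{\tau_\infty}$, which by elliptic regularity is smooth. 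Connectedness of $[0,1]$ gives $S=[0,1]$, so $1\in S$ and an expander over $u_0$ exists; this expander, viewed as $u(x,t)=U(x/\sqrt t)$, is the desired smooth solution regular at infinity.

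Finally, one must upgrade ``the solution is expanding'' from a property imposed in the existence construction to a genuine conclusion: any smooth solution of the harmonic map flow coming out of $u_0$ that is regular at infinity must in fact be an expander. This follows by combining the existence of the expander $u_b$ just produced with Theorem \ref{mono-rel-ent}: for an arbitrary such solution $u$, the entropy $\mathcal{E}(u,u_b)(t)$ is non-increasing and bounded on $(0,\infty)$; blowing up and blowing down via Theorem \ref{ilmanen-smooth-conj} produces expanders $u_\infty^{0}$, $u_\infty^{\infty}$ over $u_0$, and by the uniqueness already established (applied to the non-positively curved target) these equal $u_b$, so $\mathcal{E}(u,u_b)$ is constant, which by Theorem \ref{mono-rel-ent} forces the obstruction tensor $\partial_t u+\frac{x}{2t}\cdot\nabla u$ to vanish identically — i.e. $u$ is expanding, and then the uniqueness argument above gives $u=u_b$.

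The main obstacle I expect is the entropy-vanishing step in the uniqueness argument: showing $\mathcal{E}(u_2,u_1)=0$ for two expanders into a non-positively curved target. The Bochner inequality with the good-signed curvature term is available, but converting it into the vanishing of the improper integral $\lim_R\int_{B(0,R)}(|\nabla u_2|^2-|\nabla u_1|^2)d\mu_f$ requires a careful integration-by-parts argument with the weighted measure, controlling the boundary terms using the sharp decay $f^{-n/2}e^{-f}$ from Theorem \ref{prop-dec-time-diff-sol} and the rescaled-difference estimates of Proposition \ref{autom-reg-infty} — essentially the same delicate asymptotic matching that underlies Theorem \ref{mono-rel-ent}, now exploiting the sign of the curvature to kill the limit rather than merely bound it.
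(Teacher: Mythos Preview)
Your existence and ``must be expanding'' outlines are broadly aligned with the paper, but both the existence openness step and the uniqueness-among-expanders step have a genuine gap: you never establish, and never use, the key fact that \emph{$\ker_0 L_u = 0$ when $N$ is non-positively curved}. The paper proves this directly from the second-variation formula: for $\kappa \in \ker_0 L_u$,
\[
0 = \langle L_u\kappa,\kappa\rangle_{L^2_f} = -\|\nabla\kappa\|_{L^2_f}^2 + \int_{\R^n}\tr\Rm(g_N)(\kappa,\nabla u,\nabla u,\kappa)\,d\mu_f \leq -\|\nabla\kappa\|_{L^2_f}^2,
\]
so $\kappa$ is parallel and decays at infinity, hence vanishes. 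This single fact drives both halves of the paper's argument. For openness of $S$, knowing $L_u$ is Fredholm of index $0$ is not enough; you need it to be an \emph{isomorphism}, and that comes from $\ker_0 L_u = 0$. Your invocation of ``$D_1 s_u(u_0,0)$ is an isomorphism'' is a red herring: that holds for \emph{every} expander (Theorem \ref{loc-str} item (\ref{item-surj})) and by itself only gives that the moduli space is a manifold, not that $\Pi$ is locally surjective.

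Your uniqueness route via $\mathcal{E}(u_2,u_1)=0 \Rightarrow \kappa_{12}=0$ does not work as written, and I do not see how to salvage it. First, the Bochner subsolution inequality for each $|\nabla u_i|^2$ does not yield equality of the two weighted energies; there is no mechanism here forcing the improper integral to vanish. Second, and more seriously, even if you had $\mathcal{E}(u_2,u_1)=0$, the first-variation identity $D_1\mathcal{E}_{12}(u_0,0)(\xi_0) = -c_n\int_{\Sp^{n-1}}\langle\xi_0,\kappa_{12}\rangle\,d\sigma$ does \emph{not} show that $\int|\kappa_{12}|^2$ is controlled by $\mathcal{E}(u_2,u_1)$: it says that if $\kappa_{12}\neq 0$ then the \emph{derivative} of $\mathcal{E}_{12}$ in the boundary-data direction $\kappa_{12}$ is nonzero, which is a statement about critical points, not about zeros. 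The paper instead runs a second continuity argument: given two expanders $u_1,u_2$ over $u_0$, deform $u_0$ along the homotopy to a constant, carry both expanders along (possible since $L_u$ is an isomorphism everywhere), and define $\bar S = \{\sigma : u_1^\sigma = u_2^\sigma\}$; then $1\in\bar S$ by Corollary \ref{coro-unique-exp-pt}, $\bar S$ is closed trivially, and $\bar S$ is open precisely because the implicit function theorem (with $L_u$ invertible) gives local uniqueness. Your final paragraph, reducing an arbitrary regular-at-infinity solution to an expander via Theorem \ref{ilmanen-smooth-conj} and uniqueness among expanders, is correct and matches the paper.
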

\begin{rk}\label{rk-theo-rf-vs-hmf}
Theorem \ref{exi-uni-non-neg-cur} is analogous to a theorem proved by the author for expanding gradient Ricci solitons coming out of metric cones with simply connected smooth sections endowed with a positively curved metric: \cite{Der-Smo-Pos-Met-Con}. However, Theorem \ref{exi-uni-non-neg-cur} proves furthermore that the only solutions that come out of such initial data must be expanding.
\end{rk}

\begin{proof}
The proof is based on a continuity method: since $N$ is non-positively curved, it is aspherical by Hadamard's Theorem. In particular, there exists a smooth homotopy of maps $(u_0^{\sigma})_{\sigma\in[0,1]}:\Sp^{n-1}\rightarrow N$ from $u_0$ to a constant map $P_1\in N$. Consider the set of solutions 
\begin{eqnarray*}
S:=\left\{\sigma\in[0,1]\quad|\quad \text{there exists a solution $u^{\sigma}\in\mathcal{E}_{xp}^{\infty}(N)$ coming out of $u_0^{\sigma}$}\right\}.
\end{eqnarray*}
We prove that $S$ is a closed, open and non empty set proving thereby $S=[0,1]$ by connectedness of $[0,1]$.

$S$ is not empty since the constant map $P_1\in N$ is a smooth expanding solution coming out of itself, i.e. $1\in S$.

$S$ is closed by Corollary \ref{coro-non-neg-tar-comp}: indeed, since $(u_0^{\sigma})_{\sigma\in[0,1]}$ is a smooth homotopy path, there exists a sequence of nonnegative numbers $(\Lambda_k)_{k\geq 1}$ such that $u^{\sigma}\in  \mathcal{E}_{xp}^{\infty}(N,(\Lambda_k)_{k\geq 1})$ as soon as $\sigma\in S$.

Let us show that $S$ is open. Let $\sigma_0\in S$ and $u^{\sigma_0}$ be an expanding solution coming out of $u_0^{\sigma_0}$ that is regular at infinity. Let $k\geq 2$ and $\alpha\in(0,1)$. Then Theorem \ref{theo-fred-prop-jac-op} ensures that the corresponding Jacobi operator $L_{u^{\sigma_0}}$ is a Fredholm operator of degree $0$. Moreover, since $N$ is non-positively curved, $L_{u^{\sigma_0}}$ is injective: indeed, by [Proposition $1.6.2$, \cite{Lin-Wang-Boo}], 
\begin{eqnarray*}
\left<L_{u^{\sigma_0}}\kappa,\kappa\right>=\left<-\Delta_f\kappa,\kappa\right>-\tr \left(\Rm(g_N)(\kappa,\nabla u^{\sigma_0},\nabla u^{\sigma_0},\kappa)\right),\quad \kappa\in C^{\infty}_0(\R^n,T_{u^{\sigma_0}}N),
\end{eqnarray*}
where $\Rm(g_N)(e_1,e_2,e_2,e_1)$ denotes the sectional curvature of the metric $g_N$ evaluated on the two-plane spanned by $(e_1,e_2)$. Therefore, if $\kappa\in \ker_0L_{u^{\sigma_0}}$ then $\|\nabla \kappa\|_{L^2_f}=0$, i.e. $\kappa$ is parallel which implies in particular that its norm is constant on $\R^n$. This fact shows that $\kappa\equiv 0$ since it converges to $0$ at infinity.

To sum it up, $L_{u^{\sigma_0}}$ is an isomorphism: by the implicit function theorem, there exists a neighborhood $\mathcal{U}^{k,\alpha}(u_0^{\sigma_0})$ of $u_0^{\sigma_0}$ in $C^{k,\alpha}(\mathbb{S}^{n-1},N)$ such that any map $u_0^{\sigma}\in \mathcal{U}^{k,\alpha}(u_0^{\sigma_0})$ admits an expanding solution $u^{\sigma}\in C^{k,\alpha}_{con}(\R^n,N)$.

Now, we want to ensure that any expanding solution $u^{\sigma}\in C_{con}^{k,\alpha}(\R^n,N)$ coming out of a smooth map $u_0^{\sigma}$ is actually in $C^{\infty}_{con}(\R^n,N):=\cap_{k\geq 0}C_{con}^{k,\alpha}(\R^n,N)$. We proceed similarly to the proof of Corollary \ref{coro-non-neg-tar-comp}. Using the same notations, we observe that $u^{\sigma}-\phi u^{\sigma}_0$ satisfies (\ref{equ-sol-substract-init-cond}). Since the right-hand side is in $C^{k+1,\alpha}_f(\R^n,\R^m)$, there exists a solution $v\in \mathcal{D}_f^{k+3,\alpha}(\R^n,\R^m)$ such that $\Delta_fv=\Delta_f(u^{\sigma}-\phi u^{\sigma}_0)$. Again, by the maximum principle, $u^{\sigma}-\phi u_0^{\sigma}\in C^{k+1,\alpha}_f(\R^n,\R^m)$. In particular, this shows that $u^{\sigma} \in C^{k+1,\alpha}_{con}(\R^n,\R^m)$. One ends the proof by induction on $k$.\\

To prove the uniqueness statement, we proceed in two steps.\\

First of all, we prove the uniqueness statement among expanding solutions: let $u_1$ and $u_2$ be two solutions in $\mathcal{E}_{xp}^{\infty}(N)$ coming out of the same $0$-homogeneous map $u_0$. Consider as before a smooth homotopy path $(u_0^{\sigma})_{\sigma\in[0,1]}$ connecting $u_0$ to a constant map $P_1\in N$. By the existence part, there exists two continuous paths of expanding solutions $(u_1^{\sigma})_{\sigma\in[0,1]}$ and $(u_2^{\sigma})_{\sigma\in[0,1]}$ in $\mathcal{E}_{xp}^{\infty}(N)$. Define the following set where these two paths coincides: 
\begin{eqnarray*}
\bar{S}:=\left\{\sigma\in[0,1]\quad|\quad u_1^{\sigma}=u_2^{\sigma}\right\}.
\end{eqnarray*}
By Corollary \ref{coro-unique-exp-pt}, $\sigma=1\in \bar{S}$. By its very definition, $\bar{S}$ is closed. Now, if $\overline{\sigma}\in\bar{S}$, $u_1^{\sigma}-u_2^{\sigma}$ is of arbitrary small norm in $\mathcal{E}_{xp}^{k,\alpha}(N)$ for some $k\geq 2$ and some $\alpha\in(0,1)$ when $\sigma$ is close to $\overline{\sigma}$ by Proposition \ref{prop-dec-time-diff-sol} together with the fact that the dependence on $\sigma $ is continuous. Since the Jacobi operator is an isomorphism as explained above, the implicit function theorem shows that $u_1^{\sigma}=u_2^{\sigma}$, i.e. $\sigma\in \bar{S}$ when $\sigma$ is close to $\bar{\sigma}$.

Finally, let $u$ be a solution of the Harmonic map flow that comes out of $u_0$. Assume it is regular at infinity. Let $u_b$ be an expanding solution coming out of $u_0$ that is regular at infinity. Then by Theorem \ref{ilmanen-smooth-conj}, there exist two smooth expanding solutions $u_1$ and $u_2$ that are regular at infinity and which come out of $u_0$. By the previous uniqueness result for expanding solutions, these two solutions coincide (with $u_b$). Recall that $u_1$ and $u_2$ were obtained by blowing up and blowing down the solution $u$. Therefore, by the monotonicity of the relative entropy, the relative entropy is constant in time and Theorem \ref{ilmanen-smooth-conj} implies that $u$ is an expanding solution. Now, the previous uniqueness result implies that $u$ and $u_b$ coincide. 

\end{proof}

\bibliographystyle{alpha.bst}
\bibliography{bib-Entropy-Hmf}

\end{document}